\documentclass[a4paper, 11pt, reqno]{smfbook}
\title{Propri\'et\'es dynamiques g\'en\'eriques des hom\'eomorphismes conservatifs}
\date{\today}

\usepackage[latin1]{inputenc}
\usepackage[french]{babel}
\usepackage[T1]{fontenc} 
\usepackage{amsfonts}
\usepackage{amsmath}
\usepackage{amssymb}

\usepackage{stmaryrd}
\usepackage{amsthm}
\usepackage{color}
\usepackage{tikz}
\usepackage{enumerate}

\author{Pierre-Antoine~Guih\'eneuf}

\newtheorem{lemme}{Lemme}[chapter]
\newtheorem{theoreme}[lemme]{Th\'eor\`eme}
\newtheorem{prop}[lemme]{Proposition}
\newtheorem{propdef}[lemme]{Proposition et définition}
\newtheorem{coro}[lemme]{Corollaire}
\theoremstyle{definition}
\newtheorem{definition}[lemme]{D\'efinition}
\newtheorem{conv}[lemme]{Convention}
\theoremstyle{remark}
\newtheorem{rem}[lemme]{Remarque}
\newtheorem{app}[lemme]{Application}
\numberwithin{equation}{chapter}
\numberwithin{figure}{chapter}

\newcommand{\B}{\mathcal{B}}
\newcommand{\C}{\mathbf{C}}
\newcommand{\D}{\mathcal{D}}

\newcommand{\F}{\mathcal{F}}
\newcommand{\Hi}{\mathcal{H}}
\newcommand{\N}{\mathbf{N}}
\newcommand{\Pb}{\mathcal{P}}
\newcommand{\R}{\mathbf{R}}
\newcommand{\Sn}{\mathfrak{S}}
\newcommand{\T}{\mathbf{T}}

\newcommand{\V}{\mathcal{V}}
\newcommand{\Q}{\mathbf{Q}}
\newcommand{\Z}{\mathbf{Z}}

\setlength\arraycolsep{2pt}
\setlength{\leftmargini}{12pt}

\addtolength{\oddsidemargin}{-.5cm}
\addtolength{\evensidemargin}{-.5cm}
\addtolength{\textwidth}{1cm}
\addtolength{\topmargin}{-.5cm}
\addtolength{\textheight}{1cm}

\hyphenation{Auto}
\hyphenation{Homeo}

\definecolor{bleu}{rgb}{0,0,0.3}

\sloppy

\begin{document}
\frontmatter

\begin{abstract}
Ce mémoire porte sur l'étude des propriétés dynamiques génériques des homéomorphismes conservatifs de variétés compactes. Nous y présentons les principales techniques utilisées pour aborder cette question et mettons l'accent, entre autres, sur l'importance du rôle joué par les permutations dyadiques, ainsi que sur l'utilisation, assez étonnante à première vue, de la topologie faible et du plongement de l'espace des homéomorphismes dans celui des automorphismes bi-mesurables.
\end{abstract}

\begin{altabstract}
This memoir is concerned with the generic dynamical properties of conservative homeomorphisms of compact manifolds. Several important techniques allowing to prove genericity results are presented : we emphasize the important role played by periodic approximations of homeomorphisms, and by the embedding of the space of homeomorphisms in the space of bi-measurable automorphisms.
\end{altabstract}

\email{pguihene@ens-cachan.fr}
\address{Laboratoire de mathématiques CNRS UMR 8628\\Université Paris-Sud 11, Bât. 425\\91405 Orsay Cedex FRANCE}
\subjclass{37B99, 37A15}
\keywords{Homéomorphisme conservatif, propriété dynamique générique, permutation dyadique}
\altkeywords{Measure-preserving homeomorphism, generic property, dyadic permutation}

\maketitle

\tableofcontents

\mainmatter

\chapter*{Introduction}

Le but de ce mémoire est d'étudier les \emph{propriétés dynamiques génériques des homéo\-mor\-phismes conservatifs des variétés compactes}. Le mot \og conservatif \fg~signifie que nous nous fixerons une variété $X$ munie d'une \emph{bonne} mesure de probabilité $\mu$ et que l'on s'intéressera aux homéomorphismes de $X$ préservant $\mu$. Par \emph{bonne} mesure nous entendons une mesure de probabilité borélienne sans atome, qui charge tout ouvert et qui ne charge pas le bord de la variété. La variété $X$ sera supposée compacte, éventuellement à bord et de dimension supérieure ou égale à deux\footnote{L'étude des homéomorphismes conservatifs en dimension 1 est très réduite : il est connu que les seules variétés compactes de dimension 1 sont, à homéomorphisme près, le segment $[-1,1]$ et le cercle $\R/2\pi\Z$ ; on peut de plus supposer que la mesure $\mu$ est la mesure de Lebesgue. Il y a seulement deux homéomorphismes sur $[-1,1]$ : l'identité et son opposé. La dynamique sur le cercle n'est pas très compliquée elle non plus : les seuls homéomorphismes du cercle $\R/2\pi\Z$ préservant la mesure de Lebesgue sont les rotations.}, sans perdre en généralité nous la supposerons connexe. Nous nous restreindrons au cas où $X$ est une variété \emph{différentielle} pour des raisons de simplicité \footnote{On a alors une triangulation de notre variété, l'ensemble des homéomorphismes est dans ce cas toujours séparable.} ; il est vraisemblable que les résultats que nous allons montrer restent vrais dans le cadre plus naturel des variétés topologiques. Par \emph{propriété dynamique}, nous entendons notamment une propriété qui est stable par conjugaison dans le groupe des homéomorphismes de $X$ préservant la mesure $\mu$. Les propriétés dynamiques peuvent être classées en deux grandes familles : les propriétés topologiques, et les propriétés ergodiques. Ces dernières sont non seulement stables par conjugaison topologique, mais aussi par conjugaison mesurable, autrement dit par conjugaison par tout automorphisme ; nous désignerons par le terme \emph{automorphisme} toute bijection de $X$ bi-mesurable et préservant $\mu$. Rappelons enfin qu'une propriété est dite \emph{générique}, si elle est vérifiée sur au moins une intersection dénombrable d'ouverts denses ; d'après le théorème de Baire, une telle intersection dénombrable d'ouverts denses est elle-même dense. Ici, il s'agira bien sûr d'ouverts denses pour la topologie naturelle sur l'espace des homéomorphismes de~$X$, c'est-à-dire la topologie de la convergence uniforme.

Une caractéristique fondamentale des propriétés génériques est leur stabilité par produit fini (ou dénombrable) : étant donné un nombre fini de propriétés génériques, l'ensemble des homéomorphismes qui satisfont simultanément toutes ces propriétés est encore une intersection dénombrable d'ouverts denses. Ainsi, par abus de langage, nous pourrons parler d'\og homéomorphismes génériques\fg~et lister leurs propriétés dynamiques, telles que la transitivité, l'ergodicité etc.

Comme c'est souvent le cas, les résultats de généricité sont non seulement intéressants en tant que tels, mais fournissent aussi des preuves d'existence d'homéomorphismes vérifiant une propriété donnée. \`A première vue, il n'est pas évident de trouver un exemple explicite d'homéomorphisme conservatif du carré muni de la mesure de Lebesgue, qui soit topologiquement fortement mélangeant. Mais nous montrerons qu'un homéomorphisme conservatif générique de cet espace est topologiquement fortement mélangeant.

Rappelons toutefois que la notion de généricité a une portée limitée : le contraire d'une propriété générique $P$ peut être lui-même vérifié sur un ensemble dense d'homéomorphismes (ce sera d'ailleurs le cas pour chacune des propriétés dynamiques génériques que nous montrerons). Pire encore, les homéomorphismes vérifiant $P$ peuvent être rares du point de vue de la mesure, dans le sens qu'étant donnée une famille continue générique à un nombre fini de paramètres d'homéomorphismes, l'ensemble des paramètres correspondant à des homéomorphismes vérifiant $P$ peut être de mesure de Lebesgue nulle.

\paragraph*{Une br\`eve histoire des homéomorphismes conservatifs génériques}

La question de la généricité de propriétés dynamiques a été posée pour la première fois par G.~Birkhoff et E. Hopf lorsqu'ils conjecturèrent que l'ergodicité --- qui était connue à l'époque sous le nom de transitivité \emph{métrique}, par opposition à la notion actuelle de transitivité \emph{topologique} --- était le cas \emph{général} dans l'ensemble des homéomorphismes conservatifs, en un sens qui n'était pas encore bien défini. Un premier pas en direction de cette conjecture fut effectué par J. Oxtoby en 1937, lorsqu'il montra que la transitivité (topologique) est générique parmi les homéomorphismes conservatifs (pour la topologie uniforme) \cite{Oxtotrans} ; cela ouvrit la voie à l'étude \emph{topologique} de ce que G. Birkhoff et E. Hopf avaient appelé le cas général, à savoir l'étude des propriétés dynamiques vérifiées sur des $G_\delta$ denses. J. Oxtoby et S. Ulam résolurent la conjecture quatre ans plus tard, en montrant la généricité de l'ergodicité parmi les homéomorphismes conservatifs \cite{Oxtoby}. Ils établirent à cette occasion le théorème des mesures homéomorphes (théorème \ref{mesures-homéo}), ce qui leur a permis de se ramener au cas du cube ; déjà dans leur preuve ils utilisaient les subdivisions dyadiques pour approcher tout homéomorphisme conservatif par un autre dont les orbites sont bien distribuées dans les différents cubes de la subdivision. Parallèlement, P. Halmos résolut la conjecture de G. Birkhoff et E. Hopf dans le cas des automorphismes (pour la topologie faible) dans son article fondateur \cite{Halm44}, où il établit par ailleurs la densité des permutations dyadiques. La même année, il démontra que le mélange faible est générique parmi les automorphismes \cite{HalmMix} ; c'est à cette occasion qu'il prouva un lemme qui s'est avéré crucial par la suite, et qui affirme que la classe de conjugaison de tout automorphisme apériodique (c'est-à-dire dont l'ensemble des points périodiques est de mesure nulle) est dense. Dans son article, il soulignait néanmoins le fait que les preuves pour les homéomorphismes et pour les automorphismes étaient plutôt différentes ; il laissait penser qu'il n'y avait aucun lien logique entre les résultats obtenus. Quatre ans plus tard, V. Rokhlin prouva le premier résultat de non généricité dans les automorphismes, à savoir que génériquement, un automorphisme n'est pas fortement mélangeant \cite{Rok}.

Il a fallu attendre la fin des années 60 pour voir de nouveaux développements dans l'obtention de propriétés génériques. A. Katok et A. Stepin, entre autres, ont alors développé la théorie des approximations périodiques. Leur idée était de regarder à quelle vitesse un automorphisme est approché par les permutations dyadiques en topologie faible ; si cette vitesse est assez grande on peut en déduire certaines propriétés dynamiques sur l'automorphisme de départ. Le lien entre cette théorie et notre étude était fait par le théorème qui assure que les automorphismes approchés à une vitesse arbitrairement fixée sont génériques (pour la topologie faible). Cela permit aux auteurs non seulement de retrouver les résultats précédemment obtenus (la généricité de l'ergodicité, du mélange faible, et celle du non mélange fort), mais aussi d'en établir de nouveaux, comme par exemple la généricité de l'entropie métrique nulle ou bien celle de le simplicité du spectre.

A. Katok et A. Stepin établirent en 1970 un théorème similaire de généricité des approximations en distance faible à une vitesse fixée, mais pour les homéomorphismes en topologie uniforme cette fois : les homéomorphismes approchés en topologie \emph{faible} à une vitesse arbitrairement fixée sont génériques en topologie \emph{uniforme}. Ainsi, tous les résultats d'approximation obtenus quelques années auparavant par la technique des approximations périodiques s'appliquaient directement aux homéomorphismes ; la totalité des résultats de généricité connus pour les automorphismes (en topologie faible) devenaient aussi vrais pour les homéomorphismes en topologie uniforme. Ils remarquèrent par ailleurs que la seule chose qui manquait pour faire le lien direct entre les propriétés génériques des homéomorphismes et des automorphismes était un analogue topologique du lemme d'Halmos sur la densité des classes de conjugaison des automorphismes.

Un premier pas dans ce sens fut franchi en 1973, lorsque S. Alpern adapta le théorème de Lax \cite{Lax}, établi en 1970 et initialement motivé par des questions d'analyse numérique pour les homéomorphismes conservatifs, pour redémontrer la généricité de la transitivité parmi les homéomorphismes, replaçant par là la preuve de J. Oxtoby dans un contexte plus général. À défaut d'un réel analogue topologique du lemme d'Halmos\footnote{Qui se serait énoncé \og la classe de conjugaison de tout homéomorphisme apériodique est dense dans l'ensemble des homéomorphismes\fg.}, il montra en 1978 une variante dans l'ensemble des automorphismes mais pour la topologie forte : l'ensemble des homéomorphismes est inclus dans l'adhérence (pour la topologie forte) de la classe de conjugaison (dans l'ensemble des automorphismes) d'un automorphisme apériodique. Ceci lui permit d'établir son très beau théorème de transfert \cite{AlpernGene}, \cite{AlpernTopo}, qui affirme que toute propriété ergodique générique parmi les automorphismes pour la topologie faible l'est aussi parmi les homéomorphismes pour la topologie forte. L'étude de la dynamique générique des homéomorphismes conservatifs de variétés non compactes a été amorcée l'année suivante, lorsque V. Prasad a montré que l'ergodicité est générique dans l'espace $\R^n$ muni de la mesure de Lebesgue \cite{Prasad}.

Par la suite, la recherche sur le sujet a été semble-t-il moins active. Cependant, E. Glasner et J. King établirent en 1993 la loi du 0-1 dans l'ensemble des automorphismes \cite{4} : pour toute propriété ergodique $(P)$ portant sur l'ensemble des automorphismes, soit $(P)$ est générique, soit son contraire l'est. Par théorème de transfert, on obtient une loi du 0-1 pour les propriétés ergodiques dans l'ensemble des homéomorphismes. Cette loi du 0-1 permit à E. Glasner et B. Weiss de montrer en 2008 que toute classe de conjugaison est maigre \cite{5}.

Contrairement aux propriétés ergodiques, les propriétés dynamiques \emph{topologiques} des homéomorphismes conservatifs génériques n'ont apparemment pas été explorées de manière systématique. Hormis l'article pionnier dans lequel J. Oxtoby montre la généricité de la transitivité topologique \cite{Oxtotrans}, et l'article dans lequel S. Alpern donne une nouvelle preuve de cette  propriété~\cite{AlpernOuique}, la seule publication que nous connaissons sur le sujet est un article de l'an 2000 \cite{6}, dans lequel F. Daalderop et R. Fokkink montrent la généricité du \emph{chaos maximal} au sens de R. Devaney (voir la définition 8.5 page 50 de \cite{Dev}). \`A notre connaissance, la généricité de l'entropie topologique infinie, et du mélange topologique fort, dont nous donnons des preuves basées sur des techniques classiques dans les chapitre 1 et 2, n'avaient jamais été énoncées explicitement pour les homéomorphismes conservatifs.

L'article de J. Choksi et V. Prasad \cite{Cho} présente un survol historique plus précis (jusqu'en 1982) ; les auteurs y dégagent trois périodes importantes : la première comporte les travaux fondateurs de J. Oxtoby et S. Ulam, ainsi que les études des espaces d'automorphismes faites par P. Halmos et V. Rokhlin, la seconde est portée par les travaux de A. Katok et A. Stepin sur les vitesses d'approximation, et la troisième commence lorsque S. Alpern établit le lien entre les dynamiques génériques des automorphismes et des homéomorphismes.

\paragraph*{Organisation générale du mémoire}

Par souci de pédagogie, nous avons choisi de présenter les résultats dans un ordre logique plutôt qu'historique. Nous commençons chaque chapitre par la présentation d'une nouvelle technique, qui permet ensuite d'obtenir des résultats de généricité. Les techniques présentées dans les deux premiers chapitres sont assez naturelles ; il apparaît néanmoins assez vite que l'ensemble des homéomorphismes muni de la topologie uniforme est trop petit, trop rigide ; on parvient à trouver des résultats de généricité plus forts non seulement en s'intéressant à l'inclusion de l'ensemble des homéomorphismes dans celui des automorphismes, mais aussi, et c'est plus étonnant, en munissant ces ensembles d'une topologie auxiliaire, appelée topologie \emph{faible} (par opposition à la topologie \emph{forte}), dans laquelle deux automorphismes sont proches s'ils diffèrent très peu sur un ensemble de mesure très grande. Par souci de pédagogie, nous prouvons directement les généricités de propriétés dynamiques simples, sans attendre de les obtenir comme corollaires de propriétés plus fortes.

Il est possible d'étendre la notion de \emph{subdivision dyadique}, naturelle pour un cube de dimension $n\ge 2$, à toute variété $X$. Cela permet de définir les \emph{permutations dyadiques} comme des automorphismes préservant une subdivision dyadique, et agissant comme une translation sur chaque cube de la subdivision. L'idée d'approximation, aussi bien en topologie forte qu'en topologie faible, par des permutations, souvent cycliques, joue alors un rôle central dans l'étude des propriétés génériques des homéomorphismes. La quasi totalité des preuves de généricité commence par une approximation par une permutation. 

Le premier résultat présenté est l'approximation uniforme des homéomorphismes par des permutations cycliques, connu sous le nom de théorème de Lax. Ce théorème, combiné avec un résultat d'extension des applications finies, donne un premier résultat : la généricité de la transitivité ; en travaillant un peu plus on obtient celle du mélange faible topologique.

Dans la seconde partie, elle aussi purement topologique, nous donnons un énoncé de \og modification locale \fg~: sous certaines conditions il est possible de remplacer localement un homéomorphisme par un autre. En combinant cette technique avec l'approximation des homéomorphismes en topologie uniforme, nous montrons que l'ensemble des applications ayant un ensemble de points périodiques dense est générique. À l'aide de la notion d'intersection markovienne, nous prouvons ensuite la généricité de l'entropie topologique infinie et celle du mélange fort.

Nous commençons à aborder les propriétés ergodiques dans la troisième partie. Des considérations sur la vitesse d'approximation par des permutations cycliques (mesurée par la distance faible), introduites par A. Katok et A. Stepin, conduisent à bon nombre de résultats de généricité parmi les homéomorphismes conservatifs\footnote{Qui sont automatiquement aussi vrais pour les automorphismes.}, dont la généricité de l'ergodicité (théorème d'Oxtoby-Ulam), du mélange faible, du non mélange fort, de l'entropie métrique nulle etc. Nous consacrons la fin de ce chapitre au \emph{type spectral} associé à un automorphisme : après l'avoir défini, nous montrons que cette mesure est génériquement sans atome et étrangère à la mesure de Lebesgue.

Dans le début de la quatrième partie, nous justifions plus précisément l'étude de l'ensemble des automorphismes muni de la topologie faible en énonçant un résultat de densité des homéomorphismes parmi les automorphismes. Il y a, à ce moment, encore du travail à faire pour établir le théorème de transfert, qui affirme que toute propriété générique parmi les automorphismes munis de la topologie faible l'est aussi parmi les homéomorphismes munis de la topologie forte, mais on peut d'ores et déjà en énoncer une forme faible, qui fournit une nouvelle preuve de la généricité de l'ergodicité (théorème d'Oxtoby-Ulam).

Dans la cinquième partie, nous nous intéressons plus précisément aux liens qu'il peut y avoir entre les propriétés génériques des homéomorphismes préservant la mesure et celles des automorphismes préservant la mesure. On y montre le théorème \emph{de transfert}, dû à S. Alpern ; ce théorème permet de \og transférer \fg~les résultats de généricité (pour les propriétés ergodiques) de l'espace des automorphismes muni de la topologie faible vers l'espace des homéomorphismes muni de la topologie forte. Cela justifie la similitude des propriétés génériques des homéomorphismes et des automorphismes établie auparavant. On en profite pour donner de nouvelles preuves de la généricité du mélange faible, et du non-mélange fort, et exposer une technique d'obtention de propriétés génériques à l'aide du type spectral, technique que l'on illustre ensuite par une preuve du fait que génériquement, le type spectral est étranger à une mesure donnée.

Finalement, la sixième et dernière partie est consacrée à la loi du 0-1 pour l'ensemble des automorphismes préservant la mesure, due à E. Glasner et J. King \cite{4} : chaque propriété ergodique est soit générique soit maigre. On en déduit que toute classe de conjugaison dans les automorphismes, et donc dans les homéomorphismes, est maigre.

\paragraph*{Une liste de propriétés génériques des homéomorphismes conservatifs}

Dans ce mémoire, plutôt que de chercher à établir une liste la plus exhaustive possible des propriétés dynamiques génériques des homéomorphismes conservatifs, nous avons préféré présenter les diverses techniques qui permettent de trouver de telles propriétés. Voici néanmoins une liste de propriétés que nous allons montrer. Un homéomorphisme conservatif générique~:
\begin{itemize}
\item est topologiquement transitif (théorème \ref{transitivité}), et même topologiquement faiblement mélangeant (théorème \ref{mélange topo faible}), et encore mieux topologiquement fortement mélangeant (théorème \ref{th-mel-fort}),
\item est apériodique (proposition \ref{aper}), et même ergodique (théorème d'Oxtoby-Ulam, théorèmes \ref {OxtoUlApprox} et \ref{ergo}), encore mieux faiblement mélangeant (théorème \ref{mélfaiblgéné} et corollaire~\ref{mélange faible}), encore mieux $\alpha$-mélangeant pour tout $\alpha\in[0,1]$ (partie \ref{alphamél}), mais pas fortement mélangeant (théorème \ref{MélApprox} et corollaire \ref{mélange pas}),
\item possède un ensemble dense de points périodiques (théorème \ref{per-resid}),
\item a une entropie topologique infinie (théorème \ref{entropie-topologique}), mais une entropie métrique nulle (corollaire \ref{entrmetnullgéné}),
\item est rigide\footnote{C'est-à-dire qu'il existe une sous-suite de ses itérés tendant faiblement vers l'identité.}, (proposition \ref{rigid}),
\item est de rang un\footnote{On dit qu'un homéomorphisme $f$ est \emph{de rang un} si toute partition mesurable finie de $X$ peut être approchée par une partition engendrée par des itérés par $f$ deux à deux disjoints d'une seule partie $A$ de $X$ ; pour une définition plus précise voir la définition \ref{defrg}.} (théorème \ref{rang1F}), donc à spectre simple (théorème \ref{specsimpl}), et standard\footnote{Un homéomorphisme $f$ est dit \emph{standard} s'il existe un borélien $A$ tel que l'application induite $f_A$ soit métriquement conjuguée à un odomètre ; pour une définition plus précise voir la définition \ref{Enzo}.} (théorème \ref{pffff}),
\item a un type spectral sans autre atome que 1 (proposition \ref{speccon}) et étranger à la mesure de Lebesgue (proposition \ref{singLeb}), et même étranger à toute mesure donnée (théorème \ref{specortho}),
\item est en dehors d'une classe de conjugaison donnée (corollaire \ref{homeomaigr}).
\end{itemize}

\paragraph*{Sur les propriétés dynamiques génériques dans différents contextes}

La g\'en\'ericit\'e ou non de nombreuses propri\'et\'es dynamiques d\'epend du cadre dans lequel on se place, par exemple de la pr\'esence ou non d'une mesure pr\'eserv\'ee (syst\`emes conservatifs \emph{versus} systèmes dissipatifs), ou de la r\'egularit\'e des transformations consid\'er\'ees (automorphismes, hom\'eomorphismes, diff\'eomorphismes de classe $C^1$, diff\'eomorphismes de classe $C^2$, etc.\footnote{\`A chaque fois on munit l'espace de transformations choisi de la topologie \og naturelle\fg~qui en fait un espace de Baire.}). Prenons un exemple : la transitivité topologique.
\begin{itemize}
\item Comme nous l'avons déjà dit, J. Oxtoby a montré qu'un hom\'eomorphisme conservatif g\'en\'erique (pour la topologie uniforme) est transitif (th\'eor\`eme~\ref{transitivité}).
\item Beaucoup plus récemment, C. Bonatti et S. Crovisier ont r\'eussi \`a montrer dans \cite{Bonatti} qu'il en est de m\^eme pour un diff\'eomorphisme de classe $C^1$ conservatif g\'en\'erique (pour la topologie $C^1$).
\item Par contre, le th\'eor\`eme KAM implique que, sur toute surface compacte, il existe des ouverts de diff\'eomorphismes conservatifs de classe $C^r$ pour $r\geq 4$ (pour la topologie $C^r$) sur lesquels les éléments ne sont pas transitifs (voir par exemple la section 4 de \cite{Herman}).
\item Enfin, dans le cas dissipatif (c'est-à-dire si on considère des systèmes dynamiques qui ne préservent pas nécessairement une mesure fixée), il est tr\`es facile de voir qu'il existe un ouvert dense d'hom\'eomorphismes (et donc \emph{a fortiori} un ouvert dense de diff\'eomorphismes $C^r$ pour tout $r$) qui poss\`edent des orbites périodiques attractives, et ne peuvent donc pas \^etre topologiquement transitifs.
\end{itemize}
Les r\'esultats de g\'en\'ericit\'e obtenus dans les diff\'erents cadres sont en g\'en\'eral logiquement ind\'ependants les uns des autres\footnote{\`A l'exception notoire des r\'esultats o\`u l'on r\'eussit \`a prouver qu'une propri\'et\'e est v\'erifi\'ee sur un ouvert dense. Par exemple, tout propri\'et\'e satisfaite par un ouvert dense de diff\'eomorphismes de classe $C^r$ (pour la topologie $C^r$) est \'egalement satisfaite par un ouvert dense de diff\'eomorphismes de classe $C^{r'}$ (pour la topologie $C^{r'}$) pour tout $r'\geq r$, car la topologie $C^{r'}$ est plus fine que la topologie $C^{r}$.}. Il est n\'eanmoins toujours enrichissant de comparer ces r\'esultats, ne serait-ce que pour relativiser la port\'ee de chacun d'entre eux. Voici quelques indications bibliographiques tr\`es sommaires relatives aux différents contextes~:
\begin{itemize}
\item Dans \cite{Akin}, E. Akin, M. Hurley et J. Kennedy ont effectu\'e une \'etude assez compl\`ete des propri\'et\'es dynamiques g\'en\'eriques des hom\'eomorphismes dissipatifs d'une vari\'et\'e compacte. Cette \'etude systématique s'av\`ere cependant un peu d\'ecevante, dans la mesure o\`u les r\'esultats vont tous dans la m\^eme direction et proc\`edent d'une m\^eme heuristique~: la dynamique d'un hom\'eomorphisme dissipatif g\'en\'erique \og contient\fg~simultan\'ement tous les comportements sauvages que l'on peut imaginer\footnote{À ce sujet, les auteurs déclarent dans l'introduction : \og At this point we made a discovery which astonished us until it was interpreted for us by our elderly, imaginary, topologically inclined aunt: ``Let your homeomorphisms be wild. It will make them stable.''\fg}. Leur mémoire ne concerne que la dynamique \emph{topologique} des hom\'eomorphismes~; tr\`es r\'ecemment, F. Abdenur et  M. Anderson se sont intéress\'es aux propri\'et\'es \emph{ergodiques} g\'en\'eriques des hom\'eomorphismes dissipatifs~: il s'agit de consid\'erer le comportement des sommes de Birkhoff le long de l'orbite d'un point typique pour la mesure de Lebesgue pour un hom\'eomorphisme lui-m\^eme g\'en\'erique (voir~\cite{Anderson}).
\item L'\'etude des propri\'et\'es g\'en\'eriques des diff\'eomorphismes de classe $C^1$ est un sujet tr\`es actif ---~aussi bien dans le cadre conservatif que dans le cadre dissipatif~--- et est devenu de ce fait extr\^emement touffu. Dans le cadre dissipatif, les travaux sont guid\'es par un ensemble de conjectures dues \`a J. Palis qui d\'ecrivent ce que pourrait \^etre la dynamique, d'un point de vue topologique et d'un point de vue ergodique, d'un diff\'eomorphisme de classe $C^1$ \og typique\fg. On pourra se faire une id\'ee de la richesse de ce sujet en consultant le séminaire Bourbaki~\cite{Bbk} ou le compte-rendu de conférence de l'ICM \cite{Bona} de C. Bonatti, le m\'emoire d'habilitation~\cite{Crovisier}, ou bien l'article de survol \cite{Gambaudo} de S. Crovisier.
\item Si on la compare \`a la situation pour les diff\'eomorphismes de classe $C^1$, l'\'etude des propri\'et\'es dynamiques g\'en\'eriques des diff\'eomorphismes de classe $C^r$ avec $r\geq 2$ est bloqu\'ee par l'absence d'un \emph{closing lemma} en topologie $C^r$ pour $r\geq 2$ (concernant le closing lemma en topologie $C^1$, voir le livre de M.-C. Arnaud~\cite{Arnaud}). À notre connaissance, le seul cas dans lequel on ait des r\'esultats autres qu'anecdotiques, outre le cas trivial de la dimension~1, est celui des diff\'eomorphismes conservatifs des surfaces~; on consultera \`a ce sujet l'article de J. Franks et P. Le Calvez~\cite{Franks}.
\item Les propriétés génériques des homéomorphismes d'ensembles moins réguliers que les variétés tels que l'ensemble de Cantor ou le cube de Hilbert  ont fait l'objet de quelques articles tels que ceux de M. Hochman \cite{Hochman} ou de E. Akin, E. Glasner et B.Weiss \cite{Weiss}. Là aussi, les résultats diffèrent notablement de ceux qui seront exposés durant ce mémoire ; par exemple le mélange fort (dans son sens ergodique) est générique sur l'ensemble de Cantor.
\item Enfin, comme nous l'avons déjà dit, l'étude de la dynamique générique des automorphismes conservatifs est maintenant assez riche, notamment grâce à la technique d'approximation périodique de A. Katok et A. Stepin \cite{KatokS1} qui s'est révélé être un outil extrêmement puissant (voir \cite{Katok}, \cite{KatokT}). Cette dynamique est intimement liée à celle des homéomorphismes, à cause du théorème de transfert de S. Alpern ; c'est pourquoi nous nous y intéresserons de près.
\end{itemize}

Notons que les résultats de généricité que nous obtenons peuvent également être faux si on enlève l'hypothèse de compacité : par exemple l'ergodicité n'est pas générique dans certaines variétés non compactes (pour un exemple, voir le chapitre 13 du livre de S. Alpern et V. Prasad \cite{2}).

\paragraph*{Motivations de l'écriture du présent mémoire}

Ce survol est une version largement modifiée du mémoire de stage de fin de M2, que j'ai effectué sous la direction de François Béguin en 2010. Nous avions décidé, avec François, de le reprendre pour en faire un survol plus complet sur le sujet ; j'y ai consacré les trois premiers mois de ma thèse.

Les lecteurs pourraient trouver incongru que nous ayons éprouvé le besoin d'écrire un survol sur les propriétés génériques des homéomorphismes conservatifs, alors que l'auteur d'un des plus beau théorème du sujet, S. Alpern, a déjà publié avec V. Prasad un petit livre intitulé \og\emph{Typical dynamics of volume preserving homeomorphisms}\fg\cite{2}. En fait, contrairement \`a ce que son titre sugg\`ere, ce livre n'est pas réellement un survol des propriétés génériques des homéomorphismes conservatifs, mais plut\^ot une pr\'esentation tr\`es compl\`ete du th\'eor\`eme de transfert (voir le chapitre~\ref{partie 3} du présent mémoire), et de ses diverses généralisations (en particulier aux variétés non-compactes). Même les conséquences du théorème de transfert y sont très peu explorées. Le court article de J. Choksi et V. Prasad \cite{Cho} est, quant \`a lui, un véritable survol historique des propriétés génériques des homéomorphismes conservatifs, mais celui-ci est très concis ; c'est pourquoi il nous a semblé intéressant d'écrire un survol plus complet.

\paragraph*{Remerciements}

Je tiens bien évidemment à remercier François Béguin pour son investissement constant durant ces quelques mois de rédaction. Il a fait preuve de beaucoup de résistance face aux nombreuses relectures et corrections successives de ce survol qui s'avère finalement assez long. Je ne cache pas que certaines parties du mémoire sont, peu ou prou, de sa plume. Je voudrais également remercier Sylvain Crovisier et Frédéric Le Roux pour les quelques discussions mathématiques fructueuses que nous avons eues ensemble, ainsi qu'Emmanuel Militon pour ses nombreuses remarques pertinentes.

\chapter{Définitions, notations et remarques importantes}

Nous commençons ici par définir les objets que nous utiliserons tout au long du mémoire.
Une fois les définitions des espaces d'homéomorphismes $\mathrm{Homeo}(X,\mu)$ et d'automorphismes $\mathrm{Auto}(X,\mu)$ bien posées, nous énonçons un résultat de M. Brown, J. Oxtoby et S. Ulam qui affirme l'existence d'un homéomorphisme \og persque injectif\fg~du cube $[-1,1]^n$ vers la variété $X$, qui envoie la mesure de Lebesgue sur la mesure $\mu$. Ceci nous permet de définir des notions de subdivision dyadique et de permutation dyadique sur $X$. On pourra trouver les propriétés importantes des espaces $\mathrm{Homeo}(X,\mu)$ et $\mathrm{Auto}(X,\mu)$ dans l'annexe \ref{AAA}.

\paragraph*{La vari\'et\'e $X$ et la mesure $\mu$.}

Dans toute la suite, $X$ désignera une variété différentielle compacte connexe éventuellement à bord, de dimension $n\ge 2$. Nous la supposons  munie d'une métrique riemannienne, qui induit une distance notée $\mathrm{dist}$.

Sur cette variété $X$, nous considérons une bonne mesure de probabilité borélienne~$\mu$ :

\begin{definition}\label{bonne mesure}
Une mesure de probabilité borélienne $\mu$ sur $X$ est appelée une \emph{bonne mesure} si elle vérifie les conditions suivantes :
\begin{enumerate}[(i)]
\item elle est sans atome (ne charge pas les points)
\item elle est de support total (strictement positive sur les ouverts non vides)
\item elle est nulle sur le bord de $X$.
\end{enumerate}
\end{definition}
\bigskip

\emph{On fixe une fois pour toute une variété $X$ et une bonne mesure $\mu$ sur $X$.}
\bigskip

D'autre part, on munit une fois pour toutes l'ensemble $\B$ des boréliens de $X$ de la métrique $d$ définie par 
\[d(A,B) = \mu(A\Delta B).\]
Celle-ci rend l'ensemble $\B$ complet, et polonais pour la topologie associée à $d$ (voir le lemme \ref{polonais}).

\paragraph*{Les espaces $\mathrm{Homeo}(X,\mu)$ et $\mathrm{Auto}(X,\mu)$, les topologies faibles et fortes.}

Lorsque ces conditions sont vérifiées, on note $\mathrm{Homeo}(X,\mu)$ l'ensemble des homéomor\-phis\-mes de $X$ préservant la mesure $\mu$, et $\mathrm{Auto}(X,\mu)$ l'ensemble des bijections de $X$, bi-mesurables et préservant la mesure $\mu$, où l'on identifie deux bijections qui coïncident presque partout. L'ensemble $\mathrm{Homeo}(X,\mu)$ s'injecte alors naturellement dans l'ensemble $\mathrm{Auto}(X,\mu)$ (puisque deux homéomorphismes qui coïncident presque partout sont égaux).

Nous munissons l'espace $\mathrm{Auto}(X,\mu)$ de deux topologies distinctes : une dite \emph{forte}, ou \emph{uniforme}, et une dite \emph{faible}. La distance forte sur $\mathrm{Auto}(X,\mu)$ est définie comme la distance uniforme associée au supremum essentiel découlant de $\mathrm{dist}$ :
\[d_{\mathit{forte}}(f,g) = \underset{x\in X}{\sup\mbox{ess}}\ \mathrm{dist}(f(x),g(x)).\]
La topologie faible est elle définie par la distance :
\[d_{\mathit{faible}}(f,g) = \inf\big\{\alpha \mid \mu\{x\mid \mathrm{dist}( f(x),g(x))>\alpha\}<\alpha\big\}.\]
Comme son nom l'indique, elle est plus faible que la topologie uniforme. Ces deux distances induisent deux topologies sur l'espace $\mathrm{Auto}(X,\mu)$ ; une dite forte (ou uniforme) et une dite faible.

Par restriction, ceci définit une distance forte et une distance faible, une topologie uniforme et une topologie faible sur $\mathrm{Homeo}(X,\mu)$. Remarquons que sur $\mathrm{Homeo}(X,\mu)$, le supremum essentiel dans la définition de la distance forte est bien sûr un vrai supremum.

La topologie \og naturelle \fg~sur $\mathrm{Homeo}(X,\mu)$ est bien sûr la topologie forte, de même la topologie \og naturelle \fg~sur $\mathrm{Auto}(X,\mu)$ est la topologie faible ; nous verrons que $\mathrm{Homeo}(X,\mu)$ muni de la topologie forte, et $\mathrm{Auto}(X,\mu)$ muni de la topologie faible, sont des espaces de Baire (voir le paragraphe suivant et l'appendice \ref{AAA}). Lorsque rien ne sera précisé, l'espace $\mathrm{Homeo}(X,\mu)$ sera par défaut muni de la distance forte et $\mathrm{Auto}(X,\mu)$ sera muni de la distance faible. Il est à première vue beaucoup moins naturel de considérer l'ensemble $\mathrm{Homeo}(X,\mu)$ muni de la topologie faible, ou l'ensemble $\mathrm{Auto}(X,\mu)$ muni de la topologie forte ; ces deux espaces topologiques joueront néanmoins un rôle très important dans la suite.

%!!!!! Auto est de Baire muni de la topo forte ? !!!!!

\paragraph*{Espaces de Baire}

Profitons-en pour rappeler la terminologie concernant les espaces de Baire. Un espace topologique est dit \emph{de Baire} si le théorème de Baire y est vrai\footnote{Le théorème de Baire établit que dans un espace complet, une intersection dénombrable d'ouverts denses est elle-même dense. Un espace est dit \emph{de Baire} si la conclusion de ce théorème reste vraie.}. Le fait que les espaces $\mathrm{Homeo}(X,\mu)$ et $\mathrm{Auto}(X,\mu)$ soient de Baire\footnote{Nous le montrons dans l'appendice \ref{AAA}.} est fondamental pour notre étude, puisqu'on veut que la propriété de densité se conserve par intersection dénombrable d'ouverts. Une intersection dénombrable d'ouverts sera appelée un $G_\delta$. Un ensemble contenant un $G_\delta$ dense sera dit \emph{résiduel}, \emph{gras} ou de \emph{seconde catégorie} ; nous utiliserons systématiquement le terme \og gras \fg, qui nous semble plus parlant. Selon la terminologie usuelle, une propriété vérifiée sur un ensemble gras sera dite \emph{générique}. Le complémentaire d'un $G_\delta$ dense sera appelé un $F_\sigma$ d'intérieur vide, et un ensemble contenu dans un $F_\sigma$ d'intérieur vide sera dit \emph{maigre} (terme que nous utiliserons systématiquement) ou de \emph{première catégorie}.

\`A chaque fois que nous parlerons d'une propriété générique ou d'un ensemble ouvert, fermé, gras ou maigre de $\mathrm{Homeo}(X,\mu)$ sans préciser la topologie, il s'agira de la topologie uniforme. Au contraire, à chaque fois que nous parlerons d'une propriété générique ou d'un ensemble ouvert, fermé, gras ou maigre de $\mathrm{Auto}(X,\mu)$ sans préciser la topologie, il s'agira de la topologie faible.

\paragraph*{Du cube muni de la mesure de Lebesgue \`a la vari\'et\'e $X$ munie de la mesure $\mu$.}

On notera par la suite $I=[0,1]$ le segment unité et $\mathrm{Leb}$ la mesure de Lebesgue sur le cube $I^n$.

Nous allons voir que les travaux de M. Brown, J. Oxtoby et S. Ulam montrent que la variété $X$ munie de la mesure $\mu$ est \og presque l'image homéomorphe\fg~du cube $I^n$ muni de la mesure de Lebesgue $\mathrm{Leb}$. Plus précisément on a le théorème :

\begin{theoreme}[Brown]\label{Brown}
Il existe une application continue $\phi : I^n\to X$ telle que :
\begin{itemize}
\item $\phi$ est surjective,
\item $\phi_{| \mathring{I^n}}$ est un homéomorphisme sur son image,
\item $\phi (\partial I^n)$ est un sous-ensemble fermé d'intérieur vide de $X$, disjoint de $\phi (\mathring{I^n})$.
\end{itemize}
\end{theoreme}

Pour une preuve de ce théorème, on pourra se référer à l'article de Brown \cite{13}. L'application $\phi$ de ce théorème a l'inconvénient de ne pas transformer la mesure de Lebesgue $\mathrm{Leb}$ en la mesure $\mu$. Cette propriété s'obtient à l'aide du théorème des mesures homéomorphes :

\begin{theoreme}[des mesures homéomorphes, Oxtoby-Ulam, \cite{Oxtoby}]\label{mesures-homéo}
Si $\nu$ est une bonne mesure borélienne sur $I^n$, alors elle est est homéomorphe à la mesure de Lebesgue, dans le sens qu'il existe un homéomorphisme $h$ tel que $h_*(\nu) = \mathrm{Leb}$. De plus, étant donné un homéomorphisme $g$ de $I^n$, $h$ peut être choisi comme étant égal à $g$ sur $\partial I^n$.
\end{theoreme}

Ce théorème est montré dans l'appendice 2 de \cite{2}. Comme corollaire on obtient une extension du théorème de Brown :

\begin{coro}[Oxtoby, Ulam, \cite{Oxtoby}]\label{Brown-mesure}
Il existe une application $\phi : I^n\to X$ telle que :
\begin{enumerate}
\item $\phi$ est surjective
\item $\phi_{|\mathring{I^n}}$ est un homéomorphisme sur son image
\item $\phi(\partial I^n)$ est un sous-ensemble fermé d'intérieur vide de $X$, disjoint de $\phi (\mathring{I^n})$
\item $\mu(\phi(\partial I^n))=0$
\item $\phi^*(\mu) = \mathrm{Leb}$.
\end{enumerate}
\end{coro}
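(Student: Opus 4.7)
The plan is to combine Brown's theorem \ref{Brown} with the homeomorphic measures theorem \ref{mesures-hom�o} via a pullback-and-rectify strategy. Starting from the map $\phi_0 : I^n \to X$ given by \ref{Brown}, the idea is to push $\mu$ back through $\phi_0$ to obtain a good Borel probability measure $\nu$ on $I^n$, then to apply \ref{mesures-hom�o} to straighten $\nu$ onto the Lebesgue measure, and finally to compose $\phi_0$ with the resulting rectifying self-homeomorphism of $I^n$.

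Since $\phi_0$ is surjective and $\phi_0(\partial I^n)$ is closed with empty interior, the set $U := X \setminus \phi_0(\partial I^n) = \phi_0(\mathring{I^n})$ is open and dense in $X$, and $\phi_0$ restricts to a homeomorphism $\mathring{I^n} \to U$. I would define a Borel measure $\nu$ on $I^n$ by
\[\nu(A) = \mu\bigl(\phi_0(A \cap \mathring{I^n})\bigr),\]
a formula that makes sense because the image of a Borel set by the homeomorphism $\phi_0|_{\mathring{I^n}}$ is Borel. This $\nu$ inherits from $\mu$ the properties of being atomless and of full support on all of $I^n$ (thanks to the density of $\mathring{I^n}$ in $I^n$ and the strict positivity of $\mu$ on non-empty open sets of $X$), and by construction $\nu(\partial I^n) = 0$.

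The main obstacle is to secure that $\nu$ has total mass $1$, i.e., that $\mu(\phi_0(\partial I^n)) = 0$. This is not furnished by \ref{Brown} and must be arranged separately. The plan is to replace $\phi_0$ by $g \circ \phi_0$ for a suitable self-homeomorphism $g$ of $X$ chosen so that $\mu(g(\phi_0(\partial I^n))) = 0$. Exhibiting such a $g$ is the delicate point, and rests on the fact that closed nowhere-dense subsets of a compact connected manifold of dimension $\ge 2$ can be displaced by self-homeomorphisms into open sets of arbitrarily small $\mu$-measure, followed by an exhaustion/diagonal argument to drop the $\mu$-mass to zero on the nose.

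Once $\nu$ is a good Borel probability measure on $I^n$, theorem \ref{mesures-hom�o} produces a homeomorphism $h : I^n \to I^n$ with $h_*(\nu) = \mathrm{Leb}$. Setting $\phi := \phi_0 \circ h^{-1}$, properties (1), (2), (3) are inherited from $\phi_0$ through $h^{-1}$. For (5), any Borel $B \subset I^n$ satisfies
\[\phi^*(\mu)(B) = \mu\bigl(\phi_0(h^{-1}(B))\bigr) = \nu(h^{-1}(B)) = \mathrm{Leb}(B),\]
using that $\mu(\phi_0(\partial I^n)) = 0$. Property (4) is the special case $B = \partial I^n$, giving $\mu(\phi(\partial I^n)) = \mathrm{Leb}(\partial I^n) = 0$.
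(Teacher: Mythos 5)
Your strategy is the right one --- pull back $\mu$ through $\phi_0$, rectify the pullback to Lebesgue with the homeomorphic measures theorem, then set $\phi = \phi_0 \circ h^{-1}$ --- and the final calculation of properties 1--5 is correct \emph{granted} that $\mu(\phi_0(\partial I^n)) = 0$. You are also right that this is a genuine additional hypothesis: Brown's theorem \ref{Brown} says nothing about the $\mu$-measure of the boundary image, a good measure can perfectly well charge a closed nowhere-dense set, and without this normalization the pullback $\nu$ has total mass $1 - \mu(\phi_0(\partial I^n)) < 1$, so the homeomorphic measures theorem cannot be applied to it. (The paper states the corollary without proof, so there is no argument of its own to compare against.)

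The gap is precisely the step you flag as ``the delicate point'', and your sketch does not close it. You need: for the compact nowhere-dense set $K_0 := \phi_0(\partial I^n)$, there exists a self-homeomorphism $g$ of $X$ with $\mu(g(K_0)) = 0$. Two things are missing from your argument. First, the intermediate claim that some homeomorphism of $X$ moves $K_0$ into an open set of $\mu$-measure $<\varepsilon$ is itself nontrivial for an arbitrary good measure on an arbitrary compact manifold and is asserted without proof --- it is not a soft topological fact, since $\mu$ can be highly non-uniform and concentrate near wherever one tries to move $K_0$. Second, even granting it, a sequence $(g_k)$ with $\mu(g_k(K_0)) \to 0$ need not converge to a homeomorphism; a diagonal argument by itself does not produce the limit, and one would have to build the $g_k$ so that $g_{k+1}g_k^{-1}$ is supported on a fast-shrinking family of sets with uniformly controlled moduli of continuity before invoking completeness. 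Note finally that the lemma you need is not independent of the theory at hand: it follows at once from the \emph{manifold} form of the Oxtoby--Ulam homeomorphic measures theorem (take a good $\lambda$ on $X$ with $\lambda(K_0) = 0$, then any $g$ with $g_*\lambda = \mu$ yields $\mu(g(K_0)) = \lambda(K_0) = 0$), but that theorem is in turn usually derived from the present corollary, so a direct proof of the lemma must take care to avoid circularity.
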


%\begin{proof}[Preuve du corollaire \ref{Brown-mesure}] Les trois premiers points résultent du théorème de Brown : on obtient alors un homéomorphisme $\phi_1$. Par le théorème 9.5 de \cite{2}, il va exister un homéomorphisme $h$ de $X$ tel que $h\phi_1$ vérifie les quatre premiers points. Reste le dernier point. Posons $\nu = h\phi_1(\mu)$. Alors $\nu$ vérifie les hypothèses du théorème des mesures homéomorphes (théorème \ref{mesures-homéo}), on en déduit qu'il existe $f\in\mathrm{Homeo}(I^n,\mathrm{Leb})$, égal à l'identité sur $\partial I^n$ et tel que $\nu = f(\mathrm{Leb})$. $\phi = h\phi_1 f^{-1}$ est alors l'application recherchée.
%\end{proof}
\bigskip

\textit{On fixe une fois pour toute une application continue surjective $\phi:I^n\to X$ donn\'ee par le corollaire~\ref{Brown-mesure}.}

\paragraph*{Subdivisions et permutations dyadiques.}

Grâce à l'application $\phi$, on peut définir la notion de \emph{subdivision dyadique} pour des variétés quelconques. Informellement, la subdivision dyadique d'ordre $m$ de la variété $X$ sera l'image par l'application $\phi$ de la subdivision dyadique usuelle du cube $I^n$.
%Plaçons nous tout d'abord dans le cas du cube $I^n$ de dimension $n$. La subdivision dyadique d'ordre $m$ de $I^n$ est l'ensemble des cubes dyadiques d'ordre $m$, c'est-à-dire l'ensemble constitué des $\prod_{i=1}^n [\frac{k_i}{2^m},\frac{k_i+1}{2^m}]$ avec $0\leq k_i\leq 2^m-1$ pour tout $i$. Le corollaire \ref{Brown-mesure} permet de généraliser cette notion de subdivision aux variétés quelconques :

\begin{definition}[Subdivisions dyadiques]\label{subdiv}
Nous appellerons \emph{cube dyadique d'ordre $m$} de $X$ toute partie de $X$ qui est l'image par $\phi$ d'un cube du type $\prod_{i=1}^n [\frac{k_i}{2^m},\frac{k_i+1}{2^m}]$ avec $0\leq k_i\leq 2^m-1$ pour tout $i$. Nous appellerons \emph{centre} du cube dyadique $\phi\big(\prod_{i=1}^n [\frac{k_i}{2^m},\frac{k_i+1}{2^m}]\big)$ le point $\phi\big(\frac{k_1+1/2}{2^m},\dots,\frac{k_m+1/2}{2^m}\big)$. Nous appellerons \emph{subdivision dyadique d'ordre $m$} de la variété $X$, et nous noterons $\D_m$, la collection des cubes dyadiques d'ordre $m$ de $X$. Nous noterons $q_m$ le nombre de cubes de la subdivision dyadique $\D_m$ ; si la variété $X$ est de dimension $n$, on aura $q_m = 2^{nm}$.
\end{definition}

\begin{rem}
La suite $(\D_m)_{m\in\N}$ des subdivisions dyadiques de $X$ satisfait les propriétés suivantes :
\begin{itemize}
\item Pour tout $m$, les cubes constituant $\D_m$ sont des domaines (c'est-à-dire des adhérences d'ouverts) connexes.
\item Pour tout $m$, $\D_m$ est un recouvrement de $X$ par un nombre fini de cubes ayant tous la même mesure, et d'intérieurs deux à deux disjoints.
\item Pour tout $m$, la subdivision $\D_{m+1}$ raffine la subdivision $\D_m$.
\item La mesure des cubes constituant $\D_m$, ainsi que le maximum des diamètres de ces cubes, tendent vers 0 lorsque $m$ tend vers l'infini.
\end{itemize}
\end{rem}

\begin{rem} \label{Hadi}
En fait, on peut définir de la même manière, pour tout entier strictement positif $q$, une subdivision $q$-adique d'ordre $m$ de $X$ et une notion de permutation $q$-adique d'ordre $m$ de $X$. Nous nous concentrons ici sur les subdivisions et les permutations \emph{dyadiques} par souci de simplicit\'e, et parce que ces notions nous suffirons presque partout (en fait partout sauf dans la preuve du lemme \ref{lem mélange topo faible}).
\end{rem}

%Une permutation $\sigma$ de la subdivision dyadique d'ordre $m$ de $I^n$ induit naturellement une application $f_m : I^n\to I^n$ que l'on appelera \emph{permutation dyadique d'ordre $m$}, telle que pour tout cube $C$ de $\D_m$, ${f_m}_{|C}$ soit une translation d'image $\sigma(C)$. Le corollaire \ref{Brown-mesure} permet d'étendre la notion de permutation dyadique à une variété quelconque :

\begin{definition}\label{translat}
Soient $U,V$ deux parties de $X$ telles que $U,V\subset \phi(\mathring{I^n})$ et $f\in \mathrm{Auto}(X,\mu)$ un automorphisme qui envoie $U$ sur $V$. Nous dirons que $f$ est \emph{une translation en restriction \`a $U$} si l'application
\[\phi^{-1}\circ f\circ \phi\ :\ \phi^{-1}(U)\longrightarrow \phi^{-1}(V)\]
est la restriction d'une translation de $\R^n$.
\end{definition}

Autrement dit, $f$ est une translation en restriction à $U$ si c'est, dans la carte $\phi$, la restriction d'une translation de $\R^n$. Notons que l'hypoth\`ese $U,V\subset \phi(\mathring{I^n})$ permet de consid\'erer l'inverse de $\phi$ sur $U$ et $V$.

\begin{definition}[Permutation dyadique]\label{PerDy}
Nous appellerons \emph{permutation dyadique d'ordre $m$} toute automorphisme $f\in Auto(X,\mu)$ qui permute les cubes de la subdivision dyadique $\D_m$, et qui est une translation (au sens de la définition \ref{translat}) en restriction \`a l'int\'erieur de chaque cube de $\D_m$.
\end{definition}

\begin{rem}
\begin{itemize}
\item On voit les permutations dyadiques d'ordre $m$ comme des automorphismes~; on n'a donc pas \`a se préoccuper de leur définition sur les bords des cubes de $\D_m$, qui sont de mesure nulle.
\item Les permutations dyadiques d'ordre $m$ sont des automorphismes périodiques de période au plus $q_m$.
\end{itemize}
\end{rem}
\bigskip

\paragraph*{L'espace $\mathrm{Auto}(X,\mu)$ ne d\'epend pas de $X$ et $\mu$.}

Enfin, on a une propriété plus forte que le corollaire \ref{Brown-mesure} pour les automorphismes : les ensembles $\mathrm{Auto}(X,\mu)$ sont tous isomorphes.

\begin{theoreme}\label{leb-stand}
Il existe une application $\widetilde{\phi} : (X,\mu) \to (I,\mathrm{Leb})$ bijective bi-mesurable préservant la mesure. La conjugaison par $\widetilde{\phi}$ induit un homéomorphisme entre les espaces $\mathrm{Auto}(X,\mu)$ et $\mathrm{Auto}(I,\mathrm{Leb})$ munis de leurs topologies faibles respectives.
\end{theoreme}

\begin{proof}[Preuve du théorème \ref{leb-stand}] Ceci découle immédiatement du théorème 2.4.1 de \cite{Ito} et de la seconde caractérisation de la convergence en topologie faible.
%L'existence de l'application $\phi$ est un résultat classique de théorie de la mesure (voir par exemple \cite[théorème 2.4.1]{Ito}). On définit alors l'application 
%\begin{eqnarray*}
%\tilde{\phi} : \mathrm{Auto}(X,\mu) & \to     & \mathrm{Auto}(I,\mathrm{Leb})\\
%f                                   & \mapsto & \phi f \phi^{-1}
%\end{eqnarray*}
%il ne reste plus qu'à vérifier que cette application est bien un homéomorphisme, ce qui est simple par la seconde caractérisation de la topologie faible :
%\[\begin{array}{rl}
%f_k\to f \in & \mathrm{Auto}(X,\mu) \\ 
%             & \iff \forall A\in \mathcal{B}(X) ,\,\mu(f_k(A)\,\Delta\, f(A))\to 0\\
%             & \iff \forall A\in \mathcal{B}([0,1]) ,\,\mathrm{Leb}(\phi f_k \phi^{-1}(A)\,\Delta\, \phi f \phi^{-1}(A))\to 0
%\end{array}\]
%Où on a noté $\mathcal{B}(X)$ l'ensemble des boréliens de $X$.
\end{proof}

Moralement, cette propriété dit que la généricité des propriétés ergodiques des ensembles $\mathrm{Auto}(X,\mu)$ est indépendante de l'espace $X$ et de la mesure $\mu$.

\chapter[Approximations en topologie uniforme]{Approximations par des permutations en topologie uniforme}

Comme on l'a déjà dit dans l'introduction, une méthode fondamentale pour l'étude des propriétés génériques des homéomorphismes est l'approximation par des permutations ; ce chapitre est centré sur le résultat le plus simple allant dans ce sens, dû à P. Lax et S. Alpern\footnote{Pour plus de précisions, voir le début de la partie \ref{secLax}.}, qui affirme que tout homéomorphisme conservatif peut être approché, en topologie uniforme, par une permutation dyadique cyclique (au sens de la définition \ref{PerDy}). Notons qu'un tel résultat nous fait sortir du monde des homéomorphismes puisque les permutations dyadiques sont des applications discontinues ; c'est pourquoi il faudra le combiner à une proposition, dite d'extension des applications finies, qui permet de lisser les permutations en des homéomorphismes. Ainsi, on commencera par \og casser nos homéomorphismes en petits morceaux \fg, en les approchant par des permutations dyadiques, grâce au théorème de Lax, puis on \og recollera les morceaux \fg, à l'aide de la proposition d'extension des applications finies. La clef de cette méthode est que l'on contrôle parfaitement l'approximation dyadique qui approche notre homéomorphisme, et notamment l'orbite des points : les permutations cycliques fournissent des approximations d'homéomorphismes aux orbites $\delta$ denses, pour $\delta$ arbitrairement petit. On conserve ce contrôle de certaines orbites lors du retour dans l'ensemble des homéomorphismes conservatifs effectué par le théorème d'extension des applications finies.

Le théorème de Lax, ainsi que la proposition d'extension des applications finies, seront utilisés tout au long de ce mémoire. Cependant nous en déduisons, dès ce deuxième chapitre, deux résultats de généricité dans $\mathrm{Homeo}(X,\mu)$ : la généricité de la transitivité topologique et celle du mélange topologique faible.

Ce chapitre est très largement inspiré du cours de F. Le Roux \cite{1}, lui-même issu du livre de S. Alpern er V. Prasad \cite{2} ; on y a ajouté un énoncé de généricité du mélange topologique faible (théorème \ref{mélange topo faible}) et un résultat de densité qui servira dans la partie suivante (proposition \ref{densité}).

\section{Théorème de Lax}\label{secLax}

Comme on l'a déjà dit, nous commençons ce chapitre par une présentation du théorème de Lax, qui permet d'approcher tout élément de $\mathrm{Homeo}(X,\mu)$ par une permutation dyadique en topologie uniforme. Sa démonstration, mise au point par P. Lax et S. Alpern, est élémentaire et assez jolie ; elle découle facilement de deux lemmes combinatoires. Historiquement, ce n'est pas le premier résultat d'approximation par des permutations ; on trouvait déjà cette idée dans l'article pionnier de J. Oxtoby et S. Ulam \cite{Oxtoby}, mais aussi, et de manière plus forte, chez A. Katok et A. Stepin \cite{KatokS1} ; la nouveauté introduite par ce théorème réside semble-t-il dans le fait que l'approximation se fait ici en topologie \emph{uniforme}\footnote{Même si cela n'intervient pas de façon centrale dans les applications dans ce théorème. En fait, l'intérêt du théorème de Lax réside en grande partie dans la simplicité et la beauté de sa démonstration.}.

P. Lax a énoncé son théorème en 1971 dans l'optique de l'approximation numérique des homéomorphismes conservatifs \cite{Lax}, mais c'est S. Alpern qui a eu l'idée d'une part de le raffiner en précisant que la permutation peut être choisie cyclique, et d'autre part de l'utiliser pour donner une preuve moderne de la généricité de la transitivité \cite{AlpernThez}, \cite{AlpernOuique}.

Rappelons que l'on s'est fixé une application $\phi : I^n\to X$ donnée par le corollaire \ref{Brown-mesure}. Celle-ci permet de définir une suite $(\D_m)_{m\in\N}$ de subdivisions dyadiques de $(X,\mu)$, ainsi qu'une notion de permutation dyadique sur ces subdivisions. Le théorème de Lax affirme alors que les permutations (qui, rappelons-le, permutent les cubes de la subdivision en agissant comme des \og translations \fg~sur ces cubes) cycliques (tel que l'orbite de tout cube recouvre toute notre variété $X$) approchent uniformément les éléments de $\mathrm{Homeo}(X,\mu)$.

\begin{theoreme}[Lax, Alpern]\label{Lax}
Soient $f\in\mathrm{Homeo}(X,\mu)$ et $\varepsilon>0$. Alors il existe un entier $m$ et une permutation dyadique cyclique d'ordre $m$ notée $f_m$ tels que $d_{\mathit{forte}}(f,f_m)<\varepsilon$.
\end{theoreme}

Pour prouver ce théorème nous aurons besoin du \og lemme de mariage\fg :

\begin{lemme}[Lemme de mariage]\label{mariage}
Soient $E$ et $F$ deux ensembles finis, et $\approx$ une relation entre les éléments de $E$ et de $F$. On suppose que :
\[\forall E'\subset E,\quad\# E'\leq \# \left\{f\in F\mid \exists e \in E' : e\approx f \right\},\]
autrement dit l'ensemble des éléments de $F$ associés à un sous-ensemble $E'$ de $E$ est de cardinal plus grand que celui de $E'$. Alors il existe une application injective $\Phi : E\to F$ telle que pour tout $e\in E$, $e\approx \Phi(e)$.
\end{lemme}

Ainsi que d'un lemme purement combinatoire :

\begin{lemme}[Approximations cycliques dans $\Sn_q$]\label{Pioure}
Soient $q\in\N^*$ et $\sigma \in \mathfrak{S}_q$ (on voit $\Sn_q$ comme le groupe des permutations de $\Z/q\Z$). Alors il existe $\tau \in \mathfrak{S}_q$ telle que $|\tau(k)-k|\leq 2$ pour tout $k$ (où $|.|$ désigne la distance dans $\Z/q\Z$) et telle que la permutation $\tau\sigma$ soit cyclique.
\end{lemme}

\begin{proof}[Preuve du lemme \ref{mariage}] Ce lemme se montre par récurrence sur le cardinal de $E$. La propriété est évidente pour $\# E=1$. Supposons qu'elle soit vraie pour tout ensemble de cardinal $q$, et soit $E$ un ensemble de cardinal $q+1$. On a alors deux cas :
\begin{itemize}
\item[\emph{Premier cas :}] Pour tout sous-ensemble strict $E'$ de $E$,
\[\# E'<\# \left\{f\in F\mid\exists e \in E' : e\approx f \right\}.\]
Choisissons alors un élé\-ment $e_0\in E$ et un élément $f_0\in F$ en relation avec $e_0$. Posons $\Phi(e_0) = f_0$. On peut appliquer l'hypothèse de récurrence à $E\setminus\{e_0\}$ et à $F\setminus\{f_0\}$, ce qui permet de définir $\Phi$ sur $E\setminus \{e_0\}$ ; on a alors défini $\Phi$ sur $E$ tout entier.
\item[\emph{Second cas :}] Il existe un sous-ensemble strict $E'$ de $E$ tel que
\[\# E'\!=\# \!\left\{f\in F\mid\exists e \in E' : e\approx f \right\}.\]
Dans ce cas on peut appliquer l'hypothèse de récurrence aux ensembles $E'$ et $F'= \left\{f\in F\mid \exists e \in E' : e\approx f \right\}$, mais aussi à $E\setminus E'$ et à $F\setminus F'$; en effet, on a pour tout $G$ inclus dans $E\setminus E'$ : 
\begin{eqnarray*}
\# (G\cup E') & \leq & \# \left\{f\in F\mid\exists e \in (G\cup E') : e\approx f \right\}\\
              & \leq & \# \left\{f\in F'\mid\exists e \in (G\cup E') : e\approx f \right\}\\
              &      & + \,\# \left\{f\in (F\setminus F')\mid\exists e \in (G\cup E') : e\approx f \right\},
\end{eqnarray*}
d'où :
\[\# G +\# E' \leq \# F' + \# \left\{f\in (F\setminus F')\mid\exists e \in G : e\approx f \right\},\]
et donc :
\[\# G \leq \# \left\{f\in (F\setminus F')\mid\exists e \in G : e\approx f \right\}.\]
On définit ainsi $\Phi$ sur $E'$ et $E\setminus E'$, donc sur $E$ tout entier.
\end{itemize}
On a bien défini une application $\Phi$ dans les deux cas, ceci termine la récurrence.
\end{proof}

\begin{proof}[Preuve du lemme \ref{Pioure}] Posons $k = \lceil q/2\rceil$. Pour tout $1\leq i\leq k$, on considère la permutation $r_i$ définie par $r_i = (2i\!-\!1\ 2i)$ si $2i\!-\!1$ et $2i$ sont dans deux cycles différents (de longueurs éventuellement 1) de $\sigma$, $r_i = \mathrm{Id}$ sinon. Remarquant que :
\[(2i\!-\!1\ 2i)\,(a_1\cdots a_p\ 2i\!-\!1\ a_{p+2}\cdots a_r)\,(b_1\cdots b_q\ 2i\ b_{q+2}\cdots b_s)=\]
\begin{equation}\label{Formule1}
(a_1\cdots a_p\ 2i\ b_{q+2}\cdots b_s\ b_1\cdots b_q\ 2i\!-\!1\ a_{p+2}\cdots a_r)
\end{equation}
et posant $\tau_1 = r_1\cdots r_k$, on se rend compte que l'on a réuni deux par deux dans $\tau_1\sigma$ tous les cycles contenant $2i\!-\!1$ et $2i$. On recommence l'opération, mais avec cette fois-ci les couples $2i$ et $2i\!+\!1$. On obtient alors une permutation $\tau$ qui vérifie les conclusions du lemme.
\end{proof}

\begin{proof}[Preuve du théorème \ref{Lax}] Soit $f\in\mathrm{Homeo}(X,\mu)$. Considérons un entier $m$ tel que les cubes de la subdivision $\D_m$, ainsi que leurs images\footnote{Cela est possible par uniforme continuité de $f$.} par $f$, aient un diamètre plus petit que $\varepsilon$. Pour tout couple $(C,C')$ de cubes de $\D_m$, on définit la relation $\approx$ entre les éléments de $\D_m$ comme suit : $C\approx C'$ si et seulement si l'image de $C$ par $f$ intersecte $C'$ non trivialement. Puisque $f$ préserve le volume, l'image de l'union de $k$ cubes intersecte au moins $k$ cubes, ce qui fait que l'on se retrouve dans les hypothèses du lemme de mariage : il existe une application injective $\sigma_m$ de $\D_m$ dans $\D_m$ (donc bijective) telle que, posant $f_m$ la permutation dyadique d'ordre $m$ associée à $\sigma_m$ (au sens de la définition \ref{PerDy}), pour tout cube $C$, $f_m (C)$ intersecte $f(C)$ non trivialement. On a alors :
\[d_{\mathit{forte}}(f,f_m) \leq \sup_{C\in\D_m}\big(\mathrm{diam}(C)\big)+\sup_{C\in\D_m}\big(\mathrm{diam}(f(C))\big)\le 2\varepsilon\]
Reste à montrer que l'on peut prendre pour $f_m$ une permutation cyclique. Numérotant les cubes de manière à ce que deux cubes consécutifs soient adjacents, on utilise le lemme \ref{Pioure}, qui nous donne une permutation cyclique $f_m'$ qui est $2\,\sup(\mathrm{diam}(C))$-proche de $f_m$. On a donc trouvé un entier $m$ et une permutation cyclique $f_m'$ de $\D_m$, qui est $4\varepsilon$-proche de $f$ en distance uniforme.
\end{proof}

Le théorème \ref{Lax} nous sert principalement à montrer la généricité des propriétés du type \og quasi-périodicité \fg, comme la transitivité ou bien le non mélange fort ergodique. Il existe une variante du théorème \ref{Lax} dans laquelle la permutation cyclique est remplacée par une \emph{permutation bicyclique} et qui permet d'obtenir une preuve de la généricité du mélange topologique faible. On pourrait sûrement montrer de nombreuses autres variantes du théorème \ref{Lax} où les permutations cycliques seraient remplacées par d'autres types de permutations, mais seule la suivante nous sera utile par la suite.

\begin{definition}[Permutation bicyclique]\label{vélo}
Nous appelerons \emph{permutation bicylique} d'ordre $m$ de $X$ une permutation dyadique d'ordre $m$ ayant exactement deux cycles, et telle que les longueurs de ces deux cycles soient premières entre elles.
\end{definition}

\begin{coro}[Variante du théorème de Lax]\label{Laxbis}
Soit un homéomorphisme $f\in\mathrm{Homeo}(X,\mu)$ et $\varepsilon>0$. Alors il existe un entier $m$ et une permutation dyadique bicyclique $f_m$ telle que $d_{\mathit{forte}}(f,f_m)<\varepsilon$.
\end{coro}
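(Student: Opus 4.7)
The plan is to modify the proof of Theorem~\ref{Lax} by adding one small correction at the end: after obtaining a cyclic dyadic permutation, apply an additional transposition to split its unique cycle into two cycles of coprime lengths. Given $f \in \mathrm{Homeo}(X, \mu)$ and $\varepsilon > 0$, I begin as in the proof of Theorem~\ref{Lax}: choose an integer $m$ so that every cube of $\D_m$ and its image under $f$ has diameter at most $\varepsilon/6$, then use the marriage lemma together with Lemma~\ref{Pioure} to obtain a cyclic dyadic permutation $\rho$ of order $m$ satisfying $d_{\mathit{forte}}(f, \rho) < 4\varepsilon/6$. I keep the spatial path-numbering $C_1, \ldots, C_{q_m}$ of the cubes, in which consecutive indices correspond to adjacent cubes.

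It remains to break the $q_m$-cycle $\rho$ into two cycles of coprime lengths using only a small transposition. A direct computation shows that if $\tau = (C_a\ C_b)$ is a transposition of two cubes and $C_a, C_b$ occupy cyclic positions $i$ and $j$ in $\rho$, then $\tau\rho$ splits into exactly two cycles, of lengths $d$ and $q_m - d$, where $d := (j - i) \bmod q_m$. Since $q_m = 2^{nm}$ is a power of two, $\gcd(d, q_m - d) = \gcd(d, q_m) = 1$ exactly when $d$ is odd.

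The key combinatorial observation is that such a splitting can always be realized with a transposition of two \emph{spatially adjacent} cubes. Let $p(s) \in \{0, \ldots, q_m - 1\}$ denote the cyclic position of $C_s$ in $\rho$; as $s$ varies over $\{1, \ldots, q_m\}$, the values $p(s)$ form a permutation of $\{0, \ldots, q_m - 1\}$, so they cannot all share the same parity (since half the integers of $\{0, \ldots, q_m - 1\}$ are even and half are odd). Hence there exists some index $s$ for which $p(s+1) - p(s)$ is odd, and composing $\rho$ with the transposition $\tau = (C_s\ C_{s+1})$ yields a dyadic permutation $\rho'$ that is bicyclic in the sense of Definition~\ref{v�lo}.

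Finally, $\rho$ and $\rho'$ agree outside the two cubes whose $\rho$-images are $C_s$ and $C_{s+1}$, and on these two exceptional cubes the $\rho'$-image differs from the $\rho$-image only by the translation between the adjacent cubes $C_s$ and $C_{s+1}$. Thus $d_{\mathit{forte}}(\rho, \rho') < 2\varepsilon/6$, and the triangle inequality gives $d_{\mathit{forte}}(f, \rho') < \varepsilon$, as required. The only new ingredient beyond the proof of Theorem~\ref{Lax} is the very short parity-pigeonhole argument, which I expect to constitute the sole (and minor) obstacle.
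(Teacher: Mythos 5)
Your proposal is correct and follows essentially the same route as the paper: reduce to a cyclic dyadic permutation via Theorem \ref{Lax}, locate two spatially adjacent cubes whose cyclic transition time is odd by a parity/pigeonhole argument, and compose with the corresponding transposition to split the $q_m$-cycle into two cycles whose lengths are both odd and sum to a power of two, hence coprime. The only cosmetic difference is in how the parity argument is phrased (you scan the whole adjacency path and note that the cyclic positions realize every residue, whereas the paper tracks a path from one fixed cube to its image), but the idea and the resulting estimate are the same.
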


\begin{proof}[Preuve du corollaire \ref{Laxbis}] Soit $f\in\mathrm{Homeo}(X,\mu)$ et $\varepsilon>0$. Le théorème \ref{Lax} donne un entier $m$ et une permutation dyadique cyclique $f_m$ d'ordre $m$ à distance au plus $\varepsilon$ de~$f$.

Montrons tout d'abord par l'absurde qu'il existe deux cubes adjacents $C_1$ et $C_2= f_m^k(C_1)$ de $\D_{m}$ tels que le temps de transition $k$ entre ces deux cubes soit impair. Soit $C_0$ un cube quelconque et posons $C'_0=f_m(C_0)$. Puisque la permutation $f_m$ est d'ordre pair, les entiers $k$ tels que $C'_0 = f_m^k(C_0)$ sont tous impairs. Considérons une suite de cubes adjacents allant de $C_0$ à $C'_0$. Si tous les temps de transition entre les cubes adjacents étaient pairs, alors on trouverait un temps de transition de $C_0$ à $C'_0$ pair, ce qui entrainerait une contradiction. Donc il existe deux cubes adjacents $C_1$ et $C_2$ dont le temps de transition est impair.

Soit $\tau$ la transposition dyadique\footnote{Autrement dit une permutation dyadique qui échange deux cubes de la subdivision, et laisse invariants tous les autres.} qui permute les deux cubes $C_1$ et $C_2$. Posons $f_m' = \tau\circ f_m$, alors $d_{\mathit{forte}}(f_m',f)<2\varepsilon$. De plus, la permutation $f_m'$ se décompose en deux cycles disjoints d'ordres $p$ et $q$ impairs (comme dans la formule \ref{Formule1}). Puisque $p+q$ est une puissance de 2, les entiers $p$ et $q$ sont premiers entre eux.
\end{proof}

\begin{rem}
Comme on l'a dit dans la remarque \ref{Hadi}, les preuves du théorème \ref{Lax} et du corollaire \ref{Laxbis} s'adaptent facilement à des subdivisions $p$-adiques avec $p$ premier.
\end{rem}

\section{Extension des applications finies}

Nous venons d'établir le théorème de Lax (théorème \ref{Lax}), qui nous permet d'approcher un homéomorphisme préservant la mesure par une permutation cyclique d'une subdivision dyadique. Or le but que nous nous sommes fixés est d'obtenir des propriétés sur $\mathrm{Homeo}(X,\mu)$, si bien qu'il faut maintenant trouver un moyen de \og lisser \fg~notre permutation pour en déduire des propriétés de généricité parmi les homéomorphismes. L'idée est alors de ne s'occuper que des images des points voisins des centres des cubes et de laisser libre les points situés près du bord des cubes, de manière à avoir assez de marge de man\oe uvre pour obtenir un homéomorphisme préservant la mesure. On ajoute à cela une condition sur la distance l'extension à l'identité, afin de ne pas perdre la qualité de l'approximation que l'on avait obtenue dans le théorème de Lax.

\begin{prop}[Extension des applications finies]\label{extension} Soient $(x_1,\dots,x_k)$ et $(y_1,\dots,y_k)$ deux $k$-uplets de points deux à deux distincts de $X\setminus \partial X$. Alors il existe $f\in\mathrm{Homeo}(X,\mu)$ préservant l'orientation tel que pour tout $i\in\{1,\dots,k\}$, $f(x_i) = y_i$. De plus $f$ peut être choisi de telle sorte que si $\max \mathrm{dist}(x_i,y_i)< \delta$, alors $d_{\mathit{forte}}(f,\mathrm{Id})<\delta$.
\end{prop}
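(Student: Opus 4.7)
The plan is to localize the problem by $k$ independent modifications supported in disjoint small regions. Specifically, I would construct $k$ pairwise disjoint closed regions $U_1,\dots,U_k$ in $X\setminus \partial X$, each homeomorphic to a cube and of diameter less than~$\delta$, with $x_i,y_i$ in the interior of $U_i$; then $f$ is defined as $f_i$ on each $U_i$ and as the identity outside $\bigcup U_i$, where $f_i$ is a measure-preserving orientation-preserving homeomorphism of $U_i$, identity on $\partial U_i$, sending $x_i$ to $y_i$. The estimate $d_{\mathit{forte}}(f,\mathrm{Id})\leq\max_i\mathrm{diam}(U_i)<\delta$ is then automatic.

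For the construction of the regions, I rely on the hypothesis $n\geq 2$: since $X\setminus\partial X$ is a connected manifold of dimension at least two, the $2k$ distinct points $x_1,\dots,x_k,y_1,\dots,y_k$ admit $k$ pairwise disjoint simple arcs $\gamma_i$ going from $x_i$ to $y_i$, each of diameter less than~$\delta$ (use arcs close to a Riemannian geodesic when $\delta$ is small, and adjust in general position otherwise). Closed tubular neighborhoods $U_i$ of $\gamma_i$, still pairwise disjoint, of diameter less than~$\delta$, and disjoint from $\partial X$, are the required regions.

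The core step is the local construction inside a cubical region $U_i$: one needs a measure-preserving orientation-preserving homeomorphism $f_i$, identity on $\partial U_i$, with $f_i(x_i)=y_i$. I would first build any orientation-preserving homeomorphism $g_i$ of $U_i$, identity on $\partial U_i$ and sending $x_i$ to $y_i$ (an elementary push along $\gamma_i$, damped at the boundary). Then $(g_i)_*\mu$ is a good measure on $U_i$ of total mass $\mu(U_i)$, and Theorem~\ref{mesures-hom�o} provides a homeomorphism $h_i$ of $U_i$, identity on $\partial U_i$, with $(h_i)_*(g_i)_*\mu=\mu|_{U_i}$. The subtle point is to arrange simultaneously $h_i(y_i)=y_i$; this I would achieve by applying Theorem~\ref{mesures-hom�o} in a nested fashion on the pair $(B_i,\,U_i\setminus\mathring B_i)$, where $B_i$ is a small ball around $y_i$ chosen, by a continuity argument, so that $\mu$ and $(g_i)_*\mu$ give $B_i$ the same mass. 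This nested application, equivalent to the $k=1$ case of the proposition inside a ball (i.e.\ the transitivity on the interior of the boundary-fixing measure-preserving homeomorphism group of a cube), is the main obstacle; it is classical and can be established by an explicit area-preserving shear construction (in dimension two, which suffices by restricting to a two-dimensional slice of $U_i$ containing $\gamma_i$), or by iterated application of Theorem~\ref{mesures-hom�o} combined with general-position arguments.
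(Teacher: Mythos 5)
The main gap is in the routing step: you assert that the $2k$ points admit pairwise disjoint simple arcs $\gamma_i$ from $x_i$ to $y_i$, each of diameter less than~$\delta$, invoking general position. This is correct in dimension at least~$3$, where two generic embedded arcs in a manifold are automatically disjoint, but it fails in dimension~$2$: there, general position only makes the crossings transversal and finite, it does not remove them. Moreover the diameter constraint, with~$\delta$ allowed to be arbitrarily close to $\max_i\mathrm{dist}(x_i,y_i)$, leaves very little slack to reroute around a crossing, so that the existence of disjoint short arcs is a genuinely nontrivial assertion (imagine four points in cyclic crossing position with both pairs at distance close to~$\delta$: one arc must then bulge over the other, and that this can be done without violating the diameter bound needs a real argument). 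You do not prove it, and this is precisely where the paper's proof does the extra work: after treating the disjoint case --- which is close in spirit to yours --- the paper explicitly handles crossings, first putting the arcs in general position (finitely many transversal double points, at most two arcs through each), then resolving each crossing by a local measure-preserving rotation, namely the same polar-coordinate shear $g(r\,e^{i\theta})=r\,e^{i(\theta+f(r))}$ used for the elementary push, which swaps the two branches in a small disk around the crossing while agreeing with the pushes on the boundary. Your proof is missing either a proof of the disjoint-arc claim in dimension~$2$ or this crossing-resolution step.

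The rest of your outline is sound and runs parallel to the paper. The circularity you flag in the nested application of the measure-homeomorphism theorem (forcing $h_i(y_i)=y_i$) is real, but you correctly identify the resolution: the explicit area-preserving polar shear, supported in a coordinate $2$-plane and extended by the identity in the remaining coordinates, is exactly the paper's basic building block, and using it directly gives a measure-preserving local push without any detour through the measure-homeomorphism theorem.
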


On trouvait déjà les idées principales de ce théorème chez J. Oxtoby et S. Ulam lors de la démonstration de leur théorème (\cite{Oxtotrans}, \cite{Oxtoby}) ; nous présentons ici la forme moderne de ce théorème due à S. Alpern (voir \cite{2}).

\begin{proof}[Preuve de la proposition \ref{extension}] Pour plus de simplicité on se place en dimension 2, les dimensions supérieures se traitant de la même manière\footnote{Et même plus simplement.}. %Lorsqu'on travaillera localement, notamment lors de la définition des \og homéomorphismes infinitésimaux \fg, on se ramènera par le théorème \ref{mesures-homéo} à un ouvert de $\R^n$ muni de la mesure de Lebesgue, ce qui permettra de donner une expression explicite de ces homéomorphismes. 

On commence par relier chaque $x_i$ à chaque $y_i$ par une géodésique minimisante $\gamma_i$. Supposons tout d'abord que les $\gamma_i$ soient disjoints. Alors par compacité, il existe $\delta'$ vérifiant $0<\delta'<\frac{\delta-\max_i \mathrm{dist}(x_i,y_i)}{2}$ tel que les $\delta'$-voisinages des $\gamma_i$ soient disjoints. On envoie alors chaque $x_i$ sur le $y_i$ correspondant à l'aide de transformations à support dans le $\delta'$-voisinage de $\gamma_i$ : pour tout $i$, il existe une suite de points 
\[x_i = z_0, z_1, \dots, z_k = y_i,\]
tous situés sur $\gamma_i$, et tels que pour tout $j\in\{1,\dots,k\}$, les points $z_{j-1}$ et $z_j$ soient situés dans un même ouvert de carte de la variété. Pour tout $j$, il existe un homéomorphisme conservatif envoyant $z_{j-1}$ sur $z_j$, à support de diamètre inférieur à $\delta'$. En effet, donnons-nous un homéomorphisme $h_j$ envoyant un voisinage ouvert connexe de $z_{j-1}$ et $z_j$ sur un ouvert connexe de $\R^2$, et la mesure $\mu$ sur la mesure de Lebesgue\footnote{Ceci est possible grâce au théorème \ref{mesures-homéo}.}, ce qui nous permet de nous placer dans un ouvert $\Omega_j$ de $\R^2$, ce que nous supposons désormais. Il existe $\delta''_j$ tel que si $\|x-y\|_{\R^2}\leq \delta''_j$, alors $\mathrm{dist}(h_j^{-1}(x),h_j^{-1}(y))\leq \delta'$. On construit alors une seconde suite de points,
\[z_{j-1} = \tilde{z}_j^0, \tilde{z}_j^1, \dots, \tilde{z}_j^{k_j} = z_j,\]
telle que pour tout $l$, $\tilde{z}_j^{l}$ soit dans $h_j(\gamma_i)$, et telle que la boule de centre le milieu de $\tilde{z}_j^{l-1}$ et $\tilde{z}_j^{l}$ et de diamètre $4 \|\tilde{z}_j^{l-1}-\tilde{z}_j^{l}\|$ soit incluse dans $\Omega_j$, et de diamètre plus petit que $\delta''_j$. 

Alors pour tout $l$, il existe un homéomorphisme conservatif de $\Omega_j$, envoyant $\tilde z_k^{l-1}$ sur $\tilde z_k^l$, et dont le support est de diamètre plus petit que $\delta''_j$ : on peut par exemple consid\'erer l'hom\'eomorphisme $g$ d\'efini en coordonn\'ees polaires par $g(r\,e^{i\theta}) = r\,e^{i(\theta+f(r))}$, avec $f$ affine par morceaux définie par :
\[f(r) = \left\{\begin{array}{ll}
8\pi r & \quad \text{si}\quad 0\le r\le \frac {1}{8}\\
\pi & \quad\text{si}\quad \frac {1}{8}\le r\le \frac {3}{8}\\
4\pi-8\pi r & \quad \text{si}\quad \frac {3}{8}\le r\le \frac {1}{2},
\end{array}\right.\]
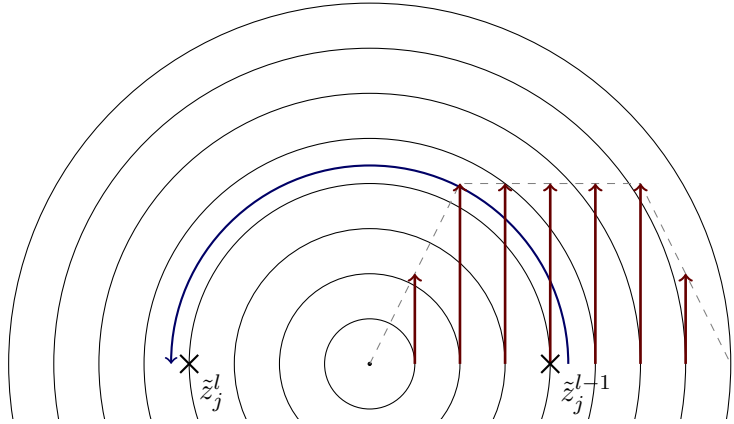
\begin{figure}
\begin{center}
\begin{tikzpicture}[scale=1.2]
\draw[fill=black] (0,0) circle (0.5pt);
\clip (-5,-0.6) rectangle (5,5);
\draw (0.5,0) arc (0:360:0.5);
\draw (1,0) arc (0:360:1);
\draw (1.5,0) arc (0:360:1.5);
\draw (2,0) arc (0:360:2);
\draw (2.5,0) arc (0:360:2.5);
\draw (3,0) arc (0:360:3);
\draw (3.5,0) arc (0:360:3.5);
\draw (4,0) arc (0:360:4);
\draw[gray,dashed] (0,0) -- (1,2) -- (3,2) -- (4,0);
\draw[->][draw=blue!40!black, thick] (2.2,0) arc (0:180:2.2);
\draw[->][line width=1pt,draw=red!40!black] (0.5,0) -- (0.5,1);
\draw[->][line width=1pt,draw=red!40!black] (1,0) -- (1,2);
\draw[->][line width=1pt,draw=red!40!black] (1.5,0) -- (1.5,2);
\draw[->][line width=1pt,draw=red!40!black] (2,0) -- (2,2);
\draw[->][line width=1pt,draw=red!40!black] (2.5,0) -- (2.5,2);
\draw[->][line width=1pt,draw=red!40!black] (3,0) -- (3,2);
\draw[->][line width=1pt,draw=red!40!black] (3.5,0) -- (3.5,1);
\node[below right=0]at (2,0) {$\tilde{z}_j^{l-1}$};
\draw[thick] (1.9,-0.1) -- (2.1,0.1);
\draw[thick] (1.9,0.1) -- (2.1,-0.1);
\node[below right=0] at (-2,0) {$\tilde{z}_j^{l}$};
\draw[thick] (-1.9,-0.1) -- (-2.1,0.1);
\draw[thick] (-1.9,0.1) -- (-2.1,-0.1);
\end{tikzpicture}
\caption{Homéomorphisme conservatif envoyant $\tilde z_k^{l-1}$ sur $\tilde z_k^l$, avec un support de diam\`etre au plus $\delta''$}\label{trajectoire}
\end{center}
\end{figure}
le conjugu\'e de $g$ par la similitude de $\R^2$ envoyant $\tilde z_k^{l-1}$, $\tilde z_k^l$ et le milieu du segment $[\tilde z_k^{l-1},\tilde z_k^l]$ sur respectivement $e^{i0}/4$, $e^{i\pi}/4$ et $0$ convient (voir la figure \ref{trajectoire}). En composant de tels homéomorphismes, on arrive à relier de proche en proche $z_{j-1}$ à $z_j$ \emph{via} les $\tilde{z}_j^{l}$, puis $x_i$ à $y_i$ \emph{via} les $z_j$. L'homéomorphisme obtenu envoie chaque $x_i$ sur $y_i$, et est à une distance de l'identité d'au plus $\delta$.\\

Reste à régler le cas où les chemins se rencontrent. Quitte à les modifier un peu (en gardant des chemins géodésiques par morceaux), on peut supposer que d'une part les intersections entre deux chemins se réduisent à au plus un point, et d'autre part qu'en chaque point d'intersection ne se rencontrent que deux chemins. En minorant les angles que font les chemins entre eux aux voisinages des intersections, ainsi que les distances relatives de ces intersections, et quitte à diminuer $\delta''$, on obtient des chemins dont les $\delta''$-voisinages sont disjoints en dehors des $\delta''/2$-voisinages des points d'intersection. Considérons un point d'intersection $z$ de deux trajectoires $\gamma_i$ et $\gamma_j$ (que l'on peut supposer unique), et quatre points équidistants du cercle de centre $z$ et de rayon $\delta''/4$, notés $\tilde x_i,\tilde x_j,\tilde y_i,\tilde y_j$ ; on choisit les notations de mani\`ere \`a ce que l'ordre des points $\tilde x_i,\tilde x_j,\tilde y_i,\tilde y_j$ sur le cercle soit le m\^eme que celui des points d'intersection du cercle avec l'arc de $\gamma_i$ joignant $x_i$ \`a $z$, l'arc de $\gamma_j$ joignant $x_j$ \`a $z$, l'arc de $\gamma_i$ joignant $z$ \`a $y_i$ et l'arc de $\gamma_j$ joignant $z$ \`a $y_j$. La discussion pr\'ec\'edente nous fournit alors un hom\'eomorphisme conservatif qui envoie $x_i$ sur $\tilde x_i$, $x_j$ sur $\tilde x_j$, $\tilde y_i$ sur $y_i$, et $\tilde y_j$ sur $y_j$. Il reste donc \`a trouver un hom\'eomorphisme qui envoie $\tilde x_i$ sur $\tilde y_i$ et $\tilde x_j$ sur $\tilde y_j$ ; il suffit pour cela de composer par une application du type de $g$ définie ci-dessus, qui envoie bien un point du cercle de rayon $\delta''/4$ sur son symétrique par rapport à $z$ (voir la figure \ref{trajectoirebis}). L'homéomorphisme obtenu par composition vérifie alors les conclusions de la proposition.

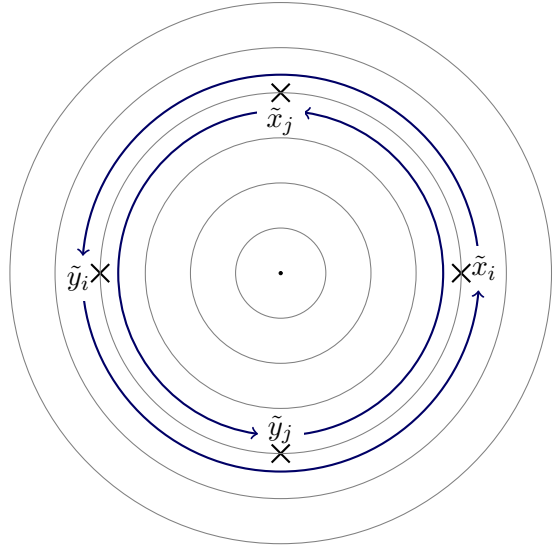
\begin{figure}
\begin{center}
\begin{tikzpicture}[scale=1.2]
\draw[fill=black] (0,0) circle (0.5pt);
\draw[->][draw=blue!40!black, thick] (2.2,0) arc (0:175:2.2);
\draw[->][draw=blue!40!black, thick] (0,1.8) arc (90:262:1.8);
\draw[->][draw=blue!40!black, thick] (0,-1.8) arc (-90:82:1.8);
\draw[->][draw=blue!40!black, thick] (-2.2,0) arc (180:355:2.2);
\node[right, fill=white] at (2,.06) {$\tilde x_i$};
\draw[thick] (1.9,-0.1) -- (2.1,0.1);
\draw[thick] (1.9,0.1) -- (2.1,-0.1);
\node[below, fill=white] at (0,1.95) {$\tilde x_j$};
\draw[thick] (-0.1,1.9) -- (0.1,2.1);
\draw[thick] (0.1,1.9) -- (-0.1,2.1);
\node[left, fill=white] at (-2,-.06) {$\tilde y_i$};
\draw[thick] (-1.9,-0.1) -- (-2.1,0.1);
\draw[thick] (-1.9,0.1) -- (-2.1,-0.1);
\node[above, fill=white] at (0,-2) {$\tilde y_j$};
\draw[thick] (-0.1,-1.9) -- (0.1,-2.1);
\draw[thick] (0.1,-1.9) -- (-0.1,-2.1);\draw[gray] (0.5,0) arc (0:360:0.5);
\draw[gray] (1,0) arc (0:360:1);
\draw[gray] (1.5,0) arc (0:360:1.5);
\draw[gray] (2,0) arc (0:360:2);
\draw[gray] (2.5,0) arc (0:360:2.5);
\draw[gray] (3,0) arc (0:360:3);
\end{tikzpicture}
\caption{Homéomorphisme conservatif envoyant $\tilde x_i$ sur $\tilde y_i$ et $\tilde x_j$ sur $\tilde y_j$}\label{trajectoirebis}
\end{center}
\end{figure}
\end{proof}

\section{Approximations par des homéomorphismes et orbites $\delta$-denses}

La combinaison du théorème de Lax (théorème \ref{Lax}) et de la proposition d'extension des applications finies (proposition \ref{extension}) permet d'approcher tout homéomorphisme par un autre qui va posséder une orbite $\delta$-dense, pour $\delta$ arbitrairement petit (fixé à l'avance). Cette orbite ne sera rien d'autre que la permutation cyclique des centres des cubes d'une subdivision dyadique assez fine. Formalisons cela :

\begin{lemme}\label{lemmetrans}
Pour tout $\delta>0$, l'ensemble des homéomorphismes ayant un point périodique d'orbite $\delta$-dense est dense. On peut de plus supposer que cette orbite est constituée de centres de cubes d'une subdivision dyadique.
\end{lemme}

\begin{proof}[Preuve du lemme \ref{lemmetrans}] Soit $f\in\mathrm{Homeo}(X,\mu)$ et $\varepsilon>0$. Prenons une permutation cyclique d'ordre $m$ donnée par le théorème de Lax, notée $f_m$, avec $m$ assez grand pour que la la taille des cubes soit plus petite que $\delta$, telle que $d_{\mathit{forte}}(f,f_m)< \varepsilon$. Soient $p_1,\dots,p_\ell$ les centres des cubes, ordonnés de telle manière que $f_m(p_k) = p_{k+1}$ pour tout $k$ (pour plus de facilité dans les calculs on indexe les cubes par $\Z/\ell\Z$). On remarque que pour tout $k$, $d_{\mathit{forte}}(f(p_k),p_{k+1})<\varepsilon$. On applique maintenant la proposition d'extension des applications finies aux points $f(p_k)$ et $p_{k+1}$. Cela nous donne un homéomorphisme préservant la mesure $\varphi$ tel que $\varphi(f(p_k)) = p_{k+1}$ et $d_{\mathit{forte}}(\varphi,\mathrm{Id})< \varepsilon$. Posant $f' = \varphi\circ f$, on a $d_{\mathit{forte}}(f,f')\leq \varepsilon$ et $f'(p_k) = p_{k+1}$ pour tout $k$, d'où la conclusion du lemme.
\end{proof}

On a aussi un rafinnement du lemme \ref{lemmetrans} :

\begin{lemme}\label{lem mélange topo faible}
Soient $f\in\mathrm{Homeo}(X,\mu)$, $\varepsilon>0$, $N\in\N$ et $\delta>0$. Alors il existe $f'\in\mathrm{Homeo}(X,\mu)$, avec $d_{\mathit{forte}}(f,f')\le \varepsilon$, des entiers $(q_1,\dots,q_N)$ deux à deux premiers entre eux, et des points $(x_1,\dots,x_N)$ de $X$ tels que chaque $x_i$ soit périodique de période $q_i$ pour $f'$, et que l'orbite de chaque $x_i$ soit $\delta$-dense.
\end{lemme}

\begin{proof}[Preuve du lemme \ref{lem mélange topo faible}] Désignons par $p_i$ le $i$-ème nombre premier. On construit $f'$ par récurrence : ayant construit $f'$ vérifiant la propriété pour un entier $N-1$ et un réel $\varepsilon/2$, on l'approche à une distance au plus $\varepsilon/2$ par une permutation $p_N$-adique cyclique\footnote{Voir la remarque \ref{Hadi}.} dont les cubes sont de taille inférieure à $\delta$, puis on construit $f''\in\mathrm{Homeo}(X,\mu)$, à une distance d'au plus $\varepsilon/2$ de $f'$, qui va permuter les centres de la subdivision choisie (et donc on aura un point périodique de période $p_N^k,\,k\in\N$) et fixer les orbites des points $(x_1,\dots,x_{N-1})$. L'homéomorphisme $f''$ vérifie bien les conclusions du lemme.
\end{proof}

Terminons cette partie par un théorème de densité, dont la preuve est de la même veine que celle du lemme \ref{lemmetrans}. Il sera le point de départ de la preuve de la généricité des homéomorphismes pour lesquels les points périodiques sont denses. 

\begin{prop}\label{densité}
L'ensemble des homéomorphismes ayant un ensemble dense de points p\'eriodiques est dense.
\end{prop}

\begin{proof}[Preuve de la proposition \ref{densité}] Soit $\varepsilon>0$ et $f\in \mathrm{Homeo}(X,\mu)$. L'idée est de trouver une suite $(m_k)_k$ d'entiers naturels, à laquelle est associée une suite d'homéomorphismes $(f_k)_k$ qui est de Cauchy pour la métrique
\[\delta(f,g)= d_{\mathit{forte}}(f,g) + d_{\mathit{forte}}(f^{-1}, g^{-1}),\]
et qui converge donc pour $\delta$ vers un homéomorphisme $f_\infty \in\mathrm{Homeo}(X,\mu)$ (la métrique $\delta$ est complète, voir le lemme \ref{complet}); et telles que l'on ait les propriétés suivantes :
\begin{itemize}
\item $d_{\mathit{forte}}(f,f_\infty)<\varepsilon$,
\item $f_k$ permute cycliquement les centres des cubes de $\D_{m_k}$,
\item $f_{k+1}$ et $f_k$ coïncident sur les centres des cubes de $\D_{m_k}$. 
\end{itemize}
Autrement dit on grossit à chaque fois l'ensemble des points périodiques de notre application de départ par l'ajout d'un ensemble de plus en plus dense. Ainsi $f_\infty$ aura un ensemble dense de points périodiques, constitué au moins de centres de cubes de subdivisions dyadiques d'ordres arbitrairement grands.

Construisons maintenant la suite $(f_k)_k$ par récurrence. On pose $f_0= f$, muni de la subdivision triviale contenant uniquement $X$.

Supposons que l'on ait construit l'homéomorphisme $f_{k-1}$. Le théorème \ref{Lax} nous fournit une subdivision $\D_{m_k}$ plus fine que $D_{m_{k-1}}$, et une permutation dyadique $g_k$ de $\D_{m_k}$ telle que $d_{\mathit{forte}}(f_{k-1},g_k)<\varepsilon/2^k$. Soient $p_k^i$ les centres de la subdivision associée à $g_k$ ordonnés de telle manière que pour tout $i$, $g_k(p_k^i) = p_k^{i+1}$. Ainsi on a $d_{\mathit{forte}}(f_{k-1}(p_k^i),p_k^{i+1})<\varepsilon/2^k$. Le théorème d'extension des applications finies nous donne alors une application $\varphi\in\mathrm{Homeo}(X,\mu)$ vérifiant :
\begin{itemize}
\item pour tout $i$, $\varphi(f_{k-1}(p_k^i)) = p_k^{i+1}$,
\item pour tout $j$ et tout $k'<k$, $\varphi(p_{k'}^j) = p_{k'}^j$,
\item $d_{\mathit{forte}}(\varphi,\mathrm{Id})<\varepsilon/2^k$.
\end{itemize}
En effet, il suffit d'appliquer le théorème aux familles $\{f_{k-1}(p_k^i),p_{k'}^j\}_{i,j}$ et $\{p_k^{i+1},p_{k'}^j\}_{i,j}$, disjointes d'une part parce que les deux familles $\{p_k^i\}_i$ et $\{p_{k'}^j\}_j$ sont disjointes, par définition de la suite de subdivisions dyadiques $\{D_m\}_{m\ge 0}$, et d'autre part par bijectivité de $f_{k-1}$, qui implique que l'image d'une famille disjointe est disjointe.

On pose maintenant $f_k= \varphi\circ f_{k-1}$. On a alors :
\begin{itemize}
\item $f_k(p_k^i) = p_k^{i+1}$,
\item $f_k(p_{k-1}^j) = p_{k-1}^j$,
\item $d_{\mathit{forte}}(f_{k-1},f_k)<\varepsilon/2^k$.
\end{itemize}

Reste à montrer que la dernière des inégalités s'applique aussi à l'inverse des fonctions. Or on sait que $d_{\mathit{forte}}(f_{k-1}^{-1},f_k^{-1}) = d_{\mathit{forte}}(f_{k-1}^{-1},f_{k-1}^{-1}\circ\varphi^{-1})$. Il suffit donc de rendre la distance $d_{\mathit{forte}}(\mathrm{Id},\varphi^{-1})$, par ailleurs égale à $d_{\mathit{forte}}(\mathrm{Id},\varphi)$, plus petite que le module de continuité de $f_{k-1}^{-1}$ associé à la quantité $\varepsilon/2^k$. Mais quitte à prendre $m$ suffisament grand, cette majoration est vérifiée. On a donc obtenu l'homéomorphisme suivant désiré ; la propriété est prouvée.
\end{proof}

\section{Généricité de la transitivité}

Les généricités de la transitivité et du mélange faible découlent facilement des lemmes de la section précédente. La première preuve de la généricité de la transitivité dans $\mathrm{Homeo}(X,\mu)$ est due à J. Oxtoby \cite{Oxtotrans} et suivait à peu près le schéma de la preuve actuelle ; mais c'est S. Alpern qui a clarifié les outils auquels on y fait appel, en adaptant le théorème de Lax \cite{AlpernThez}, \cite{AlpernOuique}.

\begin{theoreme}[Oxtoby]\label{transitivité}
Les homéomorphismes topologiquement transitifs forment un $G_\delta$ dense de l'espace $\mathrm{Homeo}(X,\mu)$.
\end{theoreme}

\begin{proof}[Preuve du théorème \ref{transitivité}] Soit $(C_i)_{i\ge 0}$ la suite des cubes ouverts de la réunion sur $m\in\N$ de toutes les subdivisions dyadiques $\D_m$, rangés par ordre décroissant de taille. La famille $(C_i)$ forme alors une base de la topologie de $X$. Posons, pour $i,j\in \N$,
\[T_{i,j} = \left\{ f\in\mathrm{Homeo}(X,\mu)\mid\exists m>0 : f^m(C_i)\cap C_j\neq \emptyset \right\}.\]
Ces ensembles sont ouverts, et leur intersection est égale à l'ensemble des éléments topologiquement transitifs. Par le théorème de Baire il suffit de montrer que chacun d'eux est dense, mais cela découle immédiatement du lemme~\ref{lemmetrans}. 
\end{proof}

\indent Comme on l'a déjà dit, la technique précédente permet de montrer des résultats un peu plus forts, comme par exemple la généricité du mélange topologique faible :

\begin{definition}
Un homéomorphisme $f$ est dit \emph{topologiquement faiblement mélangeant} si quels que soient les ouverts non vides $U_1,\,V_1,\,U_2$ et $V_2$, il existe un entier $m$ tel que $f^m(U_1)\cap V_1$ et $f^m(U_2)\cap V_2$ soient non vides.
\end{definition}

\begin{theoreme}\label{mélange topo faible}
L'ensemble des homéomorphismes topologiquement faiblement mélangeants est gras dans $\mathrm{Homeo}(X,\mu)$.
\end{theoreme}

Ce théorème découle facilement du lemme \ref{lem mélange topo faible} :

\begin{proof}[Preuve du théorème \ref{mélange topo faible}] Soit $(C_i)_{i\ge 0}$ la suite des cubes ouverts des subdivisions dyadiques de tous ordres déjà considérée. On écrit l'ensemble des homéomorphismes faiblement mélangeants comme l'intersection des $T'_{i,j,k,l}$, où 
\[T'_{i,j,k,l} = \left\{f\in\mathrm{Homeo}(X,\mu)\mid \exists m>0 : f^m(C_i)\cap C_j\neq \emptyset \text{ et } f^m(C_k)\cap C_l\neq \emptyset\right\}.\]
Les ensembles $T'_{i,j,k,l}$ sont ouverts (c'est évident par continuité de $f^m$). Pour terminer la preuve, il suffit de montrer qu'ils sont denses. Fixons les entiers $i,j,k$ et $l$. On se donne $\varepsilon>0$, et $\delta$ plus petit que la moitié du diamètre des cubes de la subdivision à laquelle appartiennent les cubes $C_i$, $C_j$, $C_k$ et $C_l$. On applique alors le lemme \ref{lem mélange topo faible} pour $N=2$. On obtient $f'\in \mathrm{Homeo}(X,\mu)$, $\varepsilon$-proche de $f$, et deux points $x$ et $y$, respectivement $p$ et $q$ périodiques pour $f'$, avec $p$ et $q$ premiers entre eux, dont les orbites sont $\delta$-denses. Par conséquent, il existe $a$, $b$, $a'$ et $b'$ tels que $f'^a(x)\in C_i$, $f'^{a'}(x)\in C_j$, $f'^b(y)\in C_k$ et $f'^{b'}(y)\in C_l$. Or on cherche une puissance de $f'$ qui envoie $f'^a(x)$ dans $C_j$ et $f'^b(y)$ dans $C_l$, il suffit donc de trouver $m$ positif tel que $m+a = a'+\alpha p$ et $m+b = b'+\beta q$. Or, par le théorème de Bézout, il existe bien $\alpha$ et $\beta$ tels que $\alpha p-\beta q = (a-a')-(b-b')$, on pose alors $m = a'-a+\alpha p$ et, quitte à ajouter un multiple de $pq$, $m$ peut être supposé positif. Ainsi $f'^m(C_i)\cap C_j\neq \emptyset$ et $f'^m(C_k)\cap C_l\neq \emptyset$, et donc $T'_{i,j,k,l}$ est dense.
\end{proof}

\chapter{Modifications locales}

Après la proposition d'extension des applications finies, nous présentons ici un autre moyen de construire (plus ou moins) explicitement des homéomorphismes conservatifs. L'idée, finalement assez naturelle, est de remplacer localement, au voisinage d'un point périodique, un homéomorphisme par un autre. On choisit ce dernier de telle manière qu'il possède la propriété dynamique dont on veut obtenir la généricité. Nous appelons \og théorème de modification locale \fg (théorème \ref{extension-sphères}) le résultat technique qui permet d'effectuer cette opération.

Cette technique, basée sur des outils purement topologiques --- on utilisera d'ailleurs un certain nombre de résultats géométriquement \og évidents \fg~---, nous donne ici trois résultats de généricité : la généricité de l'ensemble des homéomorphismes ayant un ensemble dense de points périodiques, établie par F. Daalderop et R. Fokkink en 2000 \cite{6}, celle de l'entropie topologique infinie, ainsi que celle du mélange topologique fort. Il semble que la preuve de cette dernière n'avait jamais été faite.

Comme au chapitre précédent, on se fixe une application $\phi : I^n\to X$ donnée par le corollaire \ref{Brown-mesure}, ce qui permet de choisir une suite $(\D_m)_{m\in\N}$ de subdivisions dyadiques de $(X,\mu)$, et de définir une notion de permutation dyadique sur ces subdivisions.

\section{Modification locale d'un homéomorphisme con\-ser\-va\-tif}

Donnons-nous une petite boule $U$ dans une variété compacte $X$, une boule $V$ incluse dans $U$, un homéomorphisme $f$ préservant la mesure dans cette variété et un homéomorphisme $g$ de $V$ sur $f(V)$ préservant la mesure, orienté de la même manière que $f$. Le bon sens voudrait que l'on puisse construire un homéomorphisme préservant la mesure qui soit égal à $g$ sur $V$ et à $f$ en dehors de $U$. En effet, le cas de la dimension deux se fait aisément : il est très facile de construire un homéomorphisme vérifiant les propriétés désirées, sauf peut-être la préservation de la mesure. On obtient cette dernière propriété par le théorème des mesures homéomorphes (théorème \ref{mesures-homéo}). En dimensions supérieures, des problèmes délicats de topologie surgissent ; mais ceux-ci sont résolus par l'\emph{annulus theorem}, qui affirme que la région de $\R^n$ située entre deux  sphères \emph{épaississables} est homéomorphe à une anneau. C'est un résultat géométriquement évident mais néamoins difficile, dont la preuve diffère selon la dimension de l'espace considéré ; il a été démontré en dimension 2 par T. Rad\'o en 1924 \cite{Rado}, en dimension 3 par E. Moise en 1952 \cite{Moise}, en dimension plus grande que 5 par R. Kirby en 1969 \cite{Cello} et en dimension 4 par F. Quinn en 1982 \cite{Quinn}.

\begin{definition}
Un plongement $i$ d'une variété $\Sigma$ dans une variété $M$ est dit \emph{épaississable} (en anglais \emph{bicollared}) s'il existe un prolongement de $j :\Sigma\times [-1,1] \to M$ tel que $j_{\Sigma\times \{0\}} = i$.
\end{definition}

Cette restriction aux sous-variétés épaississables permet de se défaire des pathologies telles que les sphères cornues. Admettant l'\emph{annulus theorem}, on obtient sans grande difficulté le théorème suivant :

\begin{theoreme}[Modification locale]\label{extension-sphères}
Soient $\sigma_1$, $\sigma_2$, $\tau_1$ et $\tau_2$ quatre plongements épaississables de $\mathbf{S}^{n-1}$ dans $\R^n$, tels que $\sigma_1$  soit dans la composante connexe bornée\footnote{Par théorème de Jordan-Brouwer, le complémentaire d'un ensemble homéomorphe à $\mathbf{S}^{n-1}$ possède bien deux composantes connexes : une bornée et une non.} de $\sigma_2$ et $\tau_1$ dans la composante connexe bornée de $\tau_2$. Soient $A_1$ la composante connexe bornée de $\R^n\setminus\sigma_1$ et $B_1$ la composante connexe bornée de $\R^n\setminus\tau_1$ ; $\Sigma$ la composante connexe de $\R^n\setminus(\sigma_1\cup\sigma_2)$ ayant $\sigma_1 \cup \sigma_2$ pour frontière et $\Lambda$ la composante connexe de $\R^n\setminus(\tau_1\cup\tau_2)$ ayant $\tau_1 \cup \tau_2$ pour frontière ; $A_2$ la composante connexe non bornée de $\R^n\setminus\sigma_2$ et $B_2$ la composante connexe non bornée de $\R^n\setminus\tau_2$ (voir la figure \ref{dessin-extension}).

Supposons que $\mathrm{Leb}(A_1) = \mathrm{Leb}(B_1)$ et $\mathrm{Leb}(\Sigma) = \mathrm{Leb}(\Lambda)$ et donnons-nous deux homéomorphismes préservant la mesure $f_i : A_i\to B_i$, tels que soit chacun préserve l'orientation de $\R^n$, soit chacun la renverse. Alors il existe un homéomorphisme préservant la mesure $f : \R^n \to \R^n$ égal à $f_1$ sur $A_1$ et à $f_2$ sur $A_2$.
\end{theoreme}

\begin{figure}
\begin{center}
\begin{tikzpicture}[scale=1.1]
\draw[color=blue!30!black, fill=gray!8!white, semithick] plot[tension=0.6, smooth cycle] coordinates{(-1.4,1.3) (0,1.6) (1.5,1.7) (1.4,-.6) (1.6,-2) (0.3,-1.4) (-1.5,-1.6) (-1.7,-.8) (-1.6,.5)};
\draw[color=green!30!black, fill=gray!20!white, semithick] plot[tension=0.6, smooth cycle] coordinates{(-1.1,1) (0,1.1) (.8,.6) (.8,-.2) (1,-1) (0.1,-.8) (-.9,-1) (-.8,-.2) (-.9,.1)};
\draw[color=blue!30!black, fill=gray!8!white, semithick] plot[tension=0.6, smooth cycle] coordinates{(5.4,1.3) (6,1.6) (7,1.1) (8.5,1.7) (8.6,-.2) (8.1,-1.5) (7.3,-1.8) (5.5,-1.6) (5.4,-.8) (5.6,.5)};
\draw[color=green!30!black, fill=gray!20!white, semithick] plot[tension=0.6, smooth cycle] coordinates{((6.1,1) (6.8,.9) (7.1,.6) (7.8,1) (8,.9) (7.9,0.5) (7.8,-.6) (6.9,-1) (6.5,-1.1) (6,0)};
\draw[->, semithick] (.6,-.5) .. controls (2.2,-1.5) and (4,-1.5) .. (6.4,-.5);
\draw[->, semithick] (2.2,.5) .. controls (3,1) and (4,1) .. (5.1,.5);
\node at (0,0) {$A_1$};
\node[color=green!30!black] at (-1.05,0) {$\sigma_1$};
\node[color=blue!30!black] at (-1,-1.73) {$\sigma_2$};
\node at (1,1.2) {$\Sigma$};
\node at (0,2.1) {$A_2$};
\node at (3.6,-1.5) {$f_1$};
\node at (3.6,1.1) {$f_2$};
\node at (7,0) {$B_1$};
\node[color=green!30!black] at (8.1,0) {$\tau_1$};
\node[color=blue!30!black] at (8.3,-1.6) {$\tau_2$};
\node at (7,1.7) {$B_2$};
\node at (8.3,1.2) {$\Lambda$};
\end{tikzpicture}\caption{Cadre de la technique de la modification locale}\label{dessin-extension}
\end{center}
\end{figure}
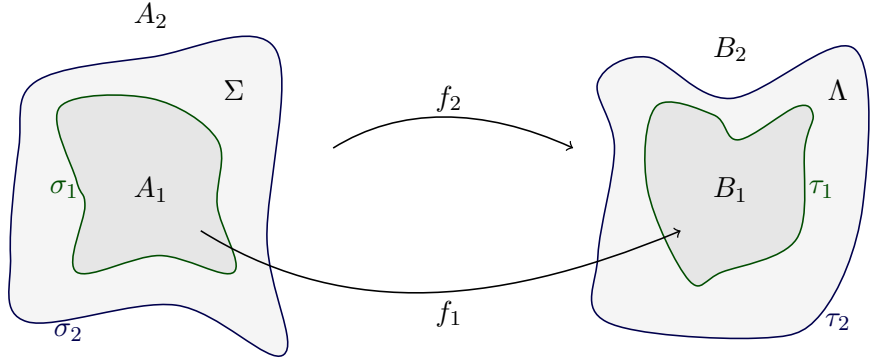

Une preuve de ce théorème, ainsi qu'un énoncé de l'\emph{annulus theorem}, pourront être trouvés dans \cite{6}.

\begin{rem}
Puisque cette propriété ne sera utilisée par la suite que localement, on pourra l'appliquer (à l'aide du théorème \ref{mesures-homéo}) à notre variété $X$ munie de la mesure $\mu$.
\end{rem}

\begin{app}\label{p15}
Nous pouvons d'ores et déjà en déduire quelques résultats sim\-ples de densité dans $\mathrm{Homeo}(X,\mu)$. Par exemple on obtient assez facilement la densité des homéomorphismes dont une puissance coïncide avec l'identité sur un ouvert non vide, ce qui implique la densité des négations des propriétés suivantes : transitivité, mélange topologique, ergodicité, mélange faible ergodique, apériodicité etc. Cela rejoint la remarque faite dans l'introduction affirmant que le contraire de propriétés génériques peut être dense.
\end{app}

\begin{proof}[Preuve de l'application \ref{p15}] Soit $f\in \mathrm{Homeo}(X,\mu)$. Par densité dans l'espace $\mathrm{Homeo}(X,\mu)$ des homéomorphismes ayant un point périodique (lemme \ref{lemmetrans}), on peut supposer que $f$ possède un point périodique $x$ de période $p$. L'idée est de modifier $f$ au voisinage de $f^{p-1}(x)$.

Soit $\varepsilon>0$ et considérons $U$ et $V$ deux boules centrées en $x$ telles que $U\subset V$. En utilisant l'uniforme continuité de $f$, on peut supposer, quitte à rétrécir $V$, puis $U$, que :
\begin{itemize}
\item $f^{p-1}(U)$ et $f^{-1}(V)$ sont dans des ouverts de cartes de la variété,
\item $V,f(V),\dots,f^{p-1}(V)$ sont deux à deux disjoints,
\item le diamètre de $V$ est strictement inférieur à $\varepsilon$,
\item $\overline{f^{p-1}(U)}$ est inclus dans $f^{-1}(V)$.
\end{itemize}
Posons $\varphi = \sigma\circ f^{-(p-1)}$, avec $\sigma$ étant soit l'identité, soit le renversement de la première coordonnée (dans une carte locale), de manière à ce que $\varphi$ préserve (respectivement renverse) l'orientation si $f$ préserve (respectivement renverse) l'orientation. Il ne reste plus qu'à appliquer le théorème \ref{extension-sphères} à : 
\[\begin{array}{rlrl}
A_1 & = f^{p-1}(U)\quad         & A_2 & = f^{-1}(V^\complement)\\
B_1 & = U \quad                 & B_2 & = V^\complement\\
f_1 & =\varphi\quad             & f_2 & = f
\end{array}\]
ce qui nous fournit un homéomorphisme préservant la mesure $g$ tel que $g_{|V^\complement} = f$ et $g_{|U} = \varphi$. Ainsi on a :
\[d_{\mathit{forte}}(f,g)\le \mathrm{diam}(V)\le\varepsilon\]
et 
\[g^p (U) = g(f^{p-1}U) = U.\]
En considérant $g^{2p}$ et $U$ (car sur $U$, $g^p$ est égal à $\sigma$, avec $\sigma^2 = \mathrm{Id}$), on obtient les conclusions de l'application.
\end{proof}

\section[Généricité et points périodiques]{G\'en\'ericit\'e de l'ensemble des homéomorphismes ayant un ensemble dense de points p\'eriodiques}\label{2.2}

Le but de cette section est de prouver le théorème suivant, dû à F. Daalderop et R. Fokkink \cite{6} :

\begin{theoreme}[Daalderop, Fokkink]\label{per-resid}
L'ensemble des homéomorphismes ayant un ensemble dense de points p\'eriodiques est résiduel.
\end{theoreme}

La proposition \ref{densité} montre que l'ensemble des homéomorphismes ayant un ensemble dense de points périodiques forme un ensemble dense, il faut maintenant exprimer cet ensemble comme un $G_\delta$. C'est là qu'intervient la technique de la modification locale, qui permet de créer des points périodiques \emph{persistants} à partir de points périodiques.

\begin{definition}
Soit $f\in\mathrm{Homeo}(X,\mu)$. Un point périodique $p$ de $f$ de période $k$ est dit \emph{persistant} si pour tout voisinage $U$ de $p$, il existe un voisinage $\V$ de $f$ dans $\mathrm{Homeo}(X,\mu)$ tel que tout $\widetilde{f}\in \V$ ait un point périodique $\widetilde{p}\in U$ de période $k$.
\end{definition}

Le c\oe ur de la preuve de généricité se situe dans la densité des points persistants, obtenue par la technique de modification locale :

\begin{prop}\label{persistant}
Soit $f\in\mathrm{Homeo}(X,\mu)$ et $x$ un point périodique de période~$p$ pour~$f$. Alors pour tout $\varepsilon>0$, il existe $g\in\mathrm{Homeo}(X,\mu)$ à distance au plus $\varepsilon$ de~$f$, tel que $x$ soit un point persistant de période $p$ pour $g$.  
\end{prop}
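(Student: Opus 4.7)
L'id�e est de suivre le sch�ma de l'application~\ref{p15}, mais en choisissant plus finement la perturbation ins�r�e de mani�re � faire de $x$ un point fixe isol� de $g^p$ d'indice topologique non nul. Notons $\eta > 0$ un minorant des distances mutuelles entre les $p$ points de l'orbite de $x$ sous $f$. Comme dans l'application~\ref{p15}, on se donne une boule $V$ centr�e en $x$ telle que $V, f(V), \dots, f^{p-1}(V)$ soient deux � deux disjointes et de diam�tres inf�rieurs � $\min(\varepsilon, \eta/4)$ ; puis une boule plus petite $U \subset V$ de m�me centre v�rifiant $\overline{f^{p-1}(U)} \subset f^{-1}(V)$. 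Pour tout hom�omorphisme $\sigma$ de $U$ pr�servant la mesure, fixant $\partial U$ et tel que $\sigma \circ f^{-(p-1)}$ ait la m�me orientation que $f$, le th�or�me~\ref{extension-sph�res} fournit un $g \in \mathrm{Homeo}(X, \mu)$ co�ncidant avec $f$ en dehors de $f^{-1}(V)$ et avec $\sigma \circ f^{-(p-1)}$ sur $f^{p-1}(U)$, avec $d_{\mathit{forte}}(f, g) < \varepsilon$. Le calcul d�j� effectu� dans l'application~\ref{p15} donne alors $g^p_{|U} = \sigma$, et en particulier $g^k(x) = f^k(x)$ pour $0 \le k < p$ et $g^p(x) = \sigma(x) = x$ d�s que $\sigma$ fixe $x$.

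On choisit $\sigma$ de sorte que $x$ en soit l'unique point fixe dans $U$ et que l'indice de point fixe en $x$ y soit non nul. En dimension $n = 2$, un twist convient : en coordonn�es polaires centr�es en $x$ sur la boule $U$ de rayon $R$, on pose $\sigma(r, \theta) = (r, \theta + \psi(r))$ avec $\psi : [0, R] \to \R$ continue, $\psi(R) \in 2\pi\Z$, et $\psi(r) \notin 2\pi\Z$ pour $0 < r < R$ ; l'unique point fixe int�rieur est alors $x$ et son indice vaut $1$. En dimension sup�rieure, on adapte la construction en combinant des twists sur des paires de coordonn�es, quitte � composer avec un hom�omorphisme fixant $x$ et $\partial U$ pour �liminer les sous-espaces r�siduels de points fixes, puis on r�tablit la pr�servation de la mesure via le th�or�me~\ref{mesures-hom�o} appliqu� dans $U$.

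La persistance d�coule alors de la stabilit� $C^0$ du degr� de Brouwer. Soit $W$ un voisinage de $x$ ; on choisit une boule ferm�e $\overline B \subset W \cap U$ centr�e en $x$ et assez petite pour que $\sigma$ n'ait pas de point fixe sur $\partial B$. L'indice en $x$ �tant non nul, le degr� de l'application $u \mapsto u - g^p(u)$ par rapport � l'origine est non nul sur $\partial B$ ; cette non-nullit� persiste pour $\tilde g^p$ d�s que $\tilde g \in \mathrm{Homeo}(X, \mu)$ est assez proche de $g$ pour $d_{\mathit{forte}}$, puisqu'alors $\tilde g^p$ reste uniform�ment proche de $g^p = \sigma$ sur $\overline B$. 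On en d�duit l'existence d'un point fixe $\tilde x \in B \subset W$ de $\tilde g^p$. Pour voir que la p�riode minimale de $\tilde x$ sous $\tilde g$ vaut exactement $p$, on observe que les points $f^k(x)$ pour $0 \le k < p$ sont $\eta$-s�par�s et que $g^k(x) = f^k(x)$ ; donc pour $\tilde g$ assez proche de $g$ et $\tilde x$ assez proche de $x$, les points $\tilde g^k(\tilde x)$ restent proches de $f^k(x)$, et sont en particulier distincts de $\tilde x$ pour $1 \le k < p$.

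L'obstacle principal est la construction effective de $\sigma$ en dimension impaire $n \ge 3$ : toute rotation lin�aire y fixe un axe entier, de sorte qu'un twist lin�aire na�f ne produit pas de point fixe isol�. Le cadre purement topologique (et non diff�rentiable) dans lequel on travaille autorise cependant une grande souplesse --- on peut par exemple perturber un twist planaire par un hom�omorphisme fixant $x$ et $\partial U$ mais d�pla�ant tous les autres points fixes r�siduels, puis corriger la mesure par le th�or�me~\ref{mesures-hom�o} --- et permet de conclure sans difficult� conceptuelle nouvelle.
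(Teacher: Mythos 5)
Your overall scheme is the same as the paper's: use the local modification theorem~\ref{extension-sph�res} to graft a model homeomorphism with a prescribed fixed point into a small ball around $x$, then get persistence from degree theory. You also correctly handle the two points the paper leaves a bit implicit — the fixed-point index argument for $g^p$ (winding number computation for the twist), and the verification that the nearby periodic point of $\tilde g$ has exact period $p$ because the orbit of $x$ is $\eta$-separated. Those parts are fine.

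The gap is in the construction of the measure-preserving model $\sigma$ on the ball $U$ in dimension $n\ge 3$, which you flag yourself as ``l'obstacle principal'' but do not actually resolve. A twist in a coordinate plane leaves a codimension-$2$ axis of fixed points; you propose to perturb by a homeomorphism killing those residual fixed points and then to restore measure preservation via Th\'eor\`eme~\ref{mesures-hom�o}. This two-step plan is not harmless: the measure-correcting homeomorphism supplied by Oxtoby--Ulam is not required to fix $x$, nor to avoid creating new fixed points of $\sigma$, nor to keep $x$ isolated, so after the correction you no longer control the fixed-point set of $\sigma$ and the index computation does not survive automatically. One would need to argue that the correction can be taken to fix $x$ and to be $C^0$-small enough to preserve the nonzero degree on a sphere around $x$, and that $x$ remains an isolated fixed point — none of which is immediate.

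The paper sidesteps this entirely by taking for the model a diagonal linear map $h$ with $\prod\lambda_i = 1$ and $\lambda_i\neq 1$ for every $i$ (Lemme~\ref{lin�aire}). Such an $h$ exists in every dimension $n\ge 2$, is automatically volume-preserving, has $0$ as its unique fixed point, and the persistence of that fixed point is precisely the content of Lemme~\ref{lin�aire} (cited from \cite{ency}). Replacing your twist by this linear map, and proceeding exactly as you do for the rest, closes the gap; everything else in your argument then goes through.
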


Donnons tout d'abord un exemple d'homéomorphisme préservant la mesure et ayant un point fixe persistant :

\begin{lemme}\label{linéaire}
Toute application linéaire de $\R^n$
\[h = \left(
\begin{array}{ccc}
\lambda_1 &        & (0)      \\
          & \ddots &          \\
(0)       &        & \lambda_n
\end{array}\right),\]
avec $\prod \lambda_i = 1$ et $\lambda_i\neq 1$ pour tout $i$, préserve la mesure et possède un point fixe persistant à l'origine.
\end{lemme}

On trouvera une preuve de ce lemme à la page 319 de \cite{ency}. À l'aide de cet exemple, combiné au théorème \ref{extension-sphères}, nous allons pouvoir remplacer localement un point périodique par un point persistant.

\begin{proof}[Preuve de la proposition \ref{persistant}] Notons $p$ la période de $x$. Soient $U\subset V$ deux boules centrées en $x$. En utilisant l'uniforme continuité de $f$, on peut supposer, quitte à rétrécir $V$, puis $U$, que :
\begin{itemize}
\item $V$ et $f^{-1}V$ sont dans des ouverts de cartes de la variété,
\item $V,f(V)\dots,f^{p-1}(V)$ sont deux à deux disjoints,
\item le diamètre de $V$ est strictement inférieur à $\varepsilon$,
\item $\overline{f^p(U)}$ est inclus dans $V$.
\end{itemize}
Le lemme précédent (lemme \ref{linéaire}) nous donne un homéomorphisme $h : U \to h(U)$ pour lequel $x$ est un point fixe persistant. Posons $\varphi = \sigma\circ h\circ f^{-(p-1)}$, avec $\sigma$ étant soit l'identité, soit le renversement de la première coordonnée, de manière à ce que $\varphi$ et $f^p$ préservent ou renversent simultanément l'orientation. On suppose, quitte à rétrécir de nouveau $U$, que $\overline{\sigma\circ h(U)}$ est contenu dans $V$. On applique le théorème~\ref{extension-sphères} à :
\[\begin{array}{rlrl}
A_1 & = f^{p-1}(U)\quad              & A_2 & = f^{-1}(V)^\complement\\
B_1 & = h(U) \quad                   & B_2 & = V^\complement\\
f_1 & =\varphi\quad                  & f_2 & = f,
\end{array}\]
ce qui nous donne un homéomorphisme $g$ préservant la mesure vérifiant les conclusions de la proposition.
\end{proof}

Nous sommes maintenant en mesure de montrer le théorème \ref{per-resid} :

\begin{proof}[Preuve du théorème \ref{per-resid}] Considérons une base de voisinages ouverts $\{U_k\}_{k\ge 0}$ de $X$ . Pour tout $k$, soit $\mathit{Per}_k$ l'ensemble des homéomorphismes ayant un point périodique dans $U_k$ et $P_k$ son sous-ensemble constitué des applications pour lesquelles ce point est persistant. Par définition, tout élément $f$ de $P_k$ possède un voisinage ouvert dans $\mathrm{Homeo}(X,\mu)$ constitué  uniquement d'éléments de $\mathit{Per}_k$. L'union de tous ces voisinages pour $f$ parcourant $P_k$ forme un ensemble ouvert $\V_k$ vérifiant $P_k\subset\V_k\subset \mathit{Per}_k$. Mais le théorème précédent affirme que $P_k$ est dense dans $\mathit{Per}_k$ et la proposition \ref{densité} dit que $\mathit{Per}_k$ est dense dans $\mathrm{Homeo}(X,\mu)$. Par conséquent $\V_k$ est un ouvert dense de $\mathrm{Homeo}(X,\mu)$. Le théorème de Baire affirme alors que $\bigcap_k \mathit{Per}_k$, qui est exactement l'ensemble des applications ayant un ensemble dense de points p\'eriodiques, contient un $G_\delta$ dense.
\end{proof}

On déduit facilement de ce théorème et du théorème \ref{mélange topo faible} que le \emph{chaos maximal}, tel que défini par Devaney (voir la définition 8.5 page 50 de \cite{Dev}), est générique. Pour plus de précisions, voir la page 27 de \cite{2}.

%\begin{definition}
%Soit (X,d) un espace métrique. Une application $h : X\to X$ possède la propriété dite de \emph{chaos maximal} si elle satisfait les trois conditions suivantes :
%\begin{enumerate}
%\item $h$ est transitive
%\item les points périodiques de $h$ sont denses
%\item Pour tout ouvert non vide $U$ de $X$, $\overline{\lim}\ \mathrm{diam}(h^iU) = \mathrm{diam}(X)$. Cela peut s'interpréter comme étant une sensibilité maximale aux conditions initiales. 
%\end{enumerate}
%\end{definition}
%
%Le troisième point (car c'est le seul qui reste à prouver) découle sans difficulté de la généricité du mélange topologique faible.

\section{Généricité de l'entropie topologique infinie}

Dans cette partie, nous allons montrer qu'un homéomorphisme conservatif générique a une entropie topologique infinie. En particulier, cela montre qu'un homéomorphisme générique n'est pas topologiquement conjugué à un difféomorphisme. Pour ce faire, nous allons utiliser les notions classiques de rectangle, d'intersection markovienne et de fer à cheval de Smale. Notons que K. Yano avait déjà montré en 1980, à l'aide de  \emph{pseudo fers à cheval}, qu'un homéomoprhisme\footnote{Sans hypothèse de préservation de la mesure.} générique possède une entropie topologique infinie \cite{Yano}.

Commençons par rappeler la définition de l'entropie topologique d'un homéomorphisme d'un espace compact.

\begin{definition}\label{2.9}
Soit $X$ un espace métrique et $f$ un homéomorphisme de $X$. Pour tout entier $m$ non nul, définissons une \emph{distance dynamique} $d_m$ par :
\[d_m(x,y) = \max_{i\le m}\, \mathrm{dist}(f^i(x),f^i(y)).\]
Pour $\varepsilon>0$ donné, notons $C(m,\varepsilon)$ le cardinal maximal d'un sous-ensemble de $X$ dont les points sont à distance au moins $\varepsilon$ les uns des autres pour $d_m$. L'\emph{entropie topologique} de $f$ est alors définie par :
\[h_{top}(f) = \lim_{\varepsilon\to 0}\, \underset{m\to+\infty}{\overline{\lim}} \,\frac{1}{m}\log C(m,\varepsilon).\]
Dans le cas où $X$ est compact, celle-ci est indépendante de la distance définissant la topologie.
\end{definition}

L'entropie peut être vue comme une manière de mesurer le désordre induit par les itérations de notre homéomorphisme.

\begin{rem}
Pour $f\in\mathrm{Homeo}(X,\mu)$, l'inégalité suivante découle facilement de la définition : $m .h_{top}(f)\ge h_{top}(f^m)$ ; c'est même une égalité (voir \cite{ency}, mais nous n'en aurons pas besoin).
\end{rem}

\begin{definition}
Nous appelons un \emph{rectangle} l'image du cube $I^n$ par un homéomorphisme $f : I^n\to f(I^n)\subset X$. Les images par $f$ des faces de $I^n$ sont appelées les \emph{faces} du rectangle. Les faces $f(I^{n-1}\times\{0\})$ et $f(I^{n-1}\times\{1\})$ sont dites \emph{horizontales}, les autres faces sont dites \emph{verticales}. Un sous-ensemble $R_2$ d'un rectangle $R_1$ est appelé \emph{sous-rectangle horizontal strict} (respectivement \emph{sous-rectangle vertical strict}) si c'est un rectangle disjoint des faces horizontales (respectivement verticales) de $R_1$ dont toutes les faces verticales (respectivement horizontales) sont incluses dans celles de $R_1$.
\end{definition}

\begin{rem}
Soit $R_h$ un sous-rectangle horizontal strict de $R$. Alors tout sous-rectangle horizontal strict de $R_h$ est lui-même un sous-rectangle horizontal strict de $R$. De même pour $R_v$ un sous-rectangle vertical strict de $R$, tout sous-rectangle vertical strict de $R_v$ est lui-même un sous-rectangle vertical strict de $R$.
\end{rem}

Les trois lemmes suivants, géométriquement \og évidents \fg, vont préciser la notion de rectangle. Pour les preuves, voir par exemple \cite{Yang}.

\begin{lemme}\label{rectangles}
Soit $R$ un rectangle de $X$, $R_h$ un sous-rectangle horizontal strict de $R$ et $R_v$ un sous-rectangle vertical strict de $R$. Alors $R_h \cap R_v$ est non-vide ; de plus l'une des composantes connexes de $R_h \cap R_v$ contient à la fois un sous-rectangle vertical strict de $R_h$ et un sous-rectangle horizontal strict de $R_v$.
\end{lemme}

\begin{definition}
Soit $f$ un homéomorphisme de $X$, $R_1$ et $R_2$ deux rectangles dans $X$ et $V$ une composante connexe de $f(R_1)\cap R_2$. On dit que $V$ est une \emph{composante markovienne} si c'est à la fois un sous-rectangle vertical strict de $R_2$ et l'image par $f$ d'un sous-rectangle horizontal strict de $R_1$. On dit que l'intersection $f(R_1)\cap R_2$ est \emph{markovienne} si au moins l'une de ses composantes connexes est markovienne.
\end{definition}

\begin{lemme}\label{markov-récur}
Soient $f$ et $g$ deux homéomorphismes de $X$ et $R_1$, $R_2$ et $R_3$ trois rectangles de $X$. Supposons que les intersections $f(R_1)\cap R_2$ et $g(R_2)\cap R_3$ aient respectivement $k$ et $\ell$ composantes markoviennes. Alors l'intersection $(g\circ f)(R_1)\cap R_3$ possède au moins $k\ell$ composantes markoviennes (voir la figure \ref{dessin-markov}).
\end{lemme}

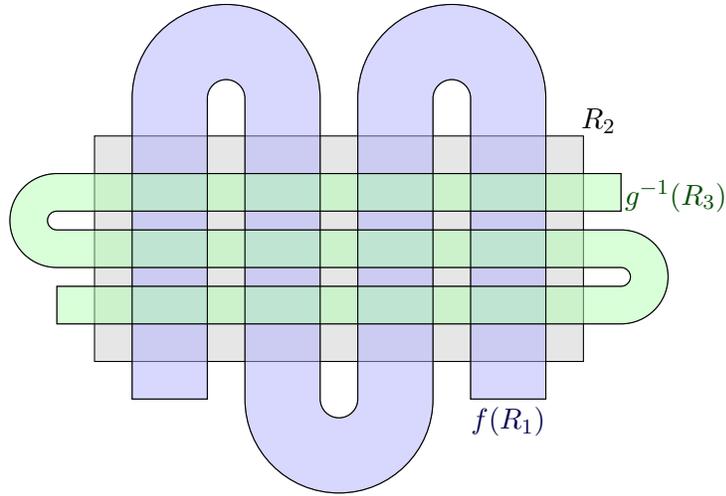
\begin{figure}
\begin{center}
\begin{tikzpicture}
\draw[fill=gray!20!white] (-3,-1.5) rectangle (3.5,1.5);
\draw[fill=blue!30!white, opacity = 0.5] (-2.5,-2) -- (-2.5,2) arc (180:0:1.25) -- (0,-2) arc (180:360:0.25) -- (0.5,2) arc (180:0:1.25) -- (3,2) -- (3,-2) -- (2,-2) -- (2,2) arc (0:180:0.25) -- (1.5,-2) arc (360:180:1.25) -- (-1,2) arc (0:180:0.25) -- (-1.5,-2) -- cycle;
\draw[fill=green!30!white, opacity=0.5] (-3.5,-1) -- (4,-1) arc (-90:90:0.625) -- (-3.5,.25) arc (270:90:0.125) -- (4,.5) -- (4,1) -- (-3.5,1) arc (90:270:0.625) -- (4,-.25) arc (90:-90:0.125) -- (-3.5,-.5) -- cycle;
\draw (-2.5,-2) -- (-2.5,2) arc (180:0:1.25) -- (0,-2) arc (180:360:0.25) -- (0.5,2) arc (180:0:1.25) -- (3,2) -- (3,-2) -- (2,-2) -- (2,2) arc (0:180:0.25) -- (1.5,-2) arc (360:180:1.25) -- (-1,2) arc (0:180:0.25) -- (-1.5,-2) -- cycle;
\draw (-3.5,-1) -- (4,-1) arc (-90:90:0.625) -- (-3.5,.25) arc (270:90:0.125) -- (4,.5) -- (4,1) -- (-3.5,1) arc (90:270:0.625) -- (4,-.25) arc (90:-90:0.125) -- (-3.5,-.5) -- cycle;
\node at (3.7,1.7) {$R_2$};
\node[color=green!30!black] at (4.73,0.7) {$g^{-1}(R_3)$};
\node[color=blue!30!black] at (2.5,-2.3) {$f(R_1)$};
\end{tikzpicture}
\caption{Intersection de $f(R_1)$ et $g^{-1}(R_3)$}\label{dessin-markov}
\end{center}
\end{figure}

\begin{lemme}\label{markov-voisin}
Soit $f$ un homéomorphisme de $X$, et $R_1$ et $R_2$ deux rectangles. Si l'intersection $f(R_1)\cap R_2$ a au moins $k$ composantes markoviennes, alors cela est encore vrai sur tout un voisinage de $f$ dans $\mathrm{Homeo}(X,\mu)$.
\end{lemme}

Maintenant que les notions de rectangle et d'intersection markovienne sont bien posées, on peut montrer le théorème suivant :

\begin{theoreme}\label{entropie-topologique}
Un élément générique de $\mathrm{Homeo}(X,\mu)$ a une entropie topologique infinie.
\end{theoreme}

L'idée de la preuve est d'approcher tout homéomorphisme par un autre dont un itéré va posséder un \og $k$-fer à cheval de Smale \fg. Ceci va donner une entropie aussi grande que l'on veut à notre approximation. Commençons par étudier l'entropie du fer à cheval lui-même :

\begin{definition}
Pour $k\in\N^*$, on dit qu'on homéomorphisme conservatif $f$ contient un \emph{$k$-fer à cheval de Smale} s'il existe un rectangle $R$ dont l'intersection avec son image par $f$ contient au moins $k$ composantes markoviennes disjointes.
\end{definition}

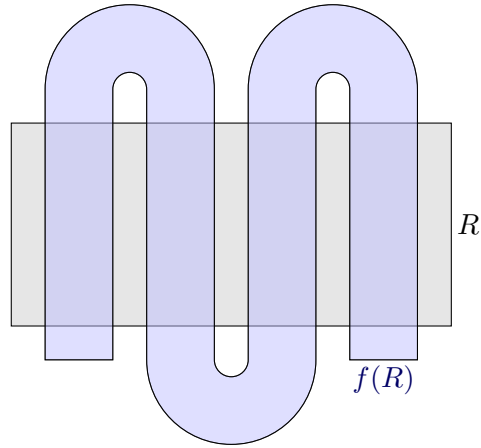
\begin{figure}
\begin{center}
\begin{tikzpicture}[scale=0.9]
\draw[fill=gray!20!white] (-3,-1.5) rectangle (3.5,1.5);
\draw[fill=blue!25!white, opacity = 0.5] (-2.5,-2) -- (-2.5,2) arc (180:0:1.25) -- (0,-2) arc (180:360:0.25) -- (0.5,2) arc (180:0:1.25) -- (3,2) -- (3,-2) -- (2,-2) -- (2,2) arc (0:180:0.25) -- (1.5,-2) arc (360:180:1.25) -- (-1,2) arc (0:180:0.25) -- (-1.5,-2) -- cycle;
\draw (-2.5,-2) -- (-2.5,2) arc (180:0:1.25) -- (0,-2) arc (180:360:0.25) -- (0.5,2) arc (180:0:1.25) -- (3,2) -- (3,-2) -- (2,-2) -- (2,2) arc (0:180:0.25) -- (1.5,-2) arc (360:180:1.25) -- (-1,2) arc (0:180:0.25) -- (-1.5,-2) -- cycle;
\node at (3.75,0) {$R$};
\node[color=blue!40!black] at (2.5,-2.3) {$f(R)$};
\end{tikzpicture}\caption{4-fer à cheval de Smale}\label{dessin-fer}
\end{center}
\end{figure}

\begin{rem}\label{34}
\begin{itemize}
\item La terminologie choisie fait référence au célèbre exemple de difféomorphisme de $\mathbf S^2$, découvert par S. Smale au début des années 60 et désormais connu sous le nom de fer à cheval de Smale. Avec notre terminologie, cet exemple est un 2-fer à cheval de Smale.
\item Par le lemme \ref{markov-voisin}, le fait de posséder un $k$-fer à cheval de Smale est stable par perturbation dans $\mathrm{Homeo}(X,\mu)$.
\item Par le lemme \ref{markov-récur}, si $f$ est un $k$-fer à cheval de Smale, alors $f^m$ est un $k^m$-fer à cheval de Smale.
\end{itemize}
\end{rem}

\begin{lemme}\label{entr-fer}
Tout $k$-fer à cheval de Smale a une entropie topologique supérieure à $\log k$.
\end{lemme}

\begin{proof}[Preuve du lemme \ref{entr-fer}] Soit $f$ un $k$-fer à cheval et $m\in\N^*$. Soit $R$ le rectangle associé à $f$. Le lemme \ref{markov-récur} assure alors que $f^m(R)\cap R$ possède au moins $k^m$ composantes markoviennes. Soient $x$ et $y$ deux points de cette intersection. Si ces deux points ne sont pas dans une même composante markovienne, il va exister $m'\le m$ tel que $f^{m-m'}(x)$ et $f^{m-m'}(y)$ soient dans des composantes différentes de $f(R)\cap R$. Cela se montre par récurrence sur~$m$ : soit $x$ et $y$ sont dans des composantes différentes de $f^{m-1}(R)\cap R$, si bien qu'on peut appliquer l'hypothèse de récurrence ; soit ils sont dans une même composante de $f^{m-1}(R)\cap R$, et donc $f^{m-1}(x)$ et $f^{m-1}(y)$ seront dans des composantes connexes différentes de $f(R)\cap R$ (en effet, par construction les composantes de $f^m(R)\cap R$ incluses dans une même composante de $f^{m-1}(R)\cap R$ sont des images par $f^{m-1}$ de sous-ensembles dans des composantes différentes de $f(R)\cap R$). La distance $d_m(x,y)$ est alors uniformément minorée par la distance minimale $\varepsilon_0$ entre deux composantes markoviennes de $f(R)\cap R$. 

Minorons maintenant l'entropie de $f$. Soit  $\varepsilon\in]0,\varepsilon_0[$, et $m \in \N^*$, on a :
\[C(m,\varepsilon)\ge k^m\]
($C(m,\varepsilon)$ est l'entier introduit dans la définition \ref{2.9}). En effet, on vient de voir que si on prenait $\varepsilon$ assez petit, les points dans des composantes markoviennes de $f^m(R)\cap R$ différentes sont $\varepsilon_0$-séparés ; on a aussi remarqué que $f^m(R)\cap R$ possède au moins $k^m$ composantes markoviennes distinctes. L'inégalité ci-dessus implique que l'entropie topologique de $f$ est supérieure à $\log k$.
\end{proof}

Finalement, par la technique de la modification locale, on approche tout homéomorphisme par un autre ayant une entropie arbitrairement grande :

\begin{lemme}\label{fer-perturb}
Soit $f\in\mathrm{Homeo}(X,\mu)$ et $\varepsilon>0$. Alors il existe un entier $p$ tel que pour tout entier $k$ non nul, il existe un homéomorphisme $g$ à distance au plus $\varepsilon$ de $f$ tel que $g^p$ possède un $k$-fer à cheval de Smale.
\end{lemme}

\begin{proof}[Preuve du lemme \ref{fer-perturb}] Par densité des homéomorphismes ayant un point périodique, on peut supposer que $f$ possède un point périodique, de période notée $p'$. En procédant comme dans la preuve de l'application \ref{p15}, on approche tout d'abord $f$ par un homéomorphisme $h$ tel que $h^p$ (avec $p=2p'$) soit égal à l'identité sur un voisinage du point périodique. Cet ouvert contient une boule $B$ disjointe de ses $p-1$ premiers itérés sur laquelle on définit un $k$-fer à cheval de Smale $\varphi$, égal à l'identité sur le bord de $B$. On étend $\varphi$ à tout $X$ par $\varphi = \mathrm{Id}$ en dehors de $B$ et pose $g = \varphi\circ h$ ; c'est l'homéomorphisme recherché.
\end{proof}

\begin{proof}[Preuve du théorème \ref{entropie-topologique}] L'ensemble des homéomorphismes ayant une entropie infinie est égal à l'intersection $\bigcap_{m\in \N} H_m$ où $H_m$ est l'ensemble des homéomorphismes ayant une entropie plus grande que $m$. Il suffit donc de montrer que $H_m$ est d'intérieur dense pour tout $m$.

Soit $f \in\mathrm{Homeo}(X,\mu)$ et $\varepsilon>0$. Le lemme \ref{fer-perturb} nous donne un entier $p$, on choisit alors $k \ge e^{mp}$, et on obtient $g\in\mathrm{Homeo}(X,\mu)$ $\varepsilon$-proche de $f$ tel que $g^p$ contienne un $k$-fer à cheval de Smale. Par la proposition \ref{entr-fer}, $g^p$ a une entropie topologique d'au moins $mp$, si bien que $g$ a une entropie d'au moins $m$. Et par la remarque \ref{34}, ceci reste vrai sur tout un voisinage de~$g$.
\end{proof}

\section{Généricité du mélange topologique fort}

Dans cette partie, nous montrons que le mélange topologique fort est générique. La preuve de ce résultat passe de nouveau par le théorème de modification locale (théorème \ref{extension-sphères}) et la notion d'intersection markovienne, qui permet de rendre la propriété \og $f^m(U)\cap V$ est non vide pour tout $m$ assez grand \fg~ouverte. Notons que la généricité du mélange topologique fort est d'autant plus intéressante que nous montrerons au chapitre \ref{chapcycl} que le mélange fort, au sens ergodique, est générique. À notre connaissance, ce résultat n'avait pas été prouvé auparavant.

\begin{definition}
Un homéomorphisme $f$ est dit \emph{topologiquement fortement mélangeant} si quels que soient les ouverts non vides $U$ et $V$, il va exister un entier $N$ tel que pour tout $m\ge N$, l'intersection $f^m (U)\cap V$ soit non vide.
\end{definition}

\begin{theoreme}\label{th-mel-fort}
Le mélange topologique fort est générique dans $\mathrm{Homeo}(X,\mu)$.
\end{theoreme}

Pour cela, il \og suffit \fg~de montrer que quels que soient les ouverts non vides $U$ et $V$, l'ensemble
\[\mathcal{H}(U,V) = \big\{f\in\mathrm{Homeo}(X,\mu) \mid \exists N\in \N : \forall m\ge N, f^m(U)\cap V\neq\emptyset  \big\}\]
est d'intérieur dense pour tous les ouverts non vides $U$ et $V$ ; on conclura en prenant une base dénombrable d'ouverts de $X$.

Le problème est que la propriété \og $f^m(U)$ intersecte $V$ pour tout $m$ assez grand \fg~n'est pas ouverte, autrement dit $\mathcal{H}(U,V)$ n'est pas forcément ouvert. Pour trouver des homéomorphismes dans l'\emph{intérieur} de $\mathcal{H}(U,V)$ qui vont former un ensemble dense, nous allons utiliser la notion d'intersection markovienne. Cette notion permet d'isoler une propriété ouverte, qui ne fait intervenir qu'un nombre fini d'itérés de $f$, mais qui assure que $f^m(U)\cap V$ est non-vide pour tout $m$ assez grand.

\begin{prop}\label{creation-rectangles}
Soit $f\in\mathrm{Homeo}(X,\mu)$, $U$ et $V$ deux ouverts de $X$ et $\varepsilon>0$. Alors il existe deux entiers premiers entre eux $p$ et $q$, un entier $k$, deux rectangles $R_x\subset U$ et $R_y\subset U$ et un homéomorphisme préservant la mesure $g$ vérifiant $d_{\mathit{forte}}(f,g)<\varepsilon$, tels que les intersections $g^p(R_x)\cap R_x$, $g^p(R_x)\cap R_y$, $g^q(R_y)\cap R_x$ et $g^q(R_y)\cap R_y$ soient markoviennes, et que $g^k(R_x\cup R_y)\subset V$.
\end{prop}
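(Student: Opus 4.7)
Le plan est de combiner le lemme~\ref{lem m�lange topo faible} (approximation par des hom\'eomorphismes poss\'edant plusieurs orbites p\'eriodiques denses de p\'eriodes premi\`eres entre elles) avec la technique de modification locale du th\'eor\`eme~\ref{extension-sph�res}, afin de cr\'eer localement une structure de type fer \`a cheval qui produira les quatre intersections markoviennes attendues.

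On appliquerait d'abord le lemme~\ref{lem m�lange topo faible} avec $N=2$ pour obtenir $f' \in \mathrm{Homeo}(X,\mu)$ v\'erifiant $d_{\mathit{forte}}(f,f') \le \varepsilon/2$, ainsi que deux points p\'eriodiques $x_0, y_0$ de p\'eriodes $p$ et $q$ premi\`eres entre elles et sup\'erieures \`a $1$, d'orbites $\delta$-denses avec $\delta$ choisi bien plus petit que le diam\`etre de $U$. Comme $p$ et $q$ sont premiers entre eux et strictement sup\'erieurs \`a $1$, les orbites de $x_0$ et $y_0$ sont disjointes. Quitte \`a remplacer $x_0$ et $y_0$ par des it\'er\'es convenables, on peut supposer que ces deux points se trouvent dans une petite boule $B \subset U$, suffisamment proches l'un de l'autre pour pouvoir y choisir deux rectangles disjoints $R_x, R_y$ autour de $x_0$ et $y_0$, contenus dans une m\^eme carte o\`u l'on identifie $\mu$ \`a la mesure de Lebesgue \emph{via} le th\'eor\`eme~\ref{mesures-hom�o}. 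Ensuite, par $\delta$-densit\'e, il existe des entiers $a \in \{1,\dots,p-1\}$ et $b \in \{1,\dots,q-1\}$ tels que $(f')^a(x_0) \in V$ et $(f')^b(y_0) \in V$ ; le th\'eor\`eme des restes chinois fournit alors un entier positif $k$ avec $k \equiv a \pmod{p}$ et $k \equiv b \pmod{q}$. Quitte \`a r\'etr\'ecir encore $R_x$ et $R_y$, chaque it\'er\'e $(f')^i(R_x)$ (resp.~$(f')^i(R_y)$), pour $0 \le i \le k$, est contenu dans un petit voisinage de $(f')^i(x_0)$ (resp.~$(f')^i(y_0)$), ces voisinages \'etant deux \`a deux disjoints, et l'on a en particulier $(f')^k(R_x \cup R_y) \subset V$.

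Le c\oe ur de la preuve consiste alors \`a modifier $f'$ seulement dans deux petites boules $W_x$ centr\'ee en $(f')^{p-1}(x_0)$ et $W_y$ centr\'ee en $(f')^{q-1}(y_0)$, prises disjointes l'une de l'autre et disjointes des it\'er\'es $(f')^i(R_x \cup R_y)$ pour $0 \le i < k$, sauf de $(f')^{p-1}(R_x) \subset W_x$ et $(f')^{q-1}(R_y) \subset W_y$ que la modification doit pr\'ecis\'ement transformer. Dans la carte autour de $x_0$, $f'$ envoie le petit rectangle $(f')^{p-1}(R_x)$ au voisinage de $x_0$~; on remplace $f'$ \`a l'int\'erieur de $W_x$ par un hom\'eomorphisme conservatif qui \'etire et replie ce rectangle en une bande fine traversant verticalement $R_x$ puis $R_y$, ce qui fournit une composante markovienne de $g^p(R_x) \cap R_x$ et une de $g^p(R_x) \cap R_y$. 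Le th\'eor\`eme~\ref{extension-sph�res} permet alors de recoller cette modification \`a $f'$ en dehors de $W_x$, les conditions de mesure et d'orientation \'etant assur\'ees par le caract\`ere volume-pr\'eservant et orient\'e de la construction locale dans la carte. Une modification sym\'etrique dans $W_y$ fournit les composantes markoviennes de $g^q(R_y) \cap R_x$ et de $g^q(R_y) \cap R_y$.

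La principale difficult\'e sera la construction explicite de ce fer \`a cheval local tout en gardant la perturbation $\varepsilon$-petite en distance $d_{\mathit{forte}}$ : il faut qu'une modification d'amplitude inf\'erieure \`a $\varepsilon/2$ suffise \`a \'etirer $(f')^{p-1}(R_x)$ en une bande traversant deux fois $R_x \cup R_y$. Elle se r\'esoudra en choisissant $R_x, R_y$ et la boule $B$ de diam\`etres bien plus petits que $\varepsilon$, de sorte que tout le fer \`a cheval soit confin\'e \`a une r\'egion de diam\`etre au plus $\varepsilon/2$, ce qui contr\^ole automatiquement le d\'eplacement de chaque point par la modification et assure $d_{\mathit{forte}}(f,g) < \varepsilon$.
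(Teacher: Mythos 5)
Your overall strategy matches the paper's: perturb $f$ via lemme \ref{lem m�lange topo faible} to obtain two periodic points of coprime periods in a small ball inside~$U$, then use the local modification theorem \ref{extension-sph�res} to manufacture the four Markovian intersections, keeping the perturbation small because everything happens in a region of diameter $\ll\varepsilon$. However, the Chinese remainder step for producing $k$ creates a genuine gap.

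You pick $a\in\{1,\dots,p-1\}$ and $b\in\{1,\dots,q-1\}$ with $(f')^a(x_0)\in V$, $(f')^b(y_0)\in V$, then let $k\equiv a\pmod p$, $k\equiv b\pmod q$. But that $k$ can be as large as $pq-1$. Once $k\ge p$ (or $\ge q$), (i) your claim that the neighbourhoods of $(f')^i(x_0)$, $0\le i\le k$, are pairwise disjoint is false, since the orbit is $p$-periodic; and (ii) more seriously, $g^k(R_x\cup R_y)\subset V$ is no longer justified: your modification changes what happens to $R_x$ at time $p$ --- $g^p(R_x)$ is the stretched band, not $(f')^p(R_x)$ --- so for $k\ge p$ the $g$-orbit of $R_x$ has left the $f'$-orbit and you have no control over $g^k(R_x)$. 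The paper avoids the CRT altogether by exploiting that $x$ and $y$ lie in the same small ball: pick $k<p$ with $g^k(x)\in V$ (possible since the orbit is $\delta$-dense modulo $p$), then take the ball $B\ni x$ inside $g^{-k}(V)\cap U$, so $g^k(y)\in V$ for any $y\in B$ automatically; choosing $q>p$ (periods are powers of distinct primes, so can be made arbitrarily large) forces $k<\min(p,q)$, and then the first $k$ iterates of $R_x\cup R_y$ under the final $g$ coincide with the unmodified ones because they never meet the support of the modification, whence $g^k(R_x\cup R_y)\subset g^k(B)\subset V$.

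A secondary point: you merge ``return'' and ``stretch'' into one local modification per rectangle, placed in two disjoint balls $W_x$, $W_y$. The paper instead first makes $R_x$, $R_y$ exactly periodic (two applications of Theorem~\ref{extension-sph�res} in $g^{-1}(D_x)$ and $g^{-1}(D_y)$), then composes once with a single stretching homeomorphism $h$ supported in a connected domain $D\supset R_x\cup R_y$ and equal to the identity on $\partial D$. Since $h$ acts on $D$ at once, it can freely carry both $R_x$ and $R_y$ into long bands crossing both rectangles; in your version the two bands are confined to the disjoint images $f'(W_x)$ and $f'(W_y)$, each of which must still cross both $R_x$ and $R_y$ as full vertical subrectangles. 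That is not obviously impossible, but the geometry is delicate and you do not address it, whereas the paper's decomposition sidesteps the constraint entirely.
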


Cette proposition donne facilement le théorème \ref{th-mel-fort} :

\begin{proof}[Preuve du théorème \ref{th-mel-fort}] Il suffit de montrer que quels que soient les ouverts non vides $U$ et $V$, l'intérieur de l'ensemble $\mathcal{H}(U,V)$ est dense.
Soient donc deux ouverts non vides $U$ et $V$, $f\in\mathrm{Homeo}(X,\mu)$ et $\varepsilon>0$.

La proposition \ref{creation-rectangles} nous donne deux entiers premiers entre eux $p$ et $q$, un entier $k$, deux rectangles $R_x\subset U$ et $R_y\subset U$, et un homéomorphisme préservant la mesure $g$ tel que $d_{\mathit{forte}}(f,g)<\varepsilon$.

Par le lemme \ref{markov-voisin}, les conclusions de la proposition sont encore vraies sur un voisinage de $g$. Soit donc $h$ dans ce voisinage et $m\ge pq$ ; montrons que $h^m(U)\cap R_x$ est non vide.

On peut écrire $m = \alpha p+\beta q$, avec $\alpha$ et $\beta$ positifs\footnote{\label{note-page}En effet, puisque $p$ et $q$ sont premiers entre eux, le théorème de Bézout affirme qu'il existe $\alpha$ et $\beta$ tels que $m$ s'écrive $m = \alpha p+\beta q$. Si on décompose de plus $\alpha$ en $\alpha = iq+j$, avec $0\le j<q$ et pose $\alpha'=\alpha-iq$ et $\beta'=\beta+ip$, on a toujours $m = \alpha' p+\beta' q$ mais aussi $\alpha'\ge 0$ et $\beta'q>m-pq\ge 0$.}. En appliquant $\beta-1$ fois le lemme \ref{markov-récur} à $R_y$ et $h^q$, on en déduit que l'intersection $(h^q)^{\beta-1}(R_y)\cap R_y$ est markovienne. Cela implique, toujours par le lemme \ref{markov-récur}, que l'intersection $(h^q)^{\beta}(R_y)\cap R_x$ est markovienne. Enfin, on applique $\alpha$ fois le lemme pour pouvoir dire que l'intersection $(h^p)^{\alpha}(h^q)^{\beta}(R_y)\cap R_x = h^m(R_y)\cap R_x$ est markovienne ; en particulier elle est non vide.

Prenons maintenant $m'\ge pq+k$. On pose $m= m'-k$ et on applique ce qu'on a montré à $m$. Alors $h^m(R_y)\cap R_x$ est non vide, et puisque l'intersection $h^k(R_x)\subset V$, $h^{m'}(R_y)\cap V$ est non vide. Finalement, $h^{m'}(U)\cap V$ est non vide ; l'ensemble $\mathcal{H}(U,V)$ est d'intérieur dense.
\end{proof}

Il ne reste plus qu'à prouver la  proposition \ref{creation-rectangles} :

\begin{proof}[Preuve de la proposition \ref{creation-rectangles}] Prenons deux ouverts non vides $U$ et $V$, ainsi qu'un homéomorphisme préservant la mesure $f$. Par densité des homéomorphismes ayant un ensemble dense de points périodiques de période une puissance de $2$ (lemme \ref{lem mélange topo faible}), on peut perturber $f$ en un homéomorphisme que l'on appelle $g$, de telle sorte qu'il existe un point périodique $x\in U$ de période $p$ une puissance de $2$ et un entier $k$ tels que $g^k(x)\in V$. Prenons une boule $B$ non vide de diamètre $\varepsilon$ contenant $x$ et incluse dans $g^{-k}(V)\cap U$. Toujours par le lemme \ref{lem mélange topo faible}, on peut supposer qu'il y a un point périodique $y$ dans $B$ de période $q$ une puissance de $3$. Si on récapitule, on a :
\begin{eqnarray*}
x\in B \quad & g^p(x) = x\\
y\in B \quad & g^q(y) = y\\
B\subset U \quad & g^k(B)\subset V
\end{eqnarray*}

Quitte à réduire $\varepsilon$ dès le début, on peut supposer que $B$ est dans un ouvert trivialisant de la variété et donc découper $B$ par des subdivisions. Si la subdivision est assez fine, les points de l'orbite de $x$, et ceux de l'orbite de $y$, seront tous dans des cubes différents et qui ont leurs frontières deux à deux disjointes. Ainsi il existe un ouvert $D$ homéomorphe à une boule dont la frontière est épaississable, et contenant $x$ et $y$, mais à une distance strictement positive des itérés de $x$ : $(g(x),\dots,g^{p-1}(x))$ et de ceux de $y$ : $(g(y),\dots,g^{q-1}(y))$ (en effet il suffit de relier $x$ et $y$ par un chemin ne passant par aucun des cubes contenant les itérés indésirables, ce qui est possibles car ces cubes sont isolés ; $D$ est alors l'union des cubes par lesquels passe ce chemin, voir la figure \ref{cubcub}).

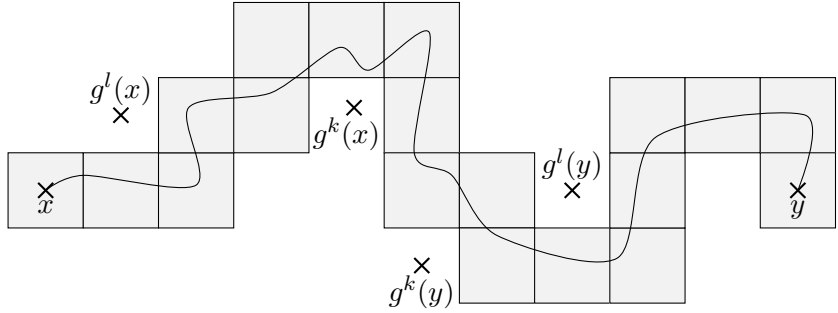
\begin{figure}
\begin{center}
\begin{tikzpicture}[scale=1]
\draw[fill=gray!10!white] (0,0) rectangle (1,1);
\draw[fill=gray!10!white] (1,0) rectangle (2,1);
\draw[fill=gray!10!white] (2,0) rectangle (3,1);
\draw[fill=gray!10!white] (2,1) rectangle (3,2);
\draw[fill=gray!10!white] (3,1) rectangle (4,2);
\draw[fill=gray!10!white] (3,2) rectangle (4,3);
\draw[fill=gray!10!white] (4,2) rectangle (5,3);
\draw[fill=gray!10!white] (5,2) rectangle (6,3);
\draw[fill=gray!10!white] (5,1) rectangle (6,2);
\draw[fill=gray!10!white] (5,0) rectangle (6,1);
\draw[fill=gray!10!white] (6,0) rectangle (7,1);
\draw[fill=gray!10!white] (6,-1) rectangle (7,0);
\draw[fill=gray!10!white] (7,-1) rectangle (8,0);
\draw[fill=gray!10!white] (8,-1) rectangle (9,0);
\draw[fill=gray!10!white] (8,0) rectangle (9,1);
\draw[fill=gray!10!white] (8,1) rectangle (9,2);
\draw[fill=gray!10!white] (9,1) rectangle (10,2);
\draw[fill=gray!10!white] (10,1) rectangle (11,2);
\draw[fill=gray!10!white] (10,0) rectangle (11,1);
\draw plot[smooth] coordinates{(0.5,0.5) (1,0.7) (2.5,0.57) (2.4,1.6) (3.5,1.8) (4.4,2.4) (4.8,2.1) (5.6,2.6) (5.4,1) (5.9,0.7) (6.5,-.1) (8.1,-.4) (8.6,1.2) (10.6,1.5) (10.5,0.5)};
\draw[thick] (0.4,0.6) -- (0.6,0.4);
\draw[thick] (0.4,0.4) -- (0.6,0.6);
\node[below] at (0.5,0.5) {$x$};
\draw[thick] (1.4,1.6) -- (1.6,1.4);
\draw[thick] (1.4,1.4) -- (1.6,1.6);
\node[above] at (1.5,1.5) {$g^l(x)$};
\draw[thick] (4.5,1.7) -- (4.7,1.5);
\draw[thick] (4.5,1.5) -- (4.7,1.7);
\node[below] at (4.5,1.6) {$g^k(x)$};
\draw[thick] (5.4,-0.6) -- (5.6,-0.4);
\draw[thick] (5.4,-0.4) -- (5.6,-0.6);
\node[below] at (5.5,-0.5) {$g^k(y)$};
\draw[thick] (7.4,0.6) -- (7.6,0.4);
\draw[thick] (7.4,0.4) -- (7.6,0.6);
\node[above] at (7.5,0.5) {$g^l(y)$};
\draw[thick] (10.4,0.4) -- (10.6,0.6);
\draw[thick] (10.4,0.6) -- (10.6,0.4);
\node[below] at (10.5,0.5) {$y$};
\end{tikzpicture}
\caption{Construction de l'ensemble $D$}\label{cubcub}
\end{center}
\end{figure}

Alors il existe (par uniforme continuité) un petit rectangle $R_x$ contenant $x$ et inclus dans $D$, et un petit rectangle $R_y$ contenant $y$ et inclus dans $D$, tels que $g^p(R_x)\subset D$ et $g^q(R_y)\subset D$, mais que les itérés $(g(R_x),\dots ,g^{p-1}(R_x))$ et $(g(R_y),\dots ,g^{q-1}(R_y))$ soient disjoints de $D$. On suppose de plus que chacun de $g^p(R_x)$ et $g^q(R_y)$ est de diamètre inférieur à la demi distance de $x$ à $y$, si bien que l'on peut partager $D$ en trois ensembles disjoints (que l'on supposera unions de cubes de la subdivision) dont les frontières sont des sphères épaississables $D_x$, $D_y$ et $D'$, tels que $R_x\cup g^p(R_x) \subset D_x$, $R_y\cup g^q(R_y) \subset D_y$, et que $D_x$ et $D_y$ n'aient pas de frontière en commun. Il est évident que $R_x$ et $R_y$ sont eux-mêmes épaississables.

Plaçons-nous maintenant dans l'ouvert $g^{-1}(D)$. Par hypothèse, il contient $g^{p-1}(R_x)$ et $g^{q-1}(R_y)$, mais pas les itérés précédents de $R_x$ et de $R_y$. On utilise alors le théorème \ref{extension-sphères} deux fois. On l'applique tout d'abord à
\[\begin{array}{rlrl}
A_1 & = g^{p-1}(R_x)\quad          & A_2 & = g^{-1}(D_x)^\complement\\
B_1 & = R_x \quad                  & B_2 & = D_x^\complement\\
f_1 & =\sigma\circ g^{-(p-1)}\quad & f_2 & = g
\end{array}\]
avec $\sigma$ un renversement éventuel de l'orientation. On fait de même pour le rectangle $R_y$ (en remplaçant $x$ par $y$ et $p$ par $q$). On se retrouve maintenant avec une application $g'$, que l'on peut prendre aussi proche que l'on veut de $g$, vérifiant $g'^p(R_x) = R_x$ et $g'^q(R_y) = R_y$.

Regardons maintenant ce qui se passe sur $D$. Il est facile de trouver un homéomorphisme $h$ de $D$ égal à l'identité sur $\partial D$, tel que $h(R_x)\cap R_x$ et $h(R_y)\cap R_x$ soient des sous-rectanges horizontaux de $R_x$ et que $h(R_x)\cap R_y$ et $h(R_y)\cap R_y$ soient des sous-rectanges horizontaux de $R_y$ (voir figure \ref{construction-g}).

\begin{figure}
\begin{center}
\begin{tikzpicture}
\draw[fill=gray!10!white] (1,-1.5) rectangle (4,1.5);
\draw[fill=gray!10!white] (-4,-1.5) rectangle (-1,1.5);
\draw[fill=gray!20!white, opacity = 0.5] (-4.5,0.25) rectangle (4.5,1.25);
\draw[fill=gray!20!white, opacity = 0.5] (-4.5,-0.25) rectangle (4.5,-1.25);
\draw (-4.5,0.25) rectangle (4.5,1.25);
\draw (-4.5,-0.25) rectangle (4.5,-1.25);
\draw (0.9,-0.1) -- (1.1,0.1);
\draw (1.1,-0.1) -- (0.9,0.1);
\draw (-4.4,0.65) -- (-4.6,0.85);
\draw (-4.6,0.65) -- (-4.4,0.85);
\draw (-4.1,0.1) -- (-3.9,0);
\draw (-4.1,0) -- (-3.9,-0.1);
\draw (-4.6,-0.75) -- (-4.4,-0.85);
\draw (-4.6,-0.65) -- (-4.4,-0.75);

\node at (0,1.5) {$h(R_y)$};
\node at (0,-1.55) {$h(R_x)$};
\node at (2.5,1.75) {$R_y$};
\node at (-2.5,1.75) {$R_x$};
\end{tikzpicture}
\caption{Construction de l'homéomorphisme $h$}\label{construction-g}
\end{center}
\end{figure}
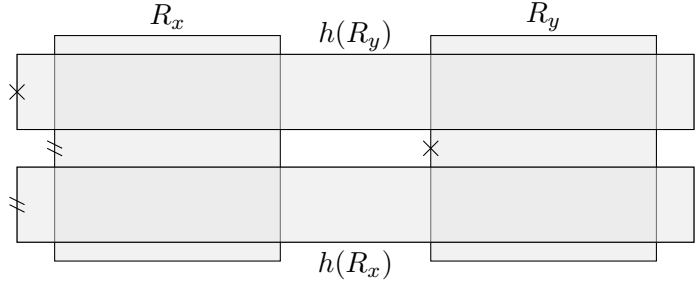

Posons maintenant $g''=h\circ g'$. Alors les intersections $g''^p(R_x)\cap R_x$, $g''^p(R_x)\cap R_y$, $g''^q(R_y)\cap R_x$ et $g''^q(R_y)\cap R_y$ sont markoviennes et $g''^k(R_x\cup R_y)$ est inclus dans $V$. Les conclusions de la proposition sont démontrées.
\end{proof}

\chapter[Approximations en topologie faible dans $\mathrm{Homeo}(X,\mu)$]{Approximations des homéomorphismes par des permutations en topologie faible}\label{chapcycl}

Dans les chapitres précédents, nous avons observé que l'approximation des homéomorphismes par des permutations joue un rôle important dans certaines preuves de généricité. Cette idée d'approximation est apparue dès les années~40, lorsque P. Halmos l'a utilisée pour montrer la généricité du mélange faible \cite{HalmMix}. Une vingtaine d'années plus tard, A. Katok et A. Stepin, entre autres, se sont rendu compte que l'étude systématique de la \emph{vitesse} de cette approximation, mesurée à l'aide de la distance faible cette fois-ci, permet d'obtenir bien d'autres résultats de généricité dans $\mathrm{Homeo}(X,\mu)$ \cite{KatokS3}. De la même manière que la vitesse d'approximation d'un réel par les rationnels fournit des indications sur ce réel, la vitesse d'approximation d'un automorphisme (en particulier d'un homéomorphisme) conservatif par les permutations cycliques peut être exploitée en vue de son étude dynamique. Ce lien assez fort avec l'approximation diophantienne est d'ailleurs sous-jacent dans certaines preuves (voir \cite{Yuz}). Le théorème fondamental pour notre étude est le fait que génériquement, un homéomorphisme admet des approximations par des permutations à une vitesse donnée arbitraire (théorème \ref{approxmeo}). Ce théorème a d'abord été prouvé pour l'ensemble des automorphismes en 1968 \cite{KatokS1}, puis pour l'ensemble des homéomorphismes en 1970 par A. Katok et A. Stepin \cite{KatokS3}. Les résultats du type \og si un homéomorphisme est approché par des permutations de tel type (souvent cycliques) à une vitesse au moins $\vartheta$, alors il possède telle propriété \fg~se transforment alors automatiquement en résultats de généricité\footnote{Néanmoins, comme il se passe exactement la même chose dans $\mathrm{Auto}(X,\mu)$ qui \emph{contient} $\mathrm{Homeo}(X,\mu)$, nous énoncerons ces résultats pour les automorphismes.} dans $\mathrm{Homeo}(X,\mu)$, qui est muni rappelons-le de la topologie forte.

Nous avons choisi de présenter, outre le théorème \ref{approxmeo}, bon nombre des résultats de généricité obtenus par cette méthode\footnote{La plupart sont dus à A. Katok et A. Stepin.} : un automorphisme admettant une approximation cyclique à une vitesse assez grande, donc un homéomorphisme générique (voir \cite{Katok}, \cite{KatokS1}, \cite{KatokS2} et \cite{KatokT}) :
\begin{itemize}
\item est ergodique (théorème \ref{OxtoUlApprox}), cela donne une nouvelle preuve du théorème d'Oxtoby-Ulam,
\item est non fortement mélangeant (théorème \ref{MélApprox}),
\item est rigide (théorème \ref{rigidgéné}),
\item est faiblement mélangeant (théorème \ref{mélfaiblgéné}),
\item a une entropie métrique nulle (théorème \ref{entrmetnullgéné}),
\item est de rang un (théorème \ref{rang1F}),
\item est à spectre simple (théorème \ref{specsimplgéné}),
\item est standard (théorème \ref{pffff}),
\item a son type spectral sans atome (théorème \ref{speccon}) et singulier par rapport à la mesure de Lebesgue (théorème \ref{singLeb}).
\end{itemize}
Notons que puisque la vitesse d'approximation est mesurée à l'aide de la distance faible, on obtient naturellement des résultats sur les propriétés ergodiques\footnote{Bien que certains résultats topologiques puissent être obtenus ainsi, par exemple la transitivité topologique, qui de toute façon découle de l'ergodicité.}.

\section[Généricité et approximation à vitesse fixée]{Généricité des homéomorphismes admettant une approximation à vitesse fixée}

On se donne $\vartheta : \N\to ]0,+\infty[$ une fonction décroissante tendant vers 0 en~$+\infty$ qui sera la \emph{vitesse} d'approximation. Comme aux chapitres précédents, on fixe une application $\phi : I^n\to X$ donnée par le corollaire \ref{Brown-mesure}, ce qui permet de choisir une suite $(\D_m)_{m\in\N}$ de subdivisions dyadiques de $(X,\mu)$ (image par $\phi$ d'une suite de subdivisions dyadiques de $I^n$) et de définir une notion de permutation dyadique sur ces subdivisions.

\begin{definition}\label{defappcycl}
Soit $f\in \mathrm{Auto}(X,\mu)$. On dit que $f$ admet une \emph{approximation cyclique à vitesse $\vartheta$} s'il existe une suite strictement croissante d'entiers positifs $(m_k)_{k\ge 0}$ et pour tout $k\ge 0$ une permutation cyclique dyadique d'ordre $m_k$ notée $f_k$ telles que, notant $(C_{k,i})_{1\le i \le q_{m_k}}$ les cubes de la subdivision dyadique $\D_{m_k}$, on ait :
\[\sum_{i=1}^{q_{m_k}} \mu\big(f(C_{k,i})\Delta f_k(C_{k,i})\big)\le\vartheta(q_{m_k}).\]
\end{definition}

On peut maintenant énoncer le théorème principal de cette partie :

\begin{theoreme}[Katok, Stepin]\label{approxmeo}
L'ensemble des éléments de $\mathrm{Homeo}(X,\mu)$ admettant une approximation cyclique à la vitesse $\vartheta$ est un $G_\delta$ dense.
\end{theoreme}

La preuve de ce théorème, tirée de \cite{KatokS3}, est à rapprocher de celle, plus simple, de la généricité des nombres de Liouville. \label{Liouv}L'ensemble des réels $\alpha$ approchés à une vitesse $\vartheta$ par des rationnels sous forme irréductible $p/q$ est l'ensemble
\[\bigcap_{m\in\N}\bigcup_{q\ge m}\bigcup_{p\in\N}\left\{\alpha\mid \left| \frac{p}{q}-\alpha\right|<\vartheta(q)\right\},\]ce qui l'exprime sous la forme d'un $G_\delta$ dense\footnote{Pour plus de précisions sur les normbres de Liouville, l'approximation diophantienne et les espaces de Baire, voir \cite{7}.}. Ici, la propriété de densité des rationnels parmi les réels est remplacée par le lemme \ref{4.2}, obtenu par A. Katok et A.~Stepin dans \cite{KatokS3} :

\begin{lemme}\label{4.2}
Soit $f\in \mathrm{Homeo}(X,\mu)$ et $\varepsilon>0$. Alors il existe un entier $m\ge 0$ tel que si on note $C_i$ les cubes de la subdivision dyadique $\D_m$, quel que soit $\delta>0$, il existe $g\in\mathrm{Homeo}(X,\mu)$, ainsi que des cubes $c_i\subset C_i$ de taille constante, tels que :
\begin{enumerate}[(1)]
\item $d_{\mathit{forte}}(f,g)\le\varepsilon$,
\item pour tout $i$, $g_{|c_i}$ est égal à la composée de la translation\footnote{Voir la définition \ref{translat}.} allant de $c_i$ vers $c_{i+1}$ avec $\sigma$ (où $\sigma$ est soit l'identité, soit le renversement d'une coordonnée, selon que $f$ préserve l'orientation ou non),
\item pour tout $i$, $\mu(c_i)\ge\frac{1-\delta}{q_m}$, autrement dit $c_i$ remplit une proportion d'au moins $1-\delta$ de $C_i$.
\end{enumerate}
\end{lemme}

Le lemme \ref{4.2} exprime que tout homéomorphisme est approché par un autre homéomorphisme qui est égal à une permutation cyclique sur un ensemble de mesure arbitrairement grande. Nous présentons une preuve de ce lemme, due à S. Alpern et utilisant l'annulus theorem \cite{AlpernOuique}, qui nous semble plus claire et plus simple que la preuve originale de A. Katok et A. Stepin.

\begin{proof}[Preuve du lemme \ref{4.2}] Soit $f\in \mathrm{Homeo}(X,\mu)$ et $\varepsilon>0$. Le lemme \ref{lemmetrans} nous donne un homéomorphisme $h$ à distance au plus $\varepsilon/2$ de $f$ ayant une orbite cyclique constituée de centres $x_i$ de cubes $C_i$ d'une subdivision dyadique $\D_m$. On peut supposer que la taille des cubes de $\D_m$ n'excède pas $\varepsilon/2$. On remplace alors localement $h$ au voisinage de chaque $x_i$ par la composée d'une translation (définie on le rappelle à l'aide de l'application $\phi$ du corollaire \ref{Brown-mesure}) avec $\sigma$ (qui est l'identité ou bien le renversement d'une coordonnée, de telle manière que soit $h$ et $\sigma$ préservent simultanément l'orientation, soit la renversent simultanément). Pour cela, on utilise le théorème~\ref{extension-sphères}, qui permet de remplacer $h$, au voisinage de chaque $x_i$, par la translation allant de $x_i$ à $x_{i+1}$, translation qui est bien définie parce que l'application $\phi$ du corollaire~\ref{Brown-mesure} est un homéomorphisme sur $I^n\setminus \partial I^n$, et parce que les points $x_i$ sont tous dans $X\setminus \phi(\partial I^n)$. On obtient alors un homéomorphisme $h'$ qui est $\varepsilon/2$-proche de $h$ et qui vérifie simultanément les points (1) et (2) du lemme sur des cubes $c_i^0$.

Reste à obtenir le point (3), autrement dit à remplacer les cubes $c_i^0$ par des cubes $c_i$ vérifiant l'inégalité $\mu(c_i)\ge\frac{1-\delta}{q_m}$. Pour ce faire, on prend des cubes $c_i$ comme définis dans l'énoncé du lemme, avec $\mu(c_i)\ge\frac{1-\delta}{q_m}$ et on construit un homéomorphisme $\Psi : X\to X$ qui fixe globalement chaque cube $C_i$ et qui envoie chaque petit cube $c_i^0$ sur le cube $c_i$. Pour définir l'homéomorphisme $\Psi$, on commence par se placer dans le cas du cube $I^n$ muni de la mesure de Lebesgue à l'aide de l'application $\phi$ du corollaire \ref{Brown-mesure}, ce qui ne posera pas de problème par la suite vu que la fonction $\Psi$ sera égale à l'identité au voisinage du bord de tout cube $C_i$ ; on pourra donc supposer $\phi$ uniformément continue, et les inégalités des points (1) et (3) du lemme seront conservées\footnote{À proprement parler elles ne seront pas conservées en elles-mêmes, mais les conclusions du lemme resteront vraies.}. Quitte à composer par un homéomorphisme conservatif\footnote{Un tel homéomorphisme est donné par exemple par l'application du théorème \ref{extension-sphères}.} envoyant $C_1$ et $c_1^0$ sur des boules concentriques $B_1$ et $b_1^0$, on peut définir notre application sur $B_1$ plutôt que sur $C_1$. On définit alors la fonction $\widetilde\Psi : B_1\to B_1$, continue et de classe $C^1$ sur $B_1\setminus \partial b_1^0$, et telle que (voir la figure \ref{Jacques}) :
\begin{itemize}
\item sur $b_1^0$, $\widetilde\Psi$ soit une homothétie de centre $x_1$ qui envoie $b_1^0$ sur la boule $(1-\delta)B_1$ (c'est à dire l'image de $B_1$ par une homothétie de centre $x_1$ de rapport $1-\delta$),
\item $\widetilde\Psi$ ait un jacobien constant sur $B_1\setminus b_1^0$,
\item $\widetilde\Psi$ soit l'identité sur la frontière de $B_1$.
\end{itemize}

\begin{figure}
\begin{center}
\begin{tikzpicture}[scale=1.1]
\draw (0,0) circle (2);
\draw[fill=gray!25!white,draw=black] (0,0) circle (1);
\draw (7,0) circle (2);
\draw[fill=gray!10!white,draw=black] (7,0) circle (1.8);
\draw[->, thick, color=red!30!black] (.4,0) -- (1.4,0);
\draw[->, thick, color=red!30!black] (0,.4) -- (0,1.4);
\draw[->, thick, color=red!30!black] (-.4,0) -- (-1.4,0);
\draw[->, thick, color=red!30!black] (0,-.4) -- (0,-1.4);
\draw[->, thick, color=red!30!black] (.283,.283) -- (.99,.99);
\draw[->, thick, color=red!30!black] (-.283,.283) -- (-.99,.99);
\draw[->, thick, color=red!30!black] (.283,-.283) -- (.99,-.99);
\draw[->, thick, color=red!30!black] (-.283,-.283) -- (-.99,-.99);
\draw[->, semithick] (2.2,0) .. controls (3.2,.3) and (3.8,.3) .. (4.7,0);
\node at (3.5,.6) {$\widetilde\Psi$};
\node at (0,0) {$b_1^0$};
\node at (1.5,-.5) {$B_1$};
\node at (7,0) {$(1-\delta)B_1$};
\node at (9.25,0) {$B_1$};
\end{tikzpicture}
\caption{L'application $\widetilde\Psi$}\label{Jacques}
\end{center}
\end{figure}

On considère l'application associée $\Psi : C_1\to C_1$ que l'on étend de manière naturelle à $X$ entier en considérant ses conjugués par les translations du cube $C_1$ vers les autres cubes $C_i$. On pose alors $g = \Psi h' \Psi^{-1}$ et on a :
\begin{itemize}
\item $g$ préserve le volume car d'une part $\widetilde\Psi$ a un jacobien constant en dehors des $c_i$ et d'autre part $h'$ est une translation sur les $c_i^0$ et $\widetilde\Psi$ est invariant par translation ; par conséquent $g\in\mathrm{Homeo}(X,\mu)$
\item $g$ vérifie le point (2) du lemme.
\end{itemize}
En d'autres termes cette manipulation nous a permis de grossir les cubes $c_i$ de manière à ce qu'ils soient presque aussi gros que les $C_i$ ; ils en remplissent une proportion d'au moins $1-\delta$. On a de plus
\[d_{\mathit{forte}}(g,h')\le d_{\mathit{forte}}(\Psi,Id)+d_{\mathit{forte}}(h'\circ\Psi^{-1},h').\]
Le premier terme est majoré par $\varepsilon$ et le second tend vers 0 lorsque $m$ tend vers $+\infty$ (le module de continuité de $h'$ est uniformément majoré, en fait de plus en plus proche de celui de $f$, et la distance $d_{\mathit{forte}}(\Psi^{-1},Id)$ tend vers 0). Ainsi on peut prendre $d_{\mathit{forte}}(f,g)\le \varepsilon$.
\end{proof}

\begin{proof}[Preuve du théorème \ref{approxmeo}] Notons $\mathcal P(\vartheta,m)$ l'ensemble des homéo\-mor\-phismes vérifiant les points (2) et (3) du lemme \ref{4.2} avec $\delta = \frac{\vartheta(q_m)}{2}$ et la subdivision $\D_m$. Ce même lemme affirme que l'ensemble $\bigcup_m \mathcal P(\vartheta,m)$ est dense dans $\mathrm{Homeo}(X,\mu)$.

Soit $g\in \mathcal P(\vartheta,m)$. On choisit $\varepsilon_m$ tel que le $\varepsilon_m$-voisinage de $c_i$ soit inclus dans $C_i$ et considère la boule $B(g,\varepsilon_m)$ dans $\mathrm{Homeo}(X,\mu)$, de centre $g$ et de rayon $\varepsilon_m$ pour la distance forte. Alors l'ensemble 
\[\mathcal L (\vartheta,m) = \bigcup_{g\in P(\vartheta,m)}B(g,\varepsilon_m)\]
est un ouvert de $\mathrm{Homeo}(X,\mu)$ si bien que, puisque $\bigcup_m \mathcal P(\vartheta,m)$ est dense,
\[\mathcal L(\vartheta) = \bigcap_{M\ge 0}\bigcup_{m>M} \mathcal L (\vartheta,m) = \overline{\lim_{m}} \ \mathcal L (\vartheta,m)\]
est un $G_\delta$ dense de $\mathrm{Homeo}(X,\mu)$.

Montrons maintenant que chaque homéomorphisme $f\in \mathcal L(\vartheta)$ admet une approximation cyclique à la vitesse $\vartheta$. Par hypothèse, il existe alors une suite d'entiers $m_k$ strictement croissante et une suite d'homéomorphismes $g_k\in P(\vartheta,m_k)$ tels que $d_{\mathit{forte}}(f,g_k)\le\varepsilon_{m_k}$, auxquels sont associés des cubes $C_{k,i}$ et $c_{k,i}$. On a alors
\[f(g_k^{i-1}(c_{k,1}))\subset B(g_{k}^{i}(c_{k,1}),\varepsilon_{m_k}) = B(c_{k,i+1},\varepsilon_{m_k}) \subset C_{k,i+1}.\]
Par conséquent, les ensembles $f(g_k^i(c_1^k))$ sont deux à deux disjoints et on a :
\begin{eqnarray*}
\sum_{i=1}^{q_{m_k}} \mu\big(f(C_{k,i})\Delta C_{k,i+1}\big) & = & \sum_{i=1}^{q_{m_k}} \mu\left(f\big(g_k^{i-1}(c_{k,1})\cup g_k^{i-1}(C_{k,1}\setminus c_{k,1})\big)\Delta C_{k,i+1}\right)\\
                      & \le & 2\mu\left(\left(\bigcup_{i=1}^{q_{m_k}} c_{k,i}\right)^\complement\right) \le \vartheta(q_{m_k}).
\end{eqnarray*}
Posons $f_k\in\mathrm{Auto}(X,\mu)$ la permutation cyclique des $C_{k,i}$, alors
\[\sum_{i=1}^{q_{m_k}} \mu(f(C_{k,i})\Delta f_k(C_{k,i})) \le \vartheta(q_{m_k}).\] 
Par conséquent la suite $f_k$ vérifie les conclusions du théorème.
\end{proof}

On peut tout à fait remplacer dans la démonstration précédente le théorème de Lax par sa version traitant de l'approximation par des permutations bicycliques (définition \ref{vélo} et corollaire \ref{Laxbis}) ; cela donne le corollaire suivant :

\begin{coro}\label{approxmeobis}
L'ensemble des éléments de $\mathrm{Homeo}(X,\mu)$ admettant une approximation à la vitesse $\vartheta$ par des permutations dyadiques bicycliques est un $G_\delta$ dense.
\end{coro}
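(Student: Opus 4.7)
Le plan consiste à transposer mot pour mot la preuve du théorème \ref{approxmeo} en y remplaçant systématiquement le théorème de Lax (théorème \ref{Lax}) par sa variante bicyclique (corollaire \ref{Laxbis}). Le point central est d'établir la version bicyclique suivante du lemme \ref{4.2} : pour tout $f\in\mathrm{Homeo}(X,\mu)$ et tout $\varepsilon>0$, il existe un entier $m$ tel que, pour tout $\delta>0$, on puisse trouver $g\in\mathrm{Homeo}(X,\mu)$ et des cubes $c_i\subset C_i$ de mesure $\mu(c_i)\ge (1-\delta)/q_m$ vérifiant $d_{\mathit{forte}}(f,g)\le\varepsilon$ et tels que $g$ agisse sur $\bigcup_i c_i$ comme la composée d'une translation et de $\sigma$, réalisant une permutation \emph{bicyclique} des cubes $C_i$ de la subdivision $\D_m$.

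Pour prouver cette variante, je reprendrais la preuve originale du lemme \ref{4.2}, mais en partant cette fois du corollaire \ref{Laxbis} plutôt que du théorème \ref{Lax} : il fournit une permutation dyadique bicyclique $f_m$ d'ordre $m$ à distance uniforme au plus $\varepsilon/2$ de $f$. On lisse ensuite $f_m$ en un homéomorphisme à l'aide de la proposition d'extension des applications finies (proposition \ref{extension}) appliquée aux centres $p_k$ des cubes $C_i$, en traitant séparément les deux cycles de $f_m$ comme dans la preuve du lemme \ref{lemmetrans}. Enfin, on applique exactement comme dans la preuve du lemme \ref{4.2} le grossissement des petits cubes via l'homéomorphisme $\Psi$ et le théorème de modification locale, sans affecter la structure bicyclique héritée du corollaire \ref{Laxbis}.

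Une fois cette variante du lemme \ref{4.2} acquise, la conclusion suit mot pour mot la fin de la preuve du théorème \ref{approxmeo} : on introduit, pour chaque $m$, l'ensemble $\mathcal{P}(\vartheta,m)$ des homéomorphismes vérifiant la condition précédente avec $\delta=\vartheta(q_m)/2$, puis les ouverts $\mathcal{L}(\vartheta,m)=\bigcup_{g\in\mathcal{P}(\vartheta,m)}B(g,\varepsilon_m)$, dont la limite supérieure $\mathcal{L}(\vartheta)=\bigcap_M\bigcup_{m>M}\mathcal{L}(\vartheta,m)$ est un $G_\delta$ dense de $\mathrm{Homeo}(X,\mu)$. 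La même majoration
\[\sum_{i=1}^{q_{m_k}}\mu\big(f(C_{k,i})\Delta f_k(C_{k,i})\big)\le\vartheta(q_{m_k})\]
que dans le théorème \ref{approxmeo} montre alors que tout élément de $\mathcal{L}(\vartheta)$ admet une approximation à vitesse $\vartheta$ par des permutations dyadiques, qui sont désormais bicycliques par construction.

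Le seul point à surveiller est que la structure combinatoire bicyclique soit préservée à chaque étape : c'est bien le cas puisque toutes les modifications effectuées (extension des applications finies, modifications locales, grossissement des cubes par $\Psi$) laissent intacte la permutation sous-jacente des cubes, le caractère bicyclique étant fixé une fois pour toutes par l'application initiale du corollaire \ref{Laxbis}. Il n'y a donc pas de véritable difficulté nouvelle, et le corollaire apparaît essentiellement comme une conséquence formelle du théorème \ref{approxmeo} combiné à la disponibilité d'une variante bicyclique du théorème de Lax.
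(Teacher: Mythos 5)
Votre d\'emonstration est correcte et suit exactement l'approche du m\'emoire, qui se contente de remarquer qu'il suffit de remplacer le th\'eor\`eme de Lax par sa variante bicyclique (corollaire \ref{Laxbis}) dans la preuve du th\'eor\`eme \ref{approxmeo}. Vous explicitez utilement le point que le texte laisse implicite, \`a savoir qu'il faut d'abord adapter le lemme \ref{4.2} au cas bicyclique et v\'erifier que les \'etapes de lissage et de grossissement des cubes (extension des applications finies, modification locale, conjugaison par $\Psi$) n'alt\`erent pas la structure combinatoire de la permutation sous-jacente.
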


\begin{rem}
On a déjà vu l'approximation par des permutations cycliques et des permutations ayant deux orbites de longueurs premières entre elles. De la même manière, toute variante du théorème de Lax du type corollaire \ref{Laxbis} donne automatiquement une variante du théorème \ref{approxmeo} du type corollaire~\ref{approxmeobis}. Une théorie des approximations par des permutations autres que cycliques a été développée pour l'ensemble des automorphismes, voir la partie \ref{alphamél}.
\end{rem}

Enfin, donnons un lemme calculatoire qui sera utile par la suite :

\begin{lemme}\label{itéré}
Soit $f$ un automorphisme admettant une approximation cyclique à vitesse $\vartheta$. En reprenant les notations de la définition \ref{defappcycl}, on a pour tout entier $p$ :
\[\sum_{i=1}^{q_{m_k}} \mu (f^p (C_{k,i})\Delta f_k^p (C_{k,i}))\le p\vartheta(q_{m_k}).\]
\end{lemme}

\begin{proof}[Preuve du lemme \ref{itéré}] En effet, en utilisant d'abord inégalité triangulaire, puis le fait que $f$ préserve la mesure et le fait que $f_k$ permute circulairement les $C_{k,i}$, on obtient :
\begin{eqnarray*}
\sum_{i=1}^{q_{m_k}} \mu (f^p (C_{k,i})\Delta f_k^p (C_{k,i})) & \le & \sum_{i=1}^{q_{m_k}}\sum_{j=0}^{p-1} \mu (f^jf_k^{p-j} (C_{k,i})\Delta f^{j+1} f_k^{p-j-1} (C_{k,i}))\\
  & \le & \sum_{i=1}^{q_{m_k}}\sum_{j=0}^{p-1} \mu (f_k (C_{k,i_j})\Delta f (C_{k,i_j}))\\
  & \le & p\vartheta(q_{m_k}),
\end{eqnarray*}
où $i_j = i+p-1-j$.
\end{proof}

\section{Première preuve de la généricité du non mélange fort}

La technique des approximations périodiques s'avère être un outil extrêmement puissant d'obtention de propriétés ergodiques génériques des homéomorphismes. Commençons par en donner des applications simples, qui permettent de retrouver rapidement pour les homéorphismes les résultats historiques de P. Halmos \cite{HalmMix} et V. Rokhlin \cite{Rok} sur les automorphismes, avant d'aborder des résultats plus aboutis. Ainsi, comme première application nous montrons la généricité du contraire du mélange fort (la seconde preuve sera donnée comme application du théorème de transfert, au théorème \ref{mélange pas}). Rappelons la définition du mélange fort ergodique :

\begin{definition}\label{mélfortergo}
Un automorphisme $f\in \mathrm{Auto}(X,\mu)$ est dit \emph{fortement mélangeant} si pour tous mesurables $A$ et $B$, 
\[\mu\big(A\cap f^m(B)\big)\mathop{\longrightarrow}_{m\to+\infty}\mu(A)\mu(B).\]
\end{definition}

\begin{theoreme}[Katok, Stepin]\label{MélApprox}
Dans $\mathrm{Homeo}(X,\mu)$, le non mélange fort est générique.
\end{theoreme}

Ce théorème découle immédiatement du théorème \ref{approxmeo} et du résultat d'approximation suivant :

\begin{prop}\label{approxmél}
Si un automorphisme admet une approximation cyclique à la vitesse $\vartheta$, avec $\vartheta(q)=\frac \alpha q$ et $\alpha<1$, alors il n'est pas fortement mélangeant.
\end{prop}

La preuve de cette proposition est issue de celle du théorème 2.2 de \cite{KatokS1}.

\begin{proof}[Preuve de la proposition \ref{approxmél}] Soit $f$ un automorphisme admettant une approximation cyclique à la vitesse $\vartheta$, avec $\vartheta(q)=\frac \alpha q$ et $\alpha<1$. Soient aussi $\delta\in]0,1-\alpha[$, un entier $l$ tel que $\frac 2l <1-\alpha-\delta$, et une partition mesurable $F_1,\dots,F_l$ de $X$, constituée d'ensembles de mesure $1/l$. On commence par approcher ces ensembles par d'autres ensembles formés d'unions de cubes : pour $k$ assez grand, la subdivision dyadique $\D_{m_k}$, constituée de $q_{m_k}$ cubes $C_{k,i}$, est telle qu'il existe une partition de $X$ en $l$ sous-ensembles $F_1^k,\dots,F_l^k$ formés d'unions de cubes de $\D_{m_k}$ vérifiant $\mu(F_i\Delta F_i^k)\le \frac \delta l$ pour tout $i$. On suppose cette propriété vérifiée, et on prend une approximation cyclique $f_k$ de $f$ telle que $\sum_i\mu (f(C_{k,i})\Delta f_k(C_{k,i}))\le\vartheta(q_{m_k})$. On obtient, à l'aide du lemme~\ref{itéré} :
\begin{eqnarray*}
\sum_{j=1}^l\mu\big(f^{q_{m_k}}(F_j)\cap F_j\big) & \ge & \sum_{j=1}^l\Big(\mu\big(f_k^{q_{m_k}}(F_j^k)\cap F_j^k\big)\\
                                                  &     & - \mu\big(f_k^{q_{m_k}}(F_j^k)\Delta f^{q_{m_k}}(F_j^k)\big) - \mu\big(F_j^k\Delta F_j\big)\Big)\\
                                                  & \ge & 1 - q_{m_k} \vartheta(q_{m_k}) - \delta\\
                                                  & \ge & 1-\alpha-\delta\ge\frac 2l.
\end{eqnarray*}
Ainsi il existe $j_0\in \{1,\dots,l\}$ tel que
\[\underset{k\to+\infty}{\overline{\lim}}\ \mu\big(f^{q_{m_k}}(F_{j_0})\cap F_{j_0}\big) \ge\frac{2}{l^2},\]
ce qui contredit le fait que $f$ est fortement mélangeant (car sinon on aurait $\underset{k\to+\infty}{\overline{\lim}}\ \mu\big(f^{q_{m_k}}(F_{j_0})\cap F_{j_0}\big) = \mu(F_{j_0})^2 = \frac{1}{l^2}$).
\end{proof}

On peut en fait obtenir une propriété plus forte que le non mélange fort : la rigidité.

\begin{definition}\label{defrigid}
On dit qu'un automorphisme $f\in\mathrm{Auto}(X,\mu)$ est \emph{rigide} s'il existe une sous-suite des itérés de $f$ qui converge faiblement vers l'identité, ou de manière équivalente s'il existe une sous-suite des itérés de l'opérateur de Koopman $U_f$ (voir définition \ref{Koop}) qui tend fortement vers l'identité.
\end{definition}

\begin{theoreme}[Katok, Stepin]\label{rigidgéné}
Un élément générique de $\mathrm{Homeo}(X,\mu)$ est rigide.
\end{theoreme}

Ce théorème se déduit du théorème \ref{approxmeo} et de la proposition suivante :

\begin{prop}[Katok, Stepin]\label{rigid}
Si un automorphisme admet une approximation à vitesse $\vartheta$, avec $\vartheta(q) = o(1/q)$, alors il est rigide.
\end{prop}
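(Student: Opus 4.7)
L'id\'ee centrale est que la vitesse $\vartheta(q) = o(1/q)$ est exactement ce qu'il faut pour annuler, \`a la limite, le facteur $q_{m_k}$ qui appara\^it lorsqu'on it\`ere $q_{m_k}$ fois l'approximation. En effet, puisque chaque permutation $f_k$ est cyclique d'ordre $q_{m_k}$, on a $f_k^{q_{m_k}} = \mathrm{Id}$, si bien que le lemme \ref{it�r�} appliqu\'e avec $p = q_{m_k}$ donne directement
\[\sum_{i=1}^{q_{m_k}} \mu\bigl(f^{q_{m_k}}(C_{k,i}) \Delta C_{k,i}\bigr) \le q_{m_k}\,\vartheta(q_{m_k}),\]
quantit\'e qui tend vers z\'ero par hypoth\`ese. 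C'est cette estimation qui fournira la sous-suite d'it\'er\'es $(f^{q_{m_k}})_k$ convergeant faiblement vers l'identit\'e.

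Pour en d\'eduire la rigidit\'e, on utilisera la caract\'erisation de la convergence faible d'une suite d'automorphismes $g_n$ vers $g$ comme \'equivalente \`a $\mu(g_n(A) \Delta g(A)) \to 0$ pour tout bor\'elien $A$ (seconde caract\'erisation de la topologie faible, d\'ej\`a invoqu\'ee dans la preuve du th\'eor\`eme \ref{leb-stand}). Fixons donc un bor\'elien $A$ et $\varepsilon > 0$. Puisque la r\'eunion des tribus engendr\'ees par les subdivisions $\D_m$ est dense dans $\B$ pour la distance $d(A,B) = \mu(A \Delta B)$, on peut trouver un entier $m$ et un sous-ensemble $B$ de $X$, union finie de cubes de $\D_m$, tel que $\mu(A \Delta B) < \varepsilon/3$. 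Pour tout $k$ assez grand pour que $m_k \ge m$ et $q_{m_k}\vartheta(q_{m_k}) < \varepsilon/3$, l'ensemble $B$ est encore une union de cubes de $\D_{m_k}$ (puisque $\D_{m_k}$ raffine $\D_m$), que l'on notera $B = \bigcup_{i \in I} C_{k,i}$.

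De l'inclusion ensembliste \'el\'ementaire $\bigl(\bigcup_i A_i\bigr) \Delta \bigl(\bigcup_i B_i\bigr) \subset \bigcup_i (A_i \Delta B_i)$ r\'esultera alors
\[\mu\bigl(f^{q_{m_k}}(B) \Delta B\bigr) \le \sum_{i\in I} \mu\bigl(f^{q_{m_k}}(C_{k,i}) \Delta C_{k,i}\bigr) < \varepsilon/3,\]
puis, par in\'egalit\'e triangulaire et puisque $f$ pr\'eserve $\mu$ :
\[\mu\bigl(f^{q_{m_k}}(A) \Delta A\bigr) \le 2\,\mu(A \Delta B) + \mu\bigl(f^{q_{m_k}}(B) \Delta B\bigr) < \varepsilon.\]
Aucun des pas n'est v\'eritablement d\'elicat ; la seule id\'ee principale est le choix de l'exposant $p = q_{m_k}$, qui fait co\"incider $f_k^p$ avec l'identit\'e et permet d'exploiter pleinement l'hypoth\`ese $\vartheta(q) = o(1/q)$ pour faire dispara\^itre le facteur $q_{m_k}$ issu de l'it\'eration.
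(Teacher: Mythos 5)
Your proposal is correct and hinges on exactly the same key idea as the paper: apply the lemme \ref{it�r�} with the exponent $p = q_{m_k}$, so that $f_k^{q_{m_k}} = \mathrm{Id}$ and the hypothesis $\vartheta(q) = o(1/q)$ kills the factor $q_{m_k}$. Where you diverge from the paper is in the translation of the resulting estimate
\[\sum_{i=1}^{q_{m_k}} \mu\bigl(f^{q_{m_k}}(C_{k,i})\,\Delta\,C_{k,i}\bigr)\longrightarrow 0\]
into weak convergence. The paper argues directly through the metric definition of $d_{\mathit{faible}}$: once the cubes of $\D_{m_k}$ have diameter at most $\varepsilon$, the set of $x$ for which $f^{q_{m_k}}(x)$ lands outside the cube of $x$ has measure at most $\tfrac12\sum_i \mu(f^{q_{m_k}}(C_{k,i})\Delta C_{k,i}) \le \varepsilon/2$, hence $d_{\mathit{faible}}(f^{q_{m_k}}, \mathrm{Id}) \le \varepsilon$. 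You instead invoke the equivalent set-theoretic characterization of the weak topology (lemme \ref{topeq}): you fix an arbitrary Borel $A$, approximate it by a finite union $B$ of dyadic cubes, bound $\mu(f^{q_{m_k}}(B)\Delta B)$ by the sum over cubes of $B$, and conclude by the triangle inequality and measure preservation. Both routes are sound; the paper's is shorter because it bypasses the approximation of Borel sets by cubes, at the small price of explicitly requiring the cube diameters to be below $\varepsilon$ (automatic since $m_k \to \infty$), whereas your argument fits more directly the ``$\mu(g_n(A)\Delta g(A)) \to 0$ for every Borel $A$'' formulation of weak convergence that the paper uses elsewhere. Nothing is missing in either version.
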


\begin{proof}[Preuve de la proposition \ref{rigid}] Soit $f$ un automorphisme admettant une approximation à vitesse $\vartheta$, avec $\vartheta(q) = o(1/q)$. Soient aussi $\varepsilon>0$ et $Q>0$ tels que pour $q>Q$, $\vartheta(q)\le\varepsilon/q$. Alors il existe une infinité d'entiers $q_{m_k}$ et de permutations cycliques $f_k$ qui vérifient :
\[\sum_{i=1}^{q_{m_k}} \mu (f (C_{k,i})\Delta f_k (C_{k,i}))\le\frac{\varepsilon}{q_{m_k}}.\]
Avec le lemme \ref{itéré}, on en déduit:
\[\sum_{i=1}^{q_{m_k}} \mu (f^{q_{m_k}} (C_{k,i})\Delta f_k^{q_{m_k}} (C_{k,i})) = \sum_{i=1}^{q_{m_k}} \mu (f^{q_{m_k}} (C_{k,i})\Delta C_{k,i})\le\varepsilon,\]
ce qui signifie que $d_{\mathit{faible}}(f^{q_{m_k}},Id)\le\varepsilon$ dès lors que le diamètre des $C_{k,i}$ est inférieur à $\varepsilon$.
\end{proof}

Comme on l'a dit un automorphisme rigide n'est pas fortement mélangeant, on a même mieux :

\begin{prop}\label{analog}
Soit $f$ un automorphisme rigide. Alors pour tout sous-ensemble mesurable non dense $E$ de $X$, il existe un sous-ensemble mesurable $F$ de $X$, avec $\mu(F)>0$ et une suite strictement croissante $(m_k)_{k\in\N}$ d'entiers tels que pour tout $k$ on ait $\mu(f^{m_k}(E)\cap F)=0$.
\end{prop}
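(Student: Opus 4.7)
\textbf{Plan de d\'emonstration de la proposition \ref{analog}.} L'id\'ee centrale est d'exploiter l'hypoth\`ese que $E$ n'est pas dense pour isoler un ouvert \emph{disjoint de $E$} d\`es le d\'epart, puis d'utiliser la rigidit\'e pour extraire une sous-suite \`a d\'ecalage \emph{sommable} des it\'er\'es de $E$. C'est cette sommabilit\'e qui permettra d'obtenir l'\'egalit\'e exacte $\mu(f^{m_k}(E)\cap F)=0$ pour \emph{tout} $k$, et non pas simplement une convergence vers $0$.

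Premi\`erement, puisque $E$ n'est pas dense, l'ouvert $V:=X\setminus\overline{E}$ est non vide et disjoint de $E$ ; comme $\mu$ est une bonne mesure au sens de la d\'efinition \ref{bonne mesure} (en particulier de support total), on a $\mu(V)>0$. Deuxi\`emement, la rigidit\'e de $f$ (d\'efinition \ref{defrigid}) fournit une suite strictement croissante $(n_k)_{k\in\N}$ telle que $f^{n_k}\to\mathrm{Id}$ en topologie faible ; la caract\'erisation \'equivalente via l'op\'erateur de Koopman appliqu\'ee \`a $\mathbf{1}_E\in L^2$ donne
\[\mu\bigl(f^{n_k}(E)\Delta E\bigr)=\|\,\mathbf{1}_E\circ f^{n_k}-\mathbf{1}_E\,\|_{L^2}^2\xrightarrow[k\to\infty]{}0.\]
On peut donc extraire de $(n_k)$ une sous-suite, not\'ee $(m_k)_{k\in\N}$, le long de laquelle la s\'erie des $\mu\bigl(f^{m_k}(E)\Delta E\bigr)$ converge et a pour somme strictement inf\'erieure \`a $\mu(V)/2$.

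Troisi\`emement, on pose
\[F\ :=\ V\ \setminus\ \bigcup_{k\in\N} f^{m_k}(E).\]
Par construction, $F$ est disjoint de chaque $f^{m_k}(E)$, donc $\mu\bigl(f^{m_k}(E)\cap F\bigr)=0$ \emph{exactement} pour tout $k\in\N$. Il reste \`a v\'erifier que $\mu(F)>0$. Comme $V\cap E=\emptyset$, on a pour chaque $k$ l'inclusion $V\cap f^{m_k}(E)\subset f^{m_k}(E)\setminus E$, et donc
\[\mu\Bigl(V\cap \bigcup_{k} f^{m_k}(E)\Bigr)\ \le\ \sum_{k}\mu\bigl(f^{m_k}(E)\setminus E\bigr)\ \le\ \sum_{k}\mu\bigl(f^{m_k}(E)\Delta E\bigr)\ <\ \mu(V)/2,\]
d'o\`u $\mu(F)\ge\mu(V)/2>0$.

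Le seul point d\'elicat est l'\'etape de sous-extraction assurant la sommabilit\'e : c'est elle qui autorise \`a retirer \emph{simultan\'ement} de $V$ l'union enti\`ere (et non seulement une queue) des $f^{m_k}(E)$ sans \'epuiser la masse de $V$. Une fois cette id\'ee acquise, tout le reste est formel ; sans elle, on n'obtiendrait que $\mu(f^{m_k}(E)\cap F)\to 0$.
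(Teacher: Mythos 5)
Your proof is correct and follows essentially the same strategy as the paper's: extract from the rigidity sequence a subsequence along which the relevant quantity is summable (you use $\mu(f^{m_k}(E)\Delta E)$, the paper uses $d_{\mathit{faible}}(f^{m_k},\mathrm{Id})$), then subtract $\bigcup_k f^{m_k}(E)$ from the positive-measure room that the non-density of $E$ provides. The paper's $F$ is the full complement of $\bigcup_k f^{m_k}(E)$, whose measure it bounds below via the $\varepsilon$-neighborhood of $E$, while you carve $F$ out of $V = X\setminus\overline{E}$ directly; the difference is cosmetic and both arguments are sound.
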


\begin{rem}
La propriété \og pour tous mesurables $E$ et $F$, $\mu(E\cap f^m(F))>0$ à partir d'un certain rang \fg~constitue un véritable analogue ergodique de la propriété de mélange fort topologique ; la proposition \ref{analog} montre que pour un homéomorphisme générique, celle-ci est fausse, alors qu'elle est vraie si on remplace l'hypothèse \og $E$ et $F$ mesurables\fg~par \og $E$ et $F$ ouverts\fg.
\end{rem}

\begin{proof}[Preuve de la proposition \ref{analog}] Soit $\varepsilon>0$ tel que l'$\varepsilon$-voisinage de $E$ soit de mesure strictement inférieure à $1-\varepsilon$ (un tel $\varepsilon$ existe car $E$ n'est pas dense). On choisit une suite $(m_k)_{k\in\N}$ telle que $\sum_k d_{\mathit{faible}}(f^{m_k},Id)<\varepsilon$, et considère
\[F = \left(\bigcup_k f^{m_k}(E)\right)^\complement.\]
Alors $F$ est mesurable et de mesure strictement positive et vérifie, pour tout entier $k$, $\mu(f^{m_k}(E)\cap F) = 0$.
\end{proof}

\section{Première preuve du théorème d'Oxtoby-Ulam}

Comme seconde application du théorème \ref{approxmeo}, on se propose de démontrer le théorème d'Oxtoby-Ulam : 

\begin{theoreme}[Oxtoby-Ulam]\label{OxtoUlApprox}
Dans $\mathrm{Homeo}(X,\mu)$, l'ergodicité est générique.
\end{theoreme}

Comme le théorème \ref{MélApprox}, il découle aussitôt du théorème \ref{approxmeo}, ainsi que de la proposition suivante :

\begin{prop}[Katok, Stepin]\label{approxergo}
Si un automorphisme admet une approximation cyclique à une vitesse $\vartheta$, avec $\vartheta(q)=o(1/q)$, alors il est ergodique.
\end{prop}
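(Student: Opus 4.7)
La strat\'egie sera de montrer directement que tout ensemble mesurable $A$ v\'erifiant $f^{-1}(A) = A$ est de mesure $0$ ou $1$. Pour chaque $k$, je commencerai par approcher $A$ par une r\'eunion $A_k = \bigsqcup_{i \in I_k} C_{k,i}$ de cubes de la subdivision $\D_{m_k}$, telle que $\mu(A \Delta A_k) \to 0$ lorsque $k \to +\infty$ (ceci est possible car les subdivisions $\D_m$ engendrent la tribu bor\'elienne de $X$). L'id\'ee directrice est que $A_k$ sera presque $f_k^j$-invariant \emph{uniform\'ement} pour $j$ allant jusqu'\`a $q_{m_k}$~; combin\'ee au caract\`ere cyclique de la permutation $f_k$, cette quasi-invariance forcera $\mu(A)$ \`a satisfaire l'\'equation de projecteur $\mu(A)^2 = \mu(A)$.

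L'estim\'ee centrale reposera sur l'in\'egalit\'e triangulaire pour la diff\'erence sym\'etrique, sur la $f$-invariance de $A$ (qui donne $f^j(A) = A$), sur la pr\'eservation de $\mu$ par $f$, et sur le lemme \ref{it�r�} appliqu\'e aux seuls cubes d'indice dans $I_k$~:
\[\mu(A_k \Delta f_k^j(A_k)) \le 2\mu(A \Delta A_k) + \mu(f^j(A_k) \Delta f_k^j(A_k)) \le 2\mu(A \Delta A_k) + j\,\vartheta(q_{m_k}).\]
L'hypoth\`ese $\vartheta(q) = o(1/q)$ entra\^\i nera $q_{m_k}\vartheta(q_{m_k}) \to 0$, de sorte que cette majoration sera uniform\'ement infinit\'esimale en $j \in \{0,\ldots,q_{m_k}-1\}$. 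Comme $f_k$ permute cycliquement les cubes, on aura $f_k^j(A_k) = \bigsqcup_{i \in I_k} C_{k,i+j}$ (les indices \'etant pris modulo $q_{m_k}$), et donc $\mu(A_k \Delta f_k^j(A_k)) = |I_k \Delta (I_k + j)|/q_{m_k}$. Via l'identit\'e $|I_k \Delta (I_k+j)| = 2|I_k| - 2|I_k \cap (I_k+j)|$, l'estim\'ee pr\'ec\'edente se traduira par
\[\sup_{0 \le j < q_{m_k}} \left| \frac{|I_k \cap (I_k + j)|}{q_{m_k}} - \mu(A_k) \right| \longrightarrow 0.\]

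La conclusion proviendra de l'identit\'e combinatoire
\[\sum_{j=0}^{q_{m_k}-1} |I_k \cap (I_k + j)| = |I_k|^2,\]
qui se d\'emontre en comptant les couples $(a,b) \in I_k^2$ param\'etr\'es par leur d\'ecalage $j = b - a \bmod q_{m_k}$. En divisant par $q_{m_k}^2$ et en passant \`a la limite, le membre de droite tendra vers $\mu(A)^2$ (puisque $|I_k|/q_{m_k} = \mu(A_k) \to \mu(A)$), tandis que le membre de gauche, \'etant la moyenne arithm\'etique des $|I_k \cap (I_k+j)|/q_{m_k}$, tendra vers $\mu(A)$ d'apr\`es l'\'etape pr\'ec\'edente. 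L'\'equation $\mu(A) = \mu(A)^2$ en r\'esultera, for\c cant $\mu(A) \in \{0,1\}$. L'obstacle principal sera l'estim\'ee centrale~: tout repose sur la compensation exacte entre la croissance lin\'eaire en $j$ du lemme \ref{it�r�} et la d\'ecroissance strictement plus rapide que $1/q$ de $\vartheta$~; avec l'hypoth\`ese plus faible $\vartheta(q) = O(1/q)$, le terme $q_{m_k}\vartheta(q_{m_k})$ resterait born\'e inf\'erieurement et le passage \`a la limite ne produirait pas l'\'egalit\'e $\mu(A) = \mu(A)^2$.
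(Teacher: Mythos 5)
Votre preuve est correcte et suit une route r\'eellement diff\'erente, et sans doute plus directe, que celle du m\'emoire. Le m\'emoire raisonne par l'absurde~: il suppose une partition invariante non triviale $\{F_1, F_2\}$, approche chacun des deux ensembles par une union de cubes, invoque l'identit\'e de moyennage $\frac{1}{q}\sum_{i=0}^{q-1}\frac{|\sigma^i(E_1)\cap E_2|}{q} = \frac{|E_1|}{q}\frac{|E_2|}{q}$ pour une permutation cyclique $\sigma$ afin d'extraire, par le principe des tiroirs, un indice $l$ v\'erifiant $\frac{|\sigma^l(E_1)\cap E_2|}{|E_2|} \ge \frac{|E_1|}{q}$, puis transf\`ere cette minoration de $f_k^l$ \`a $f^l$ via le lemme sur les it\'er\'es des approximations cycliques pour conclure $\mu(f^l(F_1)\cap F_2)>0$. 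Vous travaillez directement avec un unique ensemble invariant $A$, et plut\^ot que d'extraire un temps $l$ particulier, vous \'etablissez la majoration $\mu(A_k \Delta f_k^j(A_k)) \le 2\mu(A\Delta A_k) + j\,\vartheta(q_{m_k})$ uniform\'ement pour $0\le j < q_{m_k}$ --- c'est exactement l\`a que l'hypoth\`ese $\vartheta(q)=o(1/q)$ intervient, comme vous le soulignez --- puis injectez toute la famille d'estimations dans l'identit\'e $\sum_j |I_k \cap (I_k+j)| = |I_k|^2$ (qui n'est autre que celle du m\'emoire appliqu\'ee avec $E_1 = E_2 = I_k$), pour obtenir $\mu(A) = \mu(A)^2$ \`a la limite. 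Les deux preuves partagent les m\^emes ingr\'edients (approximation par des cubes, combinatoire cyclique de $f_k$, contr\^ole du d\'erapage entre $f^j$ et $f_k^j$)~; le gain de la v\^otre est que l'estimation uniforme rend superflu le choix d'un temps $l$ et ram\`ene la conclusion \`a une unique identit\'e alg\'ebrique.
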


La preuve de cette proposition est une adaptation de celle du théorème 2.1 de l'article de A. Katok et A. Stepin \cite{KatokS1}.

\begin{proof}[Preuve de la proposition \ref{approxergo}] Soient $f$ un automorphisme admettant une approximation cyclique à une vitesse $\vartheta$ et $\delta>0$. Supposons que $f$ admette une partition en deux ensembles invariants $F_1$ et $F_2$ de mesures non triviales. On suppose que $\mu(F_1)\ge\mu(F_2)=v$, en particulier $v\in]0,\frac 12]$. Alors pour $m$ assez grand, la subdivision dyadique $\D_m$ (constituée de $q_m$ cubes) est telle qu'il existe une partition de $X$ en deux sous-ensembles $F_1^m$ et $F_2^m$ constitués d'unions de cubes $C_{m,i}$ de $\D_m$, vérifiant $\mu(F_1\Delta F_1^m) + \mu(F_2\Delta F_2^m) \le \delta v$. On prend $k$ assez grand pour que $m_k$ vérifie cette propriété et qu'il existe une approximation cyclique $f_k$ de $f$ telle que $\mu (f( C_{k,i})\Delta f_k(C_{k,i}))\le\vartheta(q_{m_k})$.

Soient $\sigma\in\Sn_q$ une permutation cyclique et $E_1$, $E_2$ deux sous-ensembles de $\{1,\dots,q\}$. Pour tout $x\in \{1,\dots,q\}$, on a
\[\sum_{i=0}^{q-1} \chi_{E_2}(\sigma^i(x)) = |E_2|.\]
En sommant cette égalité sur $x\in E_1$, on en déduit que\footnote{Remarquons que cela traduit l'ergodicité de $\sigma$ sur $\{1,\dots,q\}$ muni de la mesure de probabilité uniforme.} :
\[\frac 1 q \sum_{i=0}^{q-1} \frac{|\sigma^i(E_1)\cap E_2|}q = \frac{|E_1|}q\frac{|E_2|}q .\]
En particulier il existe $l\in \{0,\dots,q-1\}$ tel que $\frac{|\sigma^l(E_1)\cap E_2|}{|E_2|} \ge \frac{|E_1|}q$.

On peut appliquer cette remarque aux cubes constituant l'ensemble $F_1^{m_k}$ et à la permutation dyadique $f_k$ (que l'on voit alors comme une permutation de l'ensemble des cubes). On a au moins $q_{m_k}(1-(1+\delta)v)$ cubes constituant $F_1^{m_k}$ parmi les $q_{m_k}$ cubes de $\D_{m_k}$, ainsi il existe $l\in \{0,\dots,q_{m_k}-1\}$ tel que
\[\frac{\mu\big(f_k^l(F_1^{m_k})\cap F_2^{m_k}\big)}{\mu(F_2^{m_k})} \ge 1-(1+\delta)v.\]
Autrement dit 
\[\mu\big(f_k^l(F_1^{m_k})\cap F_2^{m_k}\big) \ge \big(1-(1+\delta)v\big)(1-\delta)v \ge v\left(1-v -\delta+\delta^2 v\right),\]
et puisque $v\le \frac 12$,
\[\mu\big(f_k^l(F_1^{m_k})\cap F_2^{m_k}\big) \ge v\left(\frac 1 2 -\delta\right).\]
Prenons ${m_k}$ assez grand pour que $\vartheta(q_{m_k})<\frac{\delta v}{q_{m_k}}$ ; à l'aide du lemme \ref{itéré},
\begin{eqnarray*}
\mu\big(f^l(F_1)\cap F_2\big) & \ge & \mu\big(f_k^l(F_1^{m_k})\cap F_2^{m_k}\big)\\
                              &     & - \mu\big(f_k^l(F_1^{m_k})\Delta f^l(F_1)\big) - \mu\big(F_2\Delta F_2^{m_k}\big)\\
                              & \ge & v\left(\frac 1 2 -\delta\right) - \frac{\delta lv}{q_{m_k}} -\delta v\\
                              & \ge & v\left(\frac 1 2 -3\delta\right).
\end{eqnarray*}
Puisque $\delta$ est arbitraire, on peut choisir $\delta<\frac{1}{6}$ si bien que $\mu\big(f^l(F_1)\cap F_2\big)>0$, ce qui contredit le fait que $F_1$ et $F_2$ sont deux ensembles invariants disjoints.
\end{proof}

\begin{rem}
On peut maintenant faire un point sur les propriétés génériques des orbites des points. Le théorème d'Oxtoby-Ulam implique que pour un élément générique de $\mathrm{Homeo}(X,\mu)$, les points dont l'orbite est dense forment un $G_\delta$ dense de mesure pleine de l'espace de base. Mieux encore, l'orbite de $\mu$-presque tout point est uniformément distribuée dans l'espace de base (par généricité de l'ergodicité et application du théorème de Birkhoff). En utilisant de plus les résultats de la section~\ref{2.2}, on sait que l'ensemble des points périodiques forme une partie dense, maigre et de mesure nulle de $X$.
\end{rem}

\section{Généricité du mélange faible}

Nous énonçons maintenant un résultat de généricité du mélange faible, qui constitue le dernier résultat obtenu pour les automorphismes par P. Halmos et V. Rokhlin dans les années 40, et adapté au cas des homéomorphismes à l'aide de l'approximation en topologie faible. Comme pour la preuve du mélange faible topologique (théorème \ref{mélange topo faible}), l'approximation par des permutations cycliques s'avère insuffisante : celle-ci sert plutôt à montrer que l'on a une dynamique \og quasi-périodique\fg et donc à interdire la présence de phénomènes de mélange (comme on l'a vu au théorème \ref{MélApprox}). Ici on lui ajoute l'approximation par des permutations bicycliques ayant deux cycles de longueurs $p$ et $p+1$ (voir \cite{KatokS3}). Rappelons la définition du mélange faible :

\begin{definition}
Un automorphisme $f$ est dit \emph{faiblement mélangeant} si pour tout couple d'ouverts $U,V$,
\[\frac{1}{N}\sum_{i=0}^{N-1}\big|\mu(f^i(U)\cap V)-\mu(U)\mu(V)\big|\underset{N\to+\infty}{\longrightarrow}0.\]
\end{definition}

On a alors le théorème de généricité :

\begin{theoreme}[Katok, Stepin]\label{mélfaiblgéné}
Dans $\mathrm{Homeo}(X,\mu)$, le mélange faible est générique.
\end{theoreme}

Dans leur article \cite{KatokS3}, A. Katok et A. Stepin montrent que tout automorphisme ergodique bien approché par des permutations ayant deux orbites de longueurs $p$ et $p+1$ est mélangeant. Le théorème \ref{mélfaiblgéné} résulte alors facilement d'un théorème de généricité d'approximation à vitesse fixée (par des permutations ayant deux orbites de longueurs $p$ et $p+1$) et du théorème d'Oxtoby-Ulam (théorème \ref{OxtoUlApprox}). Nous montrerons le théorème \ref{mélfaiblgéné} dans la partie \ref{partiefaible}, comme application du théorème de transfert.

\section{Généricité de l'entropie métrique nulle}

Montrons maintenant la généricité des homéomorphismes ayant une entropie métrique nulle. Cette preuve est issue de la démonstration esquissée par A. Katok et A. Stepin dans \cite{KatokS2}. On commence par quelques définitions concernant l'entropie métrique ; les preuves détaillées sont faites à la page 74 de \cite{12'}.
\newline

Soit $\alpha = \{A_1,\dots,A_q\}$ une partition finie de $X$ en sous-ensembles mesurables. L'\emph{entropie} de $\alpha$ est la quantité
\[H(\alpha) = -\sum_{i=1}^q \mu(A_i)\log\mu(A_i).\]
À l'aide de la concavité du logarithme, on obtient $0\le H(\alpha)\le \log(q)$, la deuxième inégalité étant une égalité si $\mu(A_i) = 1/q$ pour tout $i$. \'Etant donné un ensemble fini $\{\alpha_p\}_{1\le p \le k}$ de partitons finies de $X$, la \emph{partition jointe} des $\{\alpha_p\}$, notée $\bigvee_{1\le p \le k} \alpha_p$, est la partition finie de $X$ formée des $\bigcap_{1\le p\le k} A_{p,i_p}$, avec $A_{p,i_p}\in \alpha_p$ pour tout~$p$. L'entropie de la partition jointe vérifie l'inégalité suivante : pour $\alpha$ et~$\beta$ deux partitions mesurables, $H(\alpha \vee \beta) \le H(\alpha) + H(\beta)$. Ainsi pour tout $f\in \mathrm{Auto}(X,\mu)$, la suite $(a_m)_m$ définie par $a_m = H \left( \bigvee_{p=0}^{k-1} f^{-p}(\alpha) \right)$ est sous-additive. On en déduit que la suite $\left\{\frac{a_m}m\right\}_m$ admet une limite, qu'on appelle \emph{entropie} de la transformation $f$ relativement à la partition $\alpha$ :
\[H(f,\alpha) = \lim_{k \to + \infty} \frac{1}{k} H\left(\bigvee_{p=0}^{k-1} f^{-p}(\alpha)\right).\]
Ceci permet de définir l'\emph{entropie métrique} de $f$ comme la borne supérieure des entropies de $f$ relativement aux partitions mesurables finies de $X$ : 
\[H(f) = \sup_\alpha H(f,\alpha).\]

Maintenant que l'on a défini proprement l'entropie métrique, on peut énoncer le théorème principal de cette section, qui concerne la généricité de l'entropie métrique nulle. Elle a tout d'abord été obtenue par V. Rokhlin dans le cas des automorphismes en 1959 \cite{RokhEntr}, puis dans celui des homéomorphismes par A. Katok et A. Stepin \cite{KatokS2}, \cite{KatokS3}.

\begin{theoreme}[Rokhlin, Katok, Stepin]\label{entrmetnullgéné}
Les éléments d'entropie métrique nulle sont génériques parmi $\mathrm{Homeo}(X,\mu)$.
\end{theoreme}

Ce théorème découle tout de suite (on commence à en avoir l'habitude) du théorème \ref{approxmeo} et de la proposition suivante :

\begin{prop}\label{entremet}
Si $f\in \mathrm{Auto}(X,\mu)$ admet une approximation cyclique à la vitesse $\vartheta$, avec $\vartheta(q)=o(1/\log^2(q))$, alors l'entropie métrique de~$f$ est nulle.
\end{prop}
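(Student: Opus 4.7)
Le plan consiste \`a se ramener \`a montrer que $H(f, \D_{m_k}) \to 0$ lorsque $k \to \infty$, puis \`a en d\'eduire que $H(f) = 0$. Puisque les subdivisions dyadiques $(\D_m)_m$ forment une suite croissante dont le diam\`etre des cubes tend vers z\'ero, leur r\'eunion engendre la tribu bor\'elienne, et le th\'eor\`eme de Kolmogorov-Sinai, appliqu\'e aux suites croissantes g\'en\'eratrices de partitions finies, donne $H(f) = \lim_k H(f, \D_{m_k})$.

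Fixons $k$ et posons $q = q_{m_k}$. L'observation cl\'e est que la permutation cyclique $f_k$ permute les atomes de $\D_{m_k}$, si bien que $f_k^{-p}(\D_{m_k}) = \D_{m_k}$ en tant que partitions, pour tout entier $p$. Pour tout $N \ge 1$, en notant $\xi^{(N)} = \bigvee_{p=0}^{N-1} f^{-p}(\D_{m_k})$ et en utilisant l'identit\'e $H(\alpha \vee \beta) = H(\alpha) + H(\beta \mid \alpha)$ ainsi que la sous-additivit\'e de l'entropie conditionnelle sous la jointure, on obtient
\[H(\xi^{(N)}) \le \log q + \sum_{p=1}^{N-1} H\bigl(f^{-p}(\D_{m_k}) \bigm| f_k^{-p}(\D_{m_k})\bigr).\]

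L'\'etape technique centrale est une in\'egalit\'e de type Fano : pour deux partitions $\alpha = \{A_i\}$ et $\beta = \{B_i\}$ de m\^eme cardinal $q$ munies d'un appariement naturel, en posant $\varepsilon = \sum_i \mu(A_i \setminus B_i)$, on a $H(\alpha \mid \beta) \le \varepsilon \log q + h(\varepsilon)$, o\`u $h(x) = -x\log x - (1-x)\log(1-x)$. Appliquons cela avec $A_i = f^{-p}(C_{k,i})$ et $B_i = f_k^{-p}(C_{k,i})$ ; le param\`etre $\varepsilon_p = \sum_i \mu(A_i \setminus B_i)$ se majore par $\tfrac{p}{2}\vartheta(q)$ gr\^ace au lemme~\ref{it\'er\'e} combin\'e \`a l'invariance de $\mu$ sous $f$ et $f_k$. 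En sommant, on obtient
\[H(\xi^{(N)}) \le \log q + \frac{N^2}{4}\,\vartheta(q) \log q + \sum_{p=1}^{N-1} h\bigl(\tfrac{p}{2}\vartheta(q)\bigr),\]
la derni\`ere somme \'etant n\'egligeable devant le terme pr\'ec\'edent d\`es que $N\vartheta(q)$ est petit.

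Il reste \`a appliquer le lemme de Fekete, qui donne $H(f, \D_{m_k}) \le H(\xi^{(N)})/N$ pour tout $N$. Le choix $N \sim \vartheta(q)^{-1/2}$ \'equilibre les deux termes dominants et produit la majoration $H(f, \D_{m_k}) = O\bigl(\sqrt{\vartheta(q_{m_k})\log^2 q_{m_k}}\bigr)$, qui tend vers $0$ gr\^ace \`a l'hypoth\`ese $\vartheta(q) = o(1/\log^2 q)$. On en d\'eduit $H(f) = 0$. La principale difficult\'e r\'eside dans la mise en place soigneuse de l'in\'egalit\'e de Fano : il est crucial que le facteur logarithmique dans $H(\alpha \mid \beta)$ soit $\log q$ et non $N\log q$, faute de quoi le seuil sur $\vartheta$ se d\'et\'eriorerait en $o(1/\log^3 q)$.
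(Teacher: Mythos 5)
Votre d\'emonstration est correcte, mais elle suit un chemin technique r\'eellement diff\'erent de celui du texte. Le texte compare directement la partition jointe $\D_{m_k}\vee f(\D_{m_k})\vee\cdots\vee f^l(\D_{m_k})$ \`a celle obtenue en rempla\c{c}ant $f$ par $f_k$, qui n'est autre que $\D_{m_k}$ elle-m\^eme : les atomes de la premi\`ere se scindent en $q_{m_k}$ atomes \og diagonaux \fg~de mesure proche de $1/q_{m_k}$ et en atomes r\'esiduels de masse totale major\'ee par $l\mu_k$, dont la contribution entropique est contr\^ol\'ee par le principe d'entropie maximale \`a masse totale prescrite, soit $-l\mu_k\log\bigl(l\mu_k/q_{m_k}^l\bigr)$. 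Vous proc\'edez au contraire par d\'ecomposition t\'elescopique en entropies conditionnelles $H\bigl(f^{-p}\D_{m_k}\mid\xi^{(p)}\bigr)$, remplacez chaque conditionnement par le conditionnement plus grossier $\D_{m_k}=f_k^{-p}\D_{m_k}$ gr\^ace \`a la monotonie, puis appliquez l'in\'egalit\'e de Fano terme \`a terme. Les deux routes aboutissent au m\^eme seuil $o(1/\log^2 q)$ via le m\^eme \'equilibrage $N\sim l\sim\vartheta(q)^{-1/2}$ ; la v\^otre rend plus transparente l'origine du facteur $\log q$ --- Fano ne fait intervenir que le cardinal de la partition estim\'ee, ind\'ependamment de $p$ --- tandis que celle du texte est plus autonome, ne faisant appel \`a aucune in\'egalit\'e de th\'eorie de l'information. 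Un seul d\'etail \`a raffermir : l'affirmation que $\sum_p h(\varepsilon_p)$ est n\'egligeable devant $\tfrac{N^2}{4}\vartheta(q)\log q$ n'est exacte que si $\log(1/\vartheta(q))\lesssim\log q$ ; pour des vitesses $\vartheta$ \`a d\'ecroissance tr\`es rapide, le terme en $h$ peut en fait dominer. Cela ne change rien \`a la conclusion, car apr\`es division par $N\sim\vartheta^{-1/2}$ on a de toute fa\c{c}on $\tfrac1N\sum_p h(\varepsilon_p)=O\bigl(\sqrt{\vartheta}\,\log(1/\vartheta)\bigr)\to 0$, mais la formulation gagnerait \`a \^etre ajust\'ee.
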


\begin{proof}[Preuve de la proposition \ref{entremet}] Puisque $f$ admet une approximation cyclique à la vitesse $\vartheta$, la définition \ref{defappcycl} fournit un entier $m_k$ et une permutation cyclique d'ordre $m_k$ notée $f_k$ tels que si on pose $\mu_k = \sum_{i=1}^{q_{m_k}} \mu \big(f( C_{k,i})\Delta f_k(C_{k,i})\big)$, on obtient $\mu_k\le \vartheta(q_{m_k})$. Commençons par majorer les entropies associées aux partitions $\D_{m_k}$. La définition même de l'entropie comme limite d'une suite sous-additive donne, pour tout entier $l$ :
\[0\le H(f,\D_{m_k})\le \frac{H\left(\D_{m_k}\vee f(\D_{m_k})\vee\dots\vee f^{l}(\D_{m_k})\right)}{l}.\]
La preuve va consister à montrer que l'entropie de la subdivision $\D_{m_k}\vee f(\D_{m_k})\vee\dots\vee f^l(\D_{m_k})$ est proche de celle de $\D_{m_k}\vee f_k(\D_{m_k})\vee\dots\vee f^l_k(\D_{m_k}) = \D_{m_k}$, qui vaut $\log(q_{m_k})$ pour tout $l$ (comme $f_k$ est une permutation des cubes de $\D_{m_k}$, la partition $\D_{m_k}\vee f_k(\D_{m_k})\vee\dots\vee f^l_k(\D_{m_k})$ est constituée de $q_{m_k}$ cubes $C_{k,i}$ de mesures identiques, et de $q_{m_k}^l-q_{m_k}$ ensembles vides). Un peu plus précisément, nous allons montrer que, puisque les $f_k^j(C_{k,i})$ sont proches des $f^j(C_{k,i})$, la partition $\D_{m_k}\vee f(\D_{m_k})\vee\dots\vee f^l(\D_{m_k})$, vue comme perturbation de la partition précédente, est elle constituée de $q_{m_k}$ ensembles de mesures \og presque \fg~$1/q_{m_k}$, et d'autres ensembles de mesures \og petites \fg. Mettons tout cela en forme.

Il y a une bijection entre les ensembles de la partition $\D_{m_k}\vee f_k(\D_{m_k})\vee\dots\vee f_k^l(\D_{m_k})$ et ceux de la partition $\D_{m_k}\vee f(\D_{m_k})\vee\dots\vee f^l(\D_{m_k})$, donnée par :
\[C_{k,i_1}\cap f_k(C_{k,i_2})\cap\dots\cap f_k^l(C_{k,i_l}) \longmapsto C_{k,i_1}\cap f(C_{k,i_2})\cap\dots\cap f^l(C_{k,i_l}).\]
Si les ensembles $C_{k,i_1},\, f_k(C_{k,i_2}),\,\dots,\, f_k^{k}(C_{k,i_k})$ sont tous égaux, on aura 
\[\mu\big(C_{k,i_1}\cap f_k(C_{k,i_2})\cap\dots\cap f_k^l(C_{k,i_l})\big) = \frac 1{q_{m_k}}\]
et sinon (si on a au moins deux ensembles non identiques, donc essentiellement disjoints puisque $f_k$ est une permutation) on aura 
\[\mu\big(C_{k,i_1}\cap f_k(C_{k,i_2})\cap\dots\cap f_k^l(C_{k,i_l})\big) = 0.\]
La partition $\D_{m_k}\vee f(\D_{m_k})\vee\dots\vee f^l(\D_{m_k})$ est ainsi formée de :
\begin{enumerate}
\item $q_{m_k}$ ensembles $C_{k,i_1}\cap f(C_{k,i_2})\cap\dots\cap f^l(C_{k,i_l})$, associés aux $C_{k,i_1}\cap f_k(C_{k,i_2})\cap\dots\cap f_k^l(C_{k,i_l})$ avec des ensembles tous égaux ; ils sont donc de mesures inférieures à $q_{m_k}$,
\item $q_{m_k}^l-q_{m_k}\le q_{m_k}^l$ ensembles $C_{k,i_1}\cap f(C_{k,i_2})\cap\dots\cap f^l(C_{k,i_l})$, associés aux $C_{k,i_1}\cap f_k(C_{k,i_2})\cap\dots\cap f_k^l(C_{k,i_l})$ avec des ensembles non tous égaux, de mesures petites que nous majorons maintenant.
\end{enumerate}

Soit $\{A_\alpha\}_\alpha$ la famille des ensembles du cas 2. Chaque $A_\alpha$ est l'intersection d'au moins un ensemble $C_{k,i}$ avec un ensemble $f^p(C_{k,j})$ où $p\le l$ ; en combinant la remarque précédente et le fait que $C_{k,i} \cap f^p_k(C_{k,j})  = \emptyset$ (on est dans le cas 2.), on obtient $\sum_\alpha \mu(A_\alpha)\le l \mu_k$. Puisque l'entropie d'une subdivision est maximale pour des ensembles ayant tous la même mesure, on en déduit que
\[-\sum_\alpha \mu(A_\alpha)\log(\mu(A_\alpha))\le -l\mu_k\log\left(\frac{l\mu_k}{q_{m_k}^l}\right).\]
Et puisque seuls les ensembles du cas 2. ont une contribution positive à la différence d'entropie entre $f$ et $f_k$, on obtient
\[H\left(\D_{m_k}\vee\dots\vee f^l(\D_{m_k})\right)- H\left(\D_{m_k}\vee\dots\vee f_k^l(\D_{m_k})\right)\le -l\mu_k\log\left(\frac{l\mu_k}{q_{m_k}^l}\right),\]
et donc
\[\frac{H\left(\D_{m_k}\vee f(\D_{m_k})\vee\dots\vee f^l(\D_{m_k})\right)}{l} \le \frac{\log(q_{m_k})}l + \mu_k\log\left(\frac{q_{m_k}^l}{l\mu_k}\right).\]
Cette majoration obtenue, il reste à optimiser $l$ pour que le membre de droite tende vers 0. Soit $\varepsilon\in]0,1[$, on choisit $q_{m_k}$ assez grand tel que $\vartheta(q_{m_k})<\varepsilon/\log^2(q_{m_k})$. Prenons $l = \lfloor\log(q_{m_k})/\sqrt\varepsilon\rfloor$, on a alors 
\begin{eqnarray*}
H(f,\D_{m_k}) & \le & \frac{\log(q_{m_k})}l + \frac\varepsilon{\log^2(q_{m_k})}\left(l\log(q_{m_k})-\log(l)+\log\left(\frac{\log^2(q_{m_k})}\varepsilon\right)\right)\\
          & \le & \frac{\sqrt\varepsilon}{1-\frac{\sqrt\varepsilon}{\log q_{m_k}}} + \sqrt\varepsilon+0+\frac{\log(\log^2(q_{m_k}))}{\log^2(q_{m_k})}-\frac{\varepsilon\log(\varepsilon)}{\log^2(q_{m_k})}\\
          & \le & \sqrt\varepsilon \left(2+ \frac{\sqrt\varepsilon}{\log q_{m_k}}\right)+ \frac{\log(\log^2(q_{m_k}))}{\log^2(q_{m_k})}+\frac{e^{-1}}{\log^2(q_{m_k})}.
\end{eqnarray*}
Ainsi, $\overline\lim_{k\to\infty} H(f,\D_{m_k})\le 2\sqrt\varepsilon$, d'où $\lim_{k\to\infty} H(f,\D_{m_k})=0$.

Or on sait que $H(f) = \lim_{k\to\infty} H(f,\D_{m_k})$, puisque l'ensemble des cubes des subdivisions $\D_{m_k}$ forme une base de l'ensemble des boréliens de $X$ (voir par exemple le théorème 3.7.9 de \cite{12'}). Finalement $H(f) = 0$.
\end{proof}

\section{Généricité des homéomorphismes de rang un}

Dans cette partie, nous nous concentrons sur les homéomorphismes de rang un. De manière informelle, un automorphisme $f$ est de rang un si toute partition mesurable finie de $X$ peut être approchée par une partition engendrée par des itérés par $f$ deux à deux disjoints d'une seule partie $A$ de $X$. La généricité des homéomorphismes de rang un sera bien entendu prouvée par la technique des approximations périodiques ; la preuve que nous présentons est en fait très proche de celle utilisée par A. Katok et A. Stepin en 1967 dans \cite{KatokS1} pour montrer qu'un automorphisme approché assez vite par des permutations cycliques est à spectre simple. Ce dernier résultat avait été obtenu (avec une vitesse d'approximation bien plus contraignante) par S. Yuzvinskii en 1967 dans \cite{Yuz}. De notre côté nous l'établirons comme corollaire de la généricité des homéomorphismes de rang un d'une part, et du fait qu'un automorphisme de rang un est à spectre simple d'autre part\footnote{Voir la remarque \ref{attention au rang}.}. Ce n'est pas la seule propriété notable des automorphismes de rang un : ils ont aussi une entropie métrique nulle\footnote{La preuve de cette implication est similaire à la preuve directe de la généricité de l'entropie métrique nulle, que l'on a déjà faite.}, et sont standards (un automorphisme $f$ est dit \emph{standard} s'il existe un ensemble $A$ tel que l'automorphisme induit par $f$ sur $A$ soit mesurablement conjugué à un odomètre), comme prouvé dans \cite{KatokMon}. Remarquons que la généricité du spectre simple, outre son intérêt propre, va nous permettre dans la partie suivante de donner une définition relativement simple du type spectral.

\begin{definition}\label{defrg}
On dira qu'un automorphisme $f$ est \emph{de rang un}\footnote{Voir la remarque \ref{attention au rang}.} si pour toute partition mesurable finie $P_1,\dots,P_s$ de $X$ et pour tout $\varepsilon>0$, on peut trouver un ensemble mesurable $A$ et un entier $\tau$ tels que :
\begin{enumerate}
\item les ensembles $A,f(A),\dots,f^{\tau-1}(A)$ sont deux à deux disjoints,
\item on a l'inégalité $\tau\,\mu(A\cap f^\tau(A))>1-\varepsilon$ (les ensembles $A$ et $f^\tau(A)$ coïncident presque),
\item si on note $\eta$ la partition finie de $X$
\[\left\{A,f(A),\dots,f^{\tau-1}(A), X\setminus\left(\bigcup_{i=1}^{\tau-1} f^i(A)\right)\right\},\]
il existe des parties $P_1',\dots,P_s'$ de $X$ telles que, pour tout $i\in \{1,\dots,s\}$, la partie $P_i'$ est une union d'éléments de la partition $\eta$ et satisfait $\mu(P_i\Delta P_i')\leq\varepsilon$.
\end{enumerate}
\end{definition}

\begin{rem}\label{attention au rang}
La définition classique du rang un n'inclut que les points 1. et 3. de la définition. Néamoins, nous aurons besoin de la propriété 2. lors des applications du théorème de généricité du rang un ; il nous a semblé plus simple de l'inclure directement dans la définition.
\end{rem}

\begin{rem}\label{remran}
Puisque les unions de cubes dyadiques de $X$ forment un sous-ensemble dense de l'ensemble des parties boréliennes de $X$, il suffit de vérifier la propriété 2. de la définition pour des partitions finies $P_1,\dots,P_m$ dont les éléments sont des unions de cubes dyadiques.
\end{rem}

\begin{theoreme}[Katok, Stepin]\label{rang1F}
Le rang un est générique parmi les éléments de $\mathrm{Homeo}(X,\mu)$.
\end{theoreme}

Ce résultat découle immédiatement du théorème \ref{approxmeo} et de la proposition suivante :

\begin{prop}\label{rang1}
Soit $f\in \mathrm{Auto}(X,\mu)$ admettant une approximation cyclique à vitesse $\vartheta$, avec $\vartheta(q) = o(1/q)$. Alors $f$ est de rang un, autrement dit pour toute partition mesurable finie $P_1,\dots,P_s$ de $X$ et pour tout $\varepsilon>0$, on peut trouver un ensemble mesurable $A$ et un entier $\tau$ vérifiant les trois points de la définition \ref{defrg}. De plus, on peut supposer que l'ensemble $A$ est ouvert (resp. fermé).
\end{prop}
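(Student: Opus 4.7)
Le plan est le suivant. Par la remarque~\ref{remran}, on peut se ramener au cas o� chaque $P_i$ est une union de cubes d'une subdivision dyadique $\D_m$. On choisit ensuite un entier $k$ suffisamment grand pour que $m_k\ge m$ (ce qui assure que $\D_{m_k}$ raffine $\D_m$) et pour que $2\tau\vartheta(\tau)<\varepsilon$, o� l'on pose $\tau = q_{m_k}$~; l'hypoth�se $\vartheta(q)=o(1/q)$ garantit l'existence d'un tel $k$. Apr�s num�rotation cyclique des cubes $(C_{k,j})_{j\in\Z/\tau\Z}$ de sorte que $f_k(C_{k,j}) = C_{k,j+1}$, on d�finit
\[A \ = \ \big\{x\in C_{k,1} \mid f^p(x)\in C_{k,p+1}\ \ \forall\, p\in\{0,1,\ldots,\tau\}\big\},\]
ensemble que l'on peut voir comme la base d'une tour de Rokhlin associ�e � la permutation approchante $f_k$.

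L'�tape-cl� consiste � �tablir l'estim�e $\mu(C_{k,1}\setminus A)\le\vartheta(\tau)$. Pour cela, � chaque $x\in C_{k,1}\setminus A$ on associe le plus petit entier $p^*(x)\in\{1,\ldots,\tau\}$ tel que $f^{p^*}(x)\notin C_{k,p^*+1}$. Par minimalit�, on a $f^{p^*-1}(x)\in C_{k,p^*}\setminus f^{-1}(C_{k,p^*+1})$, d'o� l'inclusion
\[C_{k,1}\setminus A \ \subset\ \bigcup_{p=1}^\tau f^{-(p-1)}\big(C_{k,p}\setminus f^{-1}(C_{k,p+1})\big),\]
puis, en utilisant la pr�servation de la mesure par $f$ et l'�galit� $f_k(C_{k,p})=C_{k,p+1}$,
\[\mu(C_{k,1}\setminus A) \ \le\ \sum_{p=1}^\tau \mu\big(f(C_{k,p})\setminus f_k(C_{k,p})\big) \ \le\ \sum_{p=1}^\tau \mu\big(f(C_{k,p})\Delta f_k(C_{k,p})\big) \ \le\ \vartheta(\tau).\]
Le point crucial ici est que le \emph{premier} temps d'erreur se trouve contr�l� par une unique erreur de l'approximation cyclique, �vitant la perte de facteur $p$ qu'occasionnerait une application directe du lemme~\ref{it�r�}~; c'est pr�cis�ment la raison pour laquelle la vitesse $\vartheta(q)=o(1/q)$ suffit.

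Il reste � v�rifier les trois conditions de la d�finition~\ref{defrg}. Le point (1) d�coule imm�diatement de la construction, puisque $f^pA\subset C_{k,p+1}$ pour $p=0,\ldots,\tau-1$ et que les cubes sont deux � deux disjoints. Pour le point (2), on observe que $A$ et $f^\tau A$ sont deux sous-ensembles de $C_{k,1}$ de m�me mesure $\mu(A)\ge 1/\tau - \vartheta(\tau)$, si bien que
\[\mu(A\cap f^\tau A) \ge 2\mu(A) - \mu(C_{k,1}) \ge 1/\tau - 2\vartheta(\tau),\]
et donc $\tau\mu(A\cap f^\tau A)\ge 1 - 2\tau\vartheta(\tau) > 1-\varepsilon$. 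Enfin pour (3), on pose $P_i' = \bigcup_{j\,:\, C_{k,j}\subset P_i} f^{j-1}A$~; par construction $P_i'\subset P_i$, et $\mu(P_i\setminus P_i') \le 1-\tau\mu(A) \le \tau\vartheta(\tau) < \varepsilon$.

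Pour obtenir $A$ ferm�, on remplace l'ensemble construit ci-dessus par un ferm� $F\subset A$ fourni par la r�gularit� int�rieure de la mesure bor�lienne $\mu$, avec $\mu(A\setminus F)$ aussi petit que l'on veut~; les it�r�s $f^pF$ restent disjoints et les in�galit�s strictes obtenues pour $k$ grand sont pr�serv�es. Pour la variante ouverte (pertinente lorsque $f\in\mathrm{Homeo}(X,\mu)$, auquel cas $f^{-p}$ pr�serve les ouverts), il suffit de remplacer chaque cube $C_{k,p+1}$ par son int�rieur $\mathring{C_{k,p+1}}$ dans la d�finition de $A$~: ce dernier devient alors une intersection finie d'ouverts, et les estim�es pr�c�dentes sont inchang�es puisque la mesure des bords des cubes est nulle.
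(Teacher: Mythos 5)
Your proof is correct and follows essentially the same strategy as the paper's: define $A$ as the set of points in a base cube that shadow the approximating cyclic permutation through one full cycle, then exploit that $\mu(C\setminus A)$ is controlled by a \emph{single} error $\vartheta(\tau)$ of the cyclic approximation rather than by the factor-$\tau$ loss that a naive application of lemme~\ref{it�r�} would give. Your ``premier temps d'erreur'' argument and the paper's telescoping bound on $\sum_i\mu(\widetilde A_{i-1}\Delta\widetilde A_i)$ are two phrasings of the same estimate. Two minor deviations worth noting: you add the wrap-around condition $f^\tau(x)\in C_{k,1}$ to the definition of $A$, which makes the verification of point~2 a one-liner ($A$ and $f^\tau A$ both sit inside $C_{k,1}$), where the paper instead bounds $\mu(A\Delta f^\tau A)$ through $C$; and you are more careful than the paper on the ``ouvert/ferm�'' addendum — the paper's ``quitte \`a choisir $C$ ouvert (resp.\ ferm\'e)'' tacitly uses that $f^{-i}$ preserves the topology, which only holds when $f\in\mathrm{Homeo}(X,\mu)$, whereas your inner-regularity argument for the closed case works for an arbitrary automorphism, and you correctly flag the open case as specific to homeomorphisms.
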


Comme nous l'avons déjà dit, nous suivons la technique de preuve du théorème~3.1 de \cite{KatokS1}, même si le théorème que A. Katok et A. Stepin y démontrent est quelque peu différent. Un véritable énoncé de la généricité du rang un se trouve dans \cite{KatokMon}.

\begin{proof}[Preuve de la proposition \ref{rang1}] Soient $f$ un automorphisme vérifiant les hypothèses de la proposition et $\varepsilon>0$. Considérons un entier $\ell$ et une partition finie mesurable $\{P_1,\dots, P_s\}$ de $X$ ; d'après la remarque \ref{remran}, on peut supposer que ses éléments sont des unions de cubes dyadiques d'ordre $\ell$. Alors il existe un entier $m\ge\ell$ tel que la subdivision $\D_{m}$ soit de taille $q_m$, ainsi qu'une permutation $f_m$ de $\D_{m}$, tels que
\[\sum_{i=1}^{q_m}\mu\big(f(C_{m,i})\Delta f_m(C_{m,i})\big)\le\vartheta(q_m).\]
On choisit de plus l'entier $m$ assez grand, de telle sorte que $\vartheta(q_m)<\frac{\varepsilon}{3 q_m}$. Soit $C$ un cube (ouvert ou fermé) de $\D_{m}$, alors par cyclicité de la permutation $f_m$, la famille de cubes $\{f_m^i(C)\}_{0\le i\le q_m-1}$ parcourt exactement la subdivision $\D_{m}$. Posons
\[A = \bigcap_{i=0}^{q_m-1} f^{-i}(f_m^i(C)) = \bigcap_{i=0}^{q_m-1}\widetilde{A}_i.\]

Alors $f^i(A)\subset f_m^i(C)$ pour tout entier $i\in\{0,\dots,q_m-1\}$, si bien que les ensembles $A, f(A), \dots, f^{q_m-1}(A)$ sont deux à deux disjoints. L'ensemble $A$ vérifie donc le point 1. de la définition \ref{defrg}.

De plus, $A$ remplit une part importante de $C$, en effet,
\[C = \widetilde{A}_0\subset \left(\bigcap_{i=0}^{q_m-1} \widetilde{A}_i\right)\cup \bigcup_{i=1}^{q_m-1}\left(\widetilde{A}_{i-1}\setminus \widetilde{A}_i\right)\]
(pour $x\in \widetilde{A}_0$, considérer s'il existe le premier indice $i$ tel que $x\in \widetilde{A}_{i-1}$ et $x\notin \widetilde{A}_i$), si bien que
\begin{eqnarray*}
\mu(C)-\mu(A) & \le & \sum_{i=1}^{q_m-1}\mu (\widetilde{A}_{i-1}\setminus \widetilde{A}_i) = \frac 12\sum_{i=1}^{q_m-1}\mu (\widetilde{A}_{i-1}\Delta \widetilde{A}_i)\\
              & =   & \frac 12 \sum_{i=1}^{q_m-1}\mu\left(f(f_m^{i-1}(C))\Delta f_m(f_m^{i-1}(C))\right) \le \frac{\vartheta(q_m)}{2}.
\end{eqnarray*}
Puisque $f^i(A)\subset f_m^i(C)$ pour $i\in\{0,\dots,q_m-1\}$, et par préservation de la mesure, on a alors
\begin{equation}\label{eq0}
\mu\big(f_m^i(C) \Delta f^i(A)\big) = \mu(C)-\mu(A)\le \frac{\vartheta(q_m)}{2}\le\frac{\varepsilon}{6q_m},
\end{equation}
et par conséquent, 
\begin{eqnarray*}
\mu\left(C\Delta f^{q_m}(A)\right) & = & \mu\big(f_m^{q_m}(C) \Delta f^{q_m}(A)\big)\\
                             & \le & \mu\left(f_mf_m^{q_m-1}(C) \Delta f f_m^{q_m-1}(C)\right) + \mu\left(ff_m^{q_m-1}(C) \Delta ff^{q_m-1}(A)\right)\\
                             & \le & \frac{\varepsilon}{3q_m} + \frac{\varepsilon}{6q_m} \le \frac{3\varepsilon}{6q_m},
\end{eqnarray*}
d'où
\begin{eqnarray*}
\mu\big(A \Delta f^{q_m}(A)\big) & \le & \mu(A\Delta C) + \mu\big(C \Delta f^{q_m}(A)\big)\\
                                 & \le & \frac{4\varepsilon}{6q_m}.
\end{eqnarray*}
Ainsi,
\[\mu\big(A \cap f^{q_m}(A)\big) \ge \frac{\mu(A)+\mu(f^{q_m}(A))}{2}-\mu\big(A \Delta f^{q_m}(A)\big) \ge \frac{1}{q_m}-\frac{5\varepsilon}{6q_m}\ge \frac{1-\varepsilon}{q_m},\]
ce qui prouve que l'ensemble $A$ vérifie point 2. de la définition \ref{defrg}.

Enfin, pour tout entier $k\in\{1,\dots,\ell\}$, il existe des entiers $i_j$ tels que $P_k = \bigcup_j f_m^{i_j}(C)$ ; l'application de l'inégalité (\ref{eq0}) donne :
\begin{equation}\label{eq0'}
\mu\left(P_k \Delta \left(\bigcup_j f^{i_j}(A)\right)\right)\le \sum_j \mu(f_m^{i_j}(C) \Delta f^{i_j}(A)) \le \frac{\varepsilon}{6},
\end{equation}
ce qui montre que $A$ satisfait le point 3. de la définition \ref{defrg} (à l'aide de la remarque \ref{remran}). Pour $\varepsilon$ fixé, on vient de construire un ensemble mesurable $A$ vérifiant les assertions 1. et 2. et 3. de la définition \ref{defrg}. Ainsi l'automorphisme $f$ est de rang un et de plus, quitte à choisir l'ensemble $C$ ouvert (resp. fermé) dès le début, on peut supposer l'ensemble $A$ ouvert (resp. fermé).
\end{proof}

\section{Généricité des homéomorphismes à spectre simple}

Dans cette partie, nous utilisons le fait qu'un élément générique de $\mathrm{Homeo}(X,\mu)$ est de rang un, établi dans la section précédente, pour en déduire la généricité des homéomorphismes à spectre simple. Notons qu'historiquement, la preuve de la généricité du spectre simple est antérieure à celle du rang un ; l'idée d'automorphisme de rang un était néanmoins sous-jacente dans la preuve de l'article de A. Katok et A. Stepin \cite{KatokS1} ; nous avons découpée celle-ci en deux parties indépendantes : une première consacrée à la généricité des homéomorphismes de rang un, et une seconde où l'on prouve que le rang un implique la simplicité du spectre.

\begin{definition}\label{Koop}
L'\emph{opérateur de Koopman} associé à $f\in \mathrm{Auto}(X,\mu)$ est l'application
\begin{eqnarray*}
U_f : L^2(X,\mu) &  \longrightarrow & L^2(X,\mu)\\
\varphi & \longmapsto & \varphi\circ f.
\end{eqnarray*}
\end{definition}

\begin{definition}\label{defspecsimpl}
On dit que $f\in \mathrm{Auto}(X,\mu)$ est à \emph{spectre simple} si son opérateur de Koopman associé $U_f$ est cyclique, c'est-à-dire s'il existe $\chi\in L^2(X,\mu)$ tel que $\{U_f^k(\chi)\}_{k\in \N}$ forme une base (de Schauder) de $L^2(X,\mu)$.
\end{definition}

\begin{rem}
Pour $f$ un automorphisme à spectre simple, toutes les valeurs propres de $U_f$ sont simples. Réciproquement, si $U_f$ est diagonalisable (c'est-à-dire s'il existe une base hilbertienne de $L^2(X,\mu)$ constituée de vecteurs propres de $U_f$) avec toutes ses valeurs propres simples, alors $f$ est à spectre simple.
\end{rem}

\begin{theoreme}[Yuzvinskii, Katok, Stepin]\label{specsimplgéné}
Un homéomorphisme générique de $\mathrm{Homeo}(X,\mu)$ est à spectre simple.
\end{theoreme}

Ce théorème se déduit du théorème \ref{rang1F} et de la proposition suivante :

\begin{prop}\label{specsimpl}
Tout automorphisme de rang un est à spectre simple.
\end{prop}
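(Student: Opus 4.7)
The plan is to exhibit a cyclic vector for the Koopman operator $U_f$ (Definition \ref{Koop}), by combining a Baire category argument with the Rokhlin towers supplied by the rank-one property.

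First, I would reduce the statement to a density problem. Fix a countable family $(\phi_k)_{k\ge 1}$ dense in $L^2(X,\mu)$, for instance rational linear combinations of characteristic functions of dyadic cubes. A vector $\chi$ is cyclic for $U_f$ if and only if, for every $k,m\ge 1$, there exists $Q\in\C[z,z^{-1}]$ with $\|\phi_k-Q(U_f)\chi\|_{L^2}<1/m$. The set of cyclic vectors is therefore the countable intersection $\bigcap_{k,m}\U_{k,m}$ of the open sets
\[\U_{k,m}\ :=\ \bigcup_{Q\in\C[z,z^{-1}]}\big\{\chi\in L^2(X,\mu)\, :\, \|\phi_k-Q(U_f)\chi\|_{L^2}<1/m\big\},\]
and by Baire's theorem it suffices to prove that each $\U_{k,m}$ is dense in $L^2(X,\mu)$.

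Next, the rank-one hypothesis directly provides approximate cyclic vectors at the partition level. Given a partition $\pi=(P_1,\dots,P_s)$ of $X$ by unions of dyadic cubes and $\eta>0$, Definition \ref{defrg} produces a set $A$ and an integer $\tau$ such that $(f^jA)_{0\le j<\tau}$ are pairwise disjoint, condition~2 of the definition ensures $f^\tau A$ essentially coincides with $A$, and for each $i$ there is an index set $J_i\subset\{0,\dots,\tau-1\}$ with $\mu\big(P_i\,\triangle\,\bigsqcup_{j\in J_i}f^jA\big)\le\eta$. Setting $Q_i(z)=\sum_{j\in J_i}z^{-j}$ yields $\|\mathbf{1}_{P_i}-Q_i(U_f)\mathbf{1}_A\|_{L^2}\le\sqrt{\eta}$, so that $\mathbf{1}_A$ is a $\sqrt{\eta}$-cyclic vector for the $\sigma$-algebra generated by $\pi$.

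To deduce the density of $\U_{k,m}$, I would take $\chi_0\in L^2(X,\mu)$ and $\delta>0$, approximate $\chi_0$ within $\delta/2$ by a simple function $\widetilde\chi_0=\sum_{i=1}^r c_i\,\mathbf{1}_{E_i}$ with the $E_i$ disjoint unions of dyadic cubes, and apply the previous step to the common refinement of $(E_i)$ and of the level partition of $\phi_k$, with $\eta$ very small. This furnishes a tower of base $A$, height $\tau$, and unions of levels $\widetilde E_i\approx E_i$ together with $\widetilde B_k\approx\phi_k$. I would then set
\[\chi\ :=\ \sum_{i=1}^r c_i\,\mathbf{1}_{\widetilde E_i}\ +\ \lambda\,\mathbf{1}_A,\]
choosing $\lambda>0$ small enough that $\|\chi-\chi_0\|_{L^2}<\delta$ and otherwise generic. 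By construction $\chi=P(U_f)\mathbf{1}_A$ for an explicit Laurent polynomial $P$ with coefficients $d_0,\dots,d_{\tau-1}$ depending on the $c_i$ and on $\lambda$; hence $\chi$ lies in the cyclic subspace $H(\mathbf{1}_A)$. Conversely, using the near-periodicity of the tower, the orbit $\{U_f^\ell\chi\}_\ell$ is, up to a small $L^2$-error, the family of cyclic shifts on $\Z/\tau\Z$ of the sequence $(d_0,\dots,d_{\tau-1})$. For a generic $\lambda$, this sequence has nonvanishing discrete Fourier transform, so its cyclic shifts span $\C^\tau$; one therefore recovers $\mathbf{1}_A$, hence $Q_k(U_f)\mathbf{1}_A\approx\phi_k$, as a Laurent polynomial in $U_f$ applied to $\chi$, placing $\chi$ in $\U_{k,m}$.

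The main obstacle is to ensure that this ``cyclic inversion'' recovering $\mathbf{1}_A$ from $\chi$ remains well-conditioned in $L^2$. Concretely, the inverse polynomial has operator norm controlled by $(\min_m|\widehat P(m)|)^{-1}$, which is generically positive but may be small; the rank-one errors of order $\sqrt{\eta}$ together with the drift due to the only approximate $\tau$-periodicity of $f$ on the tower are amplified by this factor. The parameters $\eta$, $\lambda$ and $\tau$ must therefore be coordinated so that, once $\lambda$ is fixed to make $\widehat P$ nonvanishing, $\eta$ is chosen small enough (depending on $\lambda$ and $\tau$) for the total error in the approximation of $\phi_k$ by a polynomial in $U_f$ applied to $\chi$ to remain below $1/m$.
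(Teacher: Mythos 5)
Your approach diverges from the paper's. The paper argues by contradiction: Lemma \ref{thspec} (a spectral-theoretic dichotomy) reduces the problem to showing that, for arbitrary unit vectors $\varphi,\psi$, the cyclic subspace $\mathcal K=\langle\chi_A\rangle_{U_f}$ attached to a fine enough rank-one tower approximates each of them to within $<1/2$; this is exactly the \emph{forward} direction supplied by point~3 of Definition \ref{defrg}, and no inversion of the tower structure is required. You instead try to exhibit a cyclic vector directly via a Baire-category argument, and this is where a genuine gap arises.

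The gap is the ``cyclic inversion'' step, and the closing paragraph of the proposal does not resolve it: the proposed parameter coordination is circular. To recover $\mathbf{1}_A$ from $\chi$ you must apply a Laurent polynomial $R(U_f)$ whose coefficient size is governed by $\tau/\min_j|\widehat P(j)|$; this factor amplifies both the rank-one error $\sqrt\eta$ and the wrap-around drift of the tower (of order $\sqrt{\eta\tau}$ in $L^2$ after up to $\tau$ iterations). To tame the resulting total error one needs $\eta$ small relative to $\tau$ and to $\min_j|\widehat P(j)|$. But $\tau$ and the coefficient vector $(d_j)$ --- and hence $\widehat P$ --- are \emph{produced by} the rank-one property only once $\eta$ is chosen, and Definition \ref{defrg} places no upper bound on $\tau$ in terms of $\eta$. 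You therefore cannot ``first fix $\lambda$ and $\tau$, then choose $\eta$ depending on them'': shrinking $\eta$ gives a new tower with a new, uncontrolled height $\tau$ and new coefficients $d_j$, and the conditioning problem recurs. The paper sidesteps this entirely because the contradiction scheme never asks to re-express $\mathbf 1_A$ (or $\phi_k$) as $R(U_f)$ applied to some perturbed $\chi$; it only requires the one-sided approximability of $\varphi$ and $\psi$ by $\mathcal K=\langle\chi_A\rangle$, which rank one delivers without any inversion or conditioning estimate.
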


Combinée à la proposition \ref{rang1}, cette proposition implique facilement le corollaire suivant :

\begin{coro}
Si $f\in \mathrm{Auto}(X,\mu)$ admet une approximation cyclique à la vitesse $\vartheta$, avec $\vartheta(q) = o(1/q)$, alors $f$ est à spectre simple.
\end{coro}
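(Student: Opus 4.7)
The plan is to deduce simple spectrum of $U_f$ from the rank-one hypothesis by combining a \emph{uniform approximate cyclicity} (derived from rank one) with a spectral-theoretic obstruction to higher multiplicity. Recall that $U_f$ has simple spectrum if and only if its spectral multiplicity is at most one, or equivalently if $U_f$ admits a cyclic vector in $L^2(X,\mu)$. The essential algebraic observation is that for any measurable $B \subset X$ and any $i \in \Z$, one has $\mathbf{1}_{f^i(B)} = U_f^{-i}(\mathbf{1}_B)$; hence the closed cyclic subspace
\[C(\mathbf{1}_B) := \overline{\mathrm{vect}}\{U_f^k(\mathbf{1}_B) : k \in \Z\}\]
automatically contains every indicator of an iterate of $B$, and in particular all tower-level indicators.

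First, I would extract from rank one the following uniform approximate cyclicity: for every $\varepsilon > 0$ and every finite family $v_1, \ldots, v_s \in L^2(X,\mu)$, there exists $\chi \in L^2(X,\mu)$ such that $d(v_j, C(\chi)) < \varepsilon$ for all $j$. Since simple functions built from dyadic cubes are dense in $L^2$, it suffices to handle the case where each $v_j$ is $\D_\ell$-measurable with uniformly bounded $L^\infty$-norm, for some fixed $\ell$. Applying definition \ref{defrg} to $\D_\ell$ with a sufficiently small auxiliary parameter $\varepsilon_0 > 0$ provides a Rokhlin tower with base $A$ such that each cube $C \in \D_\ell$ coincides, up to a set of measure at most $\varepsilon_0$, with a union $\bigcup_{i \in I_C} f^i(A)$. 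Consequently $\mathbf{1}_C$ lies within $L^2$-distance $\sqrt{\varepsilon_0}$ of $\sum_{i \in I_C} U_f^{-i}(\mathbf{1}_A) \in C(\mathbf{1}_A)$, and by linearity each $v_j$ is close to an element of $C(\mathbf{1}_A)$; choosing $\varepsilon_0$ small enough delivers $\chi = \mathbf{1}_A$ with the required property.

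Second, I would derive simple spectrum by contradiction. Suppose $U_f$ has spectral multiplicity at least $2$. By the spectral theorem there exist two orthogonal closed $U_f$-invariant subspaces $H_1, H_2 \subset L^2(X,\mu)$, each $U_f$-isomorphic to $L^2(\T,\sigma)$ for a common probability measure $\sigma$ on the unit circle; choose orthogonal unit vectors $u_1 \in H_1$ and $u_2 \in H_2$ whose spectral measures both equal $\sigma$. For any $\chi \in L^2$, the orthogonal projection $T := P_{H_1 \oplus H_2}|_{C(\chi)}$ is $U_f$-equivariant, and its image $T(C(\chi))$ is a closed $U_f$-invariant subspace of $H_1 \oplus H_2$ of spectral multiplicity at most one. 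Since $u_j \in H_1 \oplus H_2$, the contraction property of $P_{H_1 \oplus H_2}$ yields $d(u_j, T(C(\chi))) \leq d(u_j, C(\chi))$. A direct computation in the spectral model $H_1 \oplus H_2 \cong L^2(\sigma) \oplus L^2(\sigma)$, after identifying multiplicity-one closed $U_f$-invariant subspaces with $L^2$-closures of $\{(hg_1, hg_2) : h \in L^\infty(\sigma)\}$ for a generating pair $(g_1, g_2)$, shows by pointwise minimization of $\int (|1 - hg_1|^2 + |hg_2|^2) \, d\sigma$ that every such subspace $Y$ satisfies
\[d(u_1, Y)^2 + d(u_2, Y)^2 \geq 1,\]
so $\max_j d(u_j, C(\chi)) \geq 1/\sqrt{2}$ uniformly in $\chi$. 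This contradicts the first step applied to $\{u_1, u_2\}$ with $\varepsilon < 1/\sqrt{2}$.

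The main obstacle will be the explicit spectral computation in the multiplicity-two model: one must verify that every $U_f$-invariant closed subspace of multiplicity at most one in $L^2(\sigma) \oplus L^2(\sigma)$ is indeed of the form $\{(hg_1, hg_2) : h \in L^\infty(\sigma)\}^{L^2\text{-closure}}$, including a careful treatment on the common zero set of $(g_1, g_2)$, and then to carry out the pointwise minimization yielding the sum-of-squares bound $\geq 1$. This is standard spectral theory for unitary operators, but it is the technical heart of the multiplicity-one argument.
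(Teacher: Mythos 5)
Your proof is correct and follows the same overall route as the paper: from cyclic approximation to rank one (Proposition \ref{rang1}), from rank one to a uniform approximate-cyclicity statement, and then a collision with a quantitative spectral obstruction valid whenever the spectrum is not simple. The one real difference is how that obstruction is obtained. The paper simply cites Lemma \ref{thspec} from \cite{KatokT}: when $U$ is not cyclic there are orthogonal unit vectors $\varphi,\psi$ with $d(\varphi,\mathcal K)+d(\psi,\mathcal K)\geq 1$ for every cyclic $\mathcal K$. You instead prove the needed obstruction from scratch by reducing to the multiplicity-two model $L^2(\sigma)\oplus L^2(\sigma)$ with $u_1=(1,0)$, $u_2=(0,1)$, parametrising cyclic subspaces by a generator $(g_1,g_2)$, and minimising pointwise; setting $r^2=|g_1|^2+|g_2|^2$, the residual of the best approximation to $u_1$ along the fibre $(g_1,g_2)$ is $|g_2|^2/r^2$ and that to $u_2$ is $|g_1|^2/r^2$, which integrate to $1$, giving $d(u_1,Y)^2+d(u_2,Y)^2\geq 1$ --- in fact a slightly \emph{stronger} form than the cited lemma, since $(d_1+d_2)^2\geq d_1^2+d_2^2$. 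This buys a self-contained argument at the cost of proving a known spectral lemma inline, including the care you flag yourself around the common zero set of $(g_1,g_2)$ and the passage from $T(C(\chi))$ to its closure $Y$, which is what is actually cyclic with generator $T\chi$.
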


\begin{rem}
En travaillant directement sur la propriété de simplicité du spectre (sans passer par les automorphismes de rang un), comme dans \cite{KatokS3}, on peut obtenir une vitesse d'approximation moins contraignante, à savoir $\vartheta(q) = \alpha/q$, avec $\alpha<\frac 12$.
\end{rem}

Pour montrer la proposition \ref{specsimpl}, nous aurons besoin d'un lemme de théorie spectrale, dont on pourra trouver un énoncé plus général, ainsi qu'une démonstration, au théorème 1.21 de \cite{KatokT}.

\begin{lemme}\label{thspec}
Soit $U$ un opérateur normal et borné d'un espace de Hilbert $\Hi$. Si cet opérateur n'est pas cyclique, alors il existe deux vecteurs orthogonaux $\varphi,\psi\in\Hi$ de norme $1$ tels que pour tout sous espace $U$-cyclique $\mathcal K$, on ait
$d(\varphi,\mathcal K)+d(\psi,\mathcal K)\ge 1$
\end{lemme}

\begin{proof}[Preuve de la proposition \ref{specsimpl}] Soit $f$ un automorphisme de rang un et $\varphi, \psi\in L^2(X,\mu)$ qui vérifient $\|\varphi\|\le 1$, $\|\psi\|\le 1$ et $\delta>0$. Pour $m$ assez grand, il existe $\varphi_m, \psi_m\in L^2(X,\mu)$, constantes sur les cubes de la subdivision dyadique $\D_m$ telles que $\|\varphi_m\|\le 1$, $\|\psi_m\|\le 1$, $\|\varphi-\varphi_m\|\le \delta$ et $\|\psi-\psi_m\|\le \delta$. Puisque $\varphi$ et $\psi$ jouent des rôles identiques, on se concentre sur l'étude de $\varphi$ ; les conclusions seront identiques pour $\psi$. On note $C_1,\dots,C_{q_m}$ les cubes de $\D_m$. Choisissant $\varepsilon=\frac{1}{3 q_m}$, puisque $f$ est de rang un, il existe un ensemble mesurable $A$ et un entier $\tau$ tels que :
\begin{itemize}
\item les ensembles $A,f(A),\dots,f^{\tau-1}(A)$ sont deux à deux disjoints,
\item la partition finie de $X$ engendrée par les ensembles
\[A,f(A),\dots,f^{\tau-1}(A), X\setminus\bigcup_{k\leq \tau-1} f^k(A)\]
contient des éléments $P_1',\dots,P_{q_m}'$ tels que pour tout entier $i$ compris entre 1 et $q_m$, $\mu(C_i\Delta P_i')\le \varepsilon$.
\end{itemize}

Posons $\chi_{C_i}$ et $\chi_A$ les fonctions caractéristiques des ensembles $C_i$ et $A$, vues comme éléments de $L^2(X,\mu)$. Par hypothèse, il existe des nombres $\{b_i\}_{0\le i\le q_m-1}$ tels que
\[\varphi_m = \sum_{i=0}^{q_m-1} b_i\ \chi_{C_i}.\]
L'approximation naturelle de $\varphi_m$ par des fonctions engendrées par les itérés de $A$ par $U_T$ est alors :
\[\varphi'_m = \sum_{i=0}^{q_m-1} b_i\ \chi_{P_i'}.\]
Alors $\|\varphi-\varphi'_m\|\le \delta +\|\varphi_m-\varphi'_m\|$ et par inégalité triangulaire, 
\begin{eqnarray*}
\|\varphi_m-\varphi'_m\| & = & \sum_{i=0}^{q_m-1} |b_i| \left\|\chi_{C_i} - \chi_{P_i'}\right\| \\
                         & \le & \sum_{i=0}^{q_m-1} |b_i|\,\mu\left(C_i \Delta P_i'\right).
\end{eqnarray*}
En observant que de plus, par inégalité de Cauchy-Schwarz et par hypothèse sur la norme de $\varphi_m$, on a :
\[\sum_{i=0}^{q_m-1}|b_i|\le\sqrt{q_m}\sqrt{\sum_{i=0}^{q_m-1}|b_i|^2}\le q_m,\]
en déduit que 
\[\|\varphi_m-\varphi'_m\| \le q_m^{3/2}\varepsilon\le \frac 13.\]

Supposons que le spectre de $f$ ne soit pas simple. On applique alors le lemme \ref{thspec} en choisissant pour $\mathcal K$ le sous-espace engendré par $\chi_A$, qui contient donc tous les $\chi_{P_i'}$ ; ce lemme nous donne deux fonctions particulières $\varphi$ et $\psi$. Si on prend $m$ assez grand on aura $d(\varphi,\mathcal K)\le \frac 13+\delta$ et $d(\psi,\mathcal K)\le \frac 13+\delta$ d'où, en prenant $\delta<\frac 16$, 
\[d(\varphi,\mathcal K) + d(\psi,\mathcal K) < 1,\]
ce qui est en contradiction avec le lemme \ref{thspec}.
\end{proof}

\section{Généricité des homéomorphismes standards}

Nous donnons dans cette section une seconde conséquence du fait d'être de rang un : être standard. Un automorphisme $f$ est dit \emph{standard} s'il existe un ensemble mesurable $A$ tel que l'application induite $f_A$ soit métriquement conjuguée à un odomètre. Notons que si $f$ est standard, alors l'application $f_A$ est en particulier à spectre discret, tandis que le type spectral d'un homéomorphisme générique est sans atome (voir la section \ref{saltyp}). Ainsi, il ne faut pas confondre les comportements dynamiques de $f$ et $f_A$, qui sont en général bien différents. Les preuves de cette partie sont principalement tirées de \cite{KatokMon}.

\begin{definition}\label{Enzo}
Soit $\{r_m\}_{m\in\N}$ une suite d'entiers strictement positifs. On définit $f\in\mathrm{Auto}(I,\mathrm{Leb})$ par récurrence comme suit :
\begin{itemize}
\item On découpe $I$ en $r_0$ intervalles $I_{0}^1, \dots, I_{0}^{r_0}$ de longueurs identiques, sur les $r_0-1$ premiers intervalles, $f$ agit comme une translation envoyant chaque intervalle sur le suivant. Reste à définir $f$ sur le dernier intervalle $I_{0}^{r_0}$ ; puisqu'on veut que $f$ préserve la mesure, ce dernier intervalle sera envoyé sur le premier intervalle $I^1_0$.
\item On découpe l'intervalle restant $I_{0}^{r_0}$ en $r_1$ petits intervalles $I_{1}^1, \dots, I_{1}^{r_1}$ de longueurs identiques, on fait de même en découpant l'intervalle $I_{0}^1$ en $\tilde{I}_{1}^1, \dots, \tilde{I}_{1}^{r_1}$. Sur les $r_1-1$ premiers sous-intervalles $I_{1}^1, \dots, I_{1}^{r_1-1}$, $f$ agit comme une translation envoyant chaque petit intervalle $I_{1}^i$ sur  $\tilde{I}_{1}^{i+1}$. Reste à définir $f$ sur $I_1^{r_1}$ ; puisque $f$ préserve la mesure, ce dernier intervalle sera envoyé sur $\tilde{I}_1^1$.
\item On répète cette opération à l'infini.
\end{itemize}
Un \emph{odomètre} associé à la suite $\{r_m\}$ est un élément de $\mathrm{Auto}(X,\mu)$ métriquement conjugué à l'automorphisme $f$ construit ci-dessus. 

Un automorphisme $f\in\mathrm{Auto}(X,\mu)$ est dit \emph{standard} s'il existe un ensemble mesurable $A$ tel que l'application induite $f_A : A\to A$ soit un odomètre.
\end{definition}

Nous commençons cette partie par la preuve d'un \emph{addendum} à la proposition \ref{rang1}, qui se montre directement à partir de cette dernière.

\begin{prop}\label{rang2}
Soit $f\in \mathrm{Auto}(X,\mu)$ un automorphisme de rang un. Alors il existe une suite d'ensembles mesurables $(A_m)_{m\in\N}$ et une suite d'entiers $(\tau_m)_{m\in\N}$, vérifiant :
\begin{enumerate}
\item Pour tout $m$, les ensembles $A_m,f(A_m),\dots,f^{\tau_m-1}(A_m)$ sont deux à deux disjoints. On notera $B_m = X\setminus \bigcup_{i=0}^{\tau_m-1} f^i(A_m)$ si bien que les ensembles $A_m,f(A_m),\dots,f^{\tau_m-1}(A_m),B_m$ forment une partition finie $\eta_m$ de $X$.
\item $\tau_m\, \mu(f^{\tau_m}(A_m)\cap A_m)\underset{m\to+\infty}{\longrightarrow} 1$.
\item Pour toute partition finie mesurable $P_1,\dots,P_k$ de $X$, et pour tout $\varepsilon>0$, il existe $m\in\N$ et des ensembles $P_1',\dots,P_k'$ formés d'unions d'ensembles de $\eta_m$ tels que, pour tout entier $i$ compris entre 1 et $k$, $\mu(P_i\Delta P_i')\le \varepsilon$.
\item Les partitions $\eta_m$ sont de plus en plus fines : pour tout $m'\ge m$, chaque élément de $\eta_m$ est l'union d'élements de $\eta_{m'}$ ; de plus $B_1\supset B_2\supset\dots$.
\end{enumerate}
%De plus, si $f\in \mathrm{Homeo}(X,\mu)$, on peut supposer que les ensembles $A_m$ sont fermés.
\end{prop}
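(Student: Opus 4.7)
La strat\'egie consiste \`a construire $(A_m,\tau_m)$ par r\'ecurrence en appliquant la proposition \ref{rang1} \`a des partitions de plus en plus fines, tout en prenant garde, \`a chaque \'etape, \`a ce que la nouvelle tour de Rokhlin raffine strictement la pr\'ec\'edente. Les propri\'et\'es (1), (2) et (3) de l'\'enonc\'e d\'ecouleront alors presque directement de la d\'efinition \ref{defrg}, tandis que la propri\'et\'e (4) de nidification exacte sera le seul point d\'elicat.

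Voici plus pr\'ecis\'ement le plan. Tout d'abord, je fixerais une suite $(\mathcal{P}^{(k)})_{k\in\N}$ de partitions mesurables finies dense dans l'espace des partitions mesurables finies de $X$ pour la distance $d(A,B)=\mu(A\Delta B)$ coordonn\'ee par coordonn\'ee (par exemple les partitions engendr\'ees par les cubes des subdivisions dyadiques $\D_k$, ce qui est loisible en vertu de la remarque \ref{remran}). Ensuite, je choisirais une suite $\varepsilon_m>0$ sommable, avec $\varepsilon_m\to 0$. \`A l'\'etape $m$, disposant d\'ej\`a de $A_{m-1},\tau_{m-1}$ et de la partition $\eta_{m-1}$, je consid\'ererais la partition $\widetilde{\mathcal{P}}_m$ obtenue en raffinant $\mathcal{P}^{(1)},\dots,\mathcal{P}^{(m)}$ et en y incorporant tous les \'el\'ements de $\eta_{m-1}$, puis j'appliquerais la proposition \ref{rang1} \`a $\widetilde{\mathcal{P}}_m$ avec un $\varepsilon=\varepsilon_m$ suffisamment petit. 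Cela produit $A_m$ et $\tau_m$ satisfaisant les points (1) et (2) de la d\'efinition \ref{defrg}, avec $\tau_m\mu(A_m\cap f^{\tau_m}(A_m))\ge 1-\varepsilon_m$, ce qui assure la propri\'et\'e (2) de la proposition \ref{rang2}. La propri\'et\'e (3) d\'ecoule alors de la densit\'e des $(\mathcal{P}^{(k)})$ combin\'ee \`a un argument diagonal, et la propri\'et\'e (1) est directement celle de la d\'efinition \ref{defrg}.

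Le point difficile est la propri\'et\'e (4), qui demande une nidification \emph{exacte} des partitions $\eta_m$, non seulement \`a $\varepsilon_m$ pr\`es. Pour l'obtenir, je reviendrais \`a la construction explicite du point 3 de la preuve de la proposition \ref{rang1}, dans laquelle $A$ est obtenu comme intersection $A=\bigcap_{i=0}^{q_m-1}f^{-i}(f_m^i(C))$ pour un cube $C$ bien choisi de $\D_m$. L'id\'ee est que si $C_m$ est choisi inclus dans le cube $C_{m-1}$ utilis\'e \`a l'\'etape pr\'ec\'edente, et si la subdivision $\D_{m_k}$ raffine $\D_{m_{k-1}}$ (ce que garantit la construction des subdivisions dyadiques), et si la permutation approximante $f_{m_k}$ est elle-m\^eme compatible avec $f_{m_{k-1}}$, alors l'intersection d\'efinissant $A_m$ sera incluse dans $A_{m-1}$ et la tour engendr\'ee par $A_m$ raffinera effectivement celle engendr\'ee par $A_{m-1}$. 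Pour forcer l'inclusion $B_m\supset B_{m+1}$ (autrement dit pour que les niveaux $f^i(A_m)$ remplissent de plus en plus $X$), je choisirais $\tau_m\to\infty$ et contr\^olerais le reste $B_m$ en prenant $\mu(B_m)\to 0$, ce qui est coh\'erent avec $\tau_m\mu(A_m)\to 1$.

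L'obstacle principal est donc de concilier les trois exigences antagonistes suivantes lors de la construction r\'ecursive : (a) la tour de l'\'etape $m$ doit raffiner \emph{exactement} celle de l'\'etape $m-1$ (et non seulement \`a $\varepsilon_m$ pr\`es comme le fournit brut la proposition \ref{rang1}) ; (b) elle doit encore approcher \`a $\varepsilon_m$ pr\`es les nouvelles partitions $\mathcal{P}^{(k)}$ ajout\'ees \`a l'\'etape $m$ ; (c) la quasi-invariance $\tau_m\mu(A_m\cap f^{\tau_m}(A_m))\to 1$ doit \^etre pr\'eserv\'ee. Surmonter ce point revient \`a noter que la construction de $A_m$ par la formule $A_m=\bigcap_{i}f^{-i}(f_{m_k}^i(C_m))$ se comporte bien par restriction \`a un cube : si l'on choisit $C_m$ comme union de cubes de $\D_{m_k}$ inclus dans le cube $C_{m-1}$ de $\D_{m_{k-1}}$, alors chaque niveau de la nouvelle tour est automatiquement contenu dans un niveau de l'ancienne, et l'ensemble $B_m$ d\'ecroit. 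Une fois ce point technique r\'egl\'e, les quatre propri\'et\'es r\'esultent directement de la construction et des estimations d\'ej\`a \'etablies dans la preuve de la proposition \ref{rang1}.
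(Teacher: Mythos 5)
Your proposal correctly identifies the only delicate point (exact nesting, property 4), and the first three properties indeed come directly from the rank-one definition. But the strategy you propose for property 4 has a genuine gap, and the paper's actual proof takes a fundamentally different route.

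You propose to go back to the explicit dyadic construction of the proof of Proposition~\ref{rang1}, writing $A_m = \bigcap_i f^{-i}(f_{m_k}^i(C_m))$ and choosing $C_m \subset C_{m-1}$, hoping that \og chaque niveau de la nouvelle tour est automatiquement contenu dans un niveau de l'ancienne\fg. This is not automatic. For $f^i(A_m) \subset f^j(A_{m-1})$ you would need $f_{m_k}^l(C_m) \subset f_{m_{k-1}}^l(C_{m-1})$ for all $l < q_{m_{k-1}}$, i.e.\ the cyclic permutation at the fine scale must \emph{project exactly} onto the cyclic permutation at the coarse scale. The Lax theorem gives you no such control: its marriage-lemma construction produces a cyclic permutation at each scale independently, and there is no reason the fine permutation respects the coarse one (a coarse cube near a region where $f$ expands/contracts will have its fine sub-cubes sent by $f_{m_k}$ into several distinct coarse cubes). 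You name this \og compatibilit\'e\fg\ but give no argument for it; it would require proving a genuinely strengthened, refinement-compatible version of the Lax theorem, which is a nontrivial additional statement. Moreover, the remark that you would \og contr\^oler le reste $B_m$ en prenant $\mu(B_m)\to 0$\fg\ does not yield $B_m\supset B_{m+1}$: inclusion of sets is not implied by decreasing measure.

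The paper's proof avoids all of this. It never touches cubes or dyadic permutations again: it starts from an \emph{arbitrary} sequence of rank-one towers $(A^m,\tau_m)$, extracts a subsequence so that each base $A_m^m$ is well approximated by a union of levels of the next tower (with a summable error of order $1/(\tau_m^3 2^{\tau_m})$, and each such level mostly lies inside $A_m^m$ so that the index set $I_m$ avoids its own translates), and then \emph{retroactively modifies} all earlier bases by the recursion $A_k^m = \bigcup_{i\in I_k} f^i(A_{k+1}^m)$. A Cauchy estimate $\mu(A_k^m\Delta A_k^{m+1}) \lesssim 1/(\tau_m^2 2^{\tau_m})$ shows that for each fixed $k$ the modified sequence $(A_k^m)_m$ converges in $(\B,d)$ to a limit $A'_k$, and the nesting relation $A'_k = \bigcup_{i\in I_k} f^i(A'_{k+1})$ then holds exactly. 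Properties 2 and 3 survive the limit because $\mu(A_k^k\Delta A'_k)\le 2/\tau_k^2$. This is a purely measure-theoretic argument inside the complete metric space of measurable sets; it works for abstract automorphisms of rank one and does not depend on any geometric choice of cubes or compatible permutations. Your outline, by contrast, would only be complete after establishing the refinement-compatible Lax theorem that it implicitly invokes.
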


\begin{proof}[Preuve de la proposition \ref{rang2}] Par définition, il existe de deux suites $(A^m)_m$ et $(\tau_m)_m$ qui vérifient les points 1., 2. et 3. de la proposition. Il reste donc à montrer que l'on peut modifier ces ensembles de manière à obtenir le point 4. On suppose que l'on a construit une suite d'ensembles mesurables $(A_m)_{m\in\N}$ vérifiant les trois premiers points de la proposition ; on veut modifier ces ensembles pour qu'ils vérifient aussi le dernier. Nous allons maintenant faire une suite de modifications sur la suite d'ensembles $(A^m)_m$ pour obtenir le point~4. Dans ce but, commençons par poser $A_m^m= A^m$. Quitte à extraire une sous-suite de la suite $(A_m^m)$, on pourra supposer que les partitions
\[\eta_m^m = \left\{A_m^m, f(A_m^m),\dots,f^{\tau_m-1}(A_m^m),B_m^m\right\}\]
sont telles que pour tout entier $m$, il existe une famille d'entiers $I_m\subset \{1,\dots,\tau_{m+1}-1\}$ pour laquelle
\begin{equation}\label{eq1}
\mu\left(A_m^m\Delta\left(\bigcup_{i\in I_m} f^i(A_{m+1}^{m+1})\right)\right)\le\frac{1}{\tau_m^3 2^{\tau_m}}
\end{equation}
et, pour tout $i\in I_m$,
\begin{equation}\label{eq2}
\mu\left(f^i(A_{m+1}^{m+1})\cap A_m^m\right)\ge\frac{3}{4\tau_{m+1}}.
\end{equation}
De manière moins formelle, quand $i$ parcourt $I_m$, les ensembles $f^i(A_{m+1}^{m+1})$ sont tous \og presque \fg~dans $A_m^m$ et en plus ils remplissent bien cet ensemble. L'existence d'une telle suite de partitions résulte de la propriété 3. de la proposition, qui a déjà été prouvée.

De l'inégalité (\ref{eq2}) et du fait que les $f^j(A_m^m)$ sont deux à deux disjoints pour $j\in\{1,\dots,\tau_m-1\}$, on déduit immédiatement que, pour $i\in I_m$, 
\[\mu\left(f^{i+j}(A_{m+1}^{m+1})\cap A_m^m\right) = \mu\left(f^i(A_{m+1}^{m+1})\cap f^{-j}(A_m^m)\right)\le\frac{1}{4\tau_{m+1}},\]
si bien qu'alors
\begin{equation}\label{eq3}
I_m\cap I_m+j = \emptyset.
\end{equation}
Pour $k$ allant de $m$ à $0$, on définit les ensembles $A_k^m$ par récurrence :
\begin{equation}\label{eq5}
A_k^m = \bigcup_{i\in I_k} f^i(A_{k+1}^m).
\end{equation}
Toujours par récurrence on remarque que, par (\ref{eq3}), et pour tout $j\in\{1,\dots, \tau_k-1\}$, on a
\begin{equation}\label{eq4}
f^j(A_k^m)\cap A_k^m = \emptyset,
\end{equation}
si bien que les ensembles
\[A_k^m,f(A_k^m),\dots,f^{\tau_k-1}(A_k^m), X\setminus\left(\bigcup_{i=1}^{\tau_k-1} f^i(A_k^m)\right)\]
forment une partition $\eta_k^m$ de $X$. De plus, on a :
\begin{eqnarray*}
\mu\left(A_k^m\Delta A_k^{m+1}\right) & = & \mu\left(\left(\bigcup_{i\in I_k} f^i(A_{k+1}^m)\right)\Delta \left(\bigcup_{i\in I_k} f^i(A_{k+1}^{m+1})\right)\right) \\
                                      & \le & \sum_{i\in I_k} \mu\left(f^i(A_{k+1}^m)\Delta f^i(A_{k+1}^{m+1})\right) = |I_k|\mu\left(A_{k+1}^m\Delta A_{k+1}^{m+1}\right),
\end{eqnarray*}
et par (\ref{eq3}), 
\[\mu\left(A_k^m\Delta A_k^{m+1}\right) \le \frac{\tau_{k+1}}{\tau_k}\mu\left(A_{k+1}^m\Delta A_{k+1}^{m+1}\right) ;\]
une récurrence simple donne :
\begin{equation}\label{eq6}
\mu\left(A_k^m\Delta A_k^{m+1}\right) \le \frac{\tau_m}{\tau_k} \mu\left(A_m^m\Delta A_m^{m+1}\right) \le \frac{\tau_m}{\tau_k} \frac{1}{\tau_m^3 2^{\tau_m}} \le \frac{1}{\tau_m^2 2^{\tau_m}}.
\end{equation}
On déduit de cette dernière inégalité que pour $k$ fixé, la suite $(A_k^m)_{m\ge k}$ est une suite de Cauchy de l'espace des ensembles mesurables de $X$ muni de la métrique $d(A,B) = \mu(A\Delta B)$, qui converge donc vers un ensemble mesurable que l'on note $A'_k$ (voir le lemme \ref{polonais}). Par passage à la limite dans l'équation (\ref{eq4}), on obtient $f^j(A'_k)\cap A'_k = \emptyset$ pour tout $j\in\{1,\dots, \tau_k-1\}$, on peut donc définir la partition
\[\eta'_k = \left\{A'_k,f(A'_k),\dots,f^{\tau_k-1}(A'_k),B'_k\right\},\]
avec $B'_k = X\setminus\left(\bigcup_{j=0}^{\tau_k-1} f^j(A'_k)\right)$. C'est cette partition qui va vérifier les conclusions de la proposition. Le point 1. de la proposition est évidemment vérifié. Le passage à la limite dans l'équation (\ref{eq5}) assure que pour tout entier $k$, 
\[A'_k = \bigcup_{i\in I_k} f^i(A'_{k+1}),\]
ce qui montre d'une part la première partie du point 4., et d'autre part que par l'équation (\ref{eq4}) on a
\[\bigcup_{j=0}^{\tau_k-1} f^j(A'_k) = \bigcup_{j=0}^{\tau_k-1}\bigcup_{i\in I_k} f^{i+j}(A'_{k+1}) \subset \bigcup_{j=0}^{\tau_{k+1}-1}f^{j}(A'_{k+1}),\]
ce qui fait que la suite $(B_k)_k$ est décroissante pour l'inclusion ; le point 4. est prouvé. D'un autre côté, à l'aide de (\ref{eq6}),
\begin{equation}\label{derdesder'}
\mu(A^k_k\Delta A'_k)\le\sum_{l=0}^\infty \mu\big(A_k^{k+l}\Delta \mu(A_k^{k+l+1})\big)\le \sum_{l=0}^\infty\frac{1}{\tau_{k+l}^2 2^{\tau_{k+l}}}\le \frac{2}{\tau_k^2}.
\end{equation}
Ceci implique que
\[\left|\tau_k\mu(f^{\tau_k}(A_k^k) \Delta A_k^k) - \tau_k\mu(f^{\tau_k}(A'_k) \Delta A'_k)\right|\le \frac{4}{\tau_k},\]
ce qui prouve le point 3. Enfin, comme dans l'équation (\ref{eq0'}) page \pageref{eq0'}, et à l'aide de (\ref{derdesder'}), on a 
\begin{eqnarray*}
\Bigg|\mu\left(P \Delta \left(\bigcup_l f^{i_l}(A_k^k)\right)\right) & - & \mu\left(P \Delta \left(\bigcup_l f^{i_l}(A'_k)\right)\right)\Bigg| \le\\
& &  \sum_l \mu(f^{i_l}(A'_k)\Delta f^{i_l}(A_k^k)) \le \tau_k \frac{2}{\tau_k^2} = \frac{2}{\tau_k}.
\end{eqnarray*}
Le point 2. est démontré ; ceci clôt la démonstration des quatre points de la proposition.
\end{proof}

\begin{prop}\label{6.1}
Un automorphisme $f$ vérifiant les conclusions de la proposition \ref{rang2} est standard. L'ensemble $A$ de la définition \ref{Enzo} peut être pris de mesure arbitrairement grande.% Si de plus $f$ est un homéomorphisme, on peut le supposer fermé.
\end{prop}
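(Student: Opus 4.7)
Le plan consiste, pour tout $\varepsilon > 0$ donn�, � construire un ensemble mesurable $A \subset X$ avec $\mu(A) > 1-\varepsilon$ tel que l'application induite $f_A$ soit m�triquement conjugu�e � un odom�tre. On commence par observer que la propri�t� 2 de la proposition \ref{rang2} entra�ne en particulier $\tau_m \mu(A_m) \to 1$, et donc $\mu(B_m) = 1 - \tau_m\mu(A_m) \to 0$. Je choisirais alors un entier $m_0$ v�rifiant $\mu(B_{m_0}) < \varepsilon$ et poserais
\[A = X \setminus B_{m_0} = \bigsqcup_{i=0}^{\tau_{m_0}-1} f^i(A_{m_0}),\]
ce qui garantit $\mu(A) > 1 - \varepsilon$.

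La clef est la structure emboit�e induite sur $A$ par les partitions $\eta_m$ pour $m \ge m_0$. Par la propri�t� 4, la suite $(B_m)_{m \ge m_0}$ est d�croissante, si bien que chaque cube $f^j(A_m)$ est enti�rement inclus soit dans $A$, soit dans $B_{m_0}$. Notant $L_m \subset \{0, \ldots, \tau_m-1\}$ l'ensemble des indices $j$ tels que $f^j(A_m) \subset A$, on a $A = \bigsqcup_{j \in L_m} f^j(A_m)$ et $|L_m|\,\mu(A_m) = \mu(A)$ ; le raffinement entre les niveaux $m$ et $m+1$ fournit un entier $r_m := \mu(A_m)/\mu(A_{m+1}) \in \N^*$ tel que $|L_{m+1}| = r_m |L_m|$ et que chaque $f^j(A_m)$ pour $j \in L_m$ se partitionne en exactement $r_m$ cubes du niveau $m+1$ inclus dans $A$.

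Pour construire la conjugaison avec l'odom�tre de param�tres $(r_m)_{m \ge m_0}$, on ordonne les �l�ments de $L_m = \{\ell_m^0 < \ell_m^1 < \cdots < \ell_m^{|L_m|-1}\}$ par ordre croissant et on d�finit pour $x \in A$ une adresse $a_m(x) \in \{0, \ldots, |L_m|-1\}$ par la condition $x \in f^{\ell_m^{a_m(x)}}(A_m)$. La propri�t� 3 implique que les partitions $\eta_m$ engendrent la tribu bor�lienne modulo les ensembles de mesure nulle ; par cons�quent, l'application $\Phi : x \mapsto (a_m(x))_{m \ge m_0}$ d�finit une bijection mesurable (presque partout) de $A$ sur l'espace des suites d'adresses compatibles, envoyant la mesure normalis�e $\mu|_A/\mu(A)$ sur la mesure produit canonique de l'odom�tre.

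L'obstacle principal sera la v�rification que $\Phi$ conjugue effectivement $f_A$ � l'odom�tre $T$ de param�tres $(r_m)$. Pour $x \in f^{\ell_m^k}(A_m)$ avec $k < |L_m|-1$, le premier retour de $x$ � $A$ traverse simplement les niveaux successifs du $m$-tour en ignorant ceux contenus dans $B_{m_0}$, donnant $a_m(f_A(x)) = a_m(x)+1$. Le cas critique est celui du sommet du tour ($k = |L_m|-1$), pour lequel on doit montrer que presque tout point retourne � $f^{\ell_m^0}(A_m)$, avec incr�mentation de l'adresse au niveau $m+1$ : c'est la retenue de l'odom�tre, et c'est l� qu'intervient la propri�t� 2 sous la forme $\tau_m \mu(A_m \cap f^{\tau_m}(A_m)) \to 1$, par un contr�le soigneux de la mesure des points exceptionnels (quitte � extraire une sous-suite des $(m_k)$ pour assurer la sommabilit� et appliquer Borel-Cantelli).
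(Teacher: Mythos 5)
Your approach is genuinely different from the paper's. The paper proceeds spectrally: it observes that $f_A$ cyclically permutes the elements of each induced partition $\xi_l$, deduces that $U_{f_A}$ has discrete spectrum supported on roots of unity, and invokes Von Neumann's theorem on systems with discrete spectrum to conclude that $f_A$ is conjugate to the odometer with the same spectrum. You instead build the conjugacy directly as a symbolic coding $\Phi$ into the inverse limit $\varprojlim \Z/|L_m|\Z$. Both work; your route is more elementary (no spectral theory) and makes the combinatorial structure of the return map explicit.

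However, you substantially overestimate the difficulty of the ``carry'' at the top of the tower, and the Borel--Cantelli machinery you announce is unnecessary. The key observation you are missing: since $f_A$ is a measure-preserving \emph{bijection} of $A$ (mod null sets) and you have already shown that $f_A$ sends piece $\ell_m^k$ \emph{exactly onto} piece $\ell_m^{k+1}$ for every $k<|L_m|-1$ (inclusion plus equality of measures), the image of the top piece $\ell_m^{|L_m|-1}$ must be, up to a null set, the only remaining piece $\ell_m^0$. So the carry is \emph{exact}, not approximate: $a_m\circ f_A = a_m + 1 \pmod{|L_m|}$ holds on all of $A$ (a.e.), and no point is exceptional. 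Consequently property 2 of proposition \ref{rang2} plays no role in establishing the conjugacy; it is used only, as you note at the beginning, to guarantee $\mu(B_m)\to 0$ and thus $\mu(A)>1-\varepsilon$. One minor technicality remains that you should address explicitly: the compatibility maps $\Z/|L_{m+1}|\Z\to\Z/|L_m|\Z$ induced by the nesting of partitions are equivariant group maps but need not send $0$ to $0$; they may involve a constant offset. This is harmless (translating each coordinate by the appropriate constant gives a conjugacy of adding machines), but the coding $(a_m(x))_m$ as you write it does not literally land in the canonical inverse limit without this normalization. With that cleaned up, and the Borel--Cantelli detour removed, the argument goes through.
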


\begin{proof}[Preuve de la proposition \ref{6.1}] Soit $k$ un entier, posons $A = \bigcup_{i=0}^{\tau_k-1}f^i(A_k)$, et notons $f_A$ l'application induite par $f$ sur $A$. Pour tout $l\ge k$, la partition $\eta_l$ induit une partition $\xi_l$ sur $A$, dont les éléments sont, par le point 4. de la proposition \ref{rang2}, les $f^i(A_l)$ inclus dans $A$, avec $i\in\{0,\dots,\tau_l-1\}$. L'application $f_A$ permute cycliquement les éléments de la partition $\xi_l$ : par préservation de la mesure, toute la masse qui sort de $A$ en sort par $f^{\tau_k-1}(A_k)$ et toute la masse qui entre dans $A$ y entre par $A_k$, donc $f_A(f^{\tau_k-1}(A_k)) = A_k$. Ces partitions $\xi_l$ forment une suite \emph{exhaustive} de partitions de $A$, autrement dit l'ensemble des fonctions indicatrices des unions d'éléments de ces partitions est dense dans $L^2(A,\mu)$.

Ainsi, le spectre de $U_{f_A}$ est discret et charge exactement l'ensemble $\bigcap_{l\ge k} \mathbf{U}_{\tau_l}$ (où $\mathbf{U}_\tau$ désigne l'ensemble des racines $\tau$-èmes de l'unité dans $\C$) ; le théorème de Von Neumann sur le spectre discret (voir le \S 1.3. de \cite{KatokMon} et le chapitre~\og\emph{discrete spectrum}\fg~de \cite{3}) assure alors qu'il n'existe qu'un seul automorphisme, à conjugaison par une bijection bi-mesurable près, qui possède le même type spectral que $U_{f_A}$. Par un argument similaire à celui du début de la preuve, on observe que le type spectral de l'odomètre\footnote{Plus fondamentalement, on remarque que les actions de $f_A$ et de l'odomètre ainsi défini sont assez similaires, le théorème de Von Neumann permet de concrétiser cette remarque.} associé à la suite $(\tau_l)_l$ charge lui aussi exactement l'ensemble $\bigcap_{l\ge k} \mathbf{U}_{\tau_l}$. Ainsi, l'automorphisme $f_A$ est mesurablement conjugué à l'odomètre associé à la suite $(\tau_l)_{l\ge 0}$.

De plus, la propriété 3. du théorème \ref{rang2} signifie que l'on peut prendre la mesure de $A$ arbitrairement grande.
\end{proof}

Et en combinant \ref{rang1F}, \ref{rang2} et \ref{6.1}, on obtient :

\begin{theoreme}\label{pffff}
Dans $\mathrm{Homeo}(X,\mu)$, les homéomorphismes standards sont génériques.
\end{theoreme}

\section{Type spectral et généricité}\label{saltyp}

On vient de voir que le spectre d'un élément générique de $\mathrm{Homeo}(X,\mu)$ est simple, on peut donc supposer que l'on est dans ce cas. Ceci permet de définir plus simplement que dans le cas général le \emph{type spectral}, une classe d'équivalence de mesures boréliennes sur le cercle associée à tout automorphisme. Celle-ci donne de nouvelles propriétés dynamiques intéressantes. On pourra consulter par exemple \cite{Nad} ou bien \cite{Martine}.

\begin{definition}
On dit que deux mesures sont \emph{équivalentes} si tout mesurable de mesure nulle pour l'une est aussi de mesure nulle pour l'autre.
\end{definition}

\begin{definition}
Soit $\Hi$ un espace de Hilbert, $\mathcal E$ l'ensemble des projecteurs orthogonaux dans $\Hi$ et $(X, \mathcal B)$ un espace topologique muni de la tribu borélienne. Une fonction $E : \mathcal B\to \mathcal E$ est appelé une \emph{mesure spectrale} si $E(X) = Id$ et $E(\bigcup_{i=1}^\infty A_i) = \sum_{i=1}^\infty E(A_i)$ pour toute collection $\{A_i\}_{i\in\N}$ d'éléments disjoints de~$\mathcal B$.
\end{definition}

\begin{theoreme}[Théorème spectral, voir \cite{Nad}]\label{précédent}
Soit $U$ un opérateur unitaire d'un espace de Hilbert $\Hi$. Alors il existe une mesure spectrale $E_U : \mathbf{S}^1\to \mathcal{E}$ telle que $U = \int_{\mathbf{S}^1}z^{-1}dE_U(z)$, si bien que pour tout $m\in \N$, $U^m = \int_{\mathbf{S}^1}z^{-m}dE_U(z)$.
\end{theoreme}

\begin{rem}
Le théorème \ref{précédent} est une généralisation de la réduction en dimension finie : la mesure spectrale est alors une somme de mesures spectrales de Dirac en les valeurs propres, en chaque valeur propre la mesure spectrale est égale à la projection sur le sous-espace propre associé.
\end{rem}

Si $U$ est à spectre simple, le théorème \ref{précédent} permet de définir une mesure $\mathfrak{m}_U$ sur $\mathbf{S}^1$. On procède comme suit : on choisit $\varphi_0\in \Hi$ tel que $\langle \varphi_0\rangle_U$ (le sous-espace cyclique engendré par $\varphi_0$) soit dense dans $\Hi$. Pour tout mesurable $A$, on pose alors $\mathfrak{m}_{U,\varphi_0}(A) = \langle E_U(A)(\varphi_0),\varphi_0\rangle$. Ainsi on a $\langle U^m(\varphi_0),\varphi_0\rangle = \int_{\mathbf S^1} z^{-m}d\mathfrak{m}(z)$ et puisque $U$ est $\varphi_0$-cyclique et unitaire, les $\langle U^m(\varphi_0),\varphi_0\rangle$ définissent entièrement $U$ : pour tous $\varphi, \varphi' \in\Hi$, s'écrivant $\varphi = \sum_{m\ge 0} x_m U^m(\varphi_0)$ et $\varphi' = \sum_{m\ge 0} y_m U^m(\varphi_0)$, on a :
\[\langle \varphi,\varphi'\rangle = \left\langle \sum_{i\ge 0} x_i U^i(\varphi_0),\sum_{j\ge 0} y_j U^j(\varphi_0)\right\rangle = \int_{\mathbf S^1}\sum_{i,j}x_i y_j z^{i-j}d\mathfrak{m}_{U,\varphi_0}(z).\]
Cela signifie que l'application 
\begin{eqnarray*}
\Psi : & \Hi & \longrightarrow L^2(\mathbf S^1,\mathfrak{m}_{U,\varphi_0})\\
       &\sum_{i\ge 0}x_iU^i(\varphi_0) & \longmapsto \sum_{i\ge 0}x_i z^i
\end{eqnarray*}
est une isométrie, qui conjugue $U$ avec la multiplication par $z$ dans $L^2(\mathbf S^1,\mathfrak{m}_{U,\varphi})$. Au passage cela prouve aussi que la classe d'équivalence de $\mathfrak{m}_{U,\varphi_0}$ est indépendante du choix du vecteur cyclique $\varphi_0$.

\begin{definition}
Le \emph{type spectral} de $U$ est la classe d'équivalence de la mesure $\mathfrak{m}_{U,\varphi_0}$.
\end{definition}

Par la suite, il sera pratique de choisir une mesure particulière dans la classe d'équivalence des mesures $\mathfrak{m}_{U,\varphi_0}$, avec $\varphi_0$ cyclique, qui ne dépende pas du choix du vecteur $\varphi_0$ ; on vérifie facilement que si $\{\psi_i\}_{i\in\N^*}$ une base hilbertienne de $\Hi$, la mesure borélienne $\mathfrak{m}_{U,\{\psi_i\}}$, définie sur le cercle $\mathbf S^1$ par 
\[\mathfrak{m}_{U,\{\psi_i\}}(A) = \sum_{i\in\N}\frac{1}{2^i}\langle E_U(A)(\varphi_i),\varphi_i\rangle\]
est équivalente à toute mesure $\mathfrak{m}_{U,\varphi_0}$ (autrement dit elle appartient au type spectral de $U$).
\bigskip

Bien sûr, ces considérations s'appliquent sans mal au cas où $U$ est l'opérateur de Koopman $U_f$ associé à un automorphisme $f\in\mathrm{Auto}(X,\mu)$ (qui est unitaire et, par hypothèse, à spectre simple). Comme il est pratique de travailler avec une mesure plutôt qu'une classe d'équivalence de mesures, on adopte la convention suivante :

\begin{conv}
On fixe une fois pour toutes une base hilbertienne $\{\psi_i\}_{i\in\N^*}$ de $\Hi$. Par abus de langage, nous appellerons \emph{type spectral} de $f\in\mathrm{Auto}(X,\mu)$, et noterons $\mathfrak{m}_f$, la mesure $\mathfrak{m}_{U_f,\{\psi_i\}}$.
\end{conv}

Notons que l'on peut aussi définir le type spectral dans le cas où le spectre de l'opérateur n'est pas simple, mais dans ce cas la définition est plus compliquée (voir \cite{Nad}).

\begin{prop}\label{speccon}
Génériquement, la mesure $\mathfrak{m}_{f}$ n'a pas d'autre atome que 1.
\end{prop}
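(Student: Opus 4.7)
Le plan est de ramener la propri\'et\'e \og $\mathfrak{m}_f$ n'a pas d'atome autre que $1$\fg~\`a l'absence de valeurs propres non triviales pour l'op\'erateur de Koopman $U_f$, puis d'appliquer la g\'en\'ericit\'e du m\'elange faible (th\'eor\`eme \ref{m�lfaiblg�n�}).

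Dans un premier temps, on observera que, par construction du type spectral via le th\'eor\`eme \ref{pr�c�dent}, un point $\lambda\in\mathbf{S}^1$ est un atome de $\mathfrak{m}_f$ si et seulement si le projecteur spectral $E_{U_f}(\{\lambda\})$ est non nul ; ceci d\'ecoule du fait que
\[\mathfrak{m}_f(\{\lambda\}) = \sum_{i\ge 1} 2^{-i}\,\langle E_{U_f}(\{\lambda\})\psi_i,\psi_i\rangle\]
s'annule si et seulement si $E_{U_f}(\{\lambda\})\psi_i = 0$ pour tout $i$, ce qui \'equivaut \`a $E_{U_f}(\{\lambda\}) = 0$ car $\{\psi_i\}$ est une base hilbertienne. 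Or un tel projecteur est non nul si et seulement si $\lambda$ est une valeur propre de $U_f$. Le fait que $1$ soit toujours un atome s'explique alors par la pr\'esence des fonctions constantes comme vecteurs propres associ\'es \`a la valeur propre $1$.

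On utilisera ensuite le crit\`ere spectral classique du m\'elange faible : un automorphisme $f$ est faiblement m\'elangeant si et seulement si la seule valeur propre de $U_f$ sur $L^2(X,\mu)$ est $1$. Seule la direction \og m\'elange faible entra\^ine absence de valeur propre non triviale\fg~nous sera utile ; elle se v\'erifie en remarquant que si $U_f\varphi = \lambda\varphi$ avec $\|\varphi\|_2 = 1$ et $\lambda\in\mathbf{S}^1$, $\lambda\neq 1$, alors $\varphi$ est orthogonale aux constantes et
\[\frac{1}{N} \sum_{n=0}^{N-1} \big|\langle U_f^n\varphi,\varphi\rangle\big| = \frac{1}{N}\sum_{n=0}^{N-1}|\lambda|^n = 1,\]
ce qui est incompatible avec la caract\'erisation du m\'elange faible par les moyennes de Ces\`aro appliqu\'ee au couple d'ouverts correspondant \`a $\varphi$.

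Il suffira alors d'invoquer le th\'eor\`eme \ref{m�lfaiblg�n�}, qui affirme que le m\'elange faible est g\'en\'erique dans $\mathrm{Homeo}(X,\mu)$ : sur cet ensemble g\'en\'erique, $U_f$ n'a pas de valeur propre autre que $1$, et par l'\'etape pr\'ec\'edente $\mathfrak{m}_f$ n'a pas d'autre atome que $1$. L'obstacle principal r\'eside dans la v\'erification soigneuse de l'\'equivalence entre \og atome de $\mathfrak{m}_f$\fg~et \og valeur propre de $U_f$\fg, qui repose sur le th\'eor\`eme spectral et sur l'identification des projecteurs $E_{U_f}(\{\lambda\})$ avec les projections sur les espaces propres associ\'es ; une fois ce point admis, la conclusion est imm\'ediate.
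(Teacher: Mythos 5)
Your proof follows exactly the same route as the paper's one-line argument: identify atoms of $\mathfrak{m}_f$ with eigenvalues of $U_f$ via the spectral measure, note that weak mixing is equivalent to the absence of non-constant eigenvectors (characterization (5) of proposition \ref{�qui�qui}), and conclude by genericity of weak mixing (th\'eor\`eme \ref{m�lfaiblg�n�}). You merely spell out the two ingredients that the paper cites without proof; the only small infelicity is the phrase \og couple d'ouverts correspondant \`a $\varphi$\fg, since the Ces\`aro computation you perform is really the $L^2$ characterization (point (3)) of the same proposition, not the one stated in terms of open sets, but the intended argument is correct.
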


\begin{proof}[Preuve de la proposition \ref{speccon}] Cela découle facilement de la caractérisation (5) du mélange faible\footnote{Il est par ailleurs facile de voir que le type spectral d'un automorphisme faiblement mélangeant n'a pas d'atome autre que 1.} dans la proposition \ref{équiéqui} et du fait qu'un atome de $\mathfrak{m}_{U_f}$ est une valeur propre de $U_f$. On conclut par généricité du mélange faible (théorème~\ref{mélfaiblgéné}).
\end{proof}

\begin{prop}\label{singLeb}
Génériquement, le type spectral est singulier par rapport à la mesure de Lebesgue.
\end{prop}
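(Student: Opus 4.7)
Pour prouver cette proposition, mon approche consiste \`a combiner le th\'eor\`eme \ref{approxmeo} de g\'en\'ericit\'e des approximations cycliques rapides avec un argument de concentration de la mesure spectrale sur les racines de l'unit\'e. Le principe est le suivant : si $f$ admet une approximation cyclique de p\'eriode $q_{m_k}$ \`a vitesse suffisamment rapide, l'it\'er\'e $U_f^{q_{m_k}}$ est proche de l'identit\'e sur le sous-espace engendr\'e par les fonctions caract\'eristiques des cubes de $\D_{m_k}$~; cela force la mesure spectrale associ\'ee \`a se concentrer pr\`es des racines $q_{m_k}$-i\`emes de l'unit\'e, ensemble dont on peut choisir un voisinage de mesure de Lebesgue arbitrairement petite. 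Je fixe donc la vitesse $\vartheta(q)=1/q^3$~; par le th\'eor\`eme \ref{approxmeo}, les hom\'eomorphismes admettant une approximation cyclique \`a cette vitesse forment un $G_\delta$ dense, ce qui ram\`ene la d\'emonstration au cas d'un tel $f$.

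Soit donc $f$ admettant une approximation cyclique $(f_k)$ \`a la vitesse $\vartheta$, avec $f_k$ de p\'eriode $q_{m_k}$. Le lemme \ref{it�r�} appliqu\'e avec $p=q_{m_k}$, combin\'e au fait que $f_k^{q_{m_k}}=\mathrm{Id}$, donne
\[\sum_{i=1}^{q_{m_k}}\mu\big(f^{q_{m_k}}(C_{k,i})\Delta C_{k,i}\big)\leq q_{m_k}\vartheta(q_{m_k})=1/q_{m_k}^2.\]
Pour toute fonction caract\'eristique $\chi_C$ d'un cube $C\in\D_{m_0}$ (avec $m_0$ fix\'e) et tout $k$ tel que $m_k\geq m_0$, il en r\'esulte que $\|U_f^{q_{m_k}}\chi_C-\chi_C\|_{L^2}^2\leq 1/q_{m_k}^2$, et par la formule du th\'eor\`eme spectral, ceci s'\'ecrit
\[\int_{\mathbf{S}^1}|z^{q_{m_k}}-1|^2\,d\mathfrak{m}_{\chi_C}(z)\leq 1/q_{m_k}^2.\]

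Je pose ensuite $E_k\subset\mathbf{S}^1$ l'ensemble des racines $q_{m_k}$-i\`emes de l'unit\'e et $V_k$ son $\delta_k$-voisinage, avec $\delta_k=1/q_{m_k}^{3/2}$. Alors $\mathrm{Leb}(V_k)\leq 2\delta_k q_{m_k}=2/q_{m_k}^{1/2}$. D'autre part, un calcul trigonom\'etrique \'el\'ementaire donne $|z^{q_{m_k}}-1|^2\geq c\,\delta_k^2 q_{m_k}^2=c/q_{m_k}$ pour $z\in\mathbf{S}^1\setminus V_k$, ce qui implique $\mathfrak{m}_{\chi_C}(V_k^c)\leq 1/(c\, q_{m_k})$ pour tout $k$ assez grand. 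Comme $q_{m_k}=2^{nm_k}$ cro\^it au moins exponentiellement, les s\'eries $\sum_k 1/q_{m_k}$ et $\sum_k\mathrm{Leb}(V_k)$ sont finies, et le lemme de Borel--Cantelli donne, pour chaque cube dyadique $C$, $\mathfrak{m}_{\chi_C}(\liminf_k V_k)=1$. En posant $E=\liminf_k V_k$, on a $\mathrm{Leb}(E)=0$ et $\mathfrak{m}_{\chi_C}(E^c)=0$ pour tout cube $C$. Comme $\mathfrak{m}_{\chi_C}(E^c)=\|E_{U_f}(E^c)\chi_C\|^2$ et que les combinaisons lin\'eaires des $\chi_C$ sont denses dans $L^2(X,\mu)$, la projection spectrale $E_{U_f}(E^c)$ est nulle, donc $\mathfrak{m}_f(E^c)=0$~: la mesure $\mathfrak{m}_f$ est concentr\'ee sur $E$, de mesure de Lebesgue nulle, ce qui prouve la singularit\'e voulue.

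La difficult\'e principale tient \`a l'ajustement simultan\'e des param\`etres : $\delta_k$ doit \^etre choisi assez petit pour que $\mathrm{Leb}(V_k)\to 0$ (il faut $\delta_k q_{m_k}\to 0$) tout en restant assez grand pour obtenir une minoration utile de $|z^{q_{m_k}}-1|$ hors de $V_k$ (il faut que $\delta_k q_{m_k}$ ne soit pas trop petit devant la vitesse d'approximation). Cet \'equilibre n'est possible qu'avec une vitesse $\vartheta$ polynomiale rapide, typiquement $\vartheta(q)=o(1/q^2)$~; le choix $\vartheta(q)=1/q^3$ et $\delta_k=1/q_{m_k}^{3/2}$ rend la mise en \oe uvre transparente, mais toute variation trop gourmande de ces rates emp\^echerait la conclusion, et c'est en ce sens que le r\'esultat exige r\'eellement une approximation plus rapide que celle n\'ecessaire pour l'ergodicit\'e ou la rigidit\'e.
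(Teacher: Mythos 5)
Your argument is correct, but it takes a genuinely different route from the paper's. The paper proves the proposition in three lines: by Proposition~\ref{rigid}, a generic $f$ is rigid, so there is a sequence $k_m$ with $\int_{\mathbf S^1} z^{k_m}\,d\mathfrak{m}_f(z)\to 1$; writing $\mathfrak{m}_f=\mathfrak{m}_1+\mathfrak{m}_2$ with $\mathfrak{m}_1$ absolutely continuous and $\mathfrak{m}_2$ singular, the Riemann--Lebesgue lemma forces $\int z^{k_m}d\mathfrak{m}_1\to 0$, hence $\mathfrak{m}_2(\mathbf S^1)=1$ and $\mathfrak{m}_1=0$. Your proof, by contrast, returns to the raw cyclic-approximation estimate $\sum_i\mu(f^{q_{m_k}}(C_{k,i})\Delta C_{k,i})\le q_{m_k}\vartheta(q_{m_k})$ and extracts, via the spectral theorem and a Borel--Cantelli argument on shrinking neighborhoods $V_k$ of the $q_{m_k}$-th roots of unity, an explicit Borel set $E=\liminf_k V_k$ of Lebesgue measure zero that carries every measure $\mathfrak{m}_{\chi_C}$, hence $\mathfrak{m}_f$. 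The computations all check out (the estimate $|z^{q}-1|^2\gtrsim(q\delta)^2$ off a $\delta$-neighborhood of the roots, the tuning $\delta_k=q_{m_k}^{-3/2}$, the passage from cubes to a dense set via $E_{U_f}(E^c)\chi_C=0$). What you gain is a concrete null carrier of the spectral type, in the spirit of Katok--Stepin's original quantitative estimates; what the paper gains is brevity and the sharper rate hypothesis $\vartheta(q)=o(1/q)$.

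One remark in your last paragraph should be corrected: you claim the result \emph{requires} an approximation faster than the one giving rigidity, i.e.\ some rate around $o(1/q^2)$. This is a limitation of your particular mechanism (you need $q\delta_k\to0$ while $q\delta_k\cdot\sqrt{q\vartheta(q)^{-1}}$ stays bounded below, forcing $\vartheta(q)=o(q^{-2})$), not of the conclusion: the paper's Riemann--Lebesgue argument derives singularity from rigidity alone, which already holds at the rate $o(1/q)$. So the threshold you identify is an artifact of the direct construction and not a genuine obstruction.
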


\begin{proof}[Preuve de la proposition \ref{singLeb}] Par la proposition \ref{rigid}, pour un automorphisme générique $f$, il existe une suite $\{k_m\}_m$ telle que $\int_{\mathbf S^1} z^{k_m} d\mathfrak{m}_{f}(z)\to 1$. Par théorème de Radon-Nikodym, la mesure $\mathfrak{m}_{u_f}$ se décompose en $\mathfrak{m}_{f} = \mathfrak{m}_1+\mathfrak{m}_2$, avec $\mathfrak{m}_1$ absolument continue et $\mathfrak{m}_2$ singulière par rapport à la mesure de Lebesgue. Le théorème de Riemann-Lebesgue assure alors que $\int_{\mathbf S^1} z^{k_m} d\mathfrak{m}_1(z)\to 0$ ; de plus $\left|\int_{\mathbf S^1} z^{k_m} d\mathfrak{m}_2(z)\right|\le \mathfrak{m}_2(\mathbf S_1)$. Par conséquent $\mathfrak{m}_2(\mathbf S_1) = 1$ et la mesure $\mathfrak{m}_{f}$ est singulière par rapport à la mesure de Lebesgue.
\end{proof}

\chapter[Forme faible du théorème de transfert]{Densité de $\mathrm{Homeo}(X,\mu)$ dans $\mathrm{Auto}(X,\mu)$ et forme faible du théorème de transfert}

Au cours de ce chapitre, nous amorçons la preuve du théorème de transfert de S. Alpern\footnote{Qui justifie la similitude des résultats de généricité des propriétés ergodiques entre les espaces $\mathrm{Homeo}(X,\mu)$ et $\mathrm{Auto}(X,\mu)$ observée au chapitre précédent : le théorème \ref{approxmeo} est vrai aussi bien pour les homéomorphismes que pour les automorphismes, voir \cite{KatokS1}, \cite{KatokS3}}. Nous commençons par énoncer un théorème de densité faible de l'ensemble des homéomorphismes dans l'ensemble des automorphismes, condition nécessaire pour espérer avoir un théorème de transfert, pour en déduire ensuite une forme faible du théorème de transfert, ce qui donne une seconde preuve du théorème d'Oxtoby-Ulam.

Comme auparavant, on fixe une application $\phi : I^n\to X$ donnée par le corollaire \ref{Brown-mesure} et on choisit une suite $(\D_m)_{m\in\N}$ de subdivisions dyadiques de $(X,\mu)$, ce qui permet de définir une notion de permutation dyadique sur ces subdivisions.

\section{Densité des homéomorphismes parmi les automorphismes}

Le premier pas vers le théorème de transfert est la densité des homéomorphismes parmi les automorphismes. Ce résultat a été obtenu indépendamment par J. Oxtoby en 1973 \cite{Oxtolusin} et par H. White en 1974 \cite{White}. Notons que le théorème qu'ils énoncèrent est un peu plus fort et correspond à un analogue du théorème de Lusin pour les homéomorphismes préservant la mesure (voir par exemple le théorème 6.2 de \cite{2}).

\begin{theoreme}[Oxtoby, White]\label{densite-homeo}
L'espace $\mathrm{Homeo}(X,\mu)$ est dense dans l'espace $\mathrm{Auto(X,\mu)}$ pour la topologie faible. Plus précisément, pour toute boule $B$ de la topologie forte centrée en un homéomorphisme, $B\cap\mathrm{Homeo}(X,\mu)$ est dense, au sens de la topologie faible, dans la boule $B$.
\end{theoreme}

Pour démontrer ce théorème nous aurons besoin d'un résultat similaire au théorème de Lax pour les automorphismes :

\begin{lemme}\label{laxauto}
Soit $f\in\mathrm{Auto(X,\mu)}$. Alors il existe un entier $m$ et une permutation dyadique $f_m$ d'ordre $m$ arbitrairement proche de $f$ pour la topologie faible. De plus, si $d_{\mathit{forte}}(f,\mathrm{Id})<\varepsilon$, alors on peut choisir $f_m$ tel que $d_{\mathit{forte}}(f_m,\mathrm{Id})<\varepsilon$.
\end{lemme}

\begin{proof}[Preuve du lemme \ref{laxauto}] Soit $\delta>0$. On considère un entier $m$ suffisament grand pour que le diamètre des cubes de la subdivision dyadique $\D_m$ de $X$ soit plus petit que $\delta$. Pour tout cube ouvert~$C_i$ de $\D_m$, par régularité intérieure de la mesure $\mu$ (voir la page 56 de \cite{Rud}), il existe un compact $K_i$ inclus dans $f^{-1}(C_i)$ vérifiant $\mu(C_i)(1-\delta)<\mu(K_i)<\mu(C_i)$. Les compacts $K_i$ sont deux à deux disjoints, on peut donc trouver, en minorant les distances entre ces compacts, une subdivision plus fine $\D_{m'}$ telle que chacun de ses cubes ne rencontre qu'au plus un compact $K_i$. Posons $O_i$ l'union de tous les cubes qui rencontrent $K_i$ (voir la figure \ref{Picasso}).
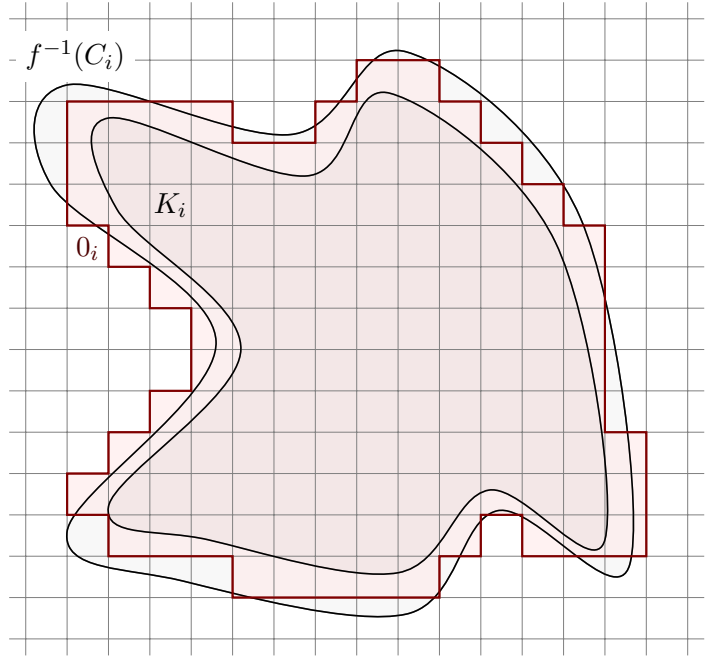
\begin{figure}
\begin{center}
\begin{tikzpicture}[scale=1.1]
\draw[fill=gray!10!white, semithick, opacity = 0.5] plot[tension=0.6, smooth cycle] coordinates{(-3.7,2.5) (-3.5,3.7) (-.8,3.1) (.6,4.1) (2.7,2.1) (3.3,-2.1) (1.7,-1.45) (0.6,-2.7) (-2.1,-2.3) (-3.5,-1.7) (-1.7,.6)};
\draw[fill=gray!30!white, semithick, opacity = 0.5] plot[tension=0.6, smooth cycle] coordinates{(-2.9,2.2) (-3,3.3) (-.6,2.6) (.4,3.6) (2.4,1.8) (3,-1.8) (1.6,-1.2) (.5,-2.2) (-1.8,-1.8) (-3,-1.4) (-1.4,.5)};
\draw[color=red!50!black, thick, fill=red!10!white, opacity = 0.5] (-3.5,3.5) -- (-1.5,3.5) -- (-1.5,3) -- (-.5,3) -- (-.5,3.5) -- (0,3.5) -- (0,4) -- (1,4) -- (1,3.5) -- (1.5,3.5) -- (1.5,3) -- (2,3) -- (2,2.5) -- (2.5,2.5) -- (2.5,2) -- (3,2) -- (3,-.5) -- (3.5,-.5) -- (3.5,-2) -- (2,-2) -- (2,-1.5) -- (1.5,-1.5) -- (1.5,-2) -- (1,-2) -- (1,-2.5) -- (-1.5,-2.5) -- (-1.5,-2) -- (-3,-2) -- (-3,-1.5) -- (-3.5,-1.5) -- (-3.5,-1) -- (-3,-1) -- (-3,-.5) -- (-2.5,-.5) -- (-2.5,0) -- (-2,0) -- (-2,1) -- (-2.5,1) -- (-2.5,1.5) -- (-3,1.5) -- (-3,2) -- (-3.5,2) -- cycle;
\draw[step=.5cm,gray,very thin] (-4.2,-3.2) grid (4.2,4.7);
\draw[semithick] plot[tension=0.6, smooth cycle] coordinates{(-3.7,2.5) (-3.5,3.7) (-.8,3.1) (.6,4.1) (2.7,2.1) (3.3,-2.1) (1.7,-1.45) (0.6,-2.7) (-2.1,-2.3) (-3.5,-1.7) (-1.7,.6)};
\draw[semithick] plot[tension=0.6, smooth cycle] coordinates{(-2.9,2.2) (-3,3.3) (-.6,2.6) (.4,3.6) (2.4,1.8) (3,-1.8) (1.6,-1.2) (.5,-2.2) (-1.8,-1.8) (-3,-1.4) (-1.4,.5)};
\draw[color=red!50!black, thick] (-3.5,3.5) -- (-1.5,3.5) -- (-1.5,3) -- (-.5,3) -- (-.5,3.5) -- (0,3.5) -- (0,4) -- (1,4) -- (1,3.5) -- (1.5,3.5) -- (1.5,3) -- (2,3) -- (2,2.5) -- (2.5,2.5) -- (2.5,2) -- (3,2) -- (3,-.5) -- (3.5,-.5) -- (3.5,-2) -- (2,-2) -- (2,-1.5) -- (1.5,-1.5) -- (1.5,-2) -- (1,-2) -- (1,-2.5) -- (-1.5,-2.5) -- (-1.5,-2) -- (-3,-2) -- (-3,-1.5) -- (-3.5,-1.5) -- (-3.5,-1) -- (-3,-1) -- (-3,-.5) -- (-2.5,-.5) -- (-2.5,0) -- (-2,0) -- (-2,1) -- (-2.5,1) -- (-2.5,1.5) -- (-3,1.5) -- (-3,2) -- (-3.5,2) -- cycle;
\node[fill=white] at (-3.4,4.05) {$f^{-1}(C_i)$};
\node at (-2.25,2.25) {$K_i$};
\node[color=red!30!black] at (-3.25,1.72) {$0_i$};
\end{tikzpicture}
\caption{Construction de l'ensemble $O_i$}
\label{Picasso}
\end{center}
\end{figure}
Quitte à prendre une subdivision encore plus fine on peut supposer que la mesure de $O_i$ est plus petite que celle de $C_i$ (en effet, si on pose $O_{i,k}$ l'ensemble $O_i$ associé à une subdivision d'ordre $k$, les $(O_{i,k})_k$ sont emboîtés et la mesure de $O_{i,k}$ tend vers celle de $K_i$). On peut donc trouver une permutation dyadique $f_m$ qui envoie tout cube de $O_i$ sur un cube de $C_i$ (pour la subdivision $\D_{m'}$). Alors pour presque tout point $x$ dans un $K_i$, $f_m(x)$ et $f(x)$ appartiennent au même cube $C_i$, ils sont donc à une distance inférieure à $\delta$. De plus l'ensemble des points qui vérifient cette propriété est de mesure plus grande que $1-\delta$, on a donc $d_{\mathit{faible}}(f_m,f)<\delta$.
\bigskip

Reste à établir l'inégalité concernant la distance avec l'identité. On suppose que l'on a l'inégalité du lemme : $d_{\mathit{forte}}(f,\mathrm{Id})<\varepsilon$. On se donne un $\delta$ \og de sécurité \fg~tel que $d_{\mathit{forte}}(f,\mathrm{Id})<\varepsilon-3\delta$. On remarque immédiatement que, ayant choisi une permutation dyadique $f_m$ qui est $\delta$-proche de $f$, pour presque tout $x$ dans l'union des $K_i$ (et même l'union des $O_i$), on a $\mathrm{dist}(x,f_m(x))<\varepsilon-2\delta$. L'idée est alors de modifier $f_m$ en un $f_m'$ de telle sorte que $f_m'$ soit égal à l'identité en dehors des $O_i$ et assez proche de $f_m$ sur les $O_i$.

On pose $f_m' = \tau\circ f_m$ avec $\tau$ défini comme suit : posant $\F'$ l'union des $O_i$ et $\F$ la $f_m$-image de $\F'$, on prend $\tau_{|\F^\complement} = {f_m^{-1}}_{|\F^\complement}$ (dont l'image est $\F'^\complement$). Reste à définir $\tau$ sur $\F$. Pour cela, on numérote les cubes de $\F$ et de $\F'$, et définit tout simplement un ordre sur les cubes de la subdivision la plus fine (celle à laquelle appartiennent les cubes des $O_i$), contenant $MN$ cubes, compatible avec celui de la subdivision plus grossière (relative aux $C_i$), contenant $M$ cubes. On commence par définir l'ordre sur la subdivision la plus grossière en numérotant ses cubes de manière à ce que deux cubes successifs soient adjacents (en prenant garde au fait qu'il faut aussi que le dernier cube et le premier soient adjacents). On numérote alors de la même manière les petits cubes à l'intérieur de chaque grand cube ; tout petit cube dans le $i$-ème grand cube aura un numéro compris entre $(i-1)N+1$ et $iN$. Ainsi, si deux petits cubes ont des numéros espacés d'au plus $N$, ils seront distants d'au plus $2\delta$. On définit alors tout simplement $\tau$ sur $\F'$ comme étant l'unique bijection croissante, pour l'ordre qu'on a défini, allant de $\F$ à $\F'$. La distance (faible) de $\tau$ à l'identité est alors majorée à l'aide du nombre de cubes de $\F'^\complement$.

Or on contrôle le nombre de cubes de $\F$. En effet, on peut choisir les $K_i$ de manière à ce qu'ils recouvrent une proportion d'au moins $1-\frac{1}{M}$ du cube. Par conséquent, la proportion de cubes dans $\F'^\complement$ est d'au plus $\frac{1}{M}$ ; puisqu'il y a $MN$ cubes dans la petite subdivision, le nombre de cubes dans $\F^\complement$ est au plus $N$.

Par la remarque faite au dessus, on en déduit que la distance faible de $\tau$ à l'identité est majorée par $2\delta$ ; par conséquent on a d'une part $d_{\mathit{faible}}(f_m',f)<3\delta$ et d'autre part ${f_m'}_{|\F'^\complement} = Id$ et $d_{\mathit{forte}}(f_m',\mathrm{Id})<\varepsilon$ sur $\F'$, si bien que $d_{\mathit{forte}}(f_m',\mathrm{Id})<\varepsilon$.
\end{proof}

Pour montrer le théorème il suffit donc de pouvoir approcher les permutations cycliques par des homéomorphismes préservant la mesure, avec une inégalité concernant la distance à l'identité. Ce problème est résolu par le lemme suivant :

\begin{lemme}\label{homeo-permutation}
Soit $f_m$ une permutation dyadique d'ordre $m$ de $X$. Alors il existe $f\in$ $\mathrm{Homeo}(X,\mu)$ arbitrairement proche de $f_m$ pour la topologie faible. De plus, si $d_{\mathit{forte}}(f_m,\mathrm{Id})<\varepsilon$, alors on peut choisir $f$ tel que $d_{\mathit{forte}}(f,\mathrm{Id})<\varepsilon$.
\end{lemme}

\begin{proof}[Preuve du lemme \ref{homeo-permutation}] Soit $\delta>0$. Posons $x_1,\dots,x_k$ les centres des cubes dyadiques $C_i$ permutés par $f_m$ et $y_1,\dots,y_k$ leurs $f_m$-images. On se donne un $\varepsilon'$ \og de sécurité \fg~tel que $d_{\mathit{forte}}(f_m,\mathrm{Id})<\varepsilon'<\varepsilon$. La proposition sur l'extension des applications finies (proposition \ref{extension}) nous permet d'obtenir $g\in\mathrm{Homeo}(X,\mu)$, préservant l'orientation, tel que pour tout $i$, $g(x_i)=y_i$ ; on peut de plus supposer que $d_{\mathit{forte}}(g,\mathrm{Id})<\varepsilon'$.

On utilise alors le lemme \ref{4.2}, appliqué à $\delta$ et $\varepsilon -\varepsilon'$, qui nous donne un homéomorphisme $f$ qui coïncide avec $f_m$ sur un ensemble de mesure au moins $1-\delta$, d'où $d_{\mathit{forte}}(f,f_m)<\delta$, et qui vérifie $d_{\mathit{faible}}(f,\mathrm{Id})<\varepsilon-\varepsilon'+\varepsilon'$ ; le lemme est prouvé.
\end{proof}

\section{Forme faible du théorème de transfert}

Du théorème précédent on peut déduire facilement une forme faible du théorème de transfert (voir le corollaire 10 de \cite{X}) :

\begin{theoreme}[de transfert faible]\label{transfertfaibl}
Soit $P$ un $G_\delta$ de l'ensemble $\mathrm{Auto}(X,\mu)$ pour la topologie faible. Si l'adhérence de $P$ pour la topologie uniforme contient $\mathrm{Homeo}(X,\mu)$, alors $P\cap \mathrm{Homeo}(X,\mu)$ est un $G_\delta$ dense de $\mathrm{Homeo}(X,\mu)$ pour la topologie uniforme.
\end{theoreme}

\begin{proof}[Preuve du théorème \ref{transfertfaibl}] Par hypothèse, l'ensemble $P$ s'écrit $P = \bigcap P_k$ où $P_k$ est un ouvert de $\mathrm{Auto}(X,\mu)$ pour la topologie faible.

Montrons que $P_k\cap \mathrm{Homeo}(X,\mu)$ est ouvert dans $\mathrm{Homeo}(X,\mu)$ pour la topologie uniforme. Soit $(f_m)_{m\ge 0}$ une suite d'éléments de $\mathrm{Homeo}(X,\mu)\setminus P_k$ convergeant pour la topologie uniforme vers une limite notée $f$. D'une part cette limite est encore dans $\mathrm{Homeo}(X,\mu)$ ; d'autre part c'est aussi la limite de la suite $(f_m)$, pour la topologie faible cette fois. Puisque $P_k$ est ouvert pour la topologie faible, $f$ n'appartient pas à $P_k$. On en déduit que $P_k\cap \mathrm{Homeo}(X,\mu)$ est ouvert dans $\mathrm{Homeo}(X,\mu)$ pour la topologie uniforme.

Reste à montrer que $P_k\cap \mathrm{Homeo}(X,\mu)$ est dense dans $\mathrm{Homeo}(X,\mu)$ pour la topologie uniforme. Soit $f\in\mathrm{Homeo}(X,\mu)$ et $\varepsilon>0$. Puisque $P_k$ est dense pour la topologie uniforme, $B_{\varepsilon/2}(f)\cap P_k$ contient un homéomorphisme $g$ (où $B_\varepsilon(f)$ désigne la boule de centre $f$ et de rayon $\varepsilon$ pour la topologie forte). Mais $P_k$ est ouvert dans $\mathrm{Auto}(X,\mu)$ pour la topologie uniforme, donc il existe $\varepsilon'<\frac{\varepsilon}{2}$ tel que $B_{\varepsilon'}(g)\subset P_k$. On utilise alors le théorème de densité des homéomorphismes parmi les automorphismes (théorème \ref{densite-homeo}) : $B_{\varepsilon'}(g)\,\cap\mathrm{Homeo}(X,\mu)$ est dense dans $B_{\varepsilon'}(g)$ pour la topologie faible, en particulier il existe $g'$ dans cette intersection. Alors $g'\in B_\varepsilon(f)\cap P_k \cap\mathrm{Homeo}(X,\mu)$. En conclusion $P_k\cap \mathrm{Homeo}(X,\mu)$ est dense dans $\mathrm{Homeo}(X,\mu)$ pour la topologie uniforme.
\end{proof}

\section{Nouvelle preuve du théorème d'Oxtoby-Ulam}

Le théorème de transfert faible permet d'obtenir une preuve relativement simple du théorème d'Oxtoby-Ulam, directement adaptée de l'article de P. Halmos (théorèmes~5 et 6 de \cite{Halm44}) :

\begin{theoreme}[Oxtoby-Ulam]\label{ergo}
Dans $\mathrm{Homeo}(X,\mu)$, l'ergodicité est générique.
\end{theoreme}

\begin{proof}[Preuve du théorème \ref{ergo}] Montrons tout d'abord que l'ensemble des automorphismes ergodiques forme un $G_\delta$ de $\mathrm{Auto}(X,\mu)$. On sait, par le théorème ergodique de Von Neumann, qu'un automorphisme $f$ est ergodique si et seulement si pour tous mesurables $A$ et $B$,
\[\frac 1m\sum_{k=0}^{m-1} \mu(A\cap f^k(B))\underset{m\to+\infty}{\longrightarrow}\mu(A)\mu(B).\]
Prenons $\{A_i\}_i$ une famille dénombrable dense\footnote{Rappelons que l'on a muni l'ensemble des boréliens de $X$ de la topologie induite par la distance $d(A,B) = \mu(A\Delta B)$.} dans l'ensemble des mesurables de $X$ (par exemple formée des unions finies de cubes dyadiques) et posons 
\[E_{i,j,k,m} = \left\{f\in\mathrm{Auto}(X,\mu)\mid \left|\frac 1m\sum_{l=0}^{m-1}\mu(A_i\cap f^l(A_j))-\mu(A_i)\mu(A_j)\right| <\frac 1k\right\}\]
ainsi que 
\[F = \bigcap_{i,j,k}\bigcup_m E_{i,j,k,m}.\]
Les ensembles $E_{i,j,m,n}$ sont clairement ouverts pour la topologie faible, donc $F$ est un $G_\delta$. Par ce que l'on vient de dire, l'ensemble des automorphismes ergodiques est inclus dans $F$, reste à prouver l'autre inclusion : soit $f\in \mathrm{Auto}(X,\mu)$ non ergodique, montrons que $f\notin F$. Par hypothèse il existe un ensemble mesurable $B$ de mesure non triviale et stable par $f$. Posons $\delta = \mu(B)\mu(B^\complement)/8$ et choisissons $i$, $j$ et $m$ tels que $\mu(A_j\Delta B)<\delta$, $\mu(A_i\Delta B^\complement)<\delta$ et $1/m<4\delta$. Alors pour tout entier $l$
\begin{eqnarray*}
\left|\mu(A_i\cap f^l(A_j)) - \mu(B^\complement\cap f^l(B))\right| & \le & \mu((A_i\cap f^l(A_j)) \Delta (B^\complement\cap f^l(B)))\\
                                                                   & \le & \mu(A_i\Delta B^\complement)+\mu(A_j\Delta B)<2\delta
\end{eqnarray*}
et par conséquent pour tout entier $m$
\[\left|\frac 1m\sum_{l=0}^{m-1}\mu(A_i\cap f^l(A_j))-\frac 1m\sum_{l=0}^{m-1}\mu(B^\complement\cap f^l(B))\right| <2\delta.\]
De plus, on a :
\begin{eqnarray*}
\left|\mu(A_i)\mu(A_j)-\mu(B^\complement)\mu(B)\right| & \le & \mu(A_i)\left|\mu(A_j)-\mu(B)\right| + \mu(B)\left|\mu(A_i)-\mu(B^\complement)\right|\\
  & < & 2\delta.
\end{eqnarray*}
Remarquons enfin que l'on a, par invariance de $B$ par $f$ :
\[\left|\frac 1m\sum_{l=0}^{m-1}\mu(B^\complement\cap f^l(B)) - \mu(B)\mu(B^\complement)\right| = \mu(B)\mu(B^\complement) = 8\delta.\]
En appliquant l'inégalité triangulaire inversée, puis en combinant les équations précédentes on obtient alors : 
\begin{eqnarray*}
\lefteqn{\left|\frac 1m\sum_{l=0}^{m-1}\mu(A_i\cap f^l(A_j)) - \mu(A_i)\mu(A_j) \right|}\\
 & \ge & -\left|\frac 1m\sum_{l=0}^{m-1}\mu(A_i\cap f^l(A_j))-\frac 1m\sum_{l=0}^{m-1}\mu(B^\complement\cap f^l(B))\right|\\
  & & + \left|\frac 1m\sum_{l=0}^{m-1}\mu(B^\complement\cap f^l(B)) - \mu(B)\mu(B^\complement)\right|\\
  & & - \left|\mu(A_i)\mu(A_j)-\mu(B^\complement)\mu(B)\right|\\
  &>& 4\delta>\frac 1 m,
\end{eqnarray*}
si bien que $f\notin F$.

Par le théorème \ref{transfertfaibl} (forme faible du théorème de transfert), il suffit maintenant de montrer que pour tout $h\in \mathrm{Homeo}(X,\mu)$ et tout $\varepsilon >0$, il existe $f\in \mathrm{Auto}(X,\mu)$ ergodique telle que $d_{\mathit{forte}}(f,h)<\varepsilon$.

On applique encore une fois le théorème de Lax (théorème \ref{Lax}) à $h$ ; on obtient une subdivision dyadique $\D_m$ dont les cubes sont de diamètre $diam$ et une permutation dyadique cyclique $h_m$ telle que $d_{\mathit{forte}}(h,h_m)+ diam < \varepsilon$. Par commodité, numérotons les cubes $(C_i)_{1\le i \le N}$ de $\D_m$ de telle manière que $h_m(C_i) = C_{i+1}$. Prenons $e : C_1\to C_1$ un automorphisme ergodique\footnote{Par exemple un automorphisme linéaire d'Anosov.}, que l'on étend par l'identité en dehors de $C_1$ en un automorphisme\footnote{Attention, $e$ n'est pas un homéomorphisme.} $e : I^n\to I^n$. On pose alors $f = e\circ h_m$ l'application recherchée, qui vérifie
\[d_{\mathit{forte}}(h,f)\le d_{\mathit{forte}}(h,h_m)+d_{\mathit{forte}}(h_m,f)< \varepsilon-diam+diam=\varepsilon.\]
L'automorphisme $f$ est ergodique : la structure de permutation cyclique de $h_m$ permet de transmettre l'ergodicité de $e$ sur $C_1$ aux autres cubes de $X$. En effet, supposons qu'il existe un sous-ensemble mesurable $S$ de $X$, invariant par $f$ et de mesure vérifiant $0<\mu(S)<1$. Pour tout $i$ on pose $S_i = S\cap C_i$. Par la structure de $f$ --- qui comme $h_m$ envoie chaque $C_i$ sur $C_{i+1}$ --- on a $f^N(S_i) = S_i$ ; et l'ensemble $S$ est équiréparti sur les $C_i$, ce qui implique que l'on a $0<\mu(S_i)<1/N$. En particulier l'ensemble $S_1$ vérifie $f^N(S_1) = S_1$ et $0<\mu(S_1)<\mu(C_1)$, mais puisque, par construction de $f$, on a $f^N(S_1) = e(S_1)$, on en déduit que $e(S_1)  =S_1$, ce qui contredit l'ergodicité de $e$. Ainsi, $f$ est ergodique.
\end{proof}

\chapter[Transfert des propriétés ergodiques génériques]{Transfert des propriétés ergodiques génériques de $\mathrm{Auto}(X,\mu)$ vers $\mathrm{Homeo}(X,\mu)$}\label{partie 3}

Le but de ce chapitre est d'établir cette fois-ci la forme forte du théorème de transfert : toute propriété ergodique générique parmi les automorphismes l'est aussi parmi les homéomorphismes. Pour cela nous utiliserons le résultat de densité des homéomorphismes parmi les automorphismes établi au chapitre précédent, mais aussi la densité de le classe de conjugaison de tout automorphisme apériodique parmi les homéomorphismes (en un sens qui sera défini). C'est ce résultat, qui était d'ailleurs connu depuis les années 40 sous une forme plus faible\footnote{Toute classe de conjugaison d'un automorphisme apériodique est dense parmi les \emph{automorphismes}, établi par P. Halmos en 1944 \cite{HalmMix}.}, qui constituait la dernière étape nécessaire à l'obtention du théorème de transfert. Elle a été franchie par S.~Alpern en 1978 (\cite{AlpernGene}, \cite{AlpernTopo}), ce qui a amené ce dernier à énoncer le théorème de transfert.

Notons que les autres arguments de la preuve du théorème de transfert étaient déjà connus de P. Halmos en 1956 ; on trouve dans son livre fondateur \cite{3} des formes faibles du théorème \ref{conjug-aper} alors appelé \emph{conjugacy lemma} (page 77), de lemmes de Rokhlin (lemmes \ref{rokhlin} et \ref{rokhlin-periodique}) et du lemme \ref{laxauto}, appelé \emph{weak approximation theorem}, ainsi que des preuves des théorèmes \ref{mélange pastop} et \ref{mélange faiblemes} pour l'ensemble $\mathrm{Auto}(X,\mu)$, appelés \emph{first} et \emph{second category theorems}.  L'outil fondamental utilisé dans les preuves de ce chapitre est le lemme de Rokhlin (lemme \ref{rokhlin}).

Comme auparavant, on fixe une application $\phi : I^n\to X$ donnée par le corollaire~\ref{Brown-mesure} et on choisit une suite $(\D_m)_{m\in\N}$ de subdivisions dyadiques de $(X,\mu)$, ce qui permet de définir une notion de permutation dyadique sur ces subdivisions.

\section[Densité des classes de conjugaison des apériodiques]{Densité des classes de conjugaison des automorphismes apériodiques}

On commence par établir la densité des classes de conjugaison des automorphismes conservatifs apériodiques.

\begin{definition}
Un automorphisme $f\in\mathrm{Auto}(X,\mu)$ est dit \emph{apériodique} si l'ensemble de ses points périodiques est de mesure nulle.
\end{definition}

\begin{theoreme}[Alpern, \cite{2}]\label{conjug-aper}
Soit $f\in\mathrm{Auto}(X,\mu)$. Si $f$ est apériodique, alors l'adhérence pour la topologie forte de la classe de conjugaison de $f$ dans $\mathrm{Auto}(X,\mu)$ contient $\mathrm{Homeo}(X,\mu)$.
\end{theoreme}

On obtient facilement le lemme d'Halmos comme corollaire de ce théorème et de la densité de $\mathrm{Homeo}(X,\mu)$ dans $\mathrm{Auto}(X,\mu)$ (théorème \ref{densite-homeo}) :

\begin{coro}[lemme d'Halmos]\label{LemHalm}
Dans $\mathrm{Auto}(X,\mu)$, la classe de conjugaison de tout apériodique est dense (pour la topologie faible).
\end{coro}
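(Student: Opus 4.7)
Le plan consiste � combiner directement les deux r�sultats cit�s en utilisant le fait cl� que la topologie forte est plus fine que la topologie faible, au moyen de deux approximations successives. Soit $f\in\mathrm{Auto}(X,\mu)$ un automorphisme ap�riodique et $g\in\mathrm{Auto}(X,\mu)$ quelconque ; pour $\varepsilon>0$ fix�, je chercherais � produire un conjugu� $\varphi f \varphi^{-1}$ qui soit $\varepsilon$-proche de $g$ pour la topologie faible, ce qui suffira � prouver la densit�.

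Premi�rement, j'appliquerais le th�or�me \ref{densite-homeo} (densit� des hom�omorphismes parmi les automorphismes pour la topologie faible) pour obtenir un hom�omorphisme $h\in\mathrm{Homeo}(X,\mu)$ v�rifiant $d_{\mathit{faible}}(g,h)<\varepsilon/2$. Cette premi�re �tape nous ram�ne � une question d'approximation d'un hom�omorphisme, ce qui est exactement le cadre dans lequel le th�or�me \ref{conjug-aper} s'applique.

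Deuxi�mement, j'appliquerais le th�or�me \ref{conjug-aper} � l'ap�riodique $f$ : puisque $h\in\mathrm{Homeo}(X,\mu)$ appartient � l'adh�rence pour la topologie forte de la classe de conjugaison de $f$ dans $\mathrm{Auto}(X,\mu)$, il existe $\varphi\in\mathrm{Auto}(X,\mu)$ tel que $d_{\mathit{forte}}(h,\varphi f \varphi^{-1})<\varepsilon/2$. Comme la topologie forte est plus fine que la topologie faible, on a imm�diatement $d_{\mathit{faible}}(h,\varphi f \varphi^{-1})<\varepsilon/2$, puis l'in�galit� triangulaire fournit $d_{\mathit{faible}}(g,\varphi f \varphi^{-1})<\varepsilon$, ce qui conclut.

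Comme la preuve se r�duit � encha�ner deux r�sultats d�j� �tablis, il n'y a pas v�ritablement d'obstacle technique � surmonter dans ce corollaire : tout le travail difficile est concentr� dans le th�or�me \ref{conjug-aper} d'Alpern (qui utilise de mani�re essentielle le lemme de Rokhlin et l'approximation des hom�omorphismes par des permutations dyadiques), ainsi que dans le th�or�me \ref{densite-homeo} (densit� faible des hom�omorphismes), tous deux suppos�s acquis ici.
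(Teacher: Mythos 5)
Your proof is correct and follows exactly the route the paper has in mind: the paper states that the corollary is obtained by combining Theorem \ref{conjug-aper} with the density of $\mathrm{Homeo}(X,\mu)$ in $\mathrm{Auto}(X,\mu)$ (Theorem \ref{densite-homeo}), and you supply precisely the two-step approximation and the metric inequality $d_{\mathit{faible}}\le d_{\mathit{forte}}$ that make that one-line remark rigorous.
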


\begin{rem}\label{PasHalmHomeo}
Un résultat récent de M. Entov, L. Polterovich et P. Py \cite{8} indique que pour toute surface orientable différente de la sphère, il existe un quasi-morphisme homogène\footnote{Un \emph{quasi-morphisme} d'un groupe $G$ est une application $\varphi : G\to \R$ telle qu'il existe une constante $C>0$ vérifiant $|\varphi(xy)-\varphi(x)-\varphi(y)|<C$ pour tout $x, y\in G$. Ce quasi-morphisme est dit \emph{homogène} si de plus $\varphi(x^n) = n\varphi(x)$ pour tout $x\in G$ et $n\in \Z$.} non trivial et continu (pour la topologie forte) sur $\mathrm{Homeo}(X,\mu)$. Celui-ci est automatiquement invariant par conjugaison\footnote{Ceci est vrai pour tous les quasi-morphismes homogènes.}. Pour un tel quasi-morphisme $\varphi$, il existe $f\in\mathrm{Homeo}(X,\mu)$ tel que $\varphi(f)\neq 0$. Supposons qu'il existe $\tilde{f}$ dont la classe de conjugaison est dense. Puisque $\varphi$ est constant sur la classe de conjugaison de $\tilde{f}$, et comme $\varphi(\mathrm{Id})=0$, par continuité, l'identité et $h$ ne peuvent pas appartenir tous les deux à l'adhérence de la classe de conjugaison de $\tilde{f}$. Ainsi, il n'y a pas d'équivalent du lemme d'Halmos pour $\mathrm{Homeo}(X,\mu)$.
\end{rem}

La preuve du théorème \ref{conjug-aper} requiert le lemme suivant : 

\begin{lemme}\label{partition}
Soit $f\in\mathrm{Auto}(X,\mu)$ un automorphisme apériodique et $p$, $q$ deux entiers naturels premiers entre eux. Alors il existe deux ensembles $t_1$ et $t_2$ de même mesure tels que les ensembles 
\[t_1,f(t_1),\dots,f^{p-1}(t_1),\quad t_2,f(t_2),\dots,f^{q-1}(t_2)\]
forment une partition de $X$.
\end{lemme}

Autrement dit, le lemme affirme l'existence d'une tour pleine à deux colonnes de hauteurs respectives $p$ et $q$ :

\begin{definition}
Une \emph{tour} est la donnée d'un ensemble mesurable $t$ de mesure positive, appelé la \emph{base} de la tour, et d'une partition mesurable de cet ensemble $t$ telle que la fonction temps de retour
\[\tau_t(x) = \min \{\tau>0\mid f^\tau(x)\in t\}\]
soit constante sur tout élément de la partition. \'Etant donné $a\in t$ et posant $\tau=\tau_t(a)$, la \emph{colonne} au dessus de $a$ est la suite $(a,f(a),\dots,f^{\tau-1}(a))$ ; $a$ est appelé la \emph{base} de la colonne et $\tau$ sa \emph{hauteur}. La tour est dite \emph{pleine} si ses itérés par $f$ recouvrent $X$ tout entier.
\end{definition}

\begin{rem}
La condition \og$p$ et $q$~premiers entre eux~\fg~est nécessaire : considérons une rotation irrationnelle $f$ du cercle et supposons qu'il existe une partition telle que donnée dans le lemme avec $p=kp'$ et $q=kq'$. Posons 
\[t = \big(t_1\cup f^k(t_1)\cup\cdots\cup f^{k(p-1)}(t_1)\big) \cup \big(t_2\cup f^k(t_2)\cup\cdots\cup f^{k(q-1)}(t_2)\big)\]
Alors $t$ satisfait $f^k(t) = t$ ce qui contredit l'ergodicité de $f^k$.
\end{rem}

Par la suite on notera $m = pq$. Pour montrer le lemme précédent nous aurons besoin du lemme de Rokhlin :

\begin{lemme}[Rokhlin]\label{rokhlin}$ $
\begin{enumerate}
\item
Soient $f\in \mathrm{Auto}(X,\mu)$ un automorphisme apériodique et $m\in\N^*$. Alors il existe un ensemble mesurable $F\subset X$ tel que $\bigcup_{i\in\Z}f^i(F) \stackrel{\mathrm{p.p.}}{=} X$ et tel que les $m$ premiers itérés de $F$ : $F,f(F),\dots,f^{m-1}(F)$ soient deux à deux disjoints.
\item
Soient $f\in\mathrm{Auto}(X,\mu)$ un automorphisme dont les points de périodes plus petites que $m$ forment un ensemble de mesure nulle et $\varepsilon>0$. Alors il existe un ensemble mesurable $F\subset X$ tel que les $m$ premiers itérés de $F$ par $f$ soient deux à deux disjoints et dont l'union forme un ensemble de mesure plus grande que $1-\varepsilon$.
\end{enumerate}
\end{lemme}

La preuve du premier point du lemme de Rokhlin dans le cas où $f$ est ergodique est beaucoup plus rapide et simple. À titre pédagogique, nous commençons donc par ce cas.

\begin{proof}[Preuve du premier point du lemme dans le cas ergodique] Choisissons un ensemble $F'$ de mesure strictement positive, mais aussi strictement plus petite que $1/m$ ; alors l'ensemble 
\[F = F'\setminus \big(f^{-1}(F')\cup\cdots\cup f^{-m}(F')\big)\]
est, par construction, disjoint de ses $m-1$ premiers itérés et de mesure non nulle. En effet, si $F$ était de mesure nulle, on aurait
\[F'\stackrel{\mathrm{p.p.}}{\subset}f^{-1}(F')\cup\cdots\cup f^{-m}(F'),\]
d'où
\[F'\cup f^{-1}(F')\cup\cdots\cup f^{-(m-1)}(F')\stackrel{\mathrm{p.p.}}{\subset}f^{-1}(F')\cup\cdots\cup f^{-m}(F').\]
Puisque $\mu$ est invariante par $f$, elle donne la même masse au deux membres de cette inclusion ; on peut donc écrire une égalité presque partout et non une simple inclusion. Mais alors, par ergodicité, on en déduit que 
\[\mu\big(F'\cup f^{-1}(F')\cup\cdots\cup f^{-(m-1)}(F')\big) \in\{0,1\},\]
ce qui est impossible vu que $\mu(F')\in]0,\frac 1m[$ par hypothèse. Finalement, par ergodicité, on a $\bigcup_{m\in\Z}f^m(F) \stackrel{\mathrm{p.p.}}{=} X$. Ceci prouve la première assertion du lemme dans le cas où $f$ est ergodique.
\end{proof}

\begin{proof}[Preuve du lemme de Rokhlin] Passons au cas général. On prouve en premier lieu la seconde assertion du lemme. Soient $\varepsilon>0$, et $G_N$ l'ensemble des points de $X$ dont les $m$ premiers itérés par $f$ sont dans des cubes différents de la subdivision dyadique $\D_N$. On ne tiendra pas compte des points dont les itérés sont sur une frontière d'un cube, car ils forment un ensemble de mesure nulle. Par hypothèse sur les points périodiques de $f$, on a $\mu (\bigcup_{N\in \N}G_N$)$\,=1$ ; on peut donc trouver $N_0$ tel que $G_{N_0}$ soit de mesure au moins égale à $1-\varepsilon/m$. On pose pour plus de simplicité $G= G_{N_0}$. Alors pour tout cube $C$ de la subdivision dyadique $\D_{N_0}$, l'ensemble $G\cap C$ est disjoint de ses $m$ premiers itérés.

Notant $C_1,\dots ,C_d$ les cubes de $\D_{N_0}$, on définit par récurrence une suite d'ensembles $F_1,\dots,F_d$ comme suit :
\[F_0:=\emptyset\]
\[F_i:=F_{i-1}\cup\big( (C_i\cap G)\setminus(f^{-(m-1)}(F_{i-1})\cup\cdots\cup f^{m-1}(F_{i-1}))\big)\]
pour $i$ allant de 0 \`a $d$.
Montrons par récurrence que $F_i$ est disjoint de ses $m$ premiers itérés. L'initialisation est triviale. Calculons $F_i\cap f^j(F_i)$ pour $1\leq j\leq m-1$ : si $x\in F_i\cap f^j(F_i)$, alors $x$ est dans l'un des ensembles suivants :
\[\begin{array}{c}
F_{i-1}\,\cap\, f^j(F_{i-1}),\\
F_{i-1}\,\cap\, f^j\big((C_i\cap G)\setminus(f^{-(m-1)}(F_{i-1})\cup\cdots\cup f^{m-1}(F_{i-1}))\big),\\
\big((C_i\cap G)\setminus(f^{-(m-1)}(F_{i-1})\cup\cdots\cup f^{m-1}(F_{i-1}))\big)\,\cap\, f^j(F_{i-1}),\\
(C_i\cap G)\,\cap\, f^j(C_i\cap G).
\end{array}\]
Le premier ensemble est vide par hypothèse de récurrence. Le deuxième et le troisième le sont car alors $x\in F_{i-1}\cap F_{i-1}^\complement$ ou $x\in f^j(F_{i-1})\cap f^j(F_{i-1}^\complement)$. Enfin, le dernier l'est aussi car on a vu plus haut que pour tout cube $C$ de $\D_{N_0}$, $G\cap C$ est disjoint de ses $m$ premiers itérés. Posons $F = F_d$, alors $F$ est disjoint de ses $m$ premiers itérés.

Montrons maintenant que l'ensemble $\bigcup_{j=0}^{m-1} f^j(F)$ recouvre l'ensemble $\bigcap_{j=0}^{m-1} f^j(G)$, qui est de mesure plus grande que $1-\varepsilon$. Pour cela remarquons tout d'abord que la suite $(F_i)$ est croissante ; ceci implique que $F$ est non vide : si $i$ est le plus petit entier tel que $C_i\cap G$ soit non vide, alors $F_i$ est non vide. Soit donc $x\in G\cap\cdots\cap f^{m-1}(G)$. Alors presque sûrement, il existe un seul $i$ tel que $x\in C_i$. Si $x\notin F_i$, on sait qu'il existe un $j$ dans $[-(m-1),m-1]\cap \N$ tel que $x\in f^j(F_{i-1})$, car sinon $x$ serait dans
\[(C_i\cap G)\setminus(f^{-(m-1)}(F_{i-1})\cup\cdots\cup f^{m-1}(F_{i-1})),\]
donc dans $F_i$. Par conséquent,
\[x\in \bigcup_{j=-(m-1)}^{m-1}f^j(F_{i-1}) \cup F_i \ \subset \bigcup_{j=-(m-1)}^{m-1}f^j(F).\]
On peut alors appliquer un raisonnement similaire aux points $f^{-k}(x)$ pour $k$ allant de 0 à $m-1$ ; on obtient : 
\[x\in\bigcap_{k=0}^{m-1} \bigcup_{j=-(m-1)}^{m-1}f^{j+k}(F) = \bigcup_{j=0}^{m-1}f^j(F).\]
Ainsi $\bigcup_{j=0}^{m-1} f^j(F)$ recouvre l'ensemble $\bigcap_{j=0}^{m-1} f^j(G)$, qui est de mesure au moins $1-\varepsilon$. Ceci prouve la seconde assertion du lemme. La première est alors obtenue en itérant (éventuellement à l'aide d'une induction transfinie) la propriété que l'on vient d'obtenir, en l'appliquant à chaque fois au complémentaire de l'ensemble invariant $\bigcup_{i\in\Z}f^i(F)$.
\end{proof}

\begin{proof}[Preuve du lemme \ref{partition}] On se donne un ensemble $F$ donné par le premier point du lemme de Rokhlin, avec $m\ge pq$. On remarque tout d'abord que par invariance, les ensembles $\bigcup_{j\in\Z}f^j(F)$ et $\bigcup_{j\in\N}f^j(F)$ sont de même mesure et par conséquent coïncident à un ensemble de mesure nulle près. Considérons la tour de Kakutani au dessus de $F$, c'est-à-dire une partition de $\bigcup_{j\in\N}f^j(F)$ selon le temps de retour dans $F$ par $f$. Si on note $F_k$ l'ensemble des points de $F$ dont le temps de retour par $f$ dans $F$ est égal à $k$, alors $\bigcup_{k\in\N} F_k = \bigcup_{j\in\N}f^j(F)$. Toujours par le lemme de Rokhlin, toute colonne est de hauteur supérieure à $m$. D'autre part, tout entier $k\ge m$ se décompose selon l'identité de Bézout : $k = \alpha p+\beta q$, avec $\alpha$ et $\beta$ positifs \footnote{Voir la note \ref{note-page} page \pageref{note-page}.}. Partant, on partitionne chaque colonne en deux colonnes de hauteurs $p$ et $q$ : pour tout $k$, on divise la colonne au desus de $F_k$ en deux plus petites,
\[F_k,f(F_k),\dots, f^{\alpha p-1}(F_k)\qquad\mathrm{et}\qquad f^{\alpha p}(F_k),f^{\alpha p+1}(F_k),\dots,f^k(F_k),\]
sur la base $F_k\cup f^{\alpha p}(F_k)$. La première colonne peut être réduite à une colonne de hauteur $p$ de base
\[F_k\cup f^p(F_k)\cup\cdots\cup f^{(\alpha-1)p}(F_k);\]
le même procédé permet de réduire la seconde à une colonne de hauteur $q$. Posant $t_1$ (respectivement $t_2$) l'union de toutes les bases de colonnes de hauteur $p$ (respectivement $q$), on obtient les ensembles demandés par le lemme, à ceci près qu'ils n'ont pas forcément la même mesure.

Modifions donc la preuve précédente pour qu'ils aient la même mesure. Posons $\alpha$ le plus petit des deux quotients $\frac{p}{p+q}$ et $\frac{q}{p+q}$ et soit $N$ un entier assez grand (qu'on minorera par la suite). On reprend notre partition de $F$ par le temps de retour. On prend cette fois-ci une tour donnée par le lemme de Rokhlin, de hauteur $h$ plus grande que $Nm$ et on écrit $h=Am+B=(A-1)m+(B+m)$ avec $0\le B<m$. On considère la colonne de hauteur $B+m$ située en haut de la tour et on la subdivise en deux colonnes de hauteurs $p$ et $q$ comme \emph{supra}. Puisque $B+m<2m$, le quotient du volume de cette colonne sur celui de la colonne de départ est $\frac{B+m}{h}<\frac{2}{N}<\alpha$ pour $N>\frac{2}{\alpha}$. Reste à s'occuper de la colonne restante, qui est de hauteur $(A-1)m$ ; on la réduit à une colonne de hauteur $m$ (par le même procédé qu'au dessus). On se retrouve alors avec une tour composée de trois colonnes de hauteurs respectives $p$, $q$ et $m$, dont les deux premières ont des bases de mesures $\mu_p$ et $\mu_q$ plus petites que $\frac{1}{p+q}$. Nous allons nous servir de la colonne restante pour boucher les trous : on la divise tout d'abord en deux colonnes dont les bases sont de mesures respectives $\frac{1}{p+q}-\mu_p$ et $\frac{1}{p+q}-\mu_q$. La première colonne se réduit à une colonne de hauteur $p$ et la seconde à une colonne de hauteur $q$ (comme on l'a déjà fait), pour finir on réunit ensemble les colonnes de mêmes hauteurs, ce qui clôt la démonstration du lemme.
\end{proof}

\begin{proof}[Preuve du théorème \ref{conjug-aper}] Soient $h\in\mathrm{Homeo}(X,\mu)$, $f\in\mathrm{Auto(X,\mu)}$ apériodique et $\varepsilon>0$. Par le corollaire \ref{Laxbis}, on peut trouver un entier $m$ et une permutation dyadique d'ordre $m$, notée $h_m$, tels que :
\begin{itemize}
\item le diamètre des cubes de la subdivision dyadique $\D_m$ soit  plus petit que $\varepsilon$,
\item la permutation dyadique $h_m$ possède exactement deux cycles de longueurs $p$ et $q$ premières entre elles,
\item  $d_{\mathit{forte}}(h_m,h)<\varepsilon$.
\end{itemize}
Soient $C_1$ et $C_2$ deux cubes adjacents, chacun dans l'un de ces cycles. 

On applique maintenant le lemme \ref{partition}, qui nous permet d'obtenir une tour associée à $f$ composée d'une colonne de hauteur $p$ et de base $t_1$ et d'une colonne de hauteur $q$ et de base $t_2$, avec $t_1$ et $t_2$ de mesures $\frac{1}{p+q}$, ce qui se trouve être l'aire des cubes de la subdivision. Soit $\Phi\in\mathrm{Auto(X,\mu)}$ (donnée par le lemme \ref{ensembles}) qui envoie $f^k(t_1)$ sur $h_m^k(C_1)$ et $f^l(t_2)$ sur $h_m^l(C_2)$ pour tout $0\le k\le p$ et $0\le l\le q$. Posons $f' = \Phi f\Phi^{-1}$. Alors tous les cubes de la subdivision ont la même image par $f'$ et $h_m$, à l'exception de $h_m^{p-1}(C_1)$ et $h_m^{q-1}(C_2)$ qui sont envoyés par $h_m$ sur respectivement $C_1$ et $C_2$ et par $f'$ sur $C_1\cup C_2$. Puisque $C_1$ et $C_2$ sont adjacents, on obtient la majoration $d_{\mathit{forte}}(f',h_m)<2\varepsilon$ et finalement $d_{\mathit{forte}}(f',h)<4\varepsilon$.
\end{proof}

\section{Généricité de l'apériodicité}

Dans la preuve du théorème de transfert nous aurons besoin de la généricité de l'apériodicité. Celle-ci se déduit facilement du théorème d'Oxtoby-Ulam. Nous en donnons toutefois une preuve directe, basée sur le lemme de Rokhlin et son \emph{alter ego} pour les applications presque partout périodiques.

\begin{prop}\label{aper}
L'ensemble des automorphismes apériodiques est un $G_\delta$ dense de l'ensemble $\mathrm{Auto(X,\mu)}$ (pour la topologie faible).
\end{prop}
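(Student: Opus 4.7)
The plan is to express the set of aperiodic automorphisms as a countable intersection of open dense subsets of $\mathrm{Auto}(X,\mu)$ (for the weak topology), using Rokhlin towers to encode aperiodicity and le lemme d'Halmos to establish density.

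First I would prove the following Rokhlin-tower characterization: an automorphism $f$ is aperiodic if and only if for every pair of integers $m, k \geq 1$ there exists a measurable set $A \subset X$ such that $A, f(A), \dots, f^{m-1}(A)$ are pairwise disjoint and their union has measure strictly greater than $1 - 1/k$. The direct implication is precisely the point 2 of le lemme de Rokhlin (lemme \ref{rokhlin}), applied with parameters $m$ and $\varepsilon = 1/k$. Conversely, if such a disjoint tower of height $m$ covers more than $1 - 1/k$, then for a.e.\ $x$ in the tower the iterates $x, f(x), \dots, f^{m-1}(x)$ lie in distinct levels, hence are pairwise distinct; in particular, for each $j \in \{1,\dots,m-1\}$ the set $\{x : f^j(x) = x\}$ is disjoint from the tower (a.e.) and therefore has measure at most $1/k$. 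Letting $m, k \to \infty$ yields that the set of periodic points has measure zero, that is, $f$ is aperiodic.

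Next, denoting by $U_{m,k}$ the set of automorphisms admitting such a tower with parameters $m$ and $k$, I would prove that each $U_{m,k}$ is open in the weak topology. The argument is a continuity check: if $f \in U_{m,k}$ with a witness $A$ whose tower covers $1 - 1/k - \delta$ for some small $\delta > 0$ (a little slack can always be arranged by replacing $k$ with a slightly larger value), then for $g$ sufficiently close to $f$ in $d_{\mathit{faible}}$, the symmetric differences $f^i(A) \Delta g^i(A)$ are small for every $i \leq m-1$, since the weak distance controls a fixed finite number of iterates. Consequently $A, g(A), \dots, g^{m-1}(A)$ are almost disjoint and still cover more than $1 - 1/k$; removing from $A$ the small ``bad'' subset where overlaps occur produces a genuine witness for $g$, so $g \in U_{m,k}$. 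Hence the set of aperiodic automorphisms equals $\bigcap_{m,k \geq 1} U_{m,k}$ and is a $G_\delta$.

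Finally, density of each $U_{m,k}$ will follow from the existence of an aperiodic automorphism combined with le lemme d'Halmos (corollaire \ref{LemHalm}). An aperiodic automorphism exists by le th\'eor\`eme d'Oxtoby--Ulam (th\'eor\`eme \ref{ergo}), which provides ergodic conservative homeomorphisms; ergodicity on the non-atomic space $(X,\mu)$ forces aperiodicity, since an equality $f^m = \mathrm{Id}$ on a positive measure set would be incompatible with ergodicity (for any $B$ with $0 < \mu(B) < 1/m$, the set $B \cup f(B) \cup \dots \cup f^{m-1}(B)$ is a nontrivial $f$-invariant set). By le lemme d'Halmos, the weak closure of the conjugacy class of any aperiodic automorphism is all of $\mathrm{Auto}(X,\mu)$; since aperiodicity is a conjugation-invariant property, the aperiodic automorphisms form a weakly dense subset of $\mathrm{Auto}(X,\mu)$ and, by the first step, are contained in every $U_{m,k}$. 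The proposition then follows from le th\'eor\`eme de Baire. The main obstacle I anticipate is the openness in the second step: one must verify carefully that a Rokhlin tower is stable, up to a slight shrinking of its base, under small weak perturbations of the automorphism, which requires precise control on how the weak distance propagates through a fixed finite number of iterates.
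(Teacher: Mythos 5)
Your proof is correct, and both halves take a genuinely different route from the paper's. For the $G_\delta$ part, the paper parametrizes with a single integer: it sets $A_k$ to be the set of $f$ such that $\mu(\mathrm{Fix}(f^k))<1/k$, observes that aperiodicity is $\bigcap_k A_{k!}$, and proves openness of $A_k$ by one application of Rokhlin's lemma (part~2) to $f^k$ with tower height $m=2$. You instead use a two-parameter family $U_{m,k}$ directly encoding Rokhlin towers of $f$ of arbitrary height $m$, with both implications of the characterization done by hand. The paper's version is a bit slicker (a single Rokhlin call with $m=2$), yours is more transparent about what aperiodicity ``looks like'' in terms of towers; both need the weak topology to control a fixed number of iterates, which amounts to using that $\mathrm{Auto}(X,\mu)$ is a topological group (lemme~\ref{autotopo}). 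For density, the paper gives a direct construction (iteratively smoothing out the sets of $p$-periodic points using lemmes~\ref{rokhlin-periodique} and~\ref{point-fixe}); you invoke the Halmos lemma (corollaire~\ref{LemHalm}) together with the existence of an aperiodic automorphism, a shortcut the paper explicitly acknowledges in the sentence preceding its direct argument, and there is no circularity since corollaire~\ref{LemHalm} and th\'eor\`eme~\ref{ergo} are proved independently of proposition~\ref{aper}. Two small remarks: your phrase ``a witness $A$ whose tower covers $1-1/k-\delta$'' has the inequality the wrong way around (you want a tower covering \emph{strictly more} than $1-1/k$, the strict inequality in the definition of $U_{m,k}$ providing exactly the slack you need, so the parenthetical about increasing $k$ is not even necessary); and invoking Oxtoby--Ulam just to get one aperiodic automorphism is heavier than needed --- th\'eor\`eme~\ref{leb-stand} plus an irrational rotation, or a linear Anosov map, does the job more cheaply.
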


\begin{lemme}[du type Rokhlin]\label{rokhlin-periodique}
Soit $E$ un ensemble mesurable de mesure 1 et $f$ un automorphisme de $E$ presque partout périodique de période $p$. Alors il existe un sous ensemble mesurable $F$ de $E$ de mesure $1/p$ et dont les $p$ premiers itérés par $f$ sont deux à deux disjoints.
\end{lemme} 

\begin{proof}[Preuve du lemme \ref{rokhlin-periodique}] Cette preuve est tirée du livre de P.~Halmos (\cite[page 70]{3}). Si $p=1$, la propriété est triviale. Sinon, il existe un ensemble mesurable $F_1$ tel que $\mu(F_1\cap f(F_1))=0$. En effet, il existe un entier $k$ tel que l'ensemble $D_k = \{x\in E \mid \mathrm{dist}(x,f(x))\ge 1/k\}$ soit de mesure strictement positive. Il existe alors une boule de diamètre $1/2k$ qui intersecte $D_k$ sur un ensemble de mesure non nulle, cette intersection est l'ensemble que l'on recherche. Si $p=2$, on s'arrête là, sinon il existe un sous ensemble $F_2$ de $F_1$ tel que $\mu(F_2\cap f^2(F_2))=0$ (sinon $f$ serait périodique de période 2 pour presque tout point de $F_1$). On continue ainsi jusqu'à la période de $f$ : on obtient un ensemble $F= F_p$ disjoint de ses $p$ premiers itérés.

Reste à montrer que l'on peut modifier la construction de manière a avoir $\mu(F)  = 1/p$. Pour cela on applique le procédé précédent au complémentaire de $F\cup f(F)\cup\cdots\cup f^{p-1}(F)$ et obtient un autre ensemble disjoint de ses $p$ premiers itérés. On itère ce procédé par induction (éventuellement transfinie) et finalement l'union $F\cup f(F)\cup\cdots\cup f^{p-1}(F)$ recouvre tout l'ensemble, ce qui est équivalent, par invariance de la mesure, au fait que $\mu(F)=1/p$.
\end{proof}

\begin{proof}[Preuve de la proposition \ref{aper}] Soit $A_k$ l'ensemble des automorphismes $f$ tels que l'ensemble des points fixes de $f^k$ soit de mesure strictement plus petite que $\frac{1}{k}$. L'ensemble des automorphismes apériodiques est alors égal à $\bigcap_{k\in\N}A_k$ (on vérifie cette égalité en considérant les $A_{k!}$).

Chaque $A_k$ est ouvert. En effet, prenons $f\in A_k$. Alors il existe $\varepsilon>0$ tel que l'ensemble des points fixes de $f^k$ soit de mesure plus petite que $\frac{1}{k}-2\varepsilon$. Le lemme de Rokhlin (lemme \ref{rokhlin}), appliqué à $m=2$ et à $g=f^k$ sur le complémentaire des points fixes de $f^k$ nous donne un ensemble $F$ tel que $F\cap f^k(F)=\emptyset$ et que $F\cup f^k(F)$ soit de mesure plus grande que $1-\frac{1}{k}+\varepsilon$ ; la mesure de $F$ est alors égale à $\frac{1}{2}(1-\frac{1}{k}+\varepsilon)$. Si on prend un autre automorphisme $g$ assez proche de $f$ pour la topologie faible, l'ensemble $ F' = F\setminus g^k(F)$ est de mesure plus grande que $\frac{1}{2}(1-\frac{1}{k})$(on utilise la seconde définition de la topologie faible), ce qui implique que $E' = F'\cup g^k(F')$ est de mesure plus grande que $1-\frac{1}{k}$. Aucun des points de $E'$ n'est $k$-périodique pour $g$, ce qui prouve que $g\in A_k$.

Nous venons de montrer que l'ensemble des automorphismes apériodiques forme un $G_\delta$ de $\mathrm{Auto}(X,\mu)$. Montrons maintenant qu'il est dense dans $\mathrm{Auto(X,\mu)}$. On peut facilement déduire ce fait du lemme d'Halmos (lemme \ref{LemHalm}), mais nous en donnons maintenant une preuve directe. Soient $f\in\mathrm{Auto(X,\mu)}$ et $\varepsilon>0$. Posons $F_1$ l'ensemble des points fixes de $f$. On applique alors le lemme \ref{point-fixe} qui nous donne un automorphisme $g_1$ de $F_1$ sans point périodique et de distance à l'identité inférieure à $\varepsilon/2$ ; on pose alors $f_1 = g_1 \circ f$, qui est sans point périodique. En appliquant le lemme \ref{rokhlin-periodique} à $f_1$, on obtient un ensemble $F_2$ disjoint de $f_1(F_2)$ tel que $F_2\cup f_1(F_2)$ soit formé exactement des points de $f_1$ de période 2. Le lemme \ref{point-fixe} nous donne alors un automorphisme $g_2$ de $F_2$ sans point périodique et de distance à l'identité plus petite que $\varepsilon/2^2$ ; on pose $f_2 = g_2 \circ f_1$. La répétition à l'infini de ce principe (la convergence est assurée par le fait qu'on modifie les automorphismes sur des ensembles dont les mesures tendant vers 0 et que la somme de ces modifications est inférieure à $\varepsilon$) nous donne un automorphisme apériodique $\varepsilon$-proche de $f$.
\end{proof}

\begin{rem}
La densité de l'ensemble des automorphismes apériodiques permet de court-circuiter la preuve du lemme d'Halmos (corollaire \ref{LemHalm}). En effet, il suffit maintenant de démontrer que de telles classes de conjugaison sont denses dans l'ensemble des automorphismes apériodiques (pour la topologie faible). Prenons donc un automorphisme apériodique $f$, un réel $\varepsilon>0$ et un second automorphisme apériodique $g$ à approcher. Prenant $k$ tel que $1/k\le \varepsilon/2$, le second point du lemme de Rokhlin (lemme \ref{rokhlin}) nous fournit deux ensembles $F$ et $G$ dont les $k$ premiers itérés par respectivement $f$ et $g$ sont deux à deux disjoints, et tels que les complémentaires des unions de ces itérés aient une mesure plus petite que $\varepsilon/2$. Quitte à en réduire un des deux, on peut supposer qu'ils ont la même mesure. Alors par le lemme \ref{ensembles}, il existe un automorphisme $\varphi$ envoyant $f^i(F)$ sur $g^i(G)$ pour $i\in\{0,\dots,k\}$. Cette application $\varphi$ est la conjugaison recherchée pour notre approximation.
\end{rem}

\section{Théorème de transfert des propriétés ergodiques génériques}

Une fois tous les résultats préparatoires démontrés, l'obtention du théorème de transfert des propriétés ergodiques, démontré par S. Alpern en 1978 \cite{AlpernGene}, est aisée.

\begin{definition}\label{propergo}
Nous appellerons \emph{propriété ergodique} toute propriété $(P)$ portant sur les éléments de $\mathrm{Auto}(X,\mu)$, telle que l'ensemble des éléments de $\mathrm{Auto}(X,\mu)$ satisfaisant $(P)$ est stable par conjugaison.
\end{definition}

\begin{theoreme}[Transfert $\mathrm{Auto(X,\mu)}$-$\mathrm{Homeo(X,\mu)}$, Alpern]\label{transfert}
Une propriété dynamique ergodique qui est générique dans $\mathrm{Auto(X,\mu)}$ relativement à la topologie faible, est aussi générique dans $\mathrm{Homeo}(X,\mu)$ relativement à la topologie de la convergence uniforme. %Plus précisément, pour toute partie $P$ de $\mathrm{Auto(X,\mu)}$ qui est un $G_\delta$ dense pour la topologie faible, invariant par conjugaison dans $\mathrm{Auto(X,\mu)}$, $P\,\cap \mathrm{Homeo(X,\mu)}$ est un $G_\delta$ dense de $\mathrm{Homeo(X,\mu)}$ pour la topologie uniforme.
\end{theoreme}

\begin{proof}[Preuve du théorème \ref{transfert}] Par le théorème \ref{transfertfaibl}, il suffit de montrer que l'adhérence de $P$ pour la topologie uniforme contient $\mathrm{Homeo}(X,\mu)$.

Puisque l'ensemble des automorphismes apériodiques est un $G_\delta$ dense (proposition \ref{aper}), par le théorème de Baire, $P$ contient au moins un automorphisme apériodique $f$ ; comme $P$ est stable par conjugaison sous $\mathrm{Auto(X,\mu)}$ et comme l'adhérence de la classe de conjugaison de $f$ dans $\mathrm{Auto(X,\mu)}$ contient $\mathrm{Homeo}(X,\mu)$ (théorème \ref{conjug-aper}), l'adhérence pour la topologie forte de $P$ dans $\mathrm{Auto}(X,\mu)$ contient $\mathrm{Homeo}(X,\mu)$.
\end{proof} 

\begin{coro}\label{20}
Toute propriété ergodique sur $\mathrm{Auto}(X,\mu)$ vérifiée par au moins un automorphisme apériodique et vraie sur un $G_\delta$ de $\mathrm{Auto}(X,\mu)$ pour la topologie faible, est générique dans $\mathrm{Homeo}(X,\mu)$ pour la topologie forte.
\end{coro}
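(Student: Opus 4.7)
Le plan est de ramener l'\'enonc\'e au th\'eor\`eme de transfert (th\'eor\`eme~\ref{transfert}) en v\'erifiant que la propri\'et\'e $(P)$ en question est d\'ej\`a g\'en\'erique dans $\mathrm{Auto}(X,\mu)$ muni de la topologie faible. Puisque $(P)$ est par hypoth\`ese satisfaite sur un $G_\delta$ de $\mathrm{Auto}(X,\mu)$, il suffit de montrer que cet ensemble $G_\delta$ est aussi dense pour la topologie faible ; le th\'eor\`eme~\ref{transfert} s'appliquera alors directement pour conclure.

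Pour obtenir la densit\'e, je proc\'ederai de la mani\`ere suivante. Soit $f_0$ un automorphisme ap\'eriodique v\'erifiant $(P)$, donn\'e par hypoth\`ese. Puisque $(P)$ est une propri\'et\'e ergodique au sens de la d\'efinition~\ref{propergo}, l'ensemble des automorphismes v\'erifiant $(P)$ est stable par conjugaison dans $\mathrm{Auto}(X,\mu)$ ; il contient donc toute la classe de conjugaison de $f_0$. Or le lemme d'Halmos (corollaire~\ref{LemHalm}) affirme exactement que la classe de conjugaison de tout automorphisme ap\'eriodique est dense dans $\mathrm{Auto}(X,\mu)$ pour la topologie faible. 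Ainsi, l'ensemble des automorphismes v\'erifiant $(P)$ contient un sous-ensemble dense, et est donc lui-m\^eme dense dans $\mathrm{Auto}(X,\mu)$ pour la topologie faible.

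Combin\'ee \`a l'hypoth\`ese que cet ensemble est un $G_\delta$, la densit\'e obtenue montre que $(P)$ est g\'en\'erique dans $\mathrm{Auto}(X,\mu)$ pour la topologie faible. Il ne reste qu'\`a appliquer le th\'eor\`eme de transfert (th\'eor\`eme~\ref{transfert}), qui transporte cette g\'en\'ericit\'e \`a $\mathrm{Homeo}(X,\mu)$ muni de la topologie forte. Aucune \'etape ne pr\'esente de v\'eritable obstacle ici : tout le travail difficile a \'et\'e fait en amont, \`a savoir la densit\'e des classes de conjugaison des ap\'eriodiques (th\'eor\`eme~\ref{conjug-aper}) et le th\'eor\`eme de transfert lui-m\^eme. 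Le corollaire consiste essentiellement \`a observer que l'hypoth\`ese \og$(P)$ est satisfaite par au moins un ap\'eriodique\fg, combin\'ee \`a la stabilit\'e par conjugaison, remplace avantageusement l'hypoth\`ese de densit\'e dans $\mathrm{Auto}(X,\mu)$ requise par la forme faible du th\'eor\`eme de transfert.
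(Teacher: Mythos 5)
Votre preuve est correcte et suit essentiellement la m\^eme d\'emarche que celle du texte : \'etablir que la classe de conjugaison de $f_0$ est faiblement dense dans $\mathrm{Auto}(X,\mu)$, ce qui force l'ensemble $G_\delta$ o\`u vaut $(P)$ \`a \^etre dense, puis appliquer le th\'eor\`eme de transfert (th\'eor\`eme~\ref{transfert}). La seule diff\'erence est cosm\'etique : vous invoquez directement le lemme d'Halmos (corollaire~\ref{LemHalm}), tandis que le texte retrace en passant sa d\'erivation \`a partir du th\'eor\`eme~\ref{conjug-aper} et de la densit\'e faible de $\mathrm{Homeo}(X,\mu)$ dans $\mathrm{Auto}(X,\mu)$ ; votre raccourci est l\'egitime puisque ce corollaire a d\'ej\`a \'et\'e \'etabli.
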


\begin{proof}[Preuve du corollaire \ref{20}] Puisque la classe de conjugaison de tout automorphisme apé\-rio\-dique est dense dans $\mathrm{Homeo}(X,\mu)$, la propriété est vérifiée sur un ensemble dense de $\mathrm{Homeo}(X,\mu)$ ; par densité de $\mathrm{Homeo}(X,\mu)$ dans $\mathrm{Auto}(X,\mu)$ pour la topologie faible, on en déduit que cette propriété est vraie sur un ensemble faiblement dense de $\mathrm{Auto}(X,\mu)$ et donc, par hypothèse, sur un $G_\delta$ dense. Le théorème précédent affirme alors que cette propriété est vraie génériquement dans $\mathrm{Homeo}(X,\mu)$ pour la topologie forte.
\end{proof}

\section{Nouvelle preuve de la généricité du non mélange fort}

Désormais, grâce au théorème de transfert, chaque propriété dynamique générique dans l'ensemble des automorphismes le sera aussi dans l'ensemble des homéomorphismes. Ce résultat est intéressant d'un point de vue pratique\footnote{En plus de son grand intérêt philosophique.} puisque, comme on l'a dit plus haut, l'ensemble des automorphismes est plus simple à manipuler que celui des homéomorphismes ; c'est pourquoi on s'intéresse maintenant aux propriétés génériques dans $\mathrm{Auto}(X,\mu)$. On commence, dans cette partie, par donner une nouvelle preuve de la généricité du non-mélange fort ; dans la partie suivante \ref{partiefaible}, nous établissons une nouvelle preuve de la généricité du mélange faible. Ces deux résultats\footnote{Que l'on a déjà obtenues au chapitre \ref{chapcycl}.} sont des applications \og historiques \fg~du théorème de transfert : ces généricités ont été établies dans $\mathrm{Auto}(X,\mu)$ par respectivement P. Halmos \cite{HalmMix} et V. Rokhlin \cite{Rok} dans les années 40\footnote{Il est quelque peu anachronique de parler d'application historique lorsqu'on applique le théorème de transfert, prouvé dans les années 70, à des résultats datant des années 40.}\footnote{Ils avaient intitulé leurs articles \og Généralement les automorphismes préservant la mesure sont mélangeants \fg~\cite{Halm44} et \og Généralement les automorphismes préservant la mesure ne sont pas mélangeants \fg~\cite{Rok}.}. Ces preuves ont été reprises par P. Halmos lui-même dans sont livre fondateur \cite{3} au chapitre \og \emph{Category} \fg ; celles-ci se transferent directement à l'espace $\mathrm{Homeo}(X,\mu)$.

La preuve de généricité du non-mélange fort que nous présentons est issue de \cite{1}.

\begin{theoreme}[Rokhlin]\label{mélange pastop}
Génériquement, dans $\mathrm{Auto}(X,\mu)$, les éléments ne sont pas fortement mélangeants.
\end{theoreme}

\begin{proof}[Preuve du théorème \ref{mélange pastop}] On a vu que l'ensemble des permutations dyadiques est dense dans $\mathrm{Auto}(X,\mu)$ (théorème \ref{laxauto}). Grâce au lemme \ref{Pioure}, on peut même avoir la densité des permutations dyadiques cycliques d'ordre arbitrairement grand : pour tout entier $\ell$, l'ensemble $\bigcup_{k\ge \ell}P_k$ est dense, où $P_k$ est l'ensemble des permutations dyadiques cycliques d'ordre $k$.

Plaçons nous dans le cas du segment unité $I$ muni de la mesure de Lebesgue $\mathrm{Leb}$, le cas général s'en déduisant facilement par le théorème \ref{leb-stand}. Soient $A=[0,1/2]$ et $M_k$ l'ensemble des automorphismes $f$ tels que $\mu(f^k(A)\cap A)\in [1/8,3 /8]$. Cet ensemble est fermé pour la topologie faible. En effet, l'application 
\begin{eqnarray*}
\mathrm{Auto}(X,\mu) & \longrightarrow & [0,1]\\
f & \longmapsto & \mu(f^k(A)\cap A)
\end{eqnarray*}
est clairement continue relativement à la topologie faible.

Or $P_k$ est disjoint de $M_k$ pour tout $k$, donc $\bigcup_{k\ge l}P_k$ est disjoint de $\bigcap_{k\ge l}M_k$. Le premier ensemble étant dense, on en déduit que le second est d'intérieur vide et donc que l'ensemble
\[\bigcup_{l\ge 0}\bigcap_{k\ge l}M_k,\]
qui contient tous les automorphismes fortement mélangeants, est un $F_\sigma$ d'intérieur vide.
\end{proof}

Et à l'aide du théorème de transfert (théorème \ref{transfert}), on en déduit immédiatement le corollaire :

\begin{coro}\label{mélange pas}
Génériquement, dans $\mathrm{Homeo}(X,\mu)$, les éléments ne sont pas fortement mélangeants.
\end{coro}

\section{Généricité du mélange faible}\label{partiefaible}

On donne maintenant une nouvelle preuve de la généricité du mélange faible ergodique. Elle est due à P. Halmos \cite{HalmMix} et découle immédiatement du lemme d'Halmos (corollaire \ref{LemHalm}) et d'une caractérisation du mélange faible. Commençons par donner plusieurs caractérisations classiques du mélange faible (voir par exemple le paragraphe 2.6 de \cite{Petersen}) :

\begin{prop}\label{équiéqui}
Soit $f\in\mathrm{Auto}(X,\mu)$. Les conditions suivantes sont équivalentes :
\begin{enumerate}
\item $f$ est faiblement mélangeant,
\item pour tout couple de mesurables $U,V$, il existe une suite $(u_i)_{i\in\N}\in \N^\N$ de densité 1 telle que
\[\mu(f^{u_i}(U)\,\cap\, V)-\mu(U)\mu(V) \underset{i\to+\infty}{\longrightarrow}0,\]
\item Pour tous $\varphi,\psi\in L^2(X,\mu)$,
\[\frac{1}{N}\sum_{i=0}^{N-1}|\int \varphi\circ f^i\cdot\psi-\int \varphi\int \psi| \underset{N\to+\infty}{\longrightarrow}0,\]
\item $f\times f$ est ergodique,
\item les seuls vecteurs propres de l'opérateur de Koopman $U_f$ sont les fonctions constantes.
\end{enumerate}
\end{prop}

Nous aurons également besoin d'une autre caractérisation :

\begin{lemme}\label{mélange-équi}
Donnons-nous $(\varphi_i)_{i\in\N}$ une famille dénombrable dense de l'espace $L^2(X,\mu)$. Alors $f\in\mathrm{Auto}(X,\mu)$ est faiblement mélangeant si et seulement si
\[\underset{m\to+\infty}{\underline{\lim}}\left|\int \big(\varphi_i\circ f^m\big)\varphi_j - \int \varphi_i \int \varphi_j\right| = 0\]
pour tout couple $(i,j)$, c'est-à-dire :
\[\forall (i,j),\,\forall k, M,\,\exists m\ge M \,:\,\left|\int \big(\varphi_i\circ f^m\big)\varphi_j - \int \varphi_i \int \varphi_j\right|<\frac{1}{k}.\]
\end{lemme}

\begin{proof}[Preuve du lemme \ref{mélange-équi}] La propriété 2. de la proposition sur les caractérisations équivalentes du mélange faible (proposition \ref{équiéqui}) implique la propriété du lemme. Réciproquement, si on suppose $f$ non faiblement mélangeant, il va exister une fonction $\varphi$ de $L^2(X,\mu)$ non constante et un complexe $c$ tel que $\varphi\circ f = c\varphi$. Puisque cet opérateur est unitaire, $|c|=1$. Prenons $\varepsilon>0$ et $i$ tel que $\|\varphi-\varphi_i\|_{L^2}<\varepsilon$. On a alors :
\begin{eqnarray*}
|\langle \varphi_i\circ f^m,\varphi_i\rangle - \langle \varphi\circ f^m,\varphi\rangle| & \le & |\langle \varphi_i\circ f^m,\varphi_i-\varphi\rangle| + |\langle \varphi_i\circ f^m - \varphi\circ f^m,\varphi\rangle| \\
  & \le & \|\varphi_i\|_{L^2} \|\varphi_i-\varphi\|_{L^2} + \|\varphi_i-\varphi\|_{L^2} \|\varphi\|_{L^2} \\
  & \le & \varepsilon (2\|\varphi\|_{L^2} + \varepsilon).
\end{eqnarray*}
D'autre part, on obtient, en utilisant successivement l'inégalité de Cauchy-Schwarz :
\begin{eqnarray*}
\left|\Big|\int \varphi\Big|^2 - \Big|\int \varphi_i\Big|^2\right| & \le & (2+\varepsilon)\|\varphi\|_{L^1}\left| \int \varphi -\int \varphi_i\right|\\
                                                                   & \le & (2+\varepsilon)\|\varphi\|_{L^2} \|\varphi-\varphi_i\|_{L^2}\\
                                                                   & \le & \varepsilon(2+\varepsilon)\|\varphi\|_{L^2},
\end{eqnarray*}
si bien que 
\[\left|\Big|\int \varphi_i\circ f^m\varphi_i - \int \varphi_i \int \varphi_i\Big| - \Big|\int \varphi\circ f^n\varphi - \int \varphi \int \varphi\Big|\right|\le K\varepsilon,\]
avec $K$ ne dépendant que de la norme de $\varphi$. Cela nous ramène au cas de $\varphi$ : 
\begin{eqnarray*}
\left|\int \varphi\circ f^m\varphi - \int \varphi \int \varphi\right| & =   & \left|c^n\int |\varphi|^2 -  \Big|\int \varphi\Big|^2\right|\\
                                                                    & \ge & \left|\int |\varphi|^2 -  \Big|\int \varphi\Big|^2\right|.
\end{eqnarray*}
Mais par l'inégalité de Cauchy-Schwarz on a :
\[\left|\int \varphi\right|^2 < \int |\varphi|^2,\]
si bien que la quantité
\[\left|\int \varphi\circ f^m\varphi - \int \varphi \int \varphi\right|\]
est strictement positive, et cela uniformément en $m$, ce qui dit que $\varphi$ ne vérifie pas la caractérisation du mélange faible voulue.
\end{proof}

La généricité se déduit alors facilement :

\begin{theoreme}[Halmos]\label{mélange faiblemes}
Génériquement dans $\mathrm{Auto}(X,\mu)$, les éléments sont faiblement mélangeants.
\end{theoreme}

\begin{proof}[Preuve de la proposition \ref{mélange faible}] On sait qu'il existe au moins un automorphisme faiblement mélangeant (donc apériodique) : en effet, le théorème~\ref{leb-stand} nous autorise à nous placer dans le tore $\T^2$ muni de la mesure de Lebesgue et dans cet espace un automorphisme d'Anosov est faiblement mélangeant. On peut donc appliquer le corollaire \ref{20}, pourvu que l'on sache que la propriété est vraie sur un $G_\delta$ de $\mathrm{Auto}(X,\mu)$ ; mais du lemme \ref{mélange-équi} on déduit facilement que l'ensemble des éléments faiblement mélangeants est un $G_\delta$ pour la topologie faible.
\end{proof}

Et le théorème de transfert implique que :

\begin{coro}\label{mélange faible}
Génériquement dans $\mathrm{Homeo}(X,\mu)$, les éléments sont faiblement mélangeants.
\end{coro}

\section{Généricité du $\alpha$-mélange faible}\label{alphamél}

Bon nombre de propriétés génériques dans $\mathrm{Auto}(X,\mu)$ ont été trouvées à la fin des années 60 via l'étude de la vitesse d'approximation par des permutations cycliques \cite{KatokS1}, \cite{KatokS2}, \cite{Yuz}. Un peu plus tard, cette technique a été adaptée à l'étude des homéomorphismes conservatifs par A. Katok et A. Stepin \cite{KatokS3}, comme on l'a vu au chapitre \ref{chapcycl}. Ainsi, le théorème \ref{approxmeo} est aussi valable pour l'ensemble des automorphismes\footnote{La preuve est même plus facile puisque la généricité a lieu en topologie faible.} : pour une vitesse $\vartheta$ fixée, l'ensemble des éléments de $\mathrm{Auto}(X,\mu)$ admettant une approximation cyclique à la vitesse $\vartheta$ est un $G_\delta$ dense en topologie faible. Tous les théorèmes de généricité vus au chapitre \ref{chapcycl} sont donc aussi vrais pour l'espace $\mathrm{Auto}(X,\mu)$. Notons que l'étude des approximations par des permutations dans $\mathrm{Auto}(X,\mu)$ est bien plus développée que celle dans $\mathrm{Homeo}(X,\mu)$ (voir entre autres \cite{Katok}, \cite{KatokS1}, \cite{KatokS2}, \cite{KatokT}, \cite{Yuz}, etc.) ; par exemple le fait de pouvoir se ramener au cas du segment unité muni de la mesure de Lebesgue permet parfois de simplifier les preuves \cite{Yuz}. D'autre part, de même que pour les homéomorphismes, le théorème de généricité des approximations à vitesse fixée admet dans le cas des automorphismes une variante avec des approximations bicycliques, mais ce n'est pas la seule ; un grand nombre de types d'approximations peuvent être utilisées dans le théorème de généricité des approximations à vitesse fixée pour les automorphismes. Une étude théorique complète pourra être trouvée dans le premier chapitre du livre de A. Katok \cite{Katok}. Par exemple, à l'aide d'approximations par des permutations ayant un nombre arbitrairement grand d'orbites, A. Stepin a montré en 1987 \cite{Stepin} que le $\alpha$-mélange faible est générique (voir aussi \cite{Stepin2} et la partie 3.3 de \cite{Katok}). Pour d'autres propriétés, on pourra aussi consulter \cite{delJ}.

\begin{definition}
Soit $\alpha\in[0,1]$. Un automorphisme conservatif $f$ est dit $\alpha$-faiblement mélangeant s'il existe une suite strictement croissante d'entiers $\{m_k\}_{k\in\N}$ telle que pour tout couple d'ensembles mesurables $(A,B)$, on ait
\[\lim_{k\to\infty}\mu(f^{m_k}(A)\cap B) = \alpha\mu(A)\mu(B) + (1-\alpha)\mu(A\cap B).\]
\end{definition}

La proposition suivante (voir \cite{Katok}) permet d'en déduire une autre propriété d'un automorphisme générique.

\begin{prop}\label{convoétr}
Soit $f$ un automorphisme. S'il existe $\alpha\in ]0,1[$ tel que $f$ soit à spectre simple et $\alpha$-faiblement mélangeant, alors les convolutions successives du type spectral
\[\mathfrak{m}_{U_f}^{(m)} = \underbrace{\mathfrak{m}_{U_f}*\dots*\mathfrak{m}_{U_f}}_{m\ \text{fois}}\]
sont deux à deux étrangères, c'est à dire que pour $m\neq m'$, les mesures $\mathfrak{m}_{U_f}^{(m)}$ et $\mathfrak{m}_{U_f}^{(m')}$ sont étrangères.
\end{prop}

Ainsi, génériquement, les convolutions successives du type spectral sont deux à deux étrangères.

Ces généricités se transmettent bien sûr à l'ensemble des homéomorphismes\footnote{Il est possible qu'il existe des démonstrations directes de tels théorèmes de généricité, qui ne passent pas par l'ensemble $\mathrm{Auto}(X,\mu)$ et le théorème de transfert, comme on l'a fait par exemple pour le théorème \ref{approxmeo}.} par le théorème de transfert \ref{transfert} :

\begin{coro}\label{alphamélgéné}
Pour tout $\alpha\in[0,1]$, un homéomorphisme générique est $\alpha$-mélangeant. Ainsi, pour un homéomorphisme générique, les convolutions successives du type spectral sont deux à deux étrangères.
\end{coro}

%!!!!! Et même $\alpha$-mél pour tout $\alpha$ (interpolation ?) !!!!! 

\section[Généricité et type spectral]{Génériquement, le type spectral est étranger à une mesure donnée}

Une autre méthode d'obtention de propriétés génériques pour les automorphismes conservatifs est l'étude du type spectral (voir la partie \ref{saltyp}). On a vu au chapitre~\ref{chapcycl} que celui-ci donne des indications sur la dynamique d'un automorphisme. Mais l'application qui à un automorphisme associe son type spectral est continue. Un $G_\delta$ de l'ensemble des mesures boréliennes sur le cercle devient alors automatiquement un $G_\delta$ de l'ensemble des automorphismes. Par conséquent, l'étude de la dynamique générique des automorphismes se ramène parfois à celle de certains sous-ensembles de l'ensemble des mesures boréliennes du cercle : si on arrive à montrer que l'ensemble des types spectraux associés aux automorphismes vérifiant une propriété $P$ est un $G_\delta$, il suffit alors de montrer que $P$ est vraie sur un ensemble dense pour obtenir sa généricité. Ce transfert des $G_\delta$ est dû à la propriété suivante \cite{Nad} :

\begin{prop}
L'application $U\mapsto\mathfrak{m}_U$, qui va de l'ensemble des opérateurs unitaires de $\Hi$ muni de la topologie faible vers l'ensemble des mesures de probabilité boréliennes de $\mathbf S^1$ muni de la topologie faible, est continue.
\end{prop}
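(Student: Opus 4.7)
Le plan est de ramener la convergence faible des mesures spectrales � celle de leurs coefficients de Fourier, puis d'identifier ces derniers aux coefficients matriciels des puissances de $U$.

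Commen�ons par observer que, $\mathbf S^1$ �tant compact et les polyn�mes trigonom�triques �tant denses dans $C(\mathbf S^1,\C)$ par le th�or�me de Stone--Weierstrass, la convergence faible de probabilit�s $\mathfrak m_{U_n}\to\mathfrak m_U$ �quivaut � la convergence
\[\int_{\mathbf S^1} z^{-m}\,d\mathfrak m_{U_n}(z)\ \longrightarrow\ \int_{\mathbf S^1} z^{-m}\,d\mathfrak m_U(z)\]
pour tout $m\in\Z$. De plus, chaque $\mathfrak m_U$ �tant une mesure positive, ses coefficients de Fourier v�rifient la relation $\hat{\mathfrak m}_U(-m) = \overline{\hat{\mathfrak m}_U(m)}$ ; il suffit donc de traiter le cas $m\ge 0$, ce qui dispense de toute discussion sur les adjoints.

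J'invoquerais ensuite le th�or�me spectral (th�or�me \ref{pr�c�dent}) pour identifier ces coefficients : de $U^m = \int z^{-m}\,dE_U(z)$ on d�duit $\langle U^m\psi_i,\psi_i\rangle = \int z^{-m}\,d\mathfrak m_{U,\psi_i}(z)$, et la d�finition de $\mathfrak m_U$ donne alors
\[\int_{\mathbf S^1} z^{-m}\,d\mathfrak m_U(z)\ =\ \sum_{i\in\N^*}\frac{1}{2^i}\,\langle U^m\psi_i,\psi_i\rangle.\]
Tout se ram�ne ainsi � prouver, pour chaque entier $m\ge 0$, la convergence de la s�rie $\sum_i 2^{-i}\langle U_n^m\psi_i,\psi_i\rangle$ vers $\sum_i 2^{-i}\langle U^m\psi_i,\psi_i\rangle$.

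Je passerais enfin � la limite en deux temps. Chaque terme v�rifie $|\langle U_n^m\psi_i,\psi_i\rangle|\le\|\psi_i\|^2$ uniform�ment en $n$, puisque $U_n$ est une isom�trie ; la s�rie $\sum_i 2^{-i}\|\psi_i\|^2$ �tant convergente, le th�or�me de convergence domin�e sur $\N^*$ r�duit la question � la convergence terme � terme $\langle U_n^m\psi_i,\psi_i\rangle\to\langle U^m\psi_i,\psi_i\rangle$ � $i$ et $m\ge 0$ fix�s. Le cas $m=0$ est trivial ; pour $m\ge 1$, l'identit� t�lescopique
\[U_n^m - U^m \;=\; \sum_{k=0}^{m-1} U_n^k\,(U_n-U)\,U^{m-1-k},\]
combin�e � la borne uniforme $\|U_n\|=\|U\|=1$, transforme la convergence $U_n\to U$ en topologie faible (laquelle co�ncide avec la topologie forte d'op�rateur sur le groupe unitaire) en la convergence $U_n^m\to U^m$ pour la m�me topologie, et a fortiori en la convergence des coefficients matriciels. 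La seule subtilit� r�side ainsi dans la r�duction initiale au cas $m\ge 0$ via la positivit� des mesures spectrales ; une fois cette remarque faite, l'argument se r�duit � une combinaison routini�re du th�or�me spectral, du th�or�me de convergence domin�e et du th�or�me de Stone--Weierstrass.
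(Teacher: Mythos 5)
Le texte ne d\'emontre pas cette proposition~; il la cite seulement depuis \cite{Nad}, de sorte qu'il n'y a pas d'argument interne auquel comparer le v\^otre. Cela \'etant, votre preuve est correcte et constitue la d\'emonstration standard, avec tous les points cruciaux en place~: par Stone--Weierstrass, la convergence faible de probabilit\'es sur $\mathbf S^1$ \'equivaut \`a la convergence des coefficients $\int z^{-m}\,d\mathfrak{m}_U$ pour tout $m$~; le th\'eor\`eme spectral et l'interversion s\'erie/int\'egrale (licite puisque chaque $\mathfrak{m}_{U,\psi_i}$ est une probabilit\'e et que $\sum_i 2^{-i}<\infty$) identifient ce coefficient \`a $\sum_i 2^{-i}\langle U^m\psi_i,\psi_i\rangle$~; la domination $|\langle U_n^m\psi_i,\psi_i\rangle|\le 1$ autorise l'\'echange limite/s\'erie~; et l'identit\'e t\'elescopique jointe \`a l'isom\'etrie des $U_n^k$ donne, pour tout $\xi\in\Hi$,
\[\|(U_n^m-U^m)\xi\|\ \le\ \sum_{k=0}^{m-1}\|(U_n-U)U^{m-1-k}\xi\|\ \longrightarrow\ 0,\]
ce qui \'etablit la continuit\'e SOT de $U\mapsto U^m$. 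Deux faits que vous invoquez sans d\'etail mais qui sont bien corrects et m\'eriteraient une ligne~: la co\"incidence des topologies faible et forte d'op\'erateur sur le groupe unitaire, qui r\'esulte imm\'ediatement de $\|U_n\xi-U\xi\|^2 = 2\|\xi\|^2-2\,\mathrm{Re}\,\langle U_n\xi,U\xi\rangle$~; et la m\'etrisabilit\'e des deux espaces en jeu ($\Hi=L^2(X,\mu)$ \'etant s\'eparable, le groupe unitaire pour la WOT et les probabilit\'es sur $\mathbf S^1$ pour la topologie faible sont tous deux m\'etrisables), qui l\'egitime le raisonnement par suites.
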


Nous proposons, pour illustrer de cette méthode, de montrer que le type spectral est génériquement étranger à une mesure donnée :

\begin{theoreme}\label{specortho}
\'Etant donnée une mesure de probabilité borélienne $\nu$ sur $\mathbf S^1$, le type spectral est génériquement étranger à $\nu$.
\end{theoreme}

\begin{proof}[Preuve du théorème \ref{specortho}] Posons $\Pb$ l'ensemble des mesures de probabilité boréliennes sur $\mathbf S^1$, et $\B$ l'ensemble des boréliens de $\mathbf S^1$. Soit $\nu\in\Pb$, l'ensemble~$\Pb_{\nu^\bot}$ des mesures $\mathfrak{m}\in\Pb$ étrangères à $\nu$ est égal à :
\begin{eqnarray*}
\Pb_{\nu^\bot} & = & \left\{\mathfrak{m}\in\Pb\mid \forall m\in\N,\exists U\in \B : \mathfrak{m}(U)<\frac 1 m\text{ et } \nu(U)>1-\frac 1 m\right\}\\
               & = & \bigcap_{m\in\N}\bigcup_{U\in \B} \left\{\mathfrak{m}\in\Pb\mid \mathfrak{m}(U)<\frac 1 m\text{ et } \nu(U)>1-\frac 1 m\right\},
\end{eqnarray*}
ce qui l'exprime comme un $G_\delta$ de $\Pb$. Par conséquent l'ensemble des automorphismes dont le type spectral est étranger à $\nu$ est un $G_\delta$.

Il reste à trouver un automorphisme apériodique dont le type spectral est étranger à la mesure $\nu$. Nous utilisons pour cela un très joli argument que nous a suggéré S.~Crovisier. Le théorème \ref{leb-stand} nous autorise à nous placer dans l'espace $(I,\mathrm{Leb})$.

L'idée est de fabriquer une infinité non dénombrable d'automorphismes apériodiques dont les mesures spectrales sont deux à deux étrangères. L'une au moins de ces mesures spectrales sera étrangère à $\nu$ et l'automorphisme correspondant sera l'apériodique recherché.

Considérons la relation d'équivalence $\sim$ sur $\R$, définie par $\alpha\sim\beta$ s'il existe $(k,l)\in\Z^2$ tels que $k\alpha\equiv l\beta$ mod $2\pi$. Chaque classe d'équivalence est dénombrable, par conséquent le nombre de ces classes d'équivalence est indénombrable. En utilisant l'axiome du choix, on exhibe une famille $\{\alpha_i\}_{i\in J}$ d'irrationnels, avec $J$ infini et indénombrable, telle que les $\alpha_i$ soient des représentants de classes d'équivalences deux à deux disjointes. Ainsi pour tous couples $(i,j)\in J^2$, $(k,l)\in\Z^2$, on a
\begin{equation}\label{qlibr}
k\alpha_i\not\equiv l\alpha_j\ [2\pi],
\end{equation}
On considère alors la famille de rotations $\{r_i\}_{i\in\N}$ de $I$ de nombres de rotation respectifs $\alpha_i$. Montrons que les mesures spectrales associées à ces rotations sont deux à deux étrangères.

Pour tout irrationnel $\alpha$, l'opérateur de Koopman $U_\alpha$ associé à la rotation $r_\alpha$ est diagonal dans la base hilbertienne de Fourier $\{e_k = t\mapsto e^{2i\pi kt}\}_{k\in\Z}$. La valeur propre associée au vecteur propre $t\mapsto e^{2i\pi kt}$ est alors $e^{2i\pi k\alpha}$. Posons $z_0 = e^{2i\pi \alpha}$ ; puisque les $z_0^k$ sont tous distincts ($\alpha\in \R\setminus \Q$), $U_\alpha$ est à spectre simple pour le vecteur
\[\varphi_0 = 1+ \sum_{k\in\Z^*}\frac 1{(k+1)^2} e_k.\]
Le type spectral associé à $r_\alpha$ et à $\varphi_0$ est alors une somme de masses de Dirac aux points $\{e^{2i\pi k\alpha}\}_{k\in\Z}$.

Par l'équation \ref{qlibr}, les types spectraux $\mathfrak{m}_i$ associés aux rotations $r_i$ sont deux à deux orthogonaux, car ils chargent des unions dénombrables de points deux à deux distincts.

Soit alors $\{A_i\}_{i\in J}$ les supports des mesures $\{\mathfrak{m}_i\}_{i\in J}$. La mesure $\nu$ est étrangère à au moins une mesure $\mathfrak{m}_{i_0}$, car sinon elle chargerait un ensemble infini non dénombrable de boréliens deux à deux disjoints. La rotation $r_{i_0}$ est alors un automorphisme apériodique dont le type spectral est étranger à $\nu$.
\end{proof}

La méthode de la démonstration du théorème \ref{rigid} suit un principe général d'obtention de propriétés spectrales génériques : on commence par prouver que la propriété sur les mesures associée est vraie sur exactement un $G_\delta$, donc aussi la propriété sur les automorphismes ; puis on montre qu'il existe un apériodique vérifiant cette propriété et conclut par densité de sa classe de conjugaison. Ceci donne par exemple d'autres preuves du fait qu'un automorphisme est génériquement rigide ou faiblement mélangeant. D'autres résultats, ainsi que des preuves plus détaillées, pourront être trouvés dans le livre de M. nadkarni\cite[théorème 8.34]{Nad}.
\bigskip

On peut désormais faire un point sur les propriétés du type spectral d'un homéomorphisme générique :
\begin{itemize}
\item il est sans atome (proposition \ref{speccon}),
\item il est singulier par rapport à la mesure de Lebesgue (proposition \ref{singLeb}),
\item il a ses convolutions successives deux à deux étrangères (proposition \ref{convoétr}),
\item il est étranger à toute mesure donnée \emph{a priori} (théorème \ref{specortho}),
\item il a son support égal au cercle $\mathbf S^1$ tout entier, car un homéomorphisme générique est apériodique (proposition \ref{aper}) et tout automorphisme apériodique a son type spectral de support total (voir \cite[théorème 3.5]{KatokT}).
\end{itemize}

\chapter{Loi du 0-1}

Comme nous l'avons déjà fait à la fin du chapitre précédent, nous pouvons nous consacrer à l'étude de l'espace des automorphismes conservatifs, les résultats de généricité se transmettant ensuite aux homéomorphismes par le théorème de transfert. Une propriété intéressante de l'espace $\mathrm{Auto}(X,\mu)$ est que les propriétés ergodiques y vérifient une loi du 0-1 : étant donnée une propriété ergodique, soit cette propriété est générique pour la topologie faible, soit son contraire l'est. Ce très joli résultat --- dont la preuve, une fois établies quelques propriétés sur les ensembles Baire-mesurables, est étonnamment facile --- a été obtenu en 1996 par E. Glasner et J. King \cite{4}. Une conséquence de ce théorème est le fait que toutes les classes de conjugaison dans $\mathrm{Auto}(X,\mu)$ --- et donc dans $\mathrm{Homeo}(X,\mu)$ --- sont maigres, comme l'ont montré E.~Glasner et B. Weiss en 2008 \cite{5}.

\section[Ensembles Baire-mesurables]{Préliminaires topologiques sur les ensembles Baire-mesurables}

Commençons par un rapide survol des propriétés des ensembles Baire-mesurables. Pour des précisions, nous renvoyons au chapitre 4 de \cite{7}.

\begin{definition}
On dit qu'une partie d'un espace topologique est \emph{Baire-mesurable} si elle peut s'écrire comme la différence symétrique d'une partie ouverte et d'une partie maigre.
\end{definition}

On a une définition équivalente des parties Baire-mesurables avec les fermés :

\begin{prop}\label{complementaire}
Une partie est Baire-mesurable si et seulement si elle peut s'écrire comme la différence symétrique d'une partie fermé et d'une partie maigre.
\end{prop}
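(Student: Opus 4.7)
Le plan est de montrer les deux implications par un argument essentiellement sym\'etrique, en exploitant le fait \'el\'ementaire suivant~: pour tout ouvert $U$ d'un espace topologique, le ferm\'e $\overline{U}\setminus U$ est d'int\'erieur vide (car tout ouvert non vide contenu dans $\overline{U}$ rencontrerait $U$), donc nulle part dense, donc maigre~; sym\'etriquement, pour tout ferm\'e $F$, l'ensemble $F\setminus\mathring{F}$ est ferm\'e d'int\'erieur vide, donc maigre. J'utiliserai aussi librement le fait que la classe des parties maigres est stable par union d\'enombrable et donc par diff\'erence sym\'etrique finie.

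Pour le sens direct, je pars d'une partie Baire-mesurable $A=U\Delta M$ avec $U$ ouvert et $M$ maigre. J'\'ecris alors
\[
A=U\Delta M=\overline{U}\,\Delta\,\bigl((\overline{U}\setminus U)\Delta M\bigr),
\]
ce qui exprime $A$ comme diff\'erence sym\'etrique du ferm\'e $\overline{U}$ et de la partie $(\overline{U}\setminus U)\Delta M$, maigre en tant que diff\'erence sym\'etrique de deux maigres d'apr\`es la remarque pr\'eliminaire.

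Pour la r\'eciproque, je proc\`ede de mani\`ere duale~: si $A=F\Delta M$ avec $F$ ferm\'e et $M$ maigre, j'\'ecris
\[
A=F\Delta M=\mathring{F}\,\Delta\,\bigl((F\setminus\mathring{F})\Delta M\bigr),
\]
ce qui r\'ealise $A$ comme diff\'erence sym\'etrique de l'ouvert $\mathring{F}$ et d'une partie maigre, en r\'eutilisant le m\^eme principe appliqu\'e \`a $F\setminus\mathring{F}$.

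Il n'y a pas d'obstacle s\'erieux ici~: le seul point \`a v\'erifier proprement est la formule de r\'e\'ecriture $X\Delta Y=Z\Delta((Z\Delta X)\Delta Y)$ pour $Z$ bien choisi, qui rel\`eve d'un calcul bool\'een imm\'ediat (la diff\'erence sym\'etrique munit $\mathcal P(E)$ d'une structure de groupe ab\'elien d'\'el\'ement neutre $\emptyset$). Le c\oe ur conceptuel du r\'esultat r\'eside dans le fait topologique, trivial mais fondamental, que le \emph{bord topologique} d'un ouvert ou d'un ferm\'e est nulle part dense.
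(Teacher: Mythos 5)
Ta preuve est correcte et suit exactement la m\^eme d\'emarche que celle du texte~: dans les deux sens on remplace $U$ par $\overline{U}$ (resp.\ $F$ par $\mathring{F}$), on observe que le bord $\overline{U}\setminus U$ (resp.\ $F\setminus\mathring{F}$) est ferm\'e d'int\'erieur vide donc maigre, et on absorbe ce bord dans la partie maigre via la structure de groupe de la diff\'erence sym\'etrique. Rien \`a ajouter.
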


\begin{proof}[Preuve de la proposition \ref{complementaire}] Soit $B = O\Delta M$ un ensemble Baire-me\-sura\-ble, avec $O$ ouvert et $M$ maigre. Posons $P = \overline{O}\setminus O$. Cet ensemble est un fermé d'intérieur vide, par conséquent $M' = P \Delta M$ est un ensemble maigre. Posant $F=\overline{O}$, on a $B = O\Delta M = (\overline{O} \Delta P)\Delta M = F\Delta M'$, ce qui l'exprime comme la différence symétrique d'une partie fermé avec une partie maigre.

Réciproquement, considérons un ensemble $B$ s'écrivant $B= F\Delta M'$, avec $F$ fermé et $M'$ maigre. Posons $O = \mathring{F}$. Alors $P = F\setminus O$ est un fermé d'intérieur vide, on en déduit que $M = P\Delta M'$ est maigre, si bien que $B = F\Delta M' = (O\Delta P)\Delta M' = O\Delta M$ s'écrit comme la différence symétrique d'un ensemble ouvert et d'un ensemble maigre et est donc Baire-mesurable.
\end{proof}

\begin{definition}
Un ouvert est dit \emph{régulier} s'il est égal à l'intérieur de son adhérence.
\end{definition}

Un lemme topologique classique permet d'y voir plus clair en ce qui concerne les ouverts réguliers :

\begin{lemme}\label{petit-topologique}
Pour toute partie $A$ d'un espace topologique, on a $\mathring{\overline{A}} = \mathring{\overline{\mathring{\overline{A}}}} = \overline{\overline{A}^\complement}^\complement$.
\end{lemme}

%\begin{proof}[Preuve du lemme \ref{petit-topologique}] La première égalité se montre par double inclusion. D'une part %$\mathring{\overline{A}}\subset\overline{A}$, si bien que $\overline{\mathring{\overline{A}}}\subset\overline{A}$ et que donc %$\mathring{\overline{\mathring{\overline{A}}}} \subset \mathring{\overline{A}}$. D'autre part %$\mathring{\overline{A}}\subset\overline{\mathring{\overline{A}}}$, donc $\mathring{\overline{A}} \subset %\mathring{\overline{\mathring{\overline{A}}}}$.
%
%Pour la seconde, il suffit de remarquer l'égalité $\overline{A}^\complement = \mathring{A^\complement}$, qui découle simplement des définitions de %l'adhérence et du complémentaire.
%\end{proof}

\indent Ce lemme nous fournit une vaste classe d'ouverts réguliers : les intérieurs d'ensembles fermés. La proposition suivante établit une manière canonique d'exprimer les ensembles Baire-mesurables, à l'aide des ouverts réguliers.

\begin{propdef}\label{régulier}
Toute partie $B$ Baire-mesurable d'un espace topologique de Baire s'écrit de manière unique sous la forme $B = U\,\Delta\,M$, avec $U$ un ouvert régulier et $M$ un ensemble maigre. Cette écriture est appelée l'\emph{image régulière} de la partie Baire-mesurable $B$.
\end{propdef}

\begin{proof}[Preuve de la proposition et définition \ref{régulier}] Soit $B$ une partie Baire-mesu\-ra\-ble. Montrons tout d'abord l'existence d'une image régulière de $B$. \'Ecrivons $B = O\Delta M$, avec $O$ ouvert et $M$ maigre ; on veut écrire $B$ comme la différence symétrique d'un ouvert régulier et d'un ensemble maigre. Notons $U=\mathring{\overline{O}}$ le candidat naturel\footnote{Par le lemme \ref{petit-topologique} !} pour l'ouvert régulier et $N = U\setminus O$ (on notera que $O$ est inclus dans $U$). L'ensemble $N$ est d'intérieur vide (car $\mathring N = \mathring U\setminus \overline O = \emptyset$), mais \emph{a priori} il n'est pas fermé. Vérifions que $N$ est néamoins assez petit pour que $M' = M\Delta \overline N$ soit maigre. On a
\[\mathring{\overline N} \subset\mathring{\overline U}\setminus O = U\setminus O = N,\]
par conséquent
\[\mathring{\overline N}\subset \mathring N = \emptyset,\]
donc $\overline N$ est d'intérieur vide. De plus, d'une part
\[U\setminus \overline{N}\subset U\setminus N \subset O,\]
et d'autre part
\[O = U\cap O = U\setminus(\overline{U}\setminus O) \subset U\setminus (\overline{U\setminus O}) = U\setminus \overline{N}.\]
On en déduit que $O = U\setminus \overline{N}$. Finalement, $M'$ est maigre et $B = U\Delta M'$ s'écrit comme la différence symétrique d'un ouvert régulier et d'un ensemble maigre.

Passons à l'unicité d'une telle écriture. Supposons que l'on ait $U\Delta M = V\Delta N$, avec $U$ un ouvert régulier, $V$ un ouvert et $M$ et $N$ des ensembles maigres. Alors $V\setminus \overline{U}\subset V\setminus U\subset V\Delta U$. Mais puisque $U\Delta M = V\Delta N$, on a, de manière générale, $V\Delta U = N\Delta M$. En effet, soit $x\in V\Delta U$. Par symétrie des rôles joués par $U$ et $V$, on peut supposer que $x\in V\setminus U$. On a alors deux cas :
\begin{itemize}
\item soit $x\in N$, donc $x\notin V\Delta N = U\Delta M$, mais $x\notin U$, donc $x\notin M$,
\item soit $x\notin N$, donc $x\in V\Delta N = U\Delta M$, mais $x\notin U$, donc $x\in M$.
\end{itemize} 
Dans les deux cas $x\in M\Delta N$, et on en déduit que $V\Delta U\subset M\Delta N$. L'autre inclusion se montre de manière semblable.\\
Nous avons donc montré que $V\setminus \overline{U} \subset M \Delta N$ ; par conséquent le terme de gauche est un ensemble ouvert et maigre. Par théorème de Baire, il est vide ; on en déduit que $V\subset\overline{U}$, et puisque $U$ est un ouvert régulier, $V\subset \mathring{\overline{U}} = U$. Par symétrie des rôles, cela implique que $V=U$, et par l'égalité $V\Delta U = N\Delta M$ vue plus haut, on en déduit l'unicité.
\end{proof}

Enfin, montrons que les parties Baire-mesurables forment un ensemble assez gros et maniable : en particulier il contient l'ensemble des boréliens.

\begin{prop}\label{tribu}
L'ensemble des parties Baire-mesurables d'un espace topologique est une tribu ; plus précisément c'est la tribu engendrée par les ensembles ouverts et les ensembles maigres. En particulier, tout ensemble borélien est Baire-mesurable.
\end{prop}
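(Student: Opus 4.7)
La d�monstration consistera � v�rifier les trois axiomes de $\sigma$-alg�bre pour la collection $\mathcal{T}$ des parties Baire-mesurables, puis � identifier $\mathcal{T}$ � la $\sigma$-alg�bre engendr�e par les ouverts et les ensembles maigres. La stabilit� par passage au compl�mentaire est imm�diate gr�ce � la proposition \ref{complementaire}~: si $B = O \Delta M$ avec $O$ ouvert et $M$ maigre, alors $B^\complement = O^\complement \Delta M$ s'�crit comme la diff�rence sym�trique d'un ferm� et d'un ensemble maigre, donc est Baire-mesurable. L'ensemble vide est trivialement Baire-mesurable.

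Le point central est la stabilit� par union d�nombrable. �tant donn�e une suite $(B_n)_n$ avec $B_n = O_n \Delta M_n$, l'ensemble $U = \bigcup_n O_n$ est ouvert, et le candidat naturel pour l'ensemble maigre est $N = \left(\bigcup_n B_n\right) \Delta U$. Les inclusions �l�mentaires $B_n \subset O_n \cup M_n$ et $O_n \subset B_n \cup M_n$, par passage � l'union, donnent
\[\bigcup_n B_n \subset U \cup \bigcup_n M_n \quad \text{et}\quad U \subset \bigcup_n B_n \cup \bigcup_n M_n,\]
ce qui entra�ne $N \subset \bigcup_n M_n$. Cette derni�re union est maigre comme union d�nombrable d'ensembles maigres~; ainsi $N$ est maigre, et $\bigcup_n B_n = U \Delta N$ est Baire-mesurable.

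Une fois �tabli que $\mathcal{T}$ est une $\sigma$-alg�bre, il est clair qu'elle contient tout ouvert (via $O = O \Delta \emptyset$) et tout ensemble maigre (via $M = \emptyset \Delta M$), donc contient la $\sigma$-alg�bre $\mathcal{A}$ engendr�e par ces deux familles. R�ciproquement, tout �l�ment $B = O \Delta M$ de $\mathcal{T}$ appartient � $\mathcal{A}$ puisque la diff�rence sym�trique de deux �l�ments d'une $\sigma$-alg�bre y reste. On obtient ainsi $\mathcal{T} = \mathcal{A}$. Enfin, la tribu bor�lienne �tant par d�finition engendr�e par les ouverts, elle est contenue dans $\mathcal{A} = \mathcal{T}$, ce qui prouve que tout bor�lien est Baire-mesurable.

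L'unique difficult� technique, relativement modeste, r�side dans l'identification explicite de l'ensemble maigre contr�lant la diff�rence sym�trique $\bigcup_n B_n \Delta U$ lors de la v�rification de la stabilit� par union d�nombrable~; le reste de la d�monstration n'est qu'un jeu formel sur les diff�rences sym�triques et les $\sigma$-alg�bres.
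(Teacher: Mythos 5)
Your proof is correct and follows essentially the same route as the paper: complement via Proposition~\ref{complementaire}, and countable union by showing that $\bigl(\bigcup_n B_n\bigr)\Delta\bigl(\bigcup_n O_n\bigr)\subset\bigcup_n M_n$. The only cosmetic difference is that you phrase the key inclusions symmetrically as $B_n\subset O_n\cup M_n$ and $O_n\subset B_n\cup M_n$, whereas the paper writes the equivalent double inclusion $O_i\setminus M_i\subset B_i\subset O_i\cup M_i$; you also spell out the identification with the generated $\sigma$-algebra, which the paper leaves mostly implicit.
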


\begin{proof}[Preuve de la proposition \ref{tribu}] L'ensemble des parties Baire-mesurables est trivialement contenu dans la tribu engendrée par les ouverts et les sous-ensembles maigres ; on voit tout aussi facilement qu'il contient tous les ouverts et tous les ensembles maigres. Il ne reste plus qu'à montrer que c'est une tribu.

Pour montrer la stabilité par passage au complémentaire, donnons-nous un ensemble Baire-mesurable $B$, que l'on écrit $B = O\Delta M$, avec $O$ ouvert et $M$ maigre. Alors $B^\complement = O^\complement\Delta M$ ; par conséquent $B$ s'écrit comme la différence symétrique d'un fermé et d'un ensemble maigre, et en appliquant la proposition \ref{complementaire}, on en déduit qu'il est Baire-mesurable.

Passons à la stabilité par union dénombrable. Prenons $(B_i)_{i\in\N}$ une suite de sous ensembles Baire-mesurables. On peut donc écrire $B_i = O_i\Delta M_i$, avec $O_i$ ouvert et $M_i$ maigre. Posons $B=\bigcup B_i$, $O=\bigcup O_i$ et $M=\bigcup M_i$. Alors $O$ est ouvert et $M$ est maigre. D'autre part, pour tout $i$, $O_i\setminus M_i\subset B_i\subset O_i\cup M_i$. L'inclusion $\bigcup O_i \setminus \bigcup M_i \subset \bigcup (O_i\setminus M_i)$ permet d'en déduire que $O\setminus M \subset B \subset O\cup M$. Par conséquent, l'ensemble $B\Delta O$ est maigre car inclus dans $M$, et donc $B = O\Delta (B\Delta O)$ peut s'écrire comme la différence symétrique d'un ensemble ouvert et d'un ensemble maigre.
\end{proof}

\section{Loi du 0-1 dans $\mathrm{Auto}(X,\mu)$}

Forts de ces considérations sur les ensembles Baire-Mesurables, nous sommes prêts à énoncer et démontrer la loi du 0-1 de E. Glasner et J. King :

\begin{theoreme}[Glasner, King, \cite{4}]\label{01}
Soit $A$ un espace topologique de Baire\footnote{Au sens usuel du terme, c'est-à-dire tel que toute intersection dénombrable d'ouverts denses de $A$ est dense.} et $\Gamma$ un sous-groupe du groupe des homéomorphismes de $A$. S'il existe un élément $f$ de $A$ dont l'orbite par $\Gamma$ est dense dans $A$, alors toute partie $B$ Baire-mesurable et $\Gamma$-invariante de $A$ est soit maigre soit grasse.
\end{theoreme}

\begin{proof}[Preuve du théorème \ref{01}] Puisque $B$ est Baire-mesurable, par la proposition et définition \ref{régulier}, on peut l'écrire selon son image régulière : 
\[B = U\,\Delta\,M,\]
où $U$ est un ouvert régulier et $M$ un ensemble maigre. Soit $\varphi\in\Gamma$. Par invariance on a :
\[B = \varphi(B) = \varphi(U)\,\Delta\,\varphi(M).\]
Puisque $\varphi$ est un homéomorphisme, $\varphi(U)$ est un ouvert régulier et $\varphi(M)$ est maigre. L'unicité de l'image régulière (proposition et définition \ref{régulier}) implique alors que $U$ et $M$ sont tous deux $\Gamma$-invariants.

Si $U$ est vide, alors $B=M$ et par conséquent $B$ est maigre. Sinon $U$ intersecte non trivialement l'orbite dense de $f$ ; par invariance $U$ est dense. Puisque $U$ est un ouvert régulier, c'est l'espace $A$ tout entier et donc $B=M^\complement$, et de cela on déduit que $B$ est résiduel.
\end{proof}

\begin{definition}
Une propriété ergodique sur $\mathrm{Auto}(X,\mu)$ (voir la définition \ref{propergo}) sera dite \emph{raisonnable} si l'ensemble des éléments la vérifiant est Baire-mesurable (en particulier s'il est borélien par la proposition \ref{tribu}).
\end{definition}

\begin{coro}[Loi du 0-1, Glasner, King, \cite{4}]\label{loi-auto}
Soit $(P)$ une propriété ergodique raisonnable portant sur les éléments de $\mathrm{Auto}(X,\mu)$. Alors soit $(P)$ est générique, soit son contraire l'est.
\end{coro}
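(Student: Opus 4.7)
L'id�e est d'appliquer le th�or�me \ref{01} � l'espace $A=\mathrm{Auto}(X,\mu)$ muni de la topologie faible, et � l'action par conjugaison du groupe $\Gamma$ constitu� des applications de la forme $h\mapsto \varphi h\varphi^{-1}$ avec $\varphi\in\mathrm{Auto}(X,\mu)$. Il y a trois choses � v�rifier pour se placer dans les hypoth�ses du th�or�me.

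Premi�rement, l'espace $\mathrm{Auto}(X,\mu)$ muni de la topologie faible est un espace de Baire (cela est �nonc� sans preuve dans le chapitre sur les d�finitions et renvoy� � l'appendice). Deuxi�mement, chaque �l�ment de $\Gamma$ est bien un hom�omorphisme de $A$ pour la topologie faible~: la continuit� de la conjugaison $h\mapsto \varphi h\varphi^{-1}$ pour la topologie faible est un point classique (on pourra la v�rifier par exemple � l'aide d'une caract�risation de la convergence faible, puisque si $h_n\to h$ faiblement alors $\mu(\{x\mid \mathrm{dist}(\varphi h_n\varphi^{-1}(x),\varphi h\varphi^{-1}(x))>\alpha\})$ se contr�le via l'uniforme continuit� de $\varphi$ et l'invariance de $\mu$ par $\varphi^{-1}$). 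Troisi�mement, il faut exhiber un �l�ment de $A$ dont la $\Gamma$-orbite (c'est-�-dire la classe de conjugaison) est dense~: c'est pr�cis�ment le lemme d'Halmos (corollaire \ref{LemHalm}), qui affirme que la classe de conjugaison de tout automorphisme ap�riodique est dense dans $\mathrm{Auto}(X,\mu)$ pour la topologie faible. L'existence d'un automorphisme ap�riodique est �vidente (ou d�coule de la proposition \ref{aper}).

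Posons maintenant $B=\{f\in\mathrm{Auto}(X,\mu)\mid f\text{ v�rifie }(P)\}$. Par hypoth�se, $(P)$ est une propri�t� ergodique raisonnable, ce qui signifie que $B$ est Baire-mesurable. Par d�finition m�me d'une propri�t� ergodique (d�finition \ref{propergo}), l'ensemble $B$ est stable par conjugaison, donc $\Gamma$-invariant. Le th�or�me \ref{01} s'applique alors et permet d'affirmer que $B$ est soit maigre, soit gras.

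Si $B$ est gras, la propri�t� $(P)$ est g�n�rique et l'on a gagn�. Si $B$ est maigre, alors le compl�mentaire $B^\complement$ est gras~; or $B^\complement$ est exactement l'ensemble des automorphismes v�rifiant la n�gation de $(P)$, qui est elle aussi une propri�t� ergodique (un ensemble stable par conjugaison a un compl�mentaire stable par conjugaison), et raisonnable (la classe des ensembles Baire-mesurables est stable par passage au compl�mentaire d'apr�s la proposition \ref{tribu}). Dans ce cas la n�gation de $(P)$ est g�n�rique. Les deux cas �tant exclusifs, on a bien la dichotomie annonc�e. L'obstacle principal, et en r�alit� le seul point non formel, est l'appel au lemme d'Halmos pour garantir l'existence d'une orbite dense~; tout le reste ne consiste qu'� v�rifier m�caniquement que les hypoth�ses du th�or�me \ref{01} sont satisfaites.
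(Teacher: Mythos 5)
Votre preuve suit exactement la même démarche que celle du mémoire : appliquer le théorème \ref{01} à $\mathrm{Auto}(X,\mu)$ muni de la topologie faible et à l'action par conjugaison, le lemme d'Halmos (corollaire \ref{LemHalm}) fournissant l'orbite dense requise, les ensembles Baire-mesurables invariants traduisant les propriétés ergodiques raisonnables. Une seule réserve, sur un point parenthétique : la continuité de $h\mapsto\varphi h\varphi^{-1}$ pour la topologie faible ne peut pas s'obtenir par «~l'uniforme continuité de $\varphi$~», puisque $\varphi$ n'est qu'un automorphisme bi-mesurable et non un homéomorphisme ; elle résulte du fait que $\mathrm{Auto}(X,\mu)$ est un groupe topologique (lemme \ref{autotopo}), dont la preuve passe par le théorème de Lusin pour remplacer $\varphi$ par une application uniformément continue sur un ensemble de grande mesure.
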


\begin{proof}[Preuve du corollaire \ref{loi-auto}] On applique la loi du 0-1 à l'espace $A = \mathrm{Auto}(X,\mu)$ muni de la topologie faible (c'est un espace de Baire, voir l'appendice) et au groupe $\Gamma$ des automorphismes intérieurs de $A$. Tout élément apériodique de $\mathrm{Auto}(X,\mu)$ peut jouer le rôle de l'élément dont l'orbite est dense : c'est le lemme d'Halmos (corollaire~\ref{LemHalm}). Les hypothèses du théorème \ref{01} sont bien satisfaites.
\end{proof}

\begin{rem}\label{pludid}
Par la remarque \ref{PasHalmHomeo}, cette preuve ne s'applique pas directement à l'espace $\mathrm{Homeo}(X,\mu)$, au moins pour certaines variétés $X$.

En fait, si $X$ est une surface fermée différente de la sphère $\mathbf S^2$, on peut trouver une propriété dynamique\footnote{Attention, on entend ici par propriété dynamique une propriété invariante par conjugaison dans $\mathrm{Homeo}(X,\mu)$ ; ce n'est pas pas une propriété \emph{ergodique}.} raisonnable portant sur les éléments de $\mathrm{Homeo}(X,\mu)$, qui ne vérifie pas de loi du 0-1 : elle n'est pas générique, et son contraire ne l'est pas non plus. Pour définir une telle propriété, il suffit de considérer le quasi-morphisme $\varphi$ évoqué dans la remarque \ref{PasHalmHomeo}. On prend un homéomorphisme $f$ et un réel $\epsilon$ tel que $0<\varepsilon<|\varphi(f)|$ ; et considère la propriété \og appartenir \`a l'ensemble $\varphi^{-1}([\epsilon,+\infty])$~\fg. L'ensemble des homéomorphismes vérifiant cette propriété est un fermé contenant un ouvert non-vide (car il contient $f$) différent de $\mathrm{Homeo}(X,\mu)$ (car l'identité est dans son complémentaire). Cet ensemble est de plus invariant par conjugaison.
\end{rem}

\begin{rem}
Observons que le théorème \ref{01} permet par contre de montrer une loi du 0-1 pour les propriétés dynamiques dans l'espace des homéomorphismes dissipatifs (c'est-à-dire sans hypothèse de préservation de mesure) de la sphère : l'ensemble des homéomorphismes de $\mathbf{S}^n$ satisfaisant une propriété dynamique\footnote{Par dynamique on entend ici une propriété invariante par conjugaison dans l'ensemble des homéomorphismes dissipatifs.} est soit gras soit maigre. En effet, d'après le théorème \ref{01}, il suffit de construire un homéomorphisme dont la classe de conjugaison est dense, on peut par exemple procéder comme suit.

Par séparabilité, il existe une famille dénombrable dense  $\{f_m\}_{m\in\N}$ d'homéomorphismes de $\mathbf{S}^n$ ; pour tout $m$, l'homéomorphisme $f_m$ fixe au moins un point de $\mathbf{S}^n$. Quitte à perturber un peu $f_m$, on peut supposer que $f_m$ est égal à l'identité sur une petite boule fermée $B_m$ (on choisit la boule $B_m$ assez petite pour que le fait de modifier $f_m$ sur $B_m$ ne change rien à la densité de la famille $\{f_m\}$). On consid\`ere alors une famille $\{D_m\}_{m\in\N}$ de boules fermées d'intérieurs non vides et deux à deux disjointes dans $\mathbf{S}^n$. Pour tout entier $m$, il existe un homéomorphisme $h_m$ de $\mathbf{S}^n$ (on peut prendre un homéomorphisme de M\"obius, mais peu importe) qui envoie $D_m$ sur $\mathbf{S}^n\setminus\mathring{B_m}$. On définit l'homéomorphisme $g$ égal à l'identité sur $\mathbf{S}^n\setminus\bigcup_m D_m$ et à $h_m^{-1}f_mh_m$ sur chacun des $D_m$.

Alors pour tout $m$, l'homéomorphisme $h_mgh_m^{-1}$ coïncide avec $f_m$ sur $\mathbf{S}^n\setminus\mathring{B_m}$. On a choisi les familles $\{f_m\}$ et $\{B_m\}$ de manière à ce que cela suffise à garantir la densité de la famille $\{h_mgh_m^{-1}\}$ dans l'ensemble des homéomorphisme de $\mathbf{S}^n$. En particulier, la classe de conjugaison de $g$ est dense dans l'ensemble des homéomorphisme de $\mathbf{S}^n$.
\end{rem}

Par le théorème de transfert, le corollaire \ref{loi-auto} possède un analogue pour les propriétés \emph{ergodiques} des homéomorphismes conservatifs :

\begin{coro}
Soit $(P)$ une propriété ergodique\footnote{Insistons sur le fait que le corollaire n'est valable que pour des propriétés \emph{ergodiques} : celles-ci doivent être invariantes par conjugaison dans $\mathrm{Auto}(X,\mu)$ et pas seulement dans $\mathrm{Homeo}(X,\mu)$ (sinon cela contredirait la remarque \ref{pludid}).} raisonnable portant sur les éléments de $\mathrm{Homeo}(X,\mu)$. Alors dans $\mathrm{Homeo}(X,\mu)$, soit $(P)$ est générique, soit son contraire l'est.
\end{coro}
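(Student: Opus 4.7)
Le plan est de combiner la loi du 0-1 pour les automorphismes (corollaire \ref{loi-auto}) avec le th\'eor\`eme de transfert des propri\'et\'es ergodiques g\'en\'eriques (th\'eor\`eme \ref{transfert}). Soit $(P)$ une propri\'et\'e ergodique raisonnable ; d'apr\`es la d\'efinition \ref{propergo}, on la voit comme une propri\'et\'e portant sur les �l�ments de $\mathrm{Auto}(X,\mu)$ dont l'ensemble de validit\'e est invariant par conjugaison dans $\mathrm{Auto}(X,\mu)$, et par hypoth\`ese cet ensemble est Baire-mesurable pour la topologie faible.

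Je commencerais par appliquer la loi du 0-1 (corollaire \ref{loi-auto}) \`a $(P)$ : on obtient alors l'alternative que soit $(P)$, soit son contraire $\neg(P)$, est g\'en\'erique dans $\mathrm{Auto}(X,\mu)$ muni de la topologie faible. Dans le premier cas, le th\'eor\`eme de transfert (th\'eor\`eme \ref{transfert}) permet de conclure imm\'ediatement que $(P)$ est g\'en\'erique dans $\mathrm{Homeo}(X,\mu)$ muni de la topologie forte. Dans le second cas, il suffit d'observer que $\neg(P)$ est encore une propri\'et\'e ergodique --- son ensemble de validit\'e est le compl\'ementaire dans $\mathrm{Auto}(X,\mu)$ d'un ensemble invariant par conjugaison, donc lui-m\^eme invariant --- et raisonnable, puisque le compl\'ementaire d'un ensemble Baire-mesurable est Baire-mesurable (proposition \ref{tribu}). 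On peut alors appliquer le th\'eor\`eme de transfert \`a $\neg(P)$ pour en d\'eduire que $\neg(P)$ est g\'en\'erique dans $\mathrm{Homeo}(X,\mu)$.

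Aucune difficult\'e technique r\'eelle n'est \`a pr\'evoir dans cette d\'eduction, l'essentiel du travail ayant \'et\'e accompli dans les preuves du corollaire \ref{loi-auto} et du th\'eor\`eme \ref{transfert}. Le seul point de vigilance est de ne pas chercher \`a appliquer directement une loi du 0-1 dans $\mathrm{Homeo}(X,\mu)$, o\`u elle peut \^etre en d\'efaut (comme le signale la remarque \ref{pludid} dans le cas des surfaces diff\'erentes de la sph\`ere), mais bien de passer par $\mathrm{Auto}(X,\mu)$ avant de redescendre vers $\mathrm{Homeo}(X,\mu)$ via le transfert ; c'est pr\'ecis\'ement pour cette raison que l'hypoth\`ese \emph{ergodique} (donc invariante par conjugaison dans $\mathrm{Auto}(X,\mu)$, et pas seulement dans $\mathrm{Homeo}(X,\mu)$) est essentielle pour �noncer ce corollaire.
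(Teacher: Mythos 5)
Votre d\'emonstration est correcte et suit exactement la voie que le papier se contente d'esquisser en une phrase (\og Par le th\'eor\`eme de transfert, le corollaire~\ref{loi-auto} poss\`ede un analogue\ldots\fg) : appliquer la loi du 0-1 dans $\mathrm{Auto}(X,\mu)$, puis transf\'erer l'alternative obtenue \`a $\mathrm{Homeo}(X,\mu)$ via le th\'eor\`eme~\ref{transfert}, en observant au passage que la n\'egation d'une propri\'et\'e ergodique raisonnable est encore ergodique et raisonnable. Vous explicitez utilement un point que le texte laisse implicite, \`a savoir pourquoi la deuxi\`eme branche de l'alternative se transf\`ere \'egalement.
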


On peut déduire de ce corollaire une propriété assez instructive sur les propriétés ergodiques dans les ensembles $\mathrm{Homeo}(X,\mu)$ :

\begin{coro}\label{indp}
La généricité ou non d'une propriété ergodique raisonnable sur $\mathrm{Homeo}(X,\mu)$ est indépendante de la variété $X$ et de la bonne mesure $\mu$ choisis.
\end{coro}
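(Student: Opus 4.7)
Le plan est de ramener la g�n�ricit� (ou non) d'une propri�t� ergodique raisonnable $(P)$ dans $\mathrm{Homeo}(X,\mu)$ � sa g�n�ricit� dans un mod�le canonique unique, en l'occurrence $\mathrm{Auto}(I,\mathrm{Leb})$ muni de la topologie faible ; celui-ci ne d�pend manifestement ni de $X$ ni de $\mu$.

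La premi�re �tape consistera � �tablir, pour tout couple $(X,\mu)$, l'�quivalence entre la g�n�ricit� de $(P)$ dans $\mathrm{Homeo}(X,\mu)$ (topologie forte) et celle de $(P)$ dans $\mathrm{Auto}(X,\mu)$ (topologie faible). L'implication \og faible-Auto $\Rightarrow$ forte-Homeo\fg~est exactement le th�or�me de transfert (th�or�me \ref{transfert}). R�ciproquement, si $(P)$ est g�n�rique dans $\mathrm{Homeo}(X,\mu)$, j'invoquerai la loi du 0-1 (corollaire \ref{loi-auto}) appliqu�e � la propri�t� raisonnable $(P)$, qui donne l'alternative : soit $(P)$, soit son contraire est g�n�rique dans $\mathrm{Auto}(X,\mu)$. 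Le second cas m�nerait, via le th�or�me de transfert, � la coexistence de deux $G_\delta$ denses disjoints dans l'espace de Baire non vide $\mathrm{Homeo}(X,\mu)$, ce qui est impossible.

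La seconde �tape reposera sur le th�or�me \ref{leb-stand}, qui fournit une bijection bi-mesurable pr�servant la mesure $\widetilde\phi : (X,\mu) \to (I,\mathrm{Leb})$ et induit un hom�omorphisme $\Phi : \mathrm{Auto}(X,\mu) \to \mathrm{Auto}(I,\mathrm{Leb})$ pour les topologies faibles, donn� par $\Phi(f) = \widetilde\phi \circ f \circ \widetilde\phi^{-1}$. Puisque $(P)$ est stable par conjugaison mesurable, et que $\widetilde\phi$ joue le r�le d'une telle conjugaison entre les deux espaces, l'ensemble des $f\in\mathrm{Auto}(X,\mu)$ v�rifiant $(P)$ correspond exactement, via $\Phi$, � son analogue dans $\mathrm{Auto}(I,\mathrm{Leb})$. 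Comme $\Phi$ est un hom�omorphisme, il pr�serve aussi bien la Baire-mesurabilit� que la g�n�ricit� ; la g�n�ricit� de $(P)$ dans $\mathrm{Auto}(X,\mu)$ �quivaut alors � celle, intrins�que, de $(P)$ dans $\mathrm{Auto}(I,\mathrm{Leb})$.

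En encha�nant les deux �quivalences, on conclura que la g�n�ricit� de $(P)$ dans $\mathrm{Homeo}(X,\mu)$ ne d�pend ni de $X$ ni de $\mu$. Le point le plus d�licat me semble �tre l'interpr�tation implicite de la \og m�me \fg~propri�t� ergodique sur des espaces $(X,\mu)$ distincts : c'est pr�cis�ment la structure abstraite d'espace mesur� standard, sous-jacente au th�or�me \ref{leb-stand}, qui permet cette identification, laquelle est l�gitime puisque les propri�t�s ergodiques sont d�finies comme �tant invariantes par conjugaison mesurable. Au-del� de cette subtilit� de formulation, aucun obstacle technique s�rieux n'est � pr�voir, tout le travail ayant d�j� �t� effectu� en amont.
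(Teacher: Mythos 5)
Your proof takes essentially the same route as the paper: both arguments combine the transfer theorem (th\'eor\`eme~\ref{transfert}) with the loi du 0-1 (corollaire~\ref{loi-auto}) to establish that genericity of~$(P)$ in $\mathrm{Homeo}(X,\mu)$ is equivalent to genericity in $\mathrm{Auto}(X,\mu)$, then invoke the canonical isomorphism of th\'eor\`eme~\ref{leb-stand} to eliminate the dependence on $(X,\mu)$. You spell out the Baire-category contradiction for the converse direction a bit more explicitly than the paper, and your closing remark about the implicit identification of a \og m\^eme\fg~propri\'et\'e across different spaces is a fair and useful observation; but the argument is the same.
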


\begin{proof}[Preuve du corollaire \ref{indp}] Soit $P$ un $G_\delta$ dense de $\mathrm{Homeo}(X,\mu)$ et $\widetilde P$ l'ensemble correspondant dans $\mathrm{Auto}(X,\mu)$, constitué des conjugués par $\mathrm{Auto}(X,\mu)$ des éléments de $P$. Par la loi du 0-1, $\widetilde P$ est soit gras, soit maigre, mais puisque $P$ est lui-même gras, le théorème de transfert implique que $\widetilde P$ est gras. Du théorème \ref{leb-stand} on déduit que la propriété correspondant à $\widetilde P$ est indépendante de l'espace $X$ et de la mesure $\mu$, et par théorème de transfert la propriété associée à $P$ est indépendante de $X$ et de $\mu$.
\end{proof}

\section{Dans $\mathrm{Homeo}(X,\mu)$, chaque classe de conjugaison est maigre}

Comme application de la loi du 0-1, nous allons montrer que toute classe de conjugaison dans $\mathrm{Homeo}(X,\mu)$ est maigre ; ce résultat est dû à E. Glasner et B. Weiss \cite{5}.

\begin{theoreme}[Glasner, Weiss]\label{classe-maigre}
Dans $\mathrm{Auto}(X,\mu)$, toute classe de conjugaison est maigre.
\end{theoreme}

On peut déduire facilement ce théorème du théorème \ref{specortho} comme suit : on prend un automorphisme $f$, on peut commencer par supposer qu'il est à spectre simple (sinon sa classe de conjugaison, contenue dans l'ensemble des automorphismes qui ne sont pas à spectre simple, serait maigre). Tout élément dans la classe de conjugaison de $f$ aura un type spectral équivalent à $\mathfrak{m}_f$, celui-ci ne sera donc pas étranger à $\mathfrak{m}_f$ ; on conclut par le théorème \ref{specortho}.

Nous présentons néanmoins une preuve directe du théorème \ref{classe-maigre}, dont la démonstration est basée sur celle présentée par E. Glasner et B. Weiss (théorème 3.4 de \cite{5}). On y fait appel à un résultat de théorie des ensembles (théorème \ref{inverse}), similaire au théorème de Jankov et Von Neumann (théorème 29.9 de \cite{11}).

\begin{theoreme}\label{inverse}
Soient $X$ et $Y$ deux espaces polonais\footnote{Voir la définition \ref{polo}.} et $\varphi : X\to Y$ une application continue. On note $E = \varphi(X)$. Alors il existe une fonction Baire-mesurable (c'est-à-dire mesurable pour la tribu formée des ensembles Baire-me\-su\-rables) $\psi : E\to X$ telle que $\varphi\circ\psi = \mathrm{Id}_E$.
\end{theoreme}

Une preuve de ce théorème est faite dans \cite{5}. Il va nous permettre d'\og inverser \fg~l'application de conjugaison.

\begin{proof}[Preuve du théorème \ref{classe-maigre}] Le théorème \ref{leb-stand} nous permet de supposer que $X$ est l'ensemble $I$ muni de sa tribu borélienne $\mathcal B$ et que $\mu$ est la mesure de Lebesgue $\mathrm{Leb}$. Raisonnons par l'absurde : supposons qu'il existe un élément $f_0\in\mathrm{Auto}(X,\mu)$ dont la classe de conjugaison sous $\mathrm{Auto}(I,\mathrm{Leb})$, notée $C(f_0)$, est grasse. Le théorème précédent (théorème \ref{inverse}) appliqué à la fonction 
\begin{eqnarray*}
\varphi : \mathrm{Auto}(I,\mathrm{Leb})& \longrightarrow     &\mathrm{Auto}(I,\mathrm{Leb})\\
R                   & \longmapsto & R^{-1} f_0 R
\end{eqnarray*}
nous fournit une application Baire-mesurable
\[\psi : C(f_0)\longrightarrow \mathrm{Auto}(I,\mathrm{Leb})\]
qui vérifie $\varphi\circ \psi = \mathrm{Id}_{C(f_0)}$. Ainsi, pour tout $f\in C(f_0)$ et $N\in\Z$, on a :
\begin{equation}\label{1}
\psi(f)f^N = f_0^N\psi(f).
\end{equation}

Considérons maintenant l'espace polonais $\mathcal{B}_{1/2}$, sous-ensem\-ble de $\mathcal{B}$, constitué des éléments de $\mathcal B$ de mesure $1/2$. Le fait que cet espace est polonais vient tout simplement du fait que c'est la sphère de centre l'ensemble vide et de rayon $1/2$ dans $\mathcal{B}$. Posons $D_0 = [0,1/2]$ et prenons une famille dénombrable $\{D_i\}_{i\ge1}$ de $\mathcal{B}_{1/2}$ qui est $1/20$-dense dans $\mathcal{B}_{1/2}$. Pour tout $i\ge 1$, notons
\[\mathcal{E}_i = \big\{f\in C(f_0)\mid\mathrm{Leb}(\psi(f)D_0\Delta D_i)<1/20\big\}.\]
Chaque $\mathcal E_i$ est une partie baire-mesurable de $\mathrm{Auto}(I,\mathrm{Leb})$ ; en effet les applications
\[\begin{array}{rclcrcl}
\mathrm{Leb} : \mathcal{B} & \longrightarrow     & [0,1]  & \quad \mathrm{et}\quad & \mathrm{Auto}(I,\mathrm{Leb}) & \longrightarrow & \mathcal{B}\\ 
      D               & \longmapsto     & \mathrm{Leb}(D) &                        & f                        & \longmapsto     & f(D_0)\Delta D_i
\end{array}\]
sont continues ; on conclut par le fait que $\psi$ est Baire-mesurable. La famille $\{\mathcal E_i\}_{i\ge 1}$ constitue un recouvrement dénombrable de l'ensemble $C(f_0)$ ; cela découle facilement du fait que $\psi(f)$ préserve la mesure. Nous pouvons donc écrire $\mathcal{E}_i = U_i\Delta M_i$ pour tout entier $i$, avec $U_i$ ouvert et $M_i$ maigre. Alors, par le théorème de Baire, et comme $C(f_0)$ est gras, il existe au moins un $i$ tel que $U_i$ soit non vide. Posons $U_0 = U_i\cap \mathcal{E}_i = U_i\setminus M_i$. Alors, pour $f_1, f_2\in U_0$, nous avons :
\begin{equation}\label{2}
\mathrm{Leb}\left(\psi(f_1)D_0 \Delta \psi(f_2)D_0\right)<1/10.
\end{equation}

Cette inégalité signifie que pour tout $f_1,f_2\in U_0$, les actions de $f_1$ et de $f_2$ sur $D_0$ sont assez proches. On va maintenant montrer que l'on peut trouver deux automorphismes $f_1$ et $f_2$ dont les actions sur $D_0$ sont assez différentes, cela fournira la contradiction désirée. Plus précisément, on imposera à $f_1$ d'être ergodique, ce qui entrainera que l'intersection $f_1^N (D_0)\cap D_0$ ne sera pas trop grosse pour une infinité d'entiers $N$ ; et on choisira $f_2$ telle que l'intersection $f_2^N (D_0)\cap D_0$ soit assez grosse là aussi pour une infinité d'entiers $N$.

Construisons un tel couple d'automorphismes $f_1,f_2$. Puisque l'ensemble des automorphismes ergodiques est générique, on peut choisir une application ergodique $f_1\in U_0$. Par théorème ergodique de Von Neumann, 
\[\frac{1}{N}\sum_{j=0}^{N-1}\mathrm{Leb}(f_1^j D_0 \cap D_0)\underset{N\to+\infty}{\longrightarrow} 1/4,\]
si bien qu'il existe une suite croissante $N_i\to +\infty$ telle que :
\begin{equation}\label{3}
\mathrm{Leb}\left(f_1^{N_i} D_0 \cap D_0\right)<1/3.
\end{equation}

Montrons maintenant que la limite supérieure des ensembles
\[A_i = \big\{f\in \mathrm{Auto}(I,\mathrm{Leb})\mid\mathrm{Leb}(f^{N_i}D_0\Delta D_0)<1/10\big\}\]
est un $G_\delta$ dense. Chaque $A_i$ est clairement ouvert (par la remarque sur la continuité au dessus). La densité de tous les ensembles de la forme $\bigcup_{j\ge m}A_j$, avec $m$ parcourant $\N$, est une conséquence du lemme de Rokhlin : soient un automorphisme apériodique $g\in\mathrm{Auto}(I,\mathrm{Leb})$, $\varepsilon>0$ et $m\in\N$. Prenant $N_i\ge m$, on obtient une tour de Rokhlin $(B,g(B),\cdots,g^{N_i-1}(B))$, avec
\[\mathrm{Leb}\left(\bigcup_{j=0}^{N_i-1} g^j(B)\right)>1-\frac{\varepsilon}{2}.\]
On définit alors l'automorphisme $f$ par 
\[f=\left\{\begin{array}{ll}
g & \quad \text{sur}\quad \cup_{j=0}^{N_i-2}g^j(B)\\
g^{-N_i+1} & \quad \text{sur}\quad g^{N_i-1}(B)\\
\mathrm{Id} & \quad \text{ailleurs.}
\end{array}\right.\]
Quitte à augmenter $i$, on suppose $1/N_i<\varepsilon/2$ et par conséquent $f$ est une $\varepsilon$-approximation $N_i$-périodique de $g$, donc $f\in\cup_{j\ge m}A_j$. On conclut par densité des automorphismes apériodiques. Ainsi, $A= \cap_{m\ge 1}\cup_{j\ge m}A_j$ la limite supérieure des $A_j$ est un $G_\delta$ dense ; on peut donc prendre $f_2\in A\cap U_0$.

Puisque $f_2$ est dans $A$, il existe un entier $i$ tel que $\mathrm{Leb}\big(f_2^{N_i}D_0\Delta D_0\big)<1/10$. Or $\psi(f_2)$ préserve la mesure, cela implique que $\mathrm{Leb}\big(\psi(f_2)f_2^{N_i}D_0\Delta \psi(f_2)D_0\big)<1/10$, et par (\ref{1}),
\begin{equation}\label{4}
\mathrm{Leb}\left(f_0^{N_i}\psi(f_2)D_0\Delta \psi(f_2)D_0\right)<1/10.
\end{equation}

Or de manière générale, pour deux ensembles mesurables $M$ et $N$, on a :
\[2\,\mathrm{Leb}(M\cap N) = \mathrm{Leb}(M) + \mathrm{Leb}(N) - \mathrm{Leb}(M\Delta N),\]
si bien que :
\begin{eqnarray*}
\mathrm{Leb}\left(f_0^{N_i} \psi(f_1)D_0 \cap \psi(f_1)D_0\right) & = & \frac 12 \bigg(\mathrm{Leb}\left(f_0^{N_i} \psi(f_1)D_0\right) + \mathrm{Leb}\left(\psi(f_1)D_0\right)\\
                                                             &   & - \mathrm{Leb}\left(f_0^{N_i} \psi(f_1)D_0 \Delta \psi(f_1)D_0\right)\bigg).
\end{eqnarray*}
Les deux premiers termes du membre de droite valent $1/2$. Pour majorer le troisième terme, on utilise l'inclusion
\[E\Delta F\subset (E\Delta G)\cup (G\Delta H) \cup (F\Delta H),\]
valable quels que soient les ensembles $E$, $F$ et $G$. On en déduit :
\begin{eqnarray*}
\mathrm{Leb}\left(f_0^{N_i} \psi(f_1)D_0 \cap \psi(f_1)D_0\right) & \ge & \frac 12 - \frac 12\bigg(\mathrm{Leb}\left(f_0^{N_i} \psi(f_1)D_0\Delta f_0^{N_i} \psi(f_2)D_0\right)\\
                                                             &   & + \mathrm{Leb}\left(f_0^{N_i}\psi(f_2)D_0\Delta \psi(f_2)D_0\right)\\
                                                             &   & + \mathrm{Leb}\left(\psi(f_1)D_0 \Delta \psi(f_2)D_0\right)\bigg).
\end{eqnarray*}
Et en utilisant les équations \ref{2} et \ref{4}, on obtient :
\begin{eqnarray}\label{5}
\mathrm{Leb}\left(f_0^{N_i} \psi(f_1)D_0 \cap \psi(f_1)D_0\right) & \ge & \frac 12 -\frac 12\left(\frac{1}{10} + \frac{1}{10} + \frac{1}{10}\right)\nonumber\\
                                                             & \ge & \frac{7}{20}>\frac 13.
\end{eqnarray}

D'autre part, par (\ref{3}), $\mathrm{Leb}\big(f_1^{N_i} D_0 \cap D_0\big)<1/3$, si bien que, toujours par préservation de la mesure, $\mathrm{Leb}\big(\psi(f_1)f_1^{N_i} D_0 \cap \psi(f_1)D_0\big)<1/3$ et de nouveau, par (\ref{1}),
\begin{equation}\label{6}
\mathrm{Leb}\left(f_0^{N_i} \psi(f_1)D_0 \cap \psi(f_1)D_0\right)<1/3.
\end{equation}
On obtient une contradiction en comparant les équations \ref{5} et \ref{6}. Donc l'ensemble $C(f_0)$ n'est pas gras et par conséquent, par la loi du 0-1, il est maigre.
\end{proof}

En appliquant le théorème de transfert \ref{transfert} et en remarquant que la trace sur $\mathrm{Homeo}(X,\mu)$ d'une classe de conjugaison d'un homéomorphisme sous $\mathrm{Auto}(X,\mu)$ contient la classe de conjugaison de cet homéomorphisme sous $\mathrm{Homeo}(X,\mu)$, on obtient le corollaire suivant :

\begin{coro}\label{homeomaigr}
Dans $\mathrm{Homeo}(X,\mu)$, toutes les classes de conjugaisons sont maigres.
\end{coro}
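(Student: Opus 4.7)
Le plan est de d\'eduire ce corollaire directement du th\'eor\`eme \ref{classe-maigre}, qui affirme que dans $\mathrm{Auto}(X,\mu)$ toute classe de conjugaison est maigre, en transf\'erant cette propri\'et\'e via le th\'eor\`eme \ref{transfert}. Fixons un hom\'eomorphisme $f\in\mathrm{Homeo}(X,\mu)$ et notons $C_{\mathrm{Auto}}(f)$ (respectivement $C_{\mathrm{Homeo}}(f)$) sa classe de conjugaison dans $\mathrm{Auto}(X,\mu)$ (respectivement dans $\mathrm{Homeo}(X,\mu)$). Puisque toute conjugaison par un hom\'eomorphisme est en particulier une conjugaison par un automorphisme, on dispose de l'inclusion fondamentale $C_{\mathrm{Homeo}}(f)\subset C_{\mathrm{Auto}}(f)\cap \mathrm{Homeo}(X,\mu)$.

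La premi\`ere \'etape consistera \`a remarquer que, par le th\'eor\`eme \ref{classe-maigre}, l'ensemble $C_{\mathrm{Auto}}(f)$ est maigre dans $\mathrm{Auto}(X,\mu)$ pour la topologie faible. Son compl\'ementaire contient donc un $G_\delta$ dense pour cette topologie ; autrement dit, la propri\'et\'e \og~ne pas appartenir \`a $C_{\mathrm{Auto}}(f)$~\fg~est g\'en\'erique dans $\mathrm{Auto}(X,\mu)$. Dans un deuxi\`eme temps, on v\'erifiera que cette propri\'et\'e est ergodique au sens de la d\'efinition \ref{propergo} : comme $C_{\mathrm{Auto}}(f)$ est par construction invariant par conjugaison dans $\mathrm{Auto}(X,\mu)$, il en va de m\^eme de son compl\'ementaire.

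On pourra alors appliquer le th\'eor\`eme de transfert \ref{transfert} pour conclure que la propri\'et\'e \og~ne pas appartenir \`a $C_{\mathrm{Auto}}(f)$~\fg~est aussi g\'en\'erique dans $\mathrm{Homeo}(X,\mu)$ pour la topologie forte. Par l'inclusion initiale, cela entra\^ine que le compl\'ementaire de $C_{\mathrm{Homeo}}(f)$ dans $\mathrm{Homeo}(X,\mu)$, qui contient $\mathrm{Homeo}(X,\mu)\setminus C_{\mathrm{Auto}}(f)$, est lui-m\^eme g\'en\'erique, et donc $C_{\mathrm{Homeo}}(f)$ est maigre.

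Il n'y a pas d'obstacle technique s\'erieux dans ce sch\'ema : tout le travail a \'et\'e fait en amont, d'une part dans la preuve de la loi du 0--1 et du th\'eor\`eme \ref{classe-maigre}, d'autre part dans le th\'eor\`eme de transfert. Le seul point d\'elicat \`a ne pas manquer est que la classe de conjugaison dans $\mathrm{Homeo}(X,\mu)$ peut \^etre strictement plus petite que la trace sur $\mathrm{Homeo}(X,\mu)$ de la classe de conjugaison dans $\mathrm{Auto}(X,\mu)$, mais cela joue en notre faveur : il suffit pr\'ecis\'ement de l'inclusion $C_{\mathrm{Homeo}}(f)\subset C_{\mathrm{Auto}}(f)$, sans besoin d'\'egalit\'e, pour que la maigreur soit h\'erit\'ee.
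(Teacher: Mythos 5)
Votre démonstration est correcte et suit exactement la même stratégie que celle du mémoire : le théorème \ref{classe-maigre} donne la maigreur de $C_{\mathrm{Auto}}(f)$, on observe que le complémentaire est une propriété ergodique générique, on transfère via le théorème \ref{transfert}, et l'inclusion $C_{\mathrm{Homeo}}(f)\subset C_{\mathrm{Auto}}(f)\cap\mathrm{Homeo}(X,\mu)$ conclut. Le mémoire condense tout cela en une seule phrase ; vous ne faites que détailler soigneusement les mêmes étapes, notamment la vérification que le complémentaire d'une classe de conjugaison est bien une propriété ergodique au sens de la définition \ref{propergo}, ce qui est le point que le texte laisse implicite.
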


\appendix
\chapter{Propriétés des espaces $\mathrm{Homeo}(X,\mu)$ et $\mathrm{Auto}(X,\mu)$}\label{AAA}

Le but de cette annexe est d'établir les propriétés importantes des espaces topologiques $\mathrm{Homeo}(X,\mu)$ et $\mathrm{Auto}(X,\mu)$. Nous montrerons en particulier que ces espaces sont de Baire\label{debutbaire}\footnote{Le théorème de Baire établit que dans un espace complet, une intersection d'ouverts denses est elle-même dense. Un espace est dit \emph{de Baire} si la conclusion de ce théorème reste vraie.}.

\section{L'espace $\mathrm{Homeo}(X,\mu)$ et la topologie uniforme}

En premier lieu, remarquons que l'espace $\mathrm{Homeo}(X,\mu)$ est un sous-en\-sem\-ble fermé de l'ensemble des homéomorphismes. Pour notre étude, il est fondamental que cet espace soit polonais :

\begin{definition}\label{polo}
Un espace topologique est dit \emph{polonais} s'il séparable et complètement métrisable, c'est-à-dire métrisable par une métrique complète.
\end{definition}

Une propriété importante\footnote{En tous cas pour nous.} des espaces polonais est qu'on peut y appliquer le théorème de Baire et cela bien sûr pour toutes les topologies compatibles avec la métrique initiale de l'espace (pour des précisions sur les espaces polonais, voir \cite{ana}). Toutefois, il faut prendre garde au fait que toutes les métriques ne rendent pas l'espace complet ; dans notre cas la convergence des suites de Cauchy n'est pas assurée dans le cadre de la métrique usuelle $d_{\mathit{forte}}$ de $\mathrm{Homeo}(X,\mu)$. Une métrique qui le rend complet est donnée par \label{delta}
\[\delta(f,g) = d_{\mathit{forte}}(f,g)+d_{\mathit{forte}}(f^{-1},g^{-1}).\]

\begin{lemme}\label{gpe-topo}
L'espace $\mathrm{Homeo}(X,\mu)$ muni de la métrique $d_{\mathit{forte}}$ est un groupe topologique.
\end{lemme}

\begin{proof}[Preuve du lemme \ref{gpe-topo}] Montrons la continuité de la composition. Soient $f$, $g$, $f'$ et $g'$ dans $\mathrm{Homeo}(X,\mu)$, tels que $d_{\mathit{forte}}(f,f')<\eta$ et $d_{\mathit{forte}}(g,g')<\eta$. Alors
\[d_{\mathit{forte}}(f\circ g,f'\circ g')\le d_{\mathit{forte}}(f\circ g, f\circ g')+d_{\mathit{forte}}(f\circ g',f'\circ g').\]
Le second terme est inférieur à $\eta$. Pour le premier il suffit de remarquer que $f$ est uniformément continue si bien qu'il va exister $\eta$ tel que la distance soit plus petite que $\varepsilon$. On a donc la continuité de la composition.

Passons à celle du passage à l'inverse. Donnons-nous deux éléments de $\mathrm{Homeo}(X,\mu)$ $f$ et $g$. Alors
\begin{eqnarray*}
d_{\mathit{forte}}(f^{-1},g^{-1}) & = & d_{\mathit{forte}}(f^{-1}\circ g,\mathrm{Id})\\
                                 & = & d_{\mathit{forte}}(f^{-1}\circ g,f^{-1}\circ f).
\end{eqnarray*}
Et on conclut par uniforme continuité de $f^{-1}$.
\end{proof}

\begin{lemme}\label{complet}
L'espace $\mathrm{Homeo}(X,\mu)$ muni de la métrique $\delta(f,g)$ est complet.
\end{lemme}

\begin{proof}[Preuve du lemme \ref{complet}] Pour cela, il suffit de montrer qu'il est fermé dans l'ensemble des applications continues de $X$ dans $X$ (sans préservation du volume), car la métrique utilisée est plus fine que la métrique uniforme.
Prenons une suite de Cauchy $(f_m)_{m\in\N}$ d'éléments de $\mathrm{Homeo}(X,\mu)$ con\-ver\-geant vers $f$ continue. De même la suite $f_m^{-1}$ converge vers une fonction $g$ elle aussi continue. Nous voulons alors montrer que $f$ est inversible, on établit pour cela que $f\circ g = g\circ f=\mathrm{Id}$, mais ces égalités découlent directement du passage à la limite des égalités $f_m\circ f_m^{-1} = f_m^{-1}\circ f_m = \mathrm{Id}$, à l'aide du lemme \ref{gpe-topo}.
\end{proof}

\begin{lemme}\label{séparable}
L'espace $\mathrm{Homeo}(X,\mu)$ muni de la métrique $d_{\mathit{forte}}$ est séparable.
\end{lemme}

\begin{proof}[Preuve du lemme \ref{séparable}] Puisque $X$ est une variété différentielle, il va exister une triangulation de $X$ (voir \cite{munk}), et même des triangulations arbitrairement fines. On en déduit la densité des applications affines par morceaux de $X$ dans $X$ parmi les applications continues de $X$ dans $X$. Dans l'ensemble des applications affines par morceaux, il existe une famille dénombrable dense. Mais cela implique que l'ensemble des applications continues de $X$ dans $X$ est à base dénombrable d'ouverts. Par inclusion de $\mathrm{Homeo}(X,\mu)$ dans cet espace (ils sont munis de la même topologie), on en déduit que $\mathrm{Homeo}(X,\mu)$ est à base dénombrable d'ouverts.
\end{proof}

Du lemme \ref{gpe-topo}, on déduit le corollaire suivant :

\begin{coro}\label{derdesder}
Les deux distances définies sur l'espace $\mathrm{Homeo}(X,\mu)$ engendrent la même topologie.
\end{coro}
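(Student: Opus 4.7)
The plan is to prove the equality of the two topologies by showing they have the same convergent sequences, using crucially the continuity of inversion established in Lemma~\ref{gpe-topo}.

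First, I would observe the trivial direction: since $\delta(f,g) = d_{\mathit{forte}}(f,g) + d_{\mathit{forte}}(f^{-1},g^{-1}) \geq d_{\mathit{forte}}(f,g)$, the identity map from $(\mathrm{Homeo}(X,\mu),\delta)$ to $(\mathrm{Homeo}(X,\mu),d_{\mathit{forte}})$ is $1$-Lipschitz, hence continuous. This shows that the topology induced by $\delta$ is a priori finer than the one induced by $d_{\mathit{forte}}$.

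For the reverse inclusion, I would show that any sequence $(f_m)_{m\in\N}$ converging to $f$ for $d_{\mathit{forte}}$ also converges to $f$ for $\delta$. This reduces to proving that $d_{\mathit{forte}}(f_m^{-1},f^{-1})\to 0$. But this is exactly the continuity of the inversion map with respect to $d_{\mathit{forte}}$, which is precisely the content of the second half of Lemma~\ref{gpe-topo}. Combining the two convergences gives $\delta(f_m,f)\to 0$. Since both distances are metrics, equality of convergent sequences implies equality of the topologies.

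The main (and really only) substantive step is the appeal to continuity of inversion, which has already been handled in Lemma~\ref{gpe-topo} via uniform continuity of $f^{-1}$. There is no significant obstacle here: the corollary is essentially a repackaging of Lemma~\ref{gpe-topo} together with the trivial comparison $\delta \geq d_{\mathit{forte}}$.
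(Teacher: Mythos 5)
Your proof is correct and matches the paper's (unstated) intention exactly: the paper simply declares that the corollary follows from Lemma~\ref{gpe-topo}, and your argument fills in precisely that deduction, combining the trivial inequality $\delta\ge d_{\mathit{forte}}$ with the continuity of inversion for $d_{\mathit{forte}}$ and concluding via first countability of metric spaces.
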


Et en combinant les lemmes \ref{complet} et \ref{séparable} et le corollaire \ref{derdesder}, on obtient :

\begin{coro}\label{homeopolo}
$\mathrm{Homeo}(X,\mu)$ est polonais. En particulier on peut y appliquer le théorème de Baire.
\end{coro}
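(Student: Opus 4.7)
Le plan consiste simplement � assembler les trois r�sultats pr�liminaires qui viennent d'�tre �tablis. Rappelons que la d�finition~\ref{polo} caract�rise un espace polonais comme un espace topologique s�parable et compl�tement m�trisable, c'est-�-dire admettant une m�trique compl�te engendrant sa topologie. Il s'agit donc de produire, pour $\mathrm{Homeo}(X,\mu)$ muni de sa topologie uniforme usuelle, � la fois une partie d�nombrable dense et une m�trique compl�te compatible.

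Pour la s�parabilit�, on applique directement le lemme~\ref{s�parable}, qui fournit une base d�nombrable d'ouverts pour la topologie de $d_{\mathit{forte}}$ (et donc, en particulier, une partie d�nombrable dense). Pour la compl�te m�trisabilit�, le candidat naturel est la distance $\delta(f,g) = d_{\mathit{forte}}(f,g) + d_{\mathit{forte}}(f^{-1},g^{-1})$ introduite page~\pageref{delta} : le lemme~\ref{complet} garantit que $(\mathrm{Homeo}(X,\mu),\delta)$ est complet. L'articulation cruciale entre ces deux �nonc�s est fournie par le corollaire~\ref{derdesder}, qui affirme que $\delta$ et $d_{\mathit{forte}}$ engendrent la m\^eme topologie sur $\mathrm{Homeo}(X,\mu)$. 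Sans cette compatibilit�, on ne pourrait pas combiner l�gitimement la s�parabilit� (�tablie pour $d_{\mathit{forte}}$) avec la compl�tude (�tablie pour $\delta$) pour conclure � la polonicit�.

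La conclusion s'ensuit alors imm�diatement : $\mathrm{Homeo}(X,\mu)$ est un espace topologique s�parable, et sa topologie est �galement induite par la m�trique compl�te $\delta$, donc il est polonais. Pour l'assertion finale, il suffit d'invoquer le th�or�me de Baire sous sa forme classique, qui s'applique � tout espace m�trique complet : comme le fait d'\^etre de Baire est une propri�t� purement topologique (cf.\ l'encadr� page~\pageref{debutbaire}), elle se transf�re sans peine � la topologie uniforme de $\mathrm{Homeo}(X,\mu)$, m\^eme si l'on pr�f�re travailler avec la distance $d_{\mathit{forte}}$ plut\^ot qu'avec $\delta$.

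Il n'y a pas d'obstacle r�el dans cet argument, tout le travail technique ayant �t� concentr� dans les trois �nonc�s pr�c�dents. Le seul point m�ritant d'\^etre soulign� est pr�cis�ment le r\^ole charni�re du corollaire~\ref{derdesder} : c'est lui qui rend licite l'introduction d'une m�trique auxiliaire $\delta$ pour obtenir la compl�tude, tout en pr�servant les propri�t�s topologiques (s�parabilit�, notion d'ouvert dense, etc.) �tablies ou utilis�es pour la m�trique uniforme originelle.
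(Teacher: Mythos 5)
Votre argument co\"incide exactement avec celui du texte : le corollaire y est d\'eduit pr\'ecis\'ement de la combinaison des lemmes~\ref{s\'eparable} et~\ref{complet} avec le corollaire~\ref{derdesder}, ce dernier servant, comme vous le soulignez, \`a garantir que la m\'etrique compl\`ete auxiliaire $\delta$ induit bien la m\^eme topologie que $d_{\mathit{forte}}$. Votre r\'edaction est correcte et ne s'\'ecarte pas de la d\'emonstration du m\'emoire.
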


\section{L'espace $\mathrm{Auto}(X,\mu)$ et la topologie faible}

Nous donnons ici les propriétés importantes pour notre étude de l'espace $\mathrm{Auto}(X,\mu)$. Comme pour l'espace $\mathrm{Homeo}(X,\mu)$, il est fondamental que l'espace $\mathrm{Auto}(X,\mu)$ muni de la topologie faible soit polonais. Les propriétés à retenir dans cette partie sont les définitions équivalentes de la topologie faible (lemme \ref{topeq}), le fait que $\mathrm{Auto}(X,\mu)$ est polonais (lemme \ref{autopolo}), quelques constructions d'automorphismes (lemmes \ref{ensembles} et \ref{point-fixe}) et le théorème d'isomorphisme des espaces $\mathrm{Auto}(X,\mu)$.

Rappelons que la topologie faible est définie par la distance
\[d_{\mathit{faible}}(f,g) = \inf\big\{\alpha \mid \mu\{x\mid \mathrm{dist}( f(x),g(x))>\alpha\}<\alpha\big\}.\]
Il sera utile par la suite d'avoir une formulation équivalente de la topologie faible, donnée par le lemme suivant :

\begin{lemme}\label{topeq}
Sur $\mathrm{Auto}(X,\mu)$, la topologie faible est équivalente à celle définie comme suit : $f_k$ converge vers $f$ si et seulement si pour tout ensemble mesurable $A$, $\mu(f_k(A) \,\Delta\, f(A))$ tend vers 0.
\end{lemme}

\begin{proof}[Preuve du lemme \ref{topeq}] Supposons que $d_{\mathit{faible}}(f_k,f)$ tende vers 0 et donnons nous un cube $C$ (cette notion est licite au vu du corollaire \ref{Brown-mesure}), qui va dans un premier temps jouer le rôle du mesurable $A$, ainsi qu'un $\varepsilon>0$. On pose $C_\varepsilon$ l'ensemble des points situés à une distance $\mathrm{dist}$ inférieure à $\varepsilon$ de $C$.
Le théorème de Lusin (pour un énoncé dans un cadre plus général que celui \og classique \fg~de $\R$, voir par exemple \cite[lemme 2.4.2]{Ito}) assure qu'il existe un homéomorphisme conservatif $g$ qui coïncide avec $f$ sur un ensemble de mesure au moins $1-\varepsilon/2$. Par uniforme continuité de $g^{-1}$, il existe $\eta\in]0,\varepsilon[$ tel que si $\mathrm{dist}(x,g(C))<\eta$, alors $g^{-1}(x)\in C_\varepsilon$, c'est-à-dire que $x\in g(C_\varepsilon)$. Mais à partir d'un certain rang, on aura $\mathrm{dist}(f_k(x),f(x))<\eta$ sur un ensemble de mesure au moins $1-\varepsilon/2$. Sans perdre en généralité on se place désormais dans l'intersection de cet ensemble avec l'ensemble où $f$ et $g$ coïncident, intersection qui est de mesure au moins $1-\varepsilon$. Alors, tout point de $f_k(C)$ est dans $f(C_\varepsilon)$. On a donc deux ensembles, à savoir $f_k(C)$ et $f(C)$, dans un ensemble $f(C_\varepsilon)$ à peine plus gros au sens de la mesure ; de cela on déduit que :
\begin{eqnarray*}
\mu\big(f_k(C)\Delta f(C)\big) & < & \mu\big(f(C_\varepsilon)\setminus f_k(C)\big) + \mu\big(f(C_\varepsilon)\setminus f(C)\big)\\
															 & = & \mu\big(f(C_\varepsilon)\big) - \mu\big(f_k(C)\big) + \mu\big(f(C_\varepsilon)\big) - \mu\big(f(C)\big)\\
															 & = & 2\mu(C_\varepsilon)-2\mu(C),
\end{eqnarray*}
avec $\mu(C_\varepsilon)$ qui tend vers $\mu(C)$ lorsque $\varepsilon$ tend vers 0. Cela a lieu, on l'a dit plus haut, sur un ensemble de mesure au moins $1-\varepsilon$. On conclut par le fait que les cubes engendrent la tribu des boréliens et que l'ensemble des mesurables tels que $\mu(f_k(A) \,\Delta\, f(A))$ tende vers 0 est une tribu.

Au contraire, supposons que pour tout borélien $A$, $\mu(f_k(A) \,\Delta\, f(A))$ tend vers 0. Donnons-nous $\varepsilon>0$, toujours par théorème de Lusin, il existe un homéomorphisme conservatif $g$ arbitrairement proche de $f$ ; par uniforme continuité de $g$, il existe $\eta>0$ tel que l'image par $g$ de tout cube de diamètre inférieur à $\eta$ soit de diamètre plus petit que $\varepsilon$. Prenons une subdivision dyadique dont les cubes vérifient cette propriété. Pour chaque cube $C$ il va exister un temps à partir duquel $f_k(C)$ est dans $g(C)$ pour une proportion d'au moins $1-\varepsilon$ du cube. Ainsi, sur un ensemble de mesure au moins $1-\varepsilon$, la distance faible entre $f$ et $f_k$ sera d'au plus $\varepsilon$. On a montré l'équivalence des deux définitions.
\end{proof}

\begin{lemme}\label{autotopo}
$\mathrm{Auto}(X,\mu)$ est un groupe topologique.
\end{lemme}

\begin{proof}[Preuve du lemme \ref{autotopo}] Il s'agit essentiellement de remarquer que, par le théorème de Lusin, tout élément de $\mathrm{Auto}(X,\mu)$ coïncide avec une application uniformément continue sur un ensemble de mesure arbitrairement grande. Le reste de la preuve est identique à celle du lemme \ref{gpe-topo}.
\end{proof}

Une vérification importante à faire est que le théorème de Baire est vrai sur $\mathrm{Auto}(X,\mu)$ :

\begin{lemme}\label{autopolo}
$\mathrm{Auto}(X,\mu)$ muni de la topologie faible est un espace polonais.
\end{lemme}

\begin{proof}[Preuve du lemme \ref{autopolo}] On remarque tout d'abord que $\mathrm{Auto}(X,\mu)$ est un sous-espace fermé de celui des applications mesurables de $X$ dans $X$ muni de la topologie faible. En appliquant le théorème de Lusin, l'ensemble des applications continues est dense dans cet espace ; et on a déjà vu (dans la preuve du lemme \ref{séparable}) que l'ensemble des applications continues est séparable pour la topologie forte, \emph{a fortiori} pour la topologie faible. On a donc une base dénombrable d'ouverts sur l'ensemble des applications mesurables de $X$ dans $X$, dont la trace sur $\mathrm{Auto}(X,\mu)$ nous en donne une base dénombrable d'ouverts.

La métrique rendant $\mathrm{Auto}(X,\mu)$ complet est donnée par
\[\delta_{\mathit{faible}}(f,g) = d_{\mathit{faible}}(f,g)+d_{\mathit{faible}}(f^{-1},g^{-1}).\]
La démonstration du fait que cette métrique rend bien $\mathrm{Auto}(X,\mu)$ complet est similaire à celle du lemme \ref{complet} (il s'agit essentiellement de vérifier que les deux fonctions limites obtenues sont l'inverse l'une de l'autre). Et cette métrique engendre bien la topologie faible car $\mathrm{Auto}(X,\mu)$ est un groupe topologique.
\end{proof}

D'autres propriétés de la topologie faible, ainsi que des commentaires, se trouvent dans le chapitre \og \emph{weak topology} \fg~du livre de P. Halmos \cite{3}.
\bigskip

Donnons maintenant deux lemmes d'existence d'automorphismes conservatifs, issus principalement de la théorie des ensembles :

\begin{lemme}\label{ensembles}
\'Etant donnés $E$ et $F$ deux sous-ensembles mesurables de $X$ de même mesure, il existe une application $\Phi\in\mathrm{Auto}(X,\mu)$ telle que $\Phi(E) = F$.
\end{lemme}

\begin{proof}[Preuve du lemme \ref{ensembles}] La preuve de ce lemme se fait par récurrence transfinie. Nous allons construire $\Phi$ seulement sur $E$, la construction sur le complémentaire se faisant de la même façon. Donnons-nous $T$ un automorphisme ergodique de $X$ préservant la mesure (donné par exemple par l'image d'un automorphisme d'Anosov par l'application $\phi$ du corollaire \ref{Brown-mesure}). Alors il va exister $k\in\N$ tel que $\mu(f^k E\cap F)>0$. On pose alors $\Phi=f^k$ sur $E\cap f^{-k}F$. On recommence en remplaçant $E$ par à $E \cap {f^{-k}F}^\complement$. Une récurrence transfinie permet ainsi de définir $f$ sur $E$ tout entier : en effet, s'il y avait un moment où le procédé s'arrêtait, il resterait deux ensembles $E'\subset E$ et $F'\subset F$ sur lesquels $\Phi$ ne serait pas définie, mais ce que l'on a fait juste avant montre que l'on peut définir $\Phi$ sur des sous-ensembles de $E'$ et $F'$ de mesures non nulles.
\end{proof}

\begin{lemme}\label{point-fixe}
Soit $E$ un sous ensemble mesurable de $X$ et $\varepsilon>0$. Alors il existe $\Phi\in\mathrm{Auto}(X,\mu)$, sans point périodique dans $E$, et égal à l'identité sur $E^\complement$, tel que $d_{\mathit{forte}}(\Phi,\mathrm{Id})<\varepsilon$. 
\end{lemme}

\begin{proof}[Preuve du lemme \ref{point-fixe}] On se donne une subdivision dyadique dont les cubes sont de diamètre plus petit que $\varepsilon$. Plaçons nous dans un cube $C$ de cette subdivision et prenons $\alpha$ un nombre irrationnel quelconque. On choisit une direction privilégiée, disons l'axe des abscisses. On définit alors $\Phi$ sur chaque segment horizontal comme étant une rotation d'angle $\alpha$ selon la mesure de $E\cap [\textrm{segment horizontal}]$ : soit $S=\{(t,x_2,\dots,x_n)\}_{0\le t<1}$ un tel segment. Chaque point de $E$ est associé bijectivement (à un ensemble de mesure nulle près) à la valeur de sa fonction de répartition $f:S\cap E\to[0,\ell]$, avec $\ell$ la mesure de $S\cap E$. On pose alors\[
\Phi(t,x_2,\dots,x_n) = (f^{-1}(f(t)+\alpha \ell),x_2,\dots,x_n).\]
Sur $S\cap E^\complement$, on pose $\Phi = \mathrm{id}$. On a alors défini $\Phi$ sur tous les segments horizontaux de tous les $C\cap E$ ; $\Phi$ est bien mesurable et, par théorème de Fubini, préserve la mesure. Par conséquent $\Phi$ vérifie bien les conclusions du lemme.
\end{proof}

\section{L'espace des boréliens muni de la topologie faible}

Enfin, terminons par un petit lemme qui munit l'ensemble des boréliens de $X$ d'une métrique complète :

\begin{lemme}\label{polonais}
L'ensemble $\mathcal{B}$ des boréliens de $X$ muni de la topologie induite par la métrique $d(A,B) = \mu(A\Delta B)$ est un espace polonais, complet pour la distance $d$.
\end{lemme}

\begin{proof}[Preuve du lemme \ref{polonais}] Cet espace s'injecte isométriquement dans $L^1(X,\mu)$ via 
\begin{eqnarray*}
i : \mathcal B & \to & L^1(X,\mu)\\
A & \mapsto & \chi_A.
\end{eqnarray*}
Il suffit alors de vérifier que $i(\mathcal B)$ est un sous-espace fermé de $L^1(X,\mu)$ ; le fait que $\mathcal B$ est polonais découle alors de la complétude de $L^1(X,\mu)$ (théorème de Riesz-Fischer) et du fait qu'il est séparable (car $X$ est compact).
\end{proof}

\backmatter

\end{document}